\def\namedlabel#1#2{\begingroup
	#2%
	\def\@currentlabel{#2}%
	\phantomsection\label{#1}\endgroup
}
\numberwithin{equation}{section}
\DeclareMathOperator{\Adm}{Adm}
\DeclareMathOperator{\b|}{\boldsymbol{|}}
\newtheorem{thm}{Theorem}[section]
\newtheorem{prop}[thm]{Proposition}
\newtheorem{lem}[thm]{Lemma}
\newtheorem{cor}[thm]{Corollary}
\theoremstyle{remark}
\newtheorem{rem}[thm]{Remark}
\theoremstyle{definition}
\newtheorem{definition}[thm]{Definition}
\title{Stochasticization of Solutions to the Yang-Baxter Equation}
\author{Amol Aggarwal, Alexei Borodin, and Alexey Bufetov}
\begin{document}

\begin{abstract}
	
	In this paper we introduce a procedure that, given a solution to the Yang-Baxter equation as input, produces a stochastic (or Markovian) solution to (a possibly dynamical version of) the Yang-Baxter equation. We then apply this ``stochasticization procedure'' to obtain three new, stochastic solutions to several different forms of the Yang-Baxter equation. The first is a stochastic, elliptic solution to the dynamical Yang-Baxter equation; the second is a stochastic, higher rank solution to the dynamical Yang-Baxter equation; and the third is a stochastic solution to a dynamical variant of the tetrahedron equation. 
\end{abstract}

\maketitle

\tableofcontents

\section{Introduction}

\label{Introduction}

The search for exactly solvable (or \emph{integrable}) systems has long played a prominent role in mathematics and physics. In recent years, a significant amount of effort has been devoted towards the more refined search for models that are not only integrable but also Markovian (or \emph{stochastic}). These include certain classes of interacting particles systems and directed polymers in random media, in which the asymmetric simple exclusion process (ASEP) and Kardar--Parisi--Zhang (KPZ) stochastic partial differential equation serve as representative examples, respectively.  

The first wave of integrable Markovian models started in the late 1990s with the papers of Johansson \cite{Johansson} and Baik--Deift--Johansson \cite{BaikDJ}, and the key to their integrability was in reductions to determinantal and Pfaffian random point processes. The second wave of integrable Markovian models started the in late 2000s, and their integrability mechanisms heavily relied on those that had been previously developed for non-Markovian integrable models of statistical and quantum mechanics. For example, looking at the earlier papers of the second wave we see that: (a) The pioneering work of Tracy--Widom \cite{TracyW1,TracyW2,TracyW3} on the ASEP was based on the famous idea of Bethe \cite{Bethe} of looking for eigenfunctions of a quantum many-body system in the form of superpositions of those for noninteracting bodies (coordinate Bethe ansatz); (b) The work of O'Connell \cite{O'Connell} and Borodin--Corwin \cite{MP} on semi-discrete Brownian polymers utilized properties of eigenfunctions of the Macdonald--Ruijsenaars quantum integrable system -- the celebrated Macdonald polynomials and their degenerations; (c) The physics papers of Dotsenko \cite{Dotsenko} and Calabrese--Le Doussal--Rosso \cite{CalabreseDR}, and a later work of Borodin--Corwin--Sasamoto \cite{BorodinCS}, used a duality trick to show that certain observables of infinite-dimensional models solve finite-dimensional quantum many-body systems that are in turn solvable by the coordinate Bethe ansatz.

It was later realized that all of the above examples, as well as many others, can be naturally united under a single umbrella -- integrable stochastic vertex models. 
The first such unification was suggested by Corwin--Petrov \cite{SHSVML} on the basis of Borodin \cite{FSRF} under the name of the \emph{stochastic higher spin six vertex model}; see Borodin--Petrov \cite{BorodinP2} for a lecture style exposition. Its existence was due to the fact that all of these models were governed by the same algebraic structure -- the quantum affine group $U_q(\widehat{\mathfrak{sl}_2})$. This was later extended to the level of the elliptic quantum group $E_{\tau,\eta}(\mathfrak{sl}_2)$ in Borodin \cite{ERSF} and Aggarwal \cite{DSHSVM}, which produced \emph{dynamical} stochastic vertex models. 
Concerning quantum groups of higher rank, stochastic vertex models corresponding to those of type A have been introduced by Kuniba--Mangazeev--Maruyama--Okado in \cite{SM}. In a certain degeneration, these models reproduce \emph{multi-species} exclusion processes that have been around since at least the 1990s. A dynamic extension was given by Kuniba in \cite{FW}.  


However, a question that largely remained without answer was whether one could provide a systematic way to search for and produce new examples of stochastic integrable systems. 

In the non-stochastic setting, exactly solvable systems are known to arise from solutions to the \emph{Yang-Baxter equation} (see the book by Baxter \cite{ESMSM} and the collection of foundational papers \cite{EIS}), which is a relation of the form 
\begin{flalign}
\label{rrr}
\displaystyle\sum_{i', j', k'} R_{ij}^{i' j'} R_{kj'}^{k'j''} R_{k'i'}^{k''i''}  = \displaystyle\sum_{i', j', k'} R_{ki}^{k'i'} R_{k'j}^{k''j'} R_{i'j'}^{i''j''},
\end{flalign}

\noindent where $\textbf{R} = \big[ R_{ij}^{i' j'} \big]$ is an \emph{$R$-matrix} (which might depend on additional parameters) and the sum is over all triples $(i', j', k')$, with $(i, j, k)$ and $(i'', j'', k'')$ fixed. 

Solutions to the Yang-Baxter equation \eqref{rrr} are in turn known (see, e.g., Jimbo-Miwa \cite{AASLM}) to come from the representation theory of (affine or elliptic) quantum groups; the latter topic is intensely studied with many examples, thereby giving rise to many examples of integrable systems. However, stochasticity of a system arising in this way is equivalent to the condition that $\sum_{i', j'} R_{ij}^{i' j'} = 1$ for each fixed $(i, j)$, and this relation is typically not satisfied.  

Our goal in this paper is to in a sense rectify this issue by explaining how a general solution $\textbf{R} = \big[ R_{ij}^{i' j'} \big]$ of the Yang-Baxter equation \eqref{rrr} can be ``stochasticized'' to form a matrix $\textbf{S} = \big[ S_{ij}^{i' j'} \big]$ that is stochastic and still satisfies a (possibly dynamical variant) of the Yang-Baxter equation \eqref{rrr}.

The above-mentioned works \cite{SHSVML, FSRF, ERSF, DSHSVM, SM, FW} (see also Kuan \cite{CDFSVMD}) did use certain conjugations to produce stochastic integrable models out of non-stochastic ones. These conjugations can be viewed as special cases of the general stochasticization procedure that we describe in this work. In fact, we will see that our procedure is much more universal and applies in a substantially broader context; in particular, it also allows one to see stochastic systems in situations where previous attempts failed to (for example, in the elliptic setting). 

It is natural to view our stochasticization procedure as a particular case of the broader concept of \emph{bijectivisation} of the Yang-Baxter equation introduced by Bufetov--Petrov \cite{FSSF}; a basic case ($\mathfrak{sl}_2$, spin $\frac 12$) of one of our examples actually appeared in Section 7 of that work and was our starting point. 

Curiously, the graphical representation of our stochasticization procedure is somewhat similar to the ``shadows world'' of Kirillov-Reshetikhin in \cite{KR}. Another similarity is that ``the shadows world'' of \cite{KR} is also closely related to solutions to the dynamical Yang-Baxter equation and solid-on-solid (SOS) models. However, this is where similarities end --- our definitions are different, as are the resulting solutions.   

In \Cref{EllipticEquation}, \Cref{HigherRankEquation}, and \Cref{TetrahedronStochastic} below we describe three new examples of stochastic solutions to (variants of) the Yang-Baxter equation obtained by applying our stochasticization procedure in different settings. Specifically, in \Cref{EllipticEquation}, we state an elliptic, stochastic solution to the dynamical Yang-Baxter equation; in \Cref{HigherRankEquation}, we state a dynamical variant of a stochastic higher rank vertex model; and, in \Cref{TetrahedronStochastic}, we state a stochastic solution to a dynamical version of the tetrahedron equation. We then provide an outline for the remainder of the paper in \Cref{Outline}.

\subsection{A Stochastic Elliptic Solution}

\label{EllipticEquation}

Elliptic solutions to the dynamical Yang-Baxter equation were introduced by Baxter 45 years ago through his eight-vertex SOS model as a means to solve the eight-vertex model \cite{EVMLSF,EVMLSE,EVMLSETM}. Since then, elliptic integrable systems have been studied extensively in the statistical physics literature; see, for instance, \cite{EVMGI,ESMSM,ESSOSMLHP,ESSOSM,FEVSOSM,EQG,AEQG,REQG,ESEMHF,EIS,TFM,AASLM} and references therein. However, none of these elliptic integrable systems happened to be stochastic. 

Based on the framework of elliptic quantum groups \cite{EQG,AEQG,REQG}, a family of stochastic dynamical vertex, or interaction round-a-face (IRF), models were proposed in \cite{ERSF}, which were later generalized in \cite{DSHSVM,FW} and analyzed from a probabilistic perspective in \cite{DCP}. However, in order to ensure stochasticity of these models, the works \cite{DSHSVM,ERSF,FW} took trigonometric degenerations of the originally elliptic solutions to the dynamical Yang-Baxter equation. Thus, the ellipticity of the stochastic vertex weights was lost. 

Here, we introduce a stochastic integrable system whose weights are elliptic. We begin with the following definition, which provides the stochastic weights.

\begin{definition}
	
	\label{1jsdefinition}
	
	Let $\tau, \eta, \lambda, x, y, v \in \mathbb{C}$ with $\Im \tau > 0$. Letting $f(z)$ denote the first Jacobi theta function (dependent on $\tau$) as in \eqref{theta1}, define
	\begin{flalign*}
	& S_{1; 1}^{\text{ell}} \big(1, 0; 1, 0 \b| \lambda, v; x, y \big)  = \displaystyle\frac{f(y - x) f(\lambda - 2 \eta)}{f(y - x + 2 \eta) f(\lambda)} \displaystyle\frac{f \big( \lambda + y - v + 2 \eta \big) f \big( x - v \big)}{  f \big( x - v + 2 \eta \big) f \big( \lambda + y - v \big)}; \\
	& S_{1; 1}^{\text{ell}} \big(0, 1; 1, 0 \b| \lambda, v; x, y \big)  = \displaystyle\frac{f(\lambda - y + x) f(2 \eta)}{f(y - x + 2 \eta) f(\lambda)} \displaystyle\frac{f \big( \lambda + y - v + 2 \eta \big) f \big( x - v \big)}{  f \big(\lambda + x - v + 2 \eta \big) f \big(y - v \big)}; \\
	& S_{1; 1}^{\text{ell}} \big(1, 0; 0, 1 \b| \lambda, v; x, y \big)  =  \displaystyle\frac{f(\lambda + y - x) f(2 \eta)}{f(y - x + 2 \eta) f(\lambda)} \displaystyle\frac{f \big( y -v + 2 \eta \big) f \big( \lambda + x - v \big)}{  f \big(x - v + 2 \eta \big) f \big( \lambda + y - v \big)}; \\
	& S_{1; 1}^{\text{ell}} \big(0, 1; 0, 1 \b| \lambda, v; x, y \big)  = \displaystyle\frac{f(y - x) f(\lambda + 2 \eta)}{f(y - x + 2 \eta) f(\lambda)} \displaystyle\frac{f \big( \lambda + x - v \big) f \big( y - v + 2 \eta \big)}{  f \big(\lambda + x - v + 2 \eta \big) f \big(y - v \big)}; \\
	& \qquad \qquad S_{1; 1}^{\text{ell}} \big(0, 0; 0, 0 \b| \lambda; x, y; v \big) = 1 = S_{1; 1}^{\text{ell}} \big( 1, 1; 1, 1 \b| \lambda; x, y; v \big), 
	\end{flalign*}
	
	\noindent and $S_{1; 1}^{\text{ell}} \big( i_1, j_1; i_2, j_2 \b| \lambda, v; x, y \big) = 0$ for all $(i_1, j_1; i_2, j_2)$ not of the above form. 
	
\end{definition}

To provide a diagrammatic interpretation of the quantities $S_{1; 1}^{\text{ell}} (i_1, j_1; i_2, j_2 \b| \lambda, v; x, y)$, let $\mathcal{D}$ be a finite subset of the graph $\mathbb{Z}_{\ge 0}^2$. A \emph{directed path ensemble} on $\mathcal{D}$ is a family of paths connecting adjacent vertices of $\mathcal{D}$, each edge of which is either directed up or to the right; see the left side of Figure \ref{vertexedgespaths} for an example.

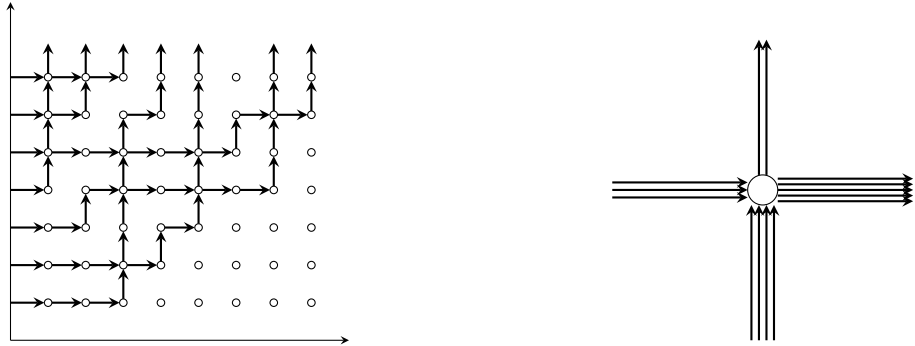
\begin{figure}[t]
	
	\begin{center}
		
		\begin{tikzpicture}[
		>=stealth,
		]

		\draw[->, black	] (0, 0) -- (0, 4.5);
		\draw[->, black] (0, 0) -- (4.5, 0);
		\draw[->,black, thick] (0, .5) -- (.45, .5);
		\draw[->,black, thick] (0, 1) -- (.45, 1);
		\draw[->,black, thick] (0, 1.5) -- (.45, 1.5);
		\draw[->,black, thick] (0, 2) -- (.45, 2);
		\draw[->,black, thick] (0, 2.5) -- (.45, 2.5);
		\draw[->,black, thick] (0, 3) -- (.45, 3);
		\draw[->,black, thick] (0, 3.5) -- (.45, 3.5);

		\draw[->,black, thick] (.55, .5) -- (.95, .5);
		\draw[->,black, thick] (.55, 1) -- (.95, 1);
		\draw[->,black, thick] (.55, 1.5) -- (.95, 1.5);
		\draw[->,black, thick] (.55, 2.5) -- (.95, 2.5);
		\draw[->,black, thick] (.55, 3) -- (.95, 3);
		\draw[->,black, thick] (.55, 3.5) -- (.95, 3.5);
		\draw[->,black, thick] (.5, 2.05) -- (.5, 2.45);
		\draw[->,black, thick] (.5, 2.55) -- (.5, 2.95);
		\draw[->,black, thick] (.5, 3.05) -- (.5, 3.45);
		\draw[->,black, thick] (.5, 3.55) -- (.5, 3.95);
		
		\draw[->,black, thick] (1.05, .5) -- (1.45, .5);
		\draw[->,black, thick] (1.05, 1) -- (1.45, 1);
		\draw[->,black, thick] (1.05, 2) -- (1.45, 2);
		\draw[->,black, thick] (1.05, 2.5) -- (1.45, 2.5);
		\draw[->,black, thick] (1.05, 3.5) -- (1.45, 3.5);
		\draw[->,black, thick] (1, 1.55) -- (1, 1.95);
		\draw[->,black, thick] (1, 3.05) -- (1, 3.45);
		\draw[->,black, thick] (1, 3.55) -- (1, 3.95);
		
		\draw[->,black, thick] (1.5, .55) -- (1.5, .95);
		\draw[->,black, thick] (1.5, 1.05) -- (1.5, 1.45);
		\draw[->,black, thick] (1.5, 1.55) -- (1.5, 1.95);
		\draw[->,black, thick] (1.5, 2.05) -- (1.5, 2.45);
		\draw[->,black, thick] (1.5, 2.55) -- (1.5, 2.95);
		\draw[->,black, thick] (1.5, 3.55) -- (1.5, 3.95);
		\draw[->,black, thick] (1.55, 1) -- (1.95, 1);
		\draw[->,black, thick] (1.55, 2) -- (1.95, 2);
		\draw[->,black, thick] (1.55, 2.5) -- (1.95, 2.5);
		\draw[->,black, thick] (1.55, 3) -- (1.95, 3);

		\draw[->,black, thick] (2, 1.05) -- (2, 1.45);
		\draw[->,black, thick] (2, 3.05) -- (2, 3.45);
		\draw[->,black, thick] (2, 3.55) -- (2, 3.95);
		\draw[->,black, thick] (2.05, 1.5) -- (2.45, 1.5);
		\draw[->,black, thick] (2.05, 2) -- (2.45, 2);
		\draw[->,black, thick] (2.05, 2.5) -- (2.45, 2.5);

		\draw[->,black, thick] (2.5, 1.55) -- (2.5, 1.95);
		\draw[->,black, thick] (2.5, 2.05) -- (2.5, 2.45);
		\draw[->,black, thick] (2.5, 2.55) -- (2.5, 2.95);
		\draw[->,black, thick] (2.5, 3.05) -- (2.5, 3.45);
		\draw[->,black, thick] (2.5, 3.55) -- (2.5, 3.95);
		\draw[->,black, thick] (2.55, 2) -- (2.95, 2);
		\draw[->,black, thick] (2.55, 2.5) -- (2.95, 2.5);

		\draw[->,black, thick] (3, 2.55) -- (3, 2.95);
		\draw[->,black, thick] (3.05, 2) -- (3.45, 2);
		\draw[->,black, thick] (3.05, 3) -- (3.45, 3);

		\draw[->,black, thick] (3.5, 2.05) -- (3.5, 2.45);
		\draw[->,black, thick] (3.5, 2.55) -- (3.5, 2.95);
		\draw[->,black, thick] (3.5, 3.05) -- (3.5, 3.45);
		\draw[->,black, thick] (3.5, 3.55) -- (3.5, 3.95);
		\draw[->,black, thick] (3.55, 3) -- (3.95, 3);
		
		\draw[->,black, thick] (4, 3.05) -- (4, 3.45);
		\draw[->,black, thick] (4, 3.55) -- (4, 3.95);

		\filldraw[fill=white, draw=black] (.5, .5) circle [radius=.05];
		\filldraw[fill=white, draw=black] (.5, 1) circle [radius=.05];
		\filldraw[fill=white, draw=black] (.5, 1.5) circle [radius=.05];
		\filldraw[fill=white, draw=black] (.5, 2) circle [radius=.05];
		\filldraw[fill=white, draw=black] (.5, 2.5) circle [radius=.05];
		\filldraw[fill=white, draw=black] (.5, 3) circle [radius=.05];
		\filldraw[fill=white, draw=black] (.5, 3.5) circle [radius=.05];

		\filldraw[fill=white, draw=black] (1, .5) circle [radius=.05];
		\filldraw[fill=white, draw=black] (1, 1) circle [radius=.05];
		\filldraw[fill=white, draw=black] (1, 1.5) circle [radius=.05];
		\filldraw[fill=white, draw=black] (1, 2) circle [radius=.05];
		\filldraw[fill=white, draw=black] (1, 2.5) circle [radius=.05];
		\filldraw[fill=white, draw=black] (1, 3) circle [radius=.05];
		\filldraw[fill=white, draw=black] (1, 3.5) circle [radius=.05];
		
		\filldraw[fill=white, draw=black] (1.5, .5) circle [radius=.05];
		\filldraw[fill=white, draw=black] (1.5, 1) circle [radius=.05];
		\filldraw[fill=white, draw=black] (1.5, 1.5) circle [radius=.05];
		\filldraw[fill=white, draw=black] (1.5, 2) circle [radius=.05];
		\filldraw[fill=white, draw=black] (1.5, 2.5) circle [radius=.05];
		\filldraw[fill=white, draw=black] (1.5, 3) circle [radius=.05];
		\filldraw[fill=white, draw=black] (1.5, 3.5) circle [radius=.05];
		
		\filldraw[fill=white, draw=black] (2, .5) circle [radius=.05];
		\filldraw[fill=white, draw=black] (2, 1) circle [radius=.05];
		\filldraw[fill=white, draw=black] (2, 1.5) circle [radius=.05];
		\filldraw[fill=white, draw=black] (2, 2) circle [radius=.05];
		\filldraw[fill=white, draw=black] (2, 2.5) circle [radius=.05];
		\filldraw[fill=white, draw=black] (2, 3) circle [radius=.05];
		\filldraw[fill=white, draw=black] (2, 3.5) circle [radius=.05];
		
		\filldraw[fill=white, draw=black] (2.5, .5) circle [radius=.05];
		\filldraw[fill=white, draw=black] (2.5, 1) circle [radius=.05];
		\filldraw[fill=white, draw=black] (2.5, 1.5) circle [radius=.05];
		\filldraw[fill=white, draw=black] (2.5, 2) circle [radius=.05];
		\filldraw[fill=white, draw=black] (2.5, 2.5) circle [radius=.05];
		\filldraw[fill=white, draw=black] (2.5, 3) circle [radius=.05];
		\filldraw[fill=white, draw=black] (2.5, 3.5) circle [radius=.05];
		
		\filldraw[fill=white, draw=black] (3, .5) circle [radius=.05];
		\filldraw[fill=white, draw=black] (3, 1) circle [radius=.05];
		\filldraw[fill=white, draw=black] (3, 1.5) circle [radius=.05];
		\filldraw[fill=white, draw=black] (3, 2) circle [radius=.05];
		\filldraw[fill=white, draw=black] (3, 2.5) circle [radius=.05];
		\filldraw[fill=white, draw=black] (3, 3) circle [radius=.05];
		\filldraw[fill=white, draw=black] (3, 3.5) circle [radius=.05];
		
		\filldraw[fill=white, draw=black] (3.5, .5) circle [radius=.05];
		\filldraw[fill=white, draw=black] (3.5, 1) circle [radius=.05];
		\filldraw[fill=white, draw=black] (3.5, 1.5) circle [radius=.05];
		\filldraw[fill=white, draw=black] (3.5, 2) circle [radius=.05];
		\filldraw[fill=white, draw=black] (3.5, 2.5) circle [radius=.05];
		\filldraw[fill=white, draw=black] (3.5, 3) circle [radius=.05];
		\filldraw[fill=white, draw=black] (3.5, 3.5) circle [radius=.05];
		
		\filldraw[fill=white, draw=black] (4, .5) circle [radius=.05];
		\filldraw[fill=white, draw=black] (4, 1) circle [radius=.05];
		\filldraw[fill=white, draw=black] (4, 1.5) circle [radius=.05];
		\filldraw[fill=white, draw=black] (4, 2) circle [radius=.05];
		\filldraw[fill=white, draw=black] (4, 2.5) circle [radius=.05];
		\filldraw[fill=white, draw=black] (4, 3) circle [radius=.05];
		\filldraw[fill=white, draw=black] (4, 3.5) circle [radius=.05];

		\draw[->,black, thick] (9.95, 2.2) -- (9.95, 4);
		\draw[->,black, thick] (10.05, 2.2) -- (10.05, 4);
		\draw[->,black, thick] (9.85, 0) -- (9.85, 1.8);
		\draw[->,black, thick] (9.95, 0) -- (9.95, 1.8);
		\draw[->,black, thick] (10.05, 0) -- (10.05, 1.8);
		\draw[->,black, thick] (10.15, 0) -- (10.15, 1.8);
		\draw[->,black, thick] (8, 1.9) -- (9.8, 1.9);
		\draw[->,black, thick] (8, 2) -- (9.8, 2);
		\draw[->,black, thick] (8, 2.1) -- (9.8, 2.1);
		\draw[->,black, thick] (10.2, 1.85) -- (12, 1.85); 
		\draw[->,black, thick] (10.2, 1.925) -- (12, 1.925); 
		\draw[->,black, thick] (10.2, 2) -- (12, 2); 
		\draw[->,black, thick] (10.2, 2.075) -- (12, 2.075); 
		\draw[->,black, thick] (10.2, 2.15) -- (12, 2.15);

		\filldraw[fill=white, draw=black] (10, 2) circle [radius=.2];

		\end{tikzpicture}
		
	\end{center}	
	
	\caption{\label{vertexedgespaths} A directed path ensemble is depicted above and to the left. A vertex with arrow configuration $(i_1, j_1; i_2, j_2) = (4, 3; 2, 5)$ is depicted above and to the right.} 
\end{figure}

In particular, each vertex $(a, b) \in \mathcal{D}$ has some number $i_1 = i_1 (a, b)$ of incoming vertical edges (namely, directed edges from $(a, b - 1)$ to $(a, b)$); some number $j_1 = j_1 (a, b)$ of incoming horizontal edges (from $(a - 1, b)$ to $(a, b)$); $i_2 = i_2 (a, b)$ outgoing vertical edges (from $(a, b)$ to $(a, b + 1)$); and $j_2 = j_2 (a, b)$ outgoing horizontal edges (from $(a, b)$ to $(a + 1, b)$). We refer to the right side of Figure \ref{vertexedgespaths} for an example (where there we allow multiple arrows to occupy edges). The quadruple $\big(i_1, j_1; i_2, j_2 \big)$ is called the \emph{arrow configuration} associated with the vertex $(a, b) \in \mathcal{D}$. Observe that the same number of arrows enter and exit each vertex, that is, $i_1 + j_1 = i_2 + j_2$; this is sometimes referred to as \emph{arrow (spin) conservation}.

We consider the quantities $S_{1; 1}^{\text{ell}} \big(i_1, j_1; i_2, j_2 \b| \lambda, v; x, y \big)$ as weights associated with a vertex $u \in \mathbb{Z}_{> 0}^2$ with arrow configuration $(i_1, j_1; i_2, j_2)$; here, $x$ is viewed as a \emph{rapidity parameter} associated with the row of $u$ and $y$ as one associated with the column of $u$. Observe in the present case that $S_{1; 1}^{\text{ell}}$ is equal to $0$ unless $i_1, j_1, i_2, j_2 \in \{ 0, 1 \}$; this condition will be removed in later examples. The complex numbers $\lambda = \lambda (u)$ and $v = v(u)$ are \emph{dynamical parameters} that change between vertices according to the identities 
\begin{flalign*}
& \lambda (a + 1, b) = \lambda (a, b) + 2 \eta (i_2 - 1); \qquad \lambda (a, b - 1) = \lambda (a, b) + 2 \eta (2j_1 - 1); \\
& v (a - 1, b) = v(a, b) - 2 \eta i_1 (a, b); \qquad  v (a, b + 1) = v(a, b) - 2 \eta j_2 (a, b),
\end{flalign*}

\noindent where $(i_1, j_1; i_2, j_2)$ is the arrow configuration associated with the vertex $u = (a, b)$; we refer to Figure \ref{vdynamicalvertex1elliptic1} for depictions of these identities, where for each vertex $u$, the dynamical parameters $\lambda (u)$ and $v(u)$ there are drawn in the upper-left face and lower-right containing $u$, respectively.

While at first glance it might seem as if one would need to keep track of two height functions in order to reformulate the above system as a face (or SOS, or IRF) model, with weights given by products of the contributions of plaquettes on the dual lattice, it actually suffices to only track one. Indeed, the identities according to which the dynamical parameters $\lambda$ and $v$ change across the lattice are determined by the directed path ensemble (or, equivalently, by the configuration of arrows entering and exiting each vertex), which can be encoded by a single height function on the dual lattice. Thus \Cref{1jsdefinition}, as well as its more general fused versions described below, can be viewed as an SOS model in the usual sense. 	

The $S_{1; 1}^{\text{ell}}$ weights above are in fact the $J = 1 = \Lambda$ special cases of the \emph{stochasticized elliptic fused weights} $S_{J; \Lambda}$ given by \Cref{selliptic} and \eqref{sjlambda1} below, which are (both vertically and horizontally) fused versions of the $S_{1; 1}^{\text{ell}}$ weights. We will not describe them here since their form, given by ${_{12} v_{11}}$ very well-poised, balanced elliptic hypergeometric series, is a bit lengthy. However, let us mention that several special cases of these fused weights that simplify considerably include their higher spin $S_{1; \Lambda}$ and elliptic Hahn degenerations, given by \Cref{1js} and \Cref{svlambdaelliptic}, respectively.

As indicated by \Cref{dynamicalequationellipticstochastic} below, these $S_{J; \Lambda}$ weights are stochastic and satisfy the dynamical Yang-Baxter equation. The $J = \Lambda = T = 1$ special case of that result is given by the following theorem.

\begin{thm}
	
	\label{lambda1equation}
	
	For any fixed $x, y, \lambda, v  \in \mathbb{C}$ and $i_1, j_1 \in \{ 0, 1 \}$, we have that 
	\begin{flalign*} 
	\displaystyle\sum_{i_2, j_2 \in \{ 0, 1 \}} S \big( i_1, j_1; i_2, j_2 \b| \lambda, v; x, y \big) = 1,
	\end{flalign*} 
	
	\noindent where we abbreviated $S \big( i_1, j_1; i_2, j_2 \b| \lambda, v; x, y \big) = S_{1; 1}^{\text{\emph{ell}}} \big( i_1, j_1; i_2, j_2 \b| \lambda, v; x, y \big)$.  
	
	Furthermore, for any fixed $i_1, j_1, k_1, i_3, j_3, k_3 \in \{ 0, 1 \}$ and $\lambda, v, x, y, z \in \mathbb{C}$, we have that 	
	\begin{flalign*}
	\displaystyle\sum_{i_2, j_2, k_2 \in \{ 0, 1 \}} & S \big( i_1, j_1; i_2, j_2 \b| \lambda; v - 2 \eta k_1; x, y \big) S \big( k_1, j_2; k_2, j_3 \b| \lambda + 2 \eta (2i_2 - 1); v; x, z \big) \\
	&  \times  S \big( k_2, i_2; k_3, i_3 \b| \lambda; v - 2 \eta j_3; y, z \big) \\
	\quad = \displaystyle\sum_{i_2, j_2, k_2 \in \{ 0, 1 \} } & S \big( k_1, i_1; k_2, i_2 \b| \lambda + 2 \eta (2j_1 - 1); v; y, z \big) S \big( k_2, j_1; k_3, j_2 \b| \lambda; v - 2 \eta i_2; x, z \big) 	\\
	& \times S \big( i_2, j_2; i_3, j_3 \b| \lambda + 2 \eta (2k_3 - 1); v; x, y \big).
	\end{flalign*}

\end{thm}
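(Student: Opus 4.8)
The plan is to recognize the weights of \Cref{1jsdefinition} as the stochasticization of the (non-stochastic) elliptic dynamical $R$-matrix of the eight-vertex SOS model, and then to derive both assertions from this identification. Inspecting the four nontrivial weights, one sees that each is a product of two factors: the first factor -- one of $\tfrac{f(y-x)f(\lambda \mp 2\eta)}{f(y-x+2\eta)f(\lambda)}$ or $\tfrac{f(\lambda \pm (y-x))f(2\eta)}{f(y-x+2\eta)f(\lambda)}$ -- is a matrix element of Felder's elliptic dynamical $R$-matrix $R_{\tau,\eta}(\lambda; y-x)$ (normalized so that the two ``all-same'' weights equal $1$), while the second factor is a gauge term depending on the new parameter $v$. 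I would first make this conjugation precise, writing each weight as $G_{\mathrm{out}} \, R_{i_1 j_1}^{i_2 j_2}(\lambda; y - x) \, G_{\mathrm{in}}^{-1}$ for an explicit $v$-dependent factor $G$ attached to the horizontal and vertical edges of the vertex, and then check that the prescribed shifts $v(a-1,b) = v(a,b) - 2\eta i_1$ and $v(a,b+1) = v(a,b) - 2\eta j_2$ are exactly the ones making these gauge factors telescope along each rapidity line.

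Granting this, \textbf{stochasticity} becomes the statement that the vector built from $G$ is a suitable eigenvector of the rows of the elliptic $R$-matrix; at the level of the first Jacobi theta function this is precisely the classical three-term (Riemann/Weierstrass) theta relation, after a linear change of variables. The \textbf{dynamical Yang--Baxter equation} then follows by conjugating each of the three $R$-factors in Felder's dynamical Yang--Baxter equation -- which carries only the single dynamical parameter $\lambda$, shifted by the weight operators -- and propagating the $G$-factors through the hexagon: the surviving $\lambda$-shifts $\lambda + 2\eta(2i_2 - 1)$, $\lambda + 2\eta(2j_1 - 1)$, $\lambda + 2\eta(2k_3 - 1)$ are Felder's shifts rewritten in additive variables, and the $v$-shifts $v - 2\eta k_1$, $v - 2\eta i_2$, $v - 2\eta j_3$ are precisely what remains after the gauge terms are carried across the three crossings. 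Equivalently -- and this is the route the paper is set up to follow -- the theorem is the $J = \Lambda = T = 1$ specialization of \Cref{dynamicalequationellipticstochastic}, so it suffices to verify that the $S_{1;1}$ weights there collapse to \Cref{1jsdefinition} when all fusion parameters equal $1$ and then to quote that general result.

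As an independent check -- and because the present case is small -- both claims can also be verified by hand. For stochasticity there are four cases; $(i_1, j_1) \in \{(0,0), (1,1)\}$ are immediate, and $(i_1, j_1) \in \{(1,0), (0,1)\}$ each reduce, after clearing denominators, to an identity of the shape $f(A_1)f(A_2)f(A_3)f(A_4) + f(B_1)f(B_2)f(B_3)f(B_4) = f(C_1)f(C_2)f(C_3)f(C_4)$ that follows from the three-term theta relation. For the Yang--Baxter equation, arrow conservation forces $i_1 + j_1 + k_1 = i_3 + j_3 + k_3$, and when this holds each of the two sums contains at most two terms (indexed by $i_2 \in \{0,1\}$, with $j_2$ and $k_2$ then determined); clearing denominators reduces the claim to a theta identity again provable by iterated use of the three-term relation.

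I expect the main obstacle to lie in the first, structural step: pinning down the exact conjugation relating these weights to the standard elliptic ones, and in particular controlling the bookkeeping of the two dynamical parameters $\lambda$ and $v$ and their interlocking shifts, so that the gauge factors telescope consistently around the hexagon. In the purely computational approach, the analogous difficulty is organizing the multi-theta manipulations: a naive expansion is unwieldy, and the work is entirely in choosing which instances of the three-term relation to apply, and in what order.
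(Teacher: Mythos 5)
Your proposal is correct and follows the paper's own route: \Cref{lambda1equation} is obtained precisely as the $J = \Lambda = T = 1$ specialization of \Cref{dynamicalequationellipticstochastic}, combined with the computation (\Cref{jlambda1s}) that the general stochasticized weights $S_{1;1}$ collapse to those of \Cref{1jsdefinition}, and your ``gauge factors telescoping along rapidity lines'' description is exactly the mechanism of the stochastic correction \eqref{cjlambdadefinition} together with the consistency condition of \Cref{wchiconditions}. Your remark that stochasticity can alternatively be verified directly from the theta-function addition formula is also the one the paper makes, via \eqref{quarticrelationf}, at the end of \Cref{Elliptic01}.
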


\begin{figure}

	\begin{center}

		\begin{tikzpicture}[
		>=stealth,
		auto,
		style={
			scale = 1.8
		}
		]

		\filldraw[fill=white, draw=black] (1, 0) circle [radius=.1] node[scale = .7, below = 37, right = 4]{$\lambda + 2 \eta (2i_2 + 2j_2 - 2)$} node[scale = 1, above = 28, left = 18]{$\lambda$};
		
		\filldraw[fill=white, draw=black] (1, 1) circle [radius=0] node[scale = .8, below = 30, right = 4]{$\lambda + 2 \eta (2i_2 - 1)$};
		
		\filldraw[fill=white, draw=black] (0, 0) circle [radius=0];

		\draw[->, black, thick] (1, -.9) -- (1, -.1) node[below = 40, scale = .8]{$i_1$} node[below = 57, scale = .8]{$y$} node[below = 67, scale = .8]{}; 
		
		\draw[->, black, thick] (1, .1) -- (1, .9) node[scale = .8, above = 0]{$i_2$}; 
		
		\draw[->, black, thick] (.1, 0) -- (.9, 0)  node[left = 40, scale = .8]{$j_1$} node[left = 57, scale = .8]{$x$} node[left = 67, scale = .8]{} node[scale = .8, below = 33, left = 2]{$\lambda + 2 \eta (2j_1 - 1)$}; 
		
		\draw[->, black, thick] (1.1, 0) -- (1.9, 0) node[right = 0, scale = .8]{$j_2$};

		\filldraw[fill=white, draw=black] (6, 0) circle [radius=.1] node[below = 25, right = 15]{$v$} node[scale = .75, above = 33, left = 2]{$v - 2 \eta (i_1 + j_1)$};
		
		\filldraw[fill=white, draw=black] (6, 1) circle [radius=0] node[scale = .9, below = 28, right = 5]{$v -2 \eta j_2$};
		
		\filldraw[fill=white, draw=black] (5, 0) circle [radius=0] node[scale = .9, below = 28, right = 7]{$v -2 \eta i_1$};

		\draw[->, black, thick] (6	, -.9) -- (6, -.1) node[below = 40, scale = .8]{$i_1$}; 
		
		\draw[->, black, thick] (6, .1) -- (6, .9) node[scale = .8, above = 0]{$i_2$}; 
		
		\draw[->, black, thick] (5.1, 0) -- (5.9, 0) node[left = 40, scale = .8]{$j_1$}; 
		
		\draw[->, black, thick] (6.1, 0) -- (6.9, 0) node[right = 0, scale = .8]{$j_2$};

		\end{tikzpicture}
		
	\end{center}

	\caption{\label{vdynamicalvertex1elliptic1} Depicted above is the way in which the dynamical parameters $\lambda$ and $v$ change between faces in the elliptic, stochastic spin $\frac{1}{2}$ model.  }
\end{figure}
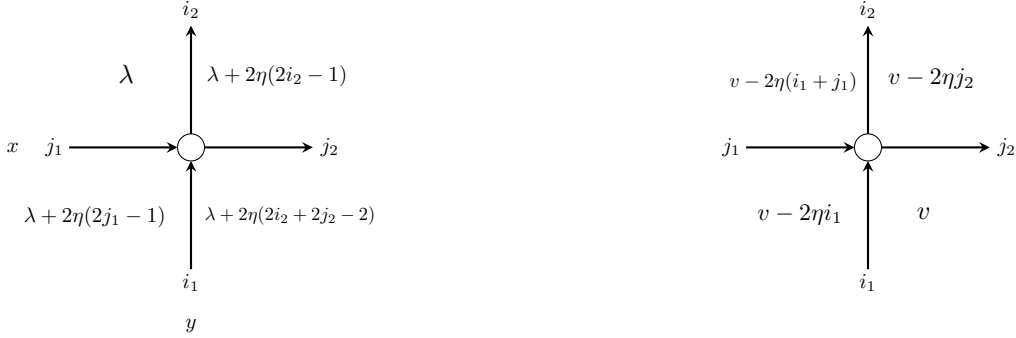

The feature that makes the weights from \Cref{1jsdefinition} and the Yang-Baxter equation from \Cref{lambda1equation} different from what had been considered earlier is their dependence on the new dynamical parameter $v$. It is possible to remove their dependence on this parameter by first taking the \emph{trigonometric limit} of the Jacobi theta function, obtained by letting the parameter $\tau$ (recall \eqref{theta1}) tend to $\textbf{i} \infty$ so that  $f(z)$ will converge to $\sin z$ (after suitable rescaling), and then by letting $v$ tend to $\infty$. This will recover some of the dynamical stochastic higher spin vertex models studied in \cite{DSHSVM,ERSF,FW}.

\subsection{A Stochastic, Dynamical Higher Rank Solution}

\label{HigherRankEquation}

In this section we provide a solution to a dynamical variant of the Yang-Baxter equation that yields a stochastic, dynamical, higher rank (colored) vertex model. 

The solution to the Yang-Baxter equation that gives rise to this stochastic dynamical solution was originally proposed by Jimbo \cite{QMGS}. That solution was not stochastic, although it was observed in \cite{SM} that stochasticity could be obtained by applying a gauge transformation to its vertex weights; these stochastic solutions were later fused in \cite{STRR} and then analyzed from the perspectives of duality and spectral theory in \cite{CDFSVMD} and \cite{CSVMST}, respectively. 

The stochasticization procedure enables one to produce dynamical deformations of these colored stochastic vertex models, which depends on an additional parameter $v$ that changes between vertices. We begin with the following definition for the vertex weights.

\begin{definition}
	
	\label{l1sdefinition} 
	
	Let $n$ be a positive integer and $q, v, x, y$ be complex numbers. For any integer $i \in [1, n]$ or pair of integers $(i, j)$ with $1 \le i < j \le n$, define 
	\begin{flalign}
	\label{11rankdynamicalsdefinition}
	\begin{aligned} 
	& S_{1; 1}^{\text{col}} \big( i, 0, 0, i \b| x, y; v \big)  = \displaystyle\frac{(1 - q ) x (1 - qvy)}{(x - q y) (1 - qvx)}; \quad S_{1; 1}^{\text{col}} \big( i, 0, i, 0 \b| x, y; v \big) =  \displaystyle\frac{q (x -  y) (1 - vx) }{(x - q y) (1 - qvx)}; \\
	& S_{1; 1}^{\text{col}}  \big( 0, i, 0, i \b| x, y; v \big)  =  \displaystyle\frac{(x - y) (1 - q v y)}{(x - q y) (1 -  v y)}; \qquad  S_{1; 1}^{\text{col}} \big( 0, i, i, 0 \b| x, y; v \big)  = \displaystyle\frac{(1 - q) y (1 - vx)}{(x - q y) (1 - vy)}; \\
	& S_{1; 1}^{\text{col}}  \big( i, j, j, i \b| x, y; v \big) =  \displaystyle\frac{(1 - q)  x}{x - q y}; \qquad \qquad \quad  S_{1; 1}^{\text{col}} \big( j, i, i, j \b| x, y; v \big) = \displaystyle\frac{(1 - q) y}{x - q y}; \\
	& S_{1; 1}^{\text{col}}  \big( i, j, i, j \b| x, y; v \big) =  \displaystyle\frac{(x - y) q}{x - q y}; \qquad \qquad \quad S_{1; 1}^{\text{col}}  \big( j, i, j, i  \b| x, y; v \big)  =  \displaystyle\frac{x - y}{x - q y}; \\
	& \qquad \qquad \qquad \qquad \quad  S_{1; 1}^{\text{col}} \big( 0, 0, 0, 0 \b| x, y; v \big)  = 1 = S_{1; 1}^{\text{col}} \big( j, j, j, j \b| x, y; v \big),
	\end{aligned}
	\end{flalign}
	
	\noindent and $S_{1; 1}^{\text{col}}  \big( A, B, C, D \b| x, y; v \big) = 0$ for any $(A, B, C, D)$ not of the above form. 
	
\end{definition}

As in \Cref{EllipticEquation}, the $S_{1; 1}^{\text{col}}$ weights have diagrammatic interpretations. Specifically, let $\mathcal{D}$ be a finite subset of the graph $\mathbb{Z}_{\ge 0}^2$. An \emph{$n$-colored directed path ensemble} on $\mathcal{D}$ is a family of \emph{colored paths} connecting adjacent vertices of $\mathcal{D}$, each edge of which is either directed up or to the right and is assigned one of $n$ colors (which are labeled by the integers $\{ 1, 2, \ldots , n \}$). We assume that each edge can accommodate at most one path (although this restriction will be weakened later); if an edge does not accommodate a path, then we assign it color $0$. 

The (colored) arrow configuration associated with some vertex $u \in \mathcal{D}$ is defined to be a certain quadruple of integers $(A, B, C, D)$. Here, $A$ denotes the color of the incoming vertical arrow at $u$; $B$ denotes the color of the incoming horizontal arrow; $C$ the color of the outgoing vertical arrow; and $D$ the color of the outgoing horizontal arrow.

Then we view the quantity $S_{1; 1}^{\text{col}} \big( A, B, C, D \b| x, y; v \big)$ as the weight associated with a vertex $u \in \mathcal{D}$ whose colored arrow configuration is $(A, B, C, D)$. Here, $x$ and $y$ denote the rapidity parameters at $u$ in the horizontal and vertical directions, respectively. The parameter $v$ can again be viewed as a dynamical parameter, so we sometimes denote $v = v(u)$ by the value of $v$ at a vertex $u$ in $\mathbb{Z}_{> 0}^2$; the identities governing $v(u)$ are given by 
\begin{flalign*}
v (a - 1, b) = q^{1 - \textbf{1}_{A = 0}} v; \qquad v (a, b + 1) = q^{1 - \textbf{1}_{D = 0}} v,
\end{flalign*}

\noindent for any $(a, b) \in \mathbb{Z}_{> 0}^2$ whose arrow configuration is $(A, B, C, D)$. These identities are depicted in \Cref{configurationrank1}, where $v(u)$ is labeled in the lower-right face containing $u$.

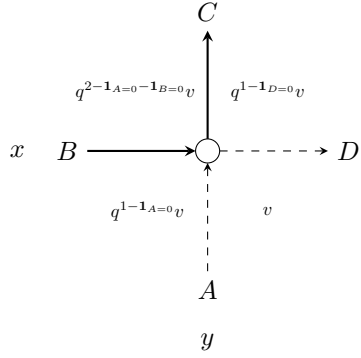
\begin{figure}
	
	\begin{center} 
		
		\begin{tikzpicture}[
		>=stealth,
		scale = .8
		]

		\draw[->,black, thick] (0, .2) -- (0, 2);
		
		\draw[->,black, dashed] (0,-2) -- (0, -.2);
		
		\draw[->,black, thick] (-2, 0) -- (-.2, 0);
		
		\draw[->,black, dashed] (.2, 0) -- (2, 0); 
		
		\draw (0, -2) circle[radius = 0] node[below = 0]{$A$} node[below = 20]{$y$} node[below = 30]{}; 
		\draw (-2, 0) circle[radius = 0] node[left = 0]{$B$} node[left = 20]{$x$} node[left = 30]{}; 
		\draw (0, 2) circle[radius = 0] node[above = 0]{$C$}; 
		\draw (2, 0) circle[radius = 0] node[right = 0]{$D$};

		\draw (-1.2, 1) circle[radius = 0] node[scale = .7]{$q^{2 - \textbf{1}_{A = 0} - \textbf{1}_{B = 0}} v$}; 
		\draw (1, 1) circle[radius = 0] node[scale = .7]{$q^{1 - \textbf{1}_{D = 0}} v$}; 
		\draw (-1, -1) circle[radius = 0] node[scale = .7]{$q^{1 - \textbf{1}_{A = 0}} v$}; 
		\draw (1, -1) circle[radius = 0] node[scale = .7]{$v$};

		\filldraw[fill=white, draw=black] (0, 0) circle [radius=.2];

		\end{tikzpicture}
		
	\end{center}

	\caption{\label{configurationrank1} Depicted above is a colored vertex $u$, where the dashed and solid arrows are associated with colors $1$ and $2$, respectively. The arrow configuration of $u$ is $(A, B, C, D) = (1, 2, 2, 1)$. The dynamical parameters are also labeled in each of the four faces passing through $u$. In the situation depicted above, we have that $\textbf{1}_{A = 0} = \textbf{1}_{B = 0} = \textbf{1}_{C = 0} = \textbf{1}_{D = 0} = 0$. } 
\end{figure}

Similar to in \Cref{EllipticEquation}, the weights given by \Cref{l1sdefinition} are the $L = 1 = M$ special cases of certain fused stochastic weights $S_{L; M}$ given by \Cref{srankc} and \eqref{svrank} below. Those weights are more general, allowing for multiple arrows to exist along vertical and horizontal edges. However, for simplicity, we will not describe them here since their expression is again a bit lengthy. 

As indicated by \Cref{dynamicalrankstochastic} below, these fused weights are stochastic and satisfy a dynamical variant of the Yang-Baxter equation. The $L = M = T = 1$ case of that result states the following.

\begin{thm} 
	
	\label{dynamicalrankstochasticlm1} 
	
	For any integers $a, b \in [0, n]$ and complex numbers $x, y, v \in \mathbb{C}$, we have that 
	\begin{flalign*}
	\displaystyle\sum_{0 \le c, d \le n}  S \big( a, b, c, d \b| x, y; v \big) = 1,
	\end{flalign*} 
	
	\noindent where we abbreviated $S \big( a, b, c, d \b| x, y; v \big) = S_{1; 1}^{\text{\emph{col}}} \big( a, b, c, d \b| x, y; v \big)$.
	
	Furthermore, for any fixed complex numbers $x, y, z, v$ and integers $i_1, j_1, k_1, i_3, j_3, k_3 \in [0, n]$, we have that 
	\begin{flalign*}
	\displaystyle\sum_{0 \le i_2, j_2, k_2 \le n}  & S \big( i_1, j_1; i_2, j_2 \big| x, y; q^{1 - \textbf{\emph{1}}_{k_1 = 0}} v \big) S \big( k_1, j_2; k_2, j_3 \Big| x, z; v \big)  S  \big( k_2, i_2; k_3, i_3  \big| y, z; q^{1 - \textbf{\emph{1}}_{j_3 = 0}} v \big) \\
	\quad = \displaystyle\sum_{0 \le i_2, j_2, k_2 \le n}  & S  \big( k_1, i_1; k_2, i_2  \big| y, z; v \big) S  \big( k_2, j_1; k_3, j_2  \big| x, z; q^{1 - \textbf{\emph{1}}_{i_2 = 0}} v \big)  S  \big( i_2, j_2; i_3, j_3  \big| x, y; v \big).
	\end{flalign*}

\end{thm}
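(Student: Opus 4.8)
The plan is to prove the two assertions by separate, essentially combinatorial arguments, reducing the cubic dynamical Yang--Baxter relation to a spin-$\tfrac12$ statement together with the color-splitting mechanism. For the stochasticity claim: since $S\big(A,B,C,D \mid x,y;v\big) = S_{1;1}^{\text{col}}\big(A,B,C,D \mid x,y;v\big)$ vanishes unless $\{A,B\}=\{C,D\}$ as multisets, for fixed $(a,b)$ the sum $\sum_{c,d} S(a,b,c,d \mid x,y;v)$ contains at most the two terms $(c,d)\in\{(a,b),(b,a)\}$, so the claim becomes $S(a,b,a,b \mid x,y;v)+S(a,b,b,a \mid x,y;v)=1$ (a single term when $a=b$). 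I would then run through the essentially distinct cases --- $a=b=0$; $a=b\neq 0$; exactly one of $a,b$ equal to $0$; and $0<a\neq b$ with $a<b$ or $a>b$ --- in each of which, after clearing the relevant common denominator ($(x-qy)(1-qvx)$, $(x-qy)(1-vy)$, $x-qy$, or $1$), the identity reduces to a one-line polynomial identity in $x,y,q,v$.

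The key structural input for the cubic relation is that the weights of \Cref{l1sdefinition} factor as
\[
S_{1;1}^{\text{col}}\big(A,B,C,D \mid x,y;v\big) = \bar{S}\big(\varepsilon(A),\varepsilon(B);\varepsilon(C),\varepsilon(D) \mid x,y;v\big)\cdot P\big(A,B;C,D \mid x,y\big),
\]
where $\varepsilon(c)=\textbf{1}_{c\neq 0}$; the scalar weights $\bar{S}(\,\cdot \mid x,y;v)$ are the two-state ($\{0,1\}$) weights obtained by merging all nonzero colors (their nonzero values being $\tfrac{(1-q)x(1-qvy)}{(x-qy)(1-qvx)}$, $\tfrac{q(x-y)(1-vx)}{(x-qy)(1-qvx)}$, $\tfrac{(x-y)(1-qvy)}{(x-qy)(1-vy)}$, $\tfrac{(1-q)y(1-vx)}{(x-qy)(1-vy)}$, together with $\bar{S}(0,0;0,0)=\bar{S}(1,1;1,1)=1$); and $P(\,\cdot \mid x,y)$ is the $v$-independent Markov kernel giving the conditional law of the output colors given the input colors and the input/output occupations --- equal to $1$ whenever the output coloring is forced, while $P(i,j;j,i \mid x,y)=\tfrac{(1-q)x}{x-qy}$ and $P(i,j;i,j \mid x,y)=\tfrac{q(x-y)}{x-qy}$ for $0<i<j$, with the values for $0<j<i$ obtained by $x\leftrightarrow y$. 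One verifies this factorization entry by entry against \Cref{l1sdefinition}. Two features are important: the dynamical parameter $v$ enters only through $\bar{S}$, and the $q$-shift rule for $v$ (e.g.\ $v\mapsto q^{1-\textbf{1}_{A=0}}v=q^{\varepsilon(A)}v$) depends only on the occupation data.

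Granting the factorization, I would substitute it into both sides of the claimed cubic identity, reorganize each side as (product of the three $\bar{S}$-factors) $\times$ (product of the three $P$-factors), and then sum over the internal colors by first summing over the internal occupations and then over the colorings refining them. The $\bar{S}$-factors depend only on occupations and --- because the $q$-shifts of $v$ depend only on occupations --- carry exactly the shifted arguments $q^{1-\textbf{1}_{k_1=0}}v,\,v,\,q^{1-\textbf{1}_{j_3=0}}v$ on the left and $v,\,q^{1-\textbf{1}_{i_2=0}}v,\,q^{1-\textbf{1}_{k_3=0}}v$ on the right, so summing them over internal occupations produces precisely the two sides of the \emph{scalar} dynamical Yang--Baxter equation for $\bar{S}(\,\cdot;v)$. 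That scalar relation is an $\mathfrak{sl}_2$, spin-$\tfrac12$ statement, close to the trigonometric-limit version of \Cref{lambda1equation} and to the computation in \cite{FSSF}, and can be checked directly. The $P$-factors are $v$-free and satisfy the ordinary colored Yang--Baxter equation; equivalently, $S_{1;1}^{\text{col}}(\,\cdot;0)=\bar{S}(\,\cdot;0)\,P$ is the classical stochastic colored six-vertex $R$-matrix of \cite{SM} (a gauge transform of Jimbo's solution \cite{QMGS}). Combining these two solvabilities through the color-merge / color-refine argument then yields the colored dynamical Yang--Baxter equation.

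The main obstacle is the compatibility underlying that last step: one must check that summing the colored cubic identity over the internal colors with the internal \emph{occupations} held fixed splits cleanly into the scalar dynamical identity (on occupation marginals) and the colored identity (on the conditional color distributions), i.e.\ that the colored identity refines to the individual internal-occupation sectors, and in particular that the occupation-dependent $q$-shifts of $v$ are assigned identically on the two sides of the hexagon. This color-splitting compatibility is a familiar device in the non-dynamical colored vertex model literature, but it must be verified to persist verbatim here, where the ``uncolored'' weights $\bar{S}$ are themselves dynamical; that check, together with the small scalar dynamical computation, is where the real work lies. If one prefers to avoid it, the cubic identity can instead be verified directly: by color conservation both sides vanish unless $\{i_1,j_1,k_1\}=\{i_3,j_3,k_3\}$ as multisets, and modulo order-preserving relabeling of the colors only finitely many patterns of $(i_1,j_1,k_1)$ occur (at most three distinct nonzero colors are ever involved), so in each pattern both sides reduce to a short sum of explicit rational functions in $x,y,z,q,v$ whose equality is a finite --- if tedious --- computation.
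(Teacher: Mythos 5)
Your proof of the stochasticity claim is fine and is actually more elementary than the paper's: the paper obtains it from an instance of the Yang--Baxter equation with a frozen boundary (\Cref{stochasticws} applied to the weights of \Cref{srankc}), whereas you check the at most two nonzero terms directly. Your factorization $S_{1;1}^{\text{col}} = \bar S\cdot P$ is also correct, and it is essentially the paper's own structure in disguise: the paper defines $S = W\cdot C$ where $W$ is the non-dynamical colored weight of \cite{SM} and the stochastic correction $C$ (computed in \Cref{rankc}) depends only on the occupation data $\textbf{1}_{A=0},\dots,\textbf{1}_{D=0}$ and on $v$; your ratio $\bar S(\,\cdot\,;v)/\bar S(\,\cdot\,;0)$ is exactly this $C$.

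The gap is in the middle step of your cubic argument. You cannot ``sum the $\bar S$-factors over internal occupations'' separately from the $P$-factors: the full left side is $\sum_{\epsilon}\bigl(\prod\bar S\bigr)(\epsilon)\cdot\bigl(\sum_{\text{colors refining }\epsilon}\prod P\bigr)$, and the inner conditional sums genuinely depend on the occupation sector $\epsilon$ and on the boundary colors, so the expression does not factor into (scalar dynamical YBE) $\times$ (something occupation-independent). Moreover the colored YBE for $P$ does \emph{not} hold sector-by-sector in the internal occupations, so the ``refinement'' you hope for fails. The correct organization --- and the one the paper uses in \Cref{relationwstochastic} and \Cref{dynamicalrankstochastic} --- is to observe that the product over the three vertices of the correction ratios $S(\,\cdot\,;v_\bullet)/S(\,\cdot\,;0)$ \emph{telescopes}: each correction is a ratio of one factor per outgoing edge over one factor per incoming edge, so every internal edge's contribution cancels between adjacent vertices, leaving a quantity depending only on the boundary data $(i_1,j_1,k_1,i_3,j_3,k_3)$ --- and the occupation-dependent $q$-shifts of $v$ in the statement are chosen precisely so that this happens identically on both sides of the hexagon. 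The colored dynamical identity is then this common boundary constant times the colored non-dynamical Yang--Baxter equation of \cite{SM}, with no scalar dynamical YBE needed at all. Your fallback of brute-force case verification would also work, but the telescoping argument is the intended (and short) one.
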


As in \Cref{EllipticEquation}, it is possible to remove the dependence on the new dynamical parameter $v$ from both the weights from \Cref{l1sdefinition} and the Yang-Baxter equation from \Cref{dynamicalrankstochasticlm1}. This can be done by either setting $v = 0$ or letting $v$ tend to $\infty$, in which case one recovers the models introduced in \cite{SM}.

\subsection{A Stochastic Solution of the Tetrahedron Equation}

\label{TetrahedronStochastic}

The \emph{tetrahedron equation} is a three-dimensional analog of the Yang-Baxter equation. While many solutions are known to the latter, few solutions are known to the former. The earliest was predicted by Zamolodchikov in 1980 \cite{TEISS,TERSMSSD} and then confirmed by Baxter in \cite{TE} shortly afterwards. A number of additional solutions to the tetrahedron equation were found over the next thirty years \cite{NSLMD,TEQG,QGLTE,STEITM}, although all of these (including the original one of Zamolodchikov) had at least one vertex weight that was negative. The first family of solutions to the tetrahedron equation with all nonnegative vertex weights was introduced by Mangazeev-Bazhanov-Sergeev in 2013 \cite{ILMPW}; none of these solutions happened to be stochastic. 

By applying our stochasticization procedure to the solution provided in \cite{ILMPW}, we provide the first example of a stochastic solution to a (dynamical variant) of the tetrahedron equation. Its vertex weights are given as follows.

\begin{definition} 
	
	\label{sskweight}
	
	For any $q, v \in \mathbb{C}$ and $n_1, n_2, n_3, n_1', n_2', n_3' \in \mathbb{Z}_{\ge 0}$, define
	\begin{flalign} 
	\label{sn1n2n3vweight}
	\begin{aligned}
	S_{n_1 n_2 n_3}^{n_1' n_2' n_3'} (v) & = q^{(n_1 + n_3 + n_1' + n_3' + 2) n_2 - 2 (n_1' + 1) n_2'} \displaystyle\frac{(q^2; q^2)_{n_1} (q^{-2n_1'}; q^2)_{n_2} (q^2; q^2)_{n_3}}{(q^2; q^2)_{n_1'} (q^2; q^2)_{n_2'} (q^2; q^2)_{n_3'}} \\
	& \quad \times v^{n_2'}  \displaystyle\frac{(v q^{-2 n_1'}; q^2)_{n_3'} }{(v; q^2)_{n_3}} {_2 \varphi_1}  \Bigg( \begin{array}{cc}  q^{-2n_2}, q^{2n_1 + 2} \\ q^{2(n_1' - n_2 + 1)} \end{array}\Bigg| q^2, q^{-2n_3}\Bigg) \textbf{1}_{n_1 + n_2 = n_1' + n_2'} \textbf{1}_{n_2 + n_3 = n_2' + n_3'}, 
	\end{aligned}
	\end{flalign} 
	
	\noindent where $(a; q^2)_k$ denotes the $q$-Pochhammer symbol, as in the second identity in \eqref{productbasicelliptic}, and ${_2 \varphi_1}$ denotes the basic hypergeometric series, as in \eqref{basichypergeometric}.
\end{definition}

The diagrammatic interpretation of the quantities $S_{n_1 n_2 n_3}^{n_1' n_2' n_3'} (v)$ is that they are weights associated with a vertex $u$ in $\mathbb{Z}^3$ with arrow configuration $(n_1, n_2, n_3; n_1', n_2', n_3')$; here, this means that $u$ has $n_1$ incoming arrows parallel to one direction (say the $x$-axis); $n_2$ incoming arrows parallel to the $y$-axis; $n_3$ incoming arrows parallel to the $z$-axis; $n_1'$ outgoing arrows parallel to the $x$-axis; $n_2'$ outgoing arrows parallel to the $y$-axis; and $n_3'$ outgoing arrows parallel to the $z$-axis. Observe that, with this understanding, the $S$ weights do not satisfy arrow conservation, in that they are not supported on arrow configurations $(n_1, n_2, n_3; n_1', n_2', n_3')$ satisfying $n_1 + n_2 + n_3 = n_1' + n_2' + n_3'$. The parameter $v$ is again a dynamical parameter that changes between vertices; we will not explicitly state the identities governing this parameter here but instead refer to \Cref{3equationdimension} for a depiction. 

The following theorem states that these $S$ weights are stochastic and satisfy a dynamical variant of the tetrahedron equation, which is depicted in \Cref{3equationdimension}; observe that there are two non-interacting dynamical parameters (namely $v$ and $w$) in this equation. The proof of this theorem will appear in \Cref{EquationStochastic3} below.

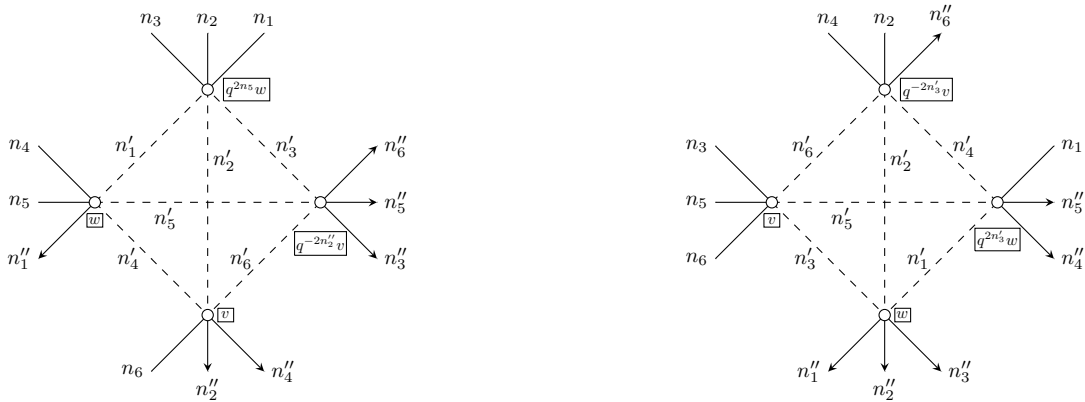
\begin{figure}

	\begin{center}

		\begin{tikzpicture}[
		>=stealth,
		auto,
		style={
			scale = 1.5	
		}
		]
		
		\draw[-, black] (2.5, 1.5) -- (2, 1);
		\draw[-, black] (2, 1.5) -- (2, 1);
		\draw[-, black] (1.5, 1.5) -- (2, 1);
		\draw[-, black] (.5, .5) -- (1, 0);
		\draw[-, black] (.5, 0) -- (1, 0);
		\draw[-, black] (1.5, -1.5) -- (2, -1);
		
		\draw[->, black] (1, 0) -- (.5, -.5);
		\draw[->, black] (2, -1) -- (2, -1.5);
		\draw[->, black] (3, 0) -- (3.5, -.5);
		\draw[->, black] (2, -1) -- (2.5, -1.5);
		\draw[->, black] (3, 0) -- (3.5, 0);
		\draw[->, black] (3, 0) -- (3.5, .5);

		\draw[-, black, dashed] (1, 0) -- (3, 0); 
		\draw[-, black, dashed] (1, 0) -- (2, 1); 
		\draw[-, black, dashed] (1, 0) -- (2, -1); 
		\draw[-, black, dashed] (3, 0) -- (2, 1); 
		\draw[-, black, dashed] (3, 0) -- (2, -1); 
		\draw[-, black, dashed] (2, 1) -- (2, -1);

		\filldraw[fill = white] (1, 0) circle[radius = .05] node[scale = .6, below = 6]{$w$};
		\filldraw[fill = white] (3, 0) circle[radius = .05] node[scale = .6, below = 17]{$q^{-2n_2''} v$};
		\filldraw[fill = white] (2, 1) circle[radius = .05] node[scale = .6, right = 8]{$q^{2n_5} w$};
		\filldraw[fill = white] (2, -1) circle[radius = .05] node[scale = .6, right = 5]{$v$};

		\filldraw[fill = black] (2.5, 1.5) circle[radius = 0] node[scale = .8, above = 0]{$n_1$};
		\filldraw[fill = black] (2, 1.5) circle[radius = 0] node[scale = .8, above = 0]{$n_2$};
		\filldraw[fill = black] (1.5, 1.5) circle[radius = 0] node[scale = .8, above = 0]{$n_3$};
		\filldraw[fill = black] (.5, .5) circle[radius = 0] node[scale = .8, left = 0]{$n_4$};
		\filldraw[fill = black] (.5, 0) circle[radius = 0] node[scale = .8, left = 0]{$n_5$};
		\filldraw[fill = black] (1.5, -1.5) circle[radius = 0] node[scale = .8, left = 0]{$n_6$};

		\filldraw[fill = black] (.5, -.5) circle[radius = 0] node[scale = .8, left = 0]{$n_1''$};
		\filldraw[fill = black] (2, -1.5) circle[radius = 0] node[scale = .8, below = 0]{$n_2''$};
		\filldraw[fill = black] (3.5, -.5) circle[radius = 0] node[scale = .8, right = 0]{$n_3''$};
		\filldraw[fill = black] (2.5, -1.5) circle[radius = 0] node[scale = .8, right = 0]{$n_4''$};
		\filldraw[fill = black] (3.5, 0) circle[radius = 0] node[scale = .8, right = 0]{$n_5''$};
		\filldraw[fill = black] (3.5, .5) circle[radius = 0] node[scale = .8, right = 0]{$n_6''$};

		\filldraw[fill = black] (1.5, .5) circle[radius = 0] node[scale = .8, left = 3]{$n_1'$};
		\filldraw[fill = black] (2, .375) circle[radius = 0] node[scale = .8, right = -1]{$n_2'$};
		\filldraw[fill = black] (2.5, .5) circle[radius = 0] node[scale = .8, right = 2]{$n_3'$};
		\filldraw[fill = black] (1.5, -.5) circle[radius = 0] node[scale = .8, left = 2]{$n_4'$};
		\filldraw[fill = black] (1.625, 0) circle[radius = 0] node[scale = .8, below = -1]{$n_5'$};
		\filldraw[fill = black] (2.5, -.5) circle[radius = 0] node[scale = .8, left = 2]{$n_6'$};

		\draw (.93, -.1) -- (.93, -.22) -- (1.07, -.22) -- (1.07, -.1) -- (.93, -.1);
		\draw (2.09, -.94) -- (2.09, -1.06) -- (2.23, -1.06) -- (2.23, -.94) -- (2.09, -.94); 
		\draw (2.14, .9) -- (2.14, 1.1) -- (2.55, 1.1) -- (2.55, .9) -- (2.14, .9); 	
		\draw (2.77, -.47) -- (2.77, -.26) -- (3.23, -.26) -- (3.23, -.47) -- (2.77, -.47);

		\draw[->, black] (8, 1) -- (8.5, 1.5);
		\draw[-, black] (8, 1.5) -- (8, 1);
		\draw[-, black] (7.5, 1.5) -- (8, 1);
		\draw[-, black] (6.5, .5) -- (7, 0);
		\draw[-, black] (6.5, 0) -- (7, 0);
		\draw[->, black] (8, -1) -- (7.5, -1.5);
		
		\draw[-, black] (6.5, -.5) -- (7, 0);
		\draw[->, black] (8, -1) -- (8, -1.5);
		\draw[->, black] (9, 0) -- (9.5, -.5);
		\draw[->, black] (8, -1) -- (8.5, -1.5);
		\draw[->, black] (9, 0) -- (9.5, 0);
		\draw[-, black] (9.5, .5) -- (9, 0);

		\draw[-, black, dashed] (7, 0) -- (9, 0); 
		\draw[-, black, dashed] (7, 0) -- (8, 1); 
		\draw[-, black, dashed] (7, 0) -- (8, -1); 
		\draw[-, black, dashed] (9, 0) -- (8, 1); 
		\draw[-, black, dashed] (9, 0) -- (8, -1); 
		\draw[-, black, dashed] (8, 1) -- (8, -1);

		\filldraw[fill = white] (7, 0) circle[radius = .05] node[scale = .6, below = 6]{$v$};
		\filldraw[fill = white] (9, 0) circle[radius = .05] node[scale = .6, below = 14]{$q^{2n_3'} w$};
		\filldraw[fill = white] (8, 1) circle[radius = .05] node[scale = .6, right = 8]{$q^{-2n_3'} v$};
		\filldraw[fill = white] (8, -1) circle[radius = .05] node[scale = .6, right = 4]{$w$};
		
		\draw (6.93, -.1) -- (6.93, -.22) -- (7.07, -.22) -- (7.07, -.1) -- (6.93, -.1);
		\draw (8.09, -.94) -- (8.09, -1.06) -- (8.23, -1.06) -- (8.23, -.94) -- (8.09, -.94); 
		\draw (8.14, .9) -- (8.14, 1.1) -- (8.6, 1.1) -- (8.6, .9) -- (8.14, .9); 	
		\draw (8.8, -.43) -- (8.8, -.23) -- (9.2, -.23) -- (9.2, -.43) -- (8.8, -.43); 
		
		\filldraw[fill = black] (9.5, .5) circle[radius = 0] node[scale = .8, right = 0]{$n_1$};
		\filldraw[fill = black] (8, 1.5) circle[radius = 0] node[scale = .8, above = 0]{$n_2$};
		\filldraw[fill = black] (6.5, .5) circle[radius = 0] node[scale = .8, left = 0]{$n_3$};
		\filldraw[fill = black] (7.5, 1.5) circle[radius = 0] node[scale = .8, above = 0]{$n_4$};
		\filldraw[fill = black] (6.5, 0) circle[radius = 0] node[scale = .8, left = 0]{$n_5$};
		\filldraw[fill = black] (6.5, -.5) circle[radius = 0] node[scale = .8, left = 0]{$n_6$};

		\filldraw[fill = black] (7.5, -1.5) circle[radius = 0] node[scale = .8, left = 0]{$n_1''$};
		\filldraw[fill = black] (8, -1.5) circle[radius = 0] node[scale = .8, below = 0]{$n_2''$};
		\filldraw[fill = black] (8.5, -1.5) circle[radius = 0] node[scale = .8, right = 0]{$n_3''$};
		\filldraw[fill = black] (9.5, -.5) circle[radius = 0] node[scale = .8, right = 0]{$n_4''$};
		\filldraw[fill = black] (9.5, 0) circle[radius = 0] node[scale = .8, right = 0]{$n_5''$};
		\filldraw[fill = black] (8.5, 1.5) circle[radius = 0] node[scale = .8, above = 0]{$n_6''$};
		
		\filldraw[fill = black] (8.5, -.5) circle[radius = 0] node[scale = .8, left = 2]{$n_1'$};
		\filldraw[fill = black] (8, .375) circle[radius = 0] node[scale = .8, right = -1]{$n_2'$};
		\filldraw[fill = black] (7.5, -.5) circle[radius = 0] node[scale = .8, left = 2]{$n_3'$};
		\filldraw[fill = black] (8.5, .5) circle[radius = 0] node[scale = .8, right = 2]{$n_4'$};
		\filldraw[fill = black] (7.625, 0) circle[radius = 0] node[scale = .8, below = -1]{$n_5'$};
		\filldraw[fill = black] (7.5, .5) circle[radius = 0] node[scale = .8, left = 3]{$n_6'$};

		\end{tikzpicture}
		
	\end{center}

	\caption{\label{3equationdimension} The dynamical tetrahedron equation, as in \Cref{stochastic3sequation}, is depicted above; the numbers of arrows on the dashed lines is summed over. The dynamical parameters of the form $q^m u$ and $q^m w$ are boxed.}

\end{figure}

\begin{thm}
	
	\label{stochastic3sequation}
	
	Fix $v \in \mathbb{C}$ and $(n_1, n_2, n_3) \in \mathbb{Z}_{\ge 0}^3$. Then,  
	\begin{flalign}
	\label{stochasticsumsv}
	\displaystyle\sum_{\textbf{\emph{n}}'} S_{n_1 n_2 n_3}^{n_1' n_2' n_3'} (v) = 1,
	\end{flalign}
	
	\noindent where the sum is over all $\textbf{\emph{n}}' = (n_1', n_2', n_3') \in \mathbb{Z}_{\ge 0}^3$. 
	
	Furthermore, fix complex numbers $v$ and $w$, as well as nonnegative integers $n_1, n_2, n_3, n_4, n_5, n_6$ and $n_1'', n_2'', n_3'', n_4'', n_5'', n_6''$. Then, 
	\begin{flalign}
	\label{dimensiondynamical}
	\begin{aligned} 
	& \displaystyle\sum_{\textbf{\emph{n}}'} S_{n_1 n_2 n_3}^{n_1' n_2' n_3'} (q^{2 n_5} w) S_{n_1' n_4 n_5}^{n_1'' n_4' n_5'} (w) S_{n_3' n_5' n_6'}^{n_3'' n_5'' n_6''} (q^{- 2 n_2''} v) S_{n_2' n_4' n_6}^{n_2'' n_4'' n_6'} (v) \\
	& \qquad = \displaystyle\sum_{\textbf{\emph{n}}'} S_{n_3 n_5 n_6}^{n_3' n_5' n_6'} (v) S_{n_2 n_4 n_6'}^{n_2' n_4' n_6''} (q^{- 2 n_3'} v) S_{n_1 n_4' n_5'}^{n_1' n_4'' n_5''} (q^{2 n_3'} w) S_{n_1' n_2' n_3'}^{n_1'' n_2'' n_3''} (w), 
	\end{aligned}
	\end{flalign} 
	
	\noindent where the sum on both sides of \eqref{dimensiondynamical} is over nonnegative integers $\textbf{\emph{n}}' = (n_1', n_2', n_3', n_4', n_5', n_6')$.
\end{thm}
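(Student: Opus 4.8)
The plan is to derive \Cref{stochastic3sequation} by applying the stochasticization procedure to the Mangazeev--Bazhanov--Sergeev solution of the tetrahedron equation from \cite{ILMPW}, in parallel with how the two-dimensional statements are obtained. Recall that the MBS weights $\mathcal{R}_{n_1 n_2 n_3}^{n_1' n_2' n_3'}$ are supported on arrow configurations satisfying the two conservation laws $n_1 + n_2 = n_1' + n_2'$ and $n_2 + n_3 = n_2' + n_3'$, and are given, up to a $q$-power prefactor and a ratio of $q$-Pochhammer symbols, by a single ${}_2 \varphi_1$ of exactly the shape appearing in \Cref{sskweight}. The first step is to pin down the diagonal gauge and the normalization that turn $\mathcal{R}$ into $S$: one writes $S_{n_1 n_2 n_3}^{n_1' n_2' n_3'}(v) = \mathcal{R}_{n_1 n_2 n_3}^{n_1' n_2' n_3'} \, G_{n_1' n_2' n_3'}(v) \big/ \big( G_{n_1 n_2 n_3}(v) \, Z_{n_1 n_2 n_3}(v) \big)$, with $G$ a product of a power of $v$ and a $q$-Pochhammer symbol in the three indices (so that the ratio $G_{\mathbf{n}'}/G_{\mathbf{n}}$ reproduces the factor $v^{n_2'} (v q^{-2 n_1'}; q^2)_{n_3'} / (v; q^2)_{n_3}$ after using the conservation laws) and $Z$ a function of $(n_1, n_2, n_3)$ and $v$ alone; matching the $q$-exponents, again with the help of the two conservation laws, shows this rearranges precisely into \eqref{sn1n2n3vweight}.

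The analytic heart of the argument is then the stochasticity identity \eqref{stochasticsumsv}, which by the previous step is equivalent to $\sum_{\mathbf{n}'} \mathcal{R}_{n_1 n_2 n_3}^{n_1' n_2' n_3'} G_{n_1' n_2' n_3'}(v) = G_{n_1 n_2 n_3}(v) \, Z_{n_1 n_2 n_3}(v)$, i.e.\ to the statement that $G(v)$ is a left eigenvector of the matrix $\mathcal{R}$ with eigenvalue the explicit product $Z(v)$. I would prove this by using the two conservation laws to collapse the triple sum over $\mathbf{n}'$ to a single sum, interchanging it with the internal ${}_2 \varphi_1$ summation, and applying the $q$-Chu--Vandermonde summation (or, after resumming, a terminating very-well-poised ${}_6 \varphi_5$ identity); since all quantities are rational in $q^{n_i}$ and $v$ and the series terminate, the identity need only be checked formally and then holds for all admissible values.

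Granting \eqref{stochasticsumsv}, the dynamical tetrahedron equation \eqref{dimensiondynamical} follows from the non-dynamical tetrahedron equation satisfied by $\mathcal{R}$, proved in \cite{ILMPW}, by the gauge-covariance mechanism that underlies the stochasticization procedure: dividing each of the four factors on either side of \eqref{dimensiondynamical} by the corresponding product of $G$'s and $Z$'s reduces the identity to the $\mathcal{R}$-tetrahedron equation, provided the gauge parameters entering the four vertices on the two sides agree after summation over $\mathbf{n}'$. That matching is exactly what forces the dynamical shifts $q^{2 n_5} w,\ w,\ q^{-2 n_2''} v,\ v$ on the left and $v,\ q^{-2 n_3'} v,\ q^{2 n_3'} w,\ w$ on the right depicted in \Cref{3equationdimension}: the two conservation laws of the MBS vertex ensure that the two independent ``charges'' (tracked respectively by $v$ and by $w$) are transported consistently through the cube, so that the telescoping of the position-dependent gauge factors agrees on both sides. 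This also explains why there are two non-interacting dynamical parameters $v$ and $w$ --- one for each conservation law.

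I expect the main obstacle to be the explicit $q$-hypergeometric summation behind \eqref{stochasticsumsv}, equivalently the identification of the eigenvector $G(v)$ and eigenvalue $Z(v)$ of $\mathcal{R}$, together with the careful bookkeeping needed to confirm that the stochasticized weights coincide on the nose with \eqref{sn1n2n3vweight} and that the dynamical shifts propagate correctly through the three-dimensional geometry. By contrast, once the gauge and normalization data are fixed, the tetrahedron equation \eqref{dimensiondynamical} is essentially automatic from the result of \cite{ILMPW}.
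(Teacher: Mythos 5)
Your treatment of the second half, the dynamical tetrahedron equation \eqref{dimensiondynamical}, is essentially the paper's: one shows that the product of the four stochastic corrections $S/R$ on each side is independent of $\textbf{n}'$ and equal on the two sides precisely when the dynamical arguments are shifted as stated, after which \eqref{dimensiondynamical} reduces to the non-dynamical tetrahedron equation \eqref{equation31} of Mangazeev--Bazhanov--Sergeev; the paper does this either directly from the explicit ratio \eqref{svr} or by the same consistency/matching argument you describe, and your observation that the two conservation laws account for the two non-interacting parameters $v$ and $w$ is exactly the mechanism at work. Your reverse-engineering of the gauge $G$ and normalization $Z$ is also workable (note only that the cross terms $q^{n_1' n_3'}$ and $(q^{-2n_1'}v;q^2)_{n_3'}$ are not separately diagonal in the outgoing indices; they factor only after using $n_3'-n_1'=n_3-n_1$), although the paper proceeds in the opposite direction, defining $S$ by the ratio of $R$-weights in \eqref{kccorrection} and then computing it to get \eqref{sn1n2n3vweight}.

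The genuine gap is in what you yourself call the analytic heart: the stochasticity identity \eqref{stochasticsumsv}. You assert that after collapsing the triple sum to a single sum over $n_2'$ and interchanging with the internal ${_2 \varphi_1}$, the identity follows from $q$-Chu--Vandermonde ``or, after resumming, a terminating very-well-poised ${_6 \varphi_5}$,'' but neither summation is carried out, and it is not evident that either applies: the lower parameter $q^{2(n_1'-n_2+1)}$ of the ${_2\varphi_1}$ and the prefactors $(q^{-2n_1'};q^2)_{n_2}$, $(vq^{-2n_1'};q^2)_{n_3'}$ all move with the summation index $n_2'$, so the interchanged inner sum is not a clean Chu--Vandermonde. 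The paper avoids this computation entirely. It observes that each side of \eqref{stochasticsumsv} is a rational function of $v$ of bounded degree with finitely many nonzero terms, so it suffices to verify the identity for infinitely many $v$; then, for $v=q^{2k_5+2}$ with $k_4,k_5,k_6$ large positive integers, it attaches three auxiliary vertices carrying $(k_4,k_5,k_6)$ arrows and imposes the frozen boundary $(n_1'',n_2'',n_3'')=(0,0,0)$ in \eqref{equation31}, which reduces the right side to the single term appearing in the denominator of \eqref{kccorrection} (this is \eqref{stochasticsumsv1}), so that $\sum_{\textbf{n}'}S_{n_1n_2n_3}^{n_1'n_2'n_3'}(q^{2k_5+2})=1$ by construction; analytic continuation in $v$ finishes the proof. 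To close your argument you should either carry out the hypergeometric summation in full (identifying the precise terminating identity needed) or adopt this frozen-boundary-plus-continuation route, which requires no summation formula beyond the tetrahedron equation itself.
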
 

Once again, it is possible to remove the dynamical feature of both the stochastic weights \eqref{sn1n2n3vweight} and the tetrahedron equation \eqref{dimensiondynamical}. This can be done by letting $q$ tend to $1$, under which degeneration the new weights (denoted by $T_{n_1 n_2 n_3}^{n_1' n_2' n_3'} (v)$) simplify considerably and are given explicitly by 
\begin{flalign*}
T_{n_1 n_2 n_3}^{n_1' n_2' n_3'} (v) & = v^{n_2'} (1 - v)^{n_2 - n_2'} \binom{n_2}{n_2'} \textbf{1}_{n_2' \le n_2} \textbf{1}_{n_1 + n_2 = n_1' + n_2'} \textbf{1}_{n_2 + n_3 = n_2' + n_3'}.
\end{flalign*}   

\noindent This serves as a (seemingly new) solution to the non-dynamical tetrahedron equation; we will describe this degeneration in more detail in \Cref{EquationDimension2}.

\subsection{Outline}

\label{Outline}

The remainder of this paper is organized as follows. We begin in Section \ref{VertexModel} with a preliminary example of the stochasticization procedure by applying it to the six-vertex solution to the Yang-Baxter equation to recover a dynamical variant of the stochastic six-vertex model that was introduced in \cite{FSSF}. Then, in Section \ref{Domain}, we explain the general version of stochasticization, which we implement in Section \ref{StochasticElliptic}, Section \ref{RankDynamical}, and Section \ref{DynamicalDimension} to establish the results from \Cref{EllipticEquation}, \Cref{HigherRankEquation}, and \Cref{TetrahedronStochastic}, respectively.

\subsection*{Acknowledgments}

The work of Amol Aggarwal was partially funded by the NSF Graduate Research Fellowship under grant number DGE1144152. The work of Alexei Borodin was partially supported by the NSF grant DMS-1607901 and DMS-1664619.

\section{A Dynamical Stochastic Six-Vertex Model}	

\label{VertexModel}

In this section we provide a preliminary example of the stochasticization procedure to recover the dynamical stochastic six-vertex model introduced as Definition 7.1 of \cite{FSSF}. Specifically, in Section \ref{SixVertexEquation} we present the solution to the Yang-Baxter equation that we will stochasticize, and in Section \ref{DynamicalVertex1} we discuss properties of the stochasticized weights and how they give rise to a dynamical six-vertex model. Throughout this section, we fix $q \in \mathbb{C}$.

\subsection{Stochasticizing a Six-Vertex Weight}

\label{SixVertexEquation} 	

In this section we recall the (higher spin) six-vertex solution to the Yang-Baxter equation and explain how it can be ``stochasticized.'' We begin by defining certain vertex weights, given by the following definition, which appears as Definition 2.1 of \cite{FSRF} with their $w_u$ replaced by our $\chi_s$ and their $u$ by our $\frac{x}{y}$.

\begin{definition}[{\cite[Definition 2.1]{FSRF}}]
	
	\label{spin12spins}
	
	For any $x, y, s \in \mathbb{C}$, define the quantities  $\chi_s (i_1, j_1; i_2, j_2) = \chi_s \big( i_1, j_2; i_2, j_2 \b| x, y \big)$ by
	\begin{flalign*}
	& \chi_s (k, 0; k, 0) = \displaystyle\frac{y - s q^k x}{y - s x}; \qquad \qquad \quad \chi_s (k, 0; k - 1, 1) = \displaystyle\frac{(1 - s^2 q^{k - 1} ) x}{y - s x}, \\
	& \chi_s (k, 1; k + 1, 0) = \displaystyle\frac{(1 - q^{k + 1}) y}{y - s x}; \qquad \chi_s (k, 1; k, 1) = \displaystyle\frac{x - s q^k y}{y - s x},
	\end{flalign*}
	
	\noindent for any nonnegative integer $k$, and $\chi_s (i_1, j_1; i_2, j_2) = 0$ for any $(i_1, j_1; i_2, j_2)$ not of the above form. 
	
	Further define the quantities  $w (i_1, j_1; i_2, j_2) = w \big(i_1, j_2; i_2, j_2 \b| x, y \big)$ by
	\begin{flalign*}
	& w (1, 0; 1, 0) = \displaystyle\frac{q (x - y)}{x - qy}; \qquad \quad w (1, 0; 0, 1) = \displaystyle\frac{(1 - q) x}{x - qy}, \\
	& w (0, 1; 1, 0) = \displaystyle\frac{(1 - q) y}{x - qy}; \qquad \quad w (0, 1; 0, 1) = \displaystyle\frac{x - y}{x - qy}, \\
	& \qquad \qquad \qquad w (0, 0; 0, 0) = 1 = (1, 1; 1, 1). 
	\end{flalign*}
	
	\noindent and $w (i_1, j_1; i_2, j_2) = 0$ for any $(i_1, j_1; i_2, j_2)$ not of the above form. 
	
\end{definition}

To provide a diagrammatic interpretation of the quantities $w (i_1, j_1; i_2, j_2)$ and $\chi_s (i_1, j_1; i_2, j_2)$, let $\mathcal{D}$ be a finite subset of the graph $\mathbb{Z}_{\ge 0}^2$, and consider a directed path ensemble on $\mathcal{D}$, as in \Cref{EllipticEquation} (see Figure \ref{vertexedgespaths} for an example). We consider the quantities $w \big(i_1, j_1; i_2, j_2 \b| x, y \big)$ and $\chi_s \big(i_1, j_1; i_2, j_2 \b| x, y \big)$ as weights associated with a vertex $u \in \mathbb{Z}_{> 0}^2$ with arrow configuration $(i_1, j_1; i_2, j_2)$; here, $x$ is viewed as a rapidity parameter associated with the row of $u$ and $y$ as one associated with the column of $u$. Observe in the present case that both $w$ and $\chi_s$ are equal to $0$ unless $j_1, j_2 \in \{ 0, 1 \}$; this condition will be removed in later examples.

The following proposition, which is a restatement of Proposition 2.5 of \cite{FSRF} (see also Section 4 and Section 5 of \cite{ESVM}), indicates that the $w$ and $\chi$ weights satisfy the ($RLL = LLR$ version of the) Yang-Baxter equation. 

\begin{prop}[{\cite[Proposition 2.5]{FSRF}}]
	
	\label{equationspin} 
	
	Fix $x, y, z \in \mathbb{C}$; $i_1, j_1, i_3, j_3 \in \{ 0, 1\}$ and $k_1, k_3 \in \mathbb{Z}_{\ge 0}$. Then, 
	\begin{flalign} 
	\label{equationspin1}
	\begin{aligned}
	\displaystyle\sum_{i_2, j_2, k_2 \in \mathbb{Z}_{\ge 0}}  & w \big( i_1, j_1; i_2, j_2 \b| x, y \big) \chi_s \big( k_1, j_2; k_2, j_3 \b| x, z \big)  \chi_s \big( k_2, i_2; k_3, i_3 \b| y, z \big) \\
	\qquad = \displaystyle\sum_{i_2, j_2, k_2 \in \mathbb{Z}_{\ge 0}} & \chi_s \big( k_1, i_1; k_2, i_2 \b| y, z \big)  \chi_s \big( k_2, j_1; k_3, j_2 \b| x, z \big)  w \big( i_2, j_2; i_3, j_3 \b| x, y \big). 
	\end{aligned}
	\end{flalign}
	
	\noindent The same statement holds if the $\chi_s$ weights in \eqref{equationspin1} are replaced by the $w$ weights.

\end{prop}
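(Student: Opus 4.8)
The $\chi_s$ version of \eqref{equationspin1} is exactly \cite[Proposition 2.5]{FSRF} (see also \cite[Sections 4 and 5]{ESVM}), translated through the dictionary recorded just above \Cref{spin12spins} that turns their $w_u$ into our $\chi_s$ and their $u$ into our $x / y$; strictly speaking there is therefore nothing to prove. Nonetheless, the plan for a self-contained argument is to reduce \eqref{equationspin1} to a finite, mechanical verification. Fix the boundary indices $i_1, j_1, i_3, j_3 \in \{0, 1\}$ and $k_1, k_3 \in \mathbb{Z}_{\ge 0}$. Adding the arrow-conservation relations of the three vertices on either side of \eqref{equationspin1} shows that each side vanishes unless $i_1 + j_1 + k_1 = i_3 + j_3 + k_3$, so we may assume this. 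On the left side, the vertex $w(i_1, j_1; i_2, j_2 \b| x, y)$ forces $i_2 + j_2 = i_1 + j_1$, leaving at most two admissible $(i_2, j_2) \in \{0, 1\}^2$; the vertex $\chi_s(k_1, j_2; k_2, j_3 \b| x, z)$ then forces $k_2 = k_1 + j_2 - j_3$, and one checks, using $i_1 + j_1 + k_1 = i_3 + j_3 + k_3$, that the remaining constraint $k_2 + i_2 = k_3 + i_3$ imposed by $\chi_s(k_2, i_2; k_3, i_3 \b| y, z)$ then holds automatically. Hence the left side is a sum of at most two completely determined products of weights, and reading the hexagon in the opposite order gives the same description of the right side.

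What remains is, for each of the sixteen choices of $(i_1, j_1, i_3, j_3)$, an equality of explicit rational functions of $x, y, z, q, s$. Every weight that appears depends on $k_1$ only through $q^{k_1}$ (and on $k_2, k_3$ through $q^{k_1}$ or $q^{k_1 \pm 1}$), so apart from the boundary cases $k_1 \in \{0, 1\}$ — in which the $k$-indices cannot descend below $0$, and which are checked directly — each identity follows from a single polynomial identity in the auxiliary variable $\mathfrak{q} := q^{k_1}$, obtained after clearing the common denominator (a product of factors of the form $a - q b$ and $b - s a$ with $\{a, b\} \subseteq \{x, y, z\}$). All of these are elementary to confirm.

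For the last sentence of \Cref{equationspin}: since $w(i_1, j_1; i_2, j_2 \b| x, y) = 0$ unless all four indices lie in $\{0, 1\}$, replacing every $\chi_s$ in \eqref{equationspin1} by $w$ forces $k_1, k_2, k_3 \in \{0, 1\}$ and reduces the statement to the ordinary Yang--Baxter equation for the (stochastic) six-vertex weights with all three lines of spin $\tfrac{1}{2}$, which is classical (see, e.g., \cite{ESMSM}) and is in any event covered by the finite check above, with $\mathfrak{q}$ specialized to $1$ or $q$. The one point to be careful about throughout is the bookkeeping: the weights $w$ and $\chi_s$ are not symmetric under interchanging the two lines that cross at a vertex, nor under swapping their rapidities, so one must track precisely which rapidity plays the role of the row and which the column at each of the six vertices of the hexagon, and match the collapsed sums on the two sides accordingly. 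This is the only genuine obstacle; the mathematical content of the $\chi_s$ case lies entirely in \cite{FSRF}.
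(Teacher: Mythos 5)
The paper gives no proof of \Cref{equationspin}; it is imported directly from \cite[Proposition 2.5]{FSRF}, which is exactly your primary move, so your proposal matches the paper's approach. Your supplementary outline of a self-contained verification is also sound: arrow conservation at the three vertices forces $i_1+j_1+k_1=i_3+j_3+k_3$ and collapses each side of \eqref{equationspin1} to at most two completely determined products, reducing the claim to sixteen rational identities in $\mathfrak{q}=q^{k_1}$ together with the small-$k_1$ boundary cases where an index would go negative. The one minor inaccuracy is your remark that the all-$w$ version is ``covered by the finite check above with $\mathfrak{q}$ specialized to $1$ or $q$'': the $w$ weights are not a specialization of the $\chi_s$ weights at a fixed value of $k$ (they live in a different gauge/normalization), so the all-$w$ identity is a separate --- though even smaller and equally classical --- finite verification rather than a corollary of the mixed one.
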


\begin{figure}

	\begin{center}

		\begin{tikzpicture}[
		>=stealth,
		auto,
		style={
			scale = 1.2
		}
		]

		\draw[->, black] (-.87, -.5) -- (-.087, -.05) node[above = 20, left = 28, scale = .8]{$j_1$} node[above = 20, left = 40, scale = .8]{$x$}; 
		\draw[->, black, dotted] (-.87, .5) -- (-.087, .05) node[below = 20, left = 25, scale = .8]{$i_1$} node[below = 20, left = 40, scale = .8]{$y$};
			
		\draw[->, black, dashed] (.087, -.05) -- (.783, -.45) node[above = 3, left = 11, scale = .8]{$j_2$}; 
		\draw[->, black, dashed] (.087, .05) -- (.783, .45) node[below = 3, left = 11, scale = .8]{$i_2$}; 
		
		\draw[->, black] (.87, -1.5) -- (.87, -.6) node[below = 38, right = -4, scale = .8]{$k_1$} node[below = 50, right = -4, scale = .8]{$z$};

		\draw[->, black, dashed] (.87, -.5) -- (.87, .4) node[below = 16, right = -1, scale = .8]{$k_2$}; 
		
		\draw[->, black] (.87, .5) -- (1.87, .5) node[right = 0, scale = .8]{$i_3$}; 
		
		\draw[->, black, dotted] (.87, -.5) -- (1.87, -.5) node[right = 0, scale = .8]{$j_3$};

		\draw[->, black] (.87, .5) -- (.87, 1.5) node[above = 5, right = -4, scale = .8]{$k_3$};

		\draw[black] (1.4, -.92) -- (1.58, -.92) -- (1.58, -1.12) -- (1.4, -1.12) -- (1.4, -.92); 
		\draw[black] (1.28, .12) -- (1.7, .12) -- (1.7, -.16) -- (1.28, -.16) -- (1.28, .12); 
		\draw[black] (-.23, -.62) -- (.23, -.62) -- (.23, -.92) -- (-.23, -.92) -- (-.23, -.62);

		\filldraw[fill=white, draw=black] (0, 0) circle [radius=.1] node[scale = .7, left = 6]{$w$} node[scale = .7, below = 29]{$q^{k_1} v$};
		
		\filldraw[fill=gray!50!white, draw=black] (.87, .5) circle [radius=.1] node[scale = .7, above = 6, right = 2]{$\chi_s$};
		
		\filldraw[fill=gray!50!white, draw=black] (.87, -.5) circle [radius=.1] node[scale = .7, below = 7, right = 2]{$\chi_s$}  node[scale = .7, right = 30, below = 20]{$v$}  node[scale = .7, right = 30, above = 15]{$q^{j_3} v$}; 
		
		\draw[->, black] (4.87, -1.5) -- (4.87, -.6) node[below = 38, right = -4, scale = .8]{$k_1$} node[below = 50, right = -4, scale = .8]{$z$}; 
		
		\draw[->, black, dashed] (4.87, -.5) -- (4.87, .4) node[below = 14, left = -1, scale = .8]{$k_2$}; 
		
		\draw[->, black, dotted] (3.87, .5) -- (4.77, .5) node[left = 34, scale = .8]{$j_1$}; 
		
		\draw[->, black] (3.87, -.5) -- (4.77, -.5) node[left = 34, scale = .8]{$i_1$};

		\draw[->, black] (4.87, .5) -- (4.87, 1.5) node[above = 5, right = -4, scale = .8]{$k_3$};

		\draw[->, black, dashed] (4.957, -.45) -- (5.653, -.05) node[above = 17	, left = 5, scale = .8]{$j_2$}; 
		\draw[->, black, dashed] (4.957, .45) -- (5.653, .05) node[below = 17, left = 5, scale = .8]{$i_2$};

		\draw[->, black] (5.827, -.05) -- (6.567, -.45) node[right = 0, scale = .8]{$j_3$} node[right = 12, scale = .8]{$x$}; 
		\draw[->, black, dotted] (5.827, .05) -- (6.567, .45) node[right = 0, scale = .8]{$i_3$} node[right = 12, scale = .8]{$y$};

		\filldraw[fill=white, draw=black] (5.74, 0) circle [radius=.1] node[scale = .7, right = 3]{$w$}  node[scale = .7, left = 15]{$q^{i_2} v$}  node[scale = .7, below = 20]{$v$};
		
		\draw[black] (5.65, -.42) -- (5.83, -.42) -- (5.83, -.62) -- (5.65, -.62) -- (5.65, -.42); 
		
		\draw[black] (4.98, .14) -- (5.4, .14) -- (5.4, -.14) -- (4.98, -.14) -- (4.98, .14); 
		
		\filldraw[fill=gray!50!white, draw=black] (4.87, .5) circle [radius=.1] node[scale = .7, above = 6, left = 1]{$\chi_s$};
		
		\filldraw[fill=gray!50!white, draw=black] (4.87, -.5) circle [radius=.1] node[scale = .7, below	 = 7, left = 1]{$\chi_s$};
		
		\filldraw[fill=white, draw=black] (2.87, 0) circle [radius=0] node{$=$};

		\end{tikzpicture}
		
	\end{center}

	\caption{\label{equationpaths} The vertex interpretation of the Yang-Baxter relation is depicted above; here, the indices along the dashed arrows (which are $i_2, j_2, k_2$) are summed over and the remaining indices (which are $i_1, j_1, k_1, i_3, j_3, k_3$) remain fixed. The dynamical parameters of the form $q^m v$ are boxed above and appear in Proposition \ref{sdynamicalequation1}.}
\end{figure}
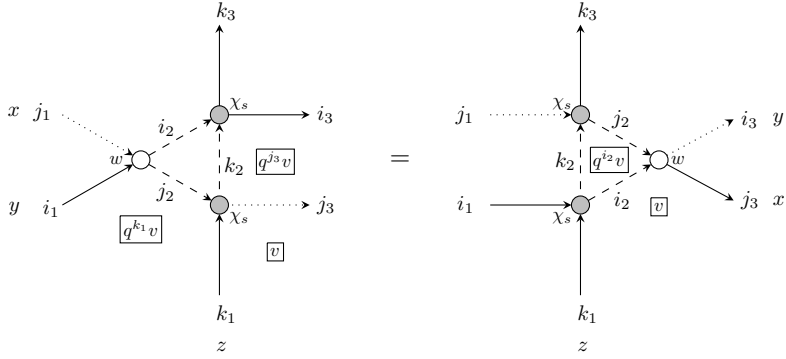

The Yang-Baxter equation \eqref{equationspin1} is sometimes diagrammatically interpreted as ``moving'' a line through a cross; see Figure \ref{equationpaths} for a depiction.\footnote{Although arbitrarily many arrows are allowed along vertical edges through shaded vertices, no edge in \Cref{equationpaths} accommodates more than one arrow. The reason for this is that \Cref{equationpaths} will be reused to depict \eqref{dynamicalequationspin1}, where at most one arrow is allowed per edge. } There, the unshaded vertices are evaluated with respect to the weight $w$ and the shaded vertices with respect to $\chi_s$.

Now, we would like to ``stochasticize'' the weights $w$, that is, produce weights $S(i_1, j_1; i_2, j_2)$, which are (manageable) deformations of the original $w$ weights that are also stochastic, meaning that $\sum_{i_2, j_2} S (i_1, j_1; i_2, j_2) = 1$ for each fixed $i_1, j_1 \in \{ 0, 1 \}$. One should also impose the nonnegativity constraint $S (i_1, j_1; i_2, j_2) \ge 0$ for all $(i_1, j_1; i_2, j_2)$, but we will typically not mention this here since in the examples we consider it will be possible to specialize the underlying parameters in such a way to ensure nonnegativity of the stochasticized weights.

As it happens, the weights $w$ given by Definition \ref{spin12spins} are already stochastic. However, this is not a general property of solutions to the Yang-Baxter equation and we would therefore like to find a definition of these $S$ weights that can be shown to be stochastic only through (some variant of) the Yang-Baxter equation \eqref{equationspin1} and the spin conservation property. 

This is indeed doable. To that end, first observe that the spin conservation property implies that $w (i_1, j_1; 0, 0)$ is equal to zero unless $i_1 = 0 = j_1$. In general, an outgoing pair $(i_2, j_2)$ that admits a unique incoming pair $(i_1, j_1)$ such that $(i_1, j_1; i_2, j_2)$ is nonzero will be called a \emph{frozen boundary condition}; the stochasticization procedure to be presented in Section \ref{DefinitionWeights} will not fully assume spin conservation but rather the existence of a frozen boundary, which in this case is $(0, 0)$. Thus, if we insert $i_3 = j_3 = 0$ in \Cref{equationspin}, there is at most one nonzero summand on the right side of \eqref{equationspin1} (which arises when $i_2 = 0 = j_2$ and $k_2 = k_1 + i_1$), as depicted in Figure \ref{vertexmodelfrozen}. This implies the following corollary.

\begin{cor}
	
	\label{equationspin2} 
	
	Fix $x, y, z \in \mathbb{C}$; $i_1, j_1 \in \{ 0, 1\}$ and $k \in \mathbb{Z}_{\ge 0}$. If $\chi_s \big( k, i_1; k + i_1, 0 \b| y, z \big)$ and $\chi_s \big( k + i_1, j_1; k + i_1 + j_1, 0 \b| x, z \big)$ are nonzero, then 
	\begin{flalign} 
	\label{equationspin3}
	\begin{aligned}
	& \displaystyle\sum_{i_2, j_2 \in \{ 0, 1 \}}   \displaystyle\frac{w \big( i_1, j_1; i_2, j_2 \b| x, y \big) \chi_s \big( k, j_2; k + j_2, 0 \b| x, z \big)  \chi_s \big( k + j_2, i_2; k + i_2 + j_2, 0 \b| y, z \big)}{ \chi_s \big( k, i_1; k + i_1, 0 \b| y, z \big)  \chi_s \big( k + i_1, j_1; k + i_1 + j_1, 0 \b| x, z \big)  w \big( 0, 0; 0, 0 \b| x, y \big)} = 1. 
	\end{aligned}
	\end{flalign}

\end{cor}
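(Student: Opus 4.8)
The plan is to obtain \eqref{equationspin3} as a direct specialization of the Yang--Baxter equation \eqref{equationspin1} of \Cref{equationspin}, using spin conservation to collapse the triple sums on each side. Concretely, I would apply \eqref{equationspin1} with $k_1 = k$, $k_3 = k + i_1 + j_1$, and $i_3 = j_3 = 0$, keeping $i_1, j_1 \in \{0,1\}$ fixed.

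First I would evaluate the right side of \eqref{equationspin1} under this specialization, namely $\sum_{i_2, j_2, k_2} \chi_s(k, i_1; k_2, i_2 \b| y, z)\, \chi_s(k_2, j_1; k + i_1 + j_1, j_2 \b| x, z)\, w(i_2, j_2; 0, 0 \b| x, y)$. Spin conservation forces $w(i_2, j_2; 0, 0 \b| x, y) = 0$ unless $i_2 = j_2 = 0$ (this is precisely the frozen boundary condition at $(0,0)$), and then $w(0,0;0,0 \b| x, y) = 1$; with $i_2 = 0$ the factor $\chi_s(k, i_1; k_2, 0 \b| y, z)$ vanishes unless $k_2 = k + i_1$, and for that value the second factor becomes $\chi_s(k + i_1, j_1; k + i_1 + j_1, 0 \b| x, z)$. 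Hence the entire right side reduces to the single product $\chi_s(k, i_1; k + i_1, 0 \b| y, z)\, \chi_s(k + i_1, j_1; k + i_1 + j_1, 0 \b| x, z)$, which is nonzero by the hypothesis of the corollary.

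Next I would rewrite the left side of \eqref{equationspin1}, namely $\sum_{i_2, j_2, k_2} w(i_1, j_1; i_2, j_2 \b| x, y)\, \chi_s(k, j_2; k_2, 0 \b| x, z)\, \chi_s(k_2, i_2; k + i_1 + j_1, 0 \b| y, z)$. Spin conservation in the first $\chi_s$ factor forces $k_2 = k + j_2$, collapsing the sum over $k_2$; since the support of $w(i_1, j_1; \cdot, \cdot)$ lies in $\{ i_2 + j_2 = i_1 + j_1 \}$, on every surviving term $k + j_2 + i_2 = k + i_1 + j_1 = k_3$, so the last factor is automatically $\chi_s(k + j_2, i_2; k + i_2 + j_2, 0 \b| y, z)$ with no extra constraint. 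Thus the left side equals $\sum_{i_2, j_2 \in \{0,1\}} w(i_1, j_1; i_2, j_2 \b| x, y)\, \chi_s(k, j_2; k + j_2, 0 \b| x, z)\, \chi_s(k + j_2, i_2; k + i_2 + j_2, 0 \b| y, z)$.

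Equating the two sides and dividing by the common nonzero quantity $\chi_s(k, i_1; k + i_1, 0 \b| y, z)\, \chi_s(k + i_1, j_1; k + i_1 + j_1, 0 \b| x, z)\, w(0,0;0,0 \b| x, y)$ (whose last factor is $1$) gives \eqref{equationspin3} verbatim. There is no substantive obstacle here beyond bookkeeping with spin conservation; the only point that genuinely needs checking is that on the left side the index $k + i_2 + j_2$ in the final $\chi_s$ factor always agrees with $k_3 = k + i_1 + j_1$, which is immediate from the fact that $w(i_1,j_1;i_2,j_2 \b| x,y)$ vanishes unless $i_1 + j_1 = i_2 + j_2$.
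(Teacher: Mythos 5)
Your proposal is correct and is essentially the proof given in the paper: one specializes $i_3 = j_3 = 0$ in \eqref{equationspin1}, observes via spin conservation that the right side collapses to the single term with $i_2 = j_2 = 0$ and $k_2 = k_1 + i_1$ (the frozen boundary), and divides through by that nonzero product. The bookkeeping on the left side, including the check that $k + i_2 + j_2 = k + i_1 + j_1$ on the support of $w$, is exactly as the paper intends.
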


In view of \Cref{equationspin2} (and the fact that $w \big( 0, 0; 0, 0 \b| x, y \big) = 1$), we have the following definition.

\begin{definition}
	
	\label{stochastic1}
	
	For each $x, y, s \in \mathbb{C}$; $i_1, j_1, i_2, j_2 \in \{ 0, 1 \}$; and $k \in \mathbb{Z}_{\ge 0}$, define the \emph{stochasticized weight} 
	\begin{flalign}
	\label{si1j1i2j2vertex}
	\begin{aligned}
	S & \big( i_1, j_1; i_2, j_2 \b| x, y; s; k, z \big) \\
	& = w \big( i_1, j_1; i_2, j_2 \b| x, y \big)\displaystyle\frac{ \chi_s \big(k, j_2; k + j_2, 0 \b| x, z \big) }{\chi_s \big(k + i_1, j_1; k + i_1 + j_1	, 0 \b| x, z \big)} \displaystyle\frac{ \chi_s \big(k + j_2, i_2; k + i_2 + j_2, 0 \b| y, z \big)}{\chi_s \big( k, i_1; k + i_1, 0 \b| y, z \big)}.
	\end{aligned}
	\end{flalign}
	
	\noindent The fact that the $S$ weights are indeed stochastic follows from \Cref{equationspin2}. 
	
\end{definition}

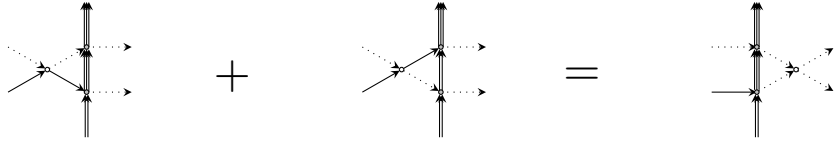
\begin{figure}

	\begin{center}

		\begin{tikzpicture}[
		>=stealth,
		auto,
		style={
			scale = .6
		}
		]

		\draw[->, black] (-.87, .5) -- (-.043, .975); 
		\draw[->, black, dotted] (-.87, 1.5) -- (-.043, 1.025);
		
		\draw[->, black, dotted] (.043, .975) -- (.827, .525); 
		\draw[->, black] (.043, 1.025) -- (.827, 1.475); 
		
		\draw[->, black] (.84, -.5) -- (.84, .45); 
		\draw[->, black] (.9, -.5) -- (.9, .45);
		
		\draw[->, black] (.84, .5) -- (.84, 1.45);
		\draw[->, black] (.9, .5) -- (.9, 1.45); 
		
		\draw[->, black, dotted] (.87, 1.5) -- (1.87, 1.5); 
		
		\draw[->, black, dotted] (.87, .5) -- (1.87, .5);

		\draw[->, black] (.82, 1.5) -- (.82, 2.5); 
		\draw[->, black] (.87, 1.5) -- (.87, 2.5);  
		\draw[->, black] (.92, 1.5) -- (.92, 2.5); 
	
		\filldraw[fill=white, draw=black] (0, 1) circle [radius=.05];
		
		\filldraw[fill=gray!50!white, draw=black] (.87, 1.5) circle [radius=.05];
		
		\filldraw[fill=gray!50!white, draw=black] (.87, .5) circle [radius=.05];

		\draw[black] (4, .85) circle[radius=0] node[scale = 1.8]{$=$};

		\draw[black] (-3.75, .85) circle[radius=0] node[scale = 1.8]{$+$};

		\draw[->, black] (-8.72, .5) -- ( -7.893, .975); 
		\draw[->, black, dotted] (-8.72, 1.5) -- (-7.893, 1.025);
		
		\draw[->, black] (-7.807, .975) -- (-7.023, .525); 
		\draw[->, black, dotted] (-7.807, 1.025) -- (-7.023, 1.475); 
		
		\draw[->, black] (-7.01, -.5) -- (-7.01, .45); 
		\draw[->, black] (-6.95, -.5) -- (-6.95, .45);
		
		\draw[->, black] (-7.03, .5) -- (-7.03, 1.45);
		\draw[->, black] (-6.98, .5) -- (-6.98, 1.45); 
		\draw[->, black] (-6.93, .5) -- (-6.93, 1.45);  
		
		\draw[->, black, dotted] (-6.98, 1.5) -- (-5.98, 1.5); 
		
		\draw[->, black, dotted] (-6.98, .5) -- (-5.98, .5);

		\draw[->, black] (-7.03, 1.5) -- (-7.03, 2.5); 
		\draw[->, black] (-6.98, 1.5) -- (-6.98, 2.5); 
		\draw[->, black] (-6.93, 1.5) -- (-6.93, 2.5); 
		
		\filldraw[fill=white, draw=black] (-7.85, 1) circle [radius=.05];
		
		\filldraw[fill=gray!50!white, draw=black] (-6.98, 1.5) circle [radius=.05];
		
		\filldraw[fill=gray!50!white, draw=black] (-6.98, .5) circle [radius=.05];

		\draw[->, black] (7.84, -.5) -- (7.84, .45); 
		\draw[->, black] (7.9, -.5) -- (7.9, .45);

		\draw[->, black] (7.82, .55) -- (7.82, 1.45); 
		\draw[->, black] (7.87, .55) -- (7.87, 1.45); 
		\draw[->, black] (7.92, .55) -- (7.92, 1.45);

		\draw[->, black, dotted] (6.87, 1.5) -- (7.87, 1.5); 
		
		\draw[->, black] (6.87, .5) -- (7.87, .5);

		\draw[->, black] (7.82, 1.5) -- (7.82, 2.5); 
		\draw[->, black] (7.87, 1.5) -- (7.87, 2.5); 
		\draw[->, black] (7.92, 1.5) -- (7.92, 2.5);

		\draw[->, black, dotted] (7.913, .525) -- (8.697, .975); 
		\draw[->, black, dotted] (7.913, 1.475) -- (8.697, 1.025);

		\draw[->, black, dotted] (8.783, .975) -- (9.567, .525); 
		\draw[->, black, dotted] (8.783, 1.025) -- (9.567, 1.475);

		\filldraw[fill=white, draw=black] (8.74, 1) circle [radius=.05];
		
		\filldraw[fill=gray!50!white, draw=black] (7.87, 1.5) circle [radius=.05];
		
		\filldraw[fill=gray!50!white, draw=black] (7.87, .5) circle [radius=.05];

		\end{tikzpicture}
		
	\end{center}

	\caption{\label{vertexmodelfrozen} An instance of the Yang-Baxter relation with a frozen right boundary is depicted above. }
\end{figure}

The following proposition explicitly evaluates the $S$ weights from \Cref{stochastic1}; its proof follows from inserting the weights from \Cref{spin12spins} into \eqref{si1j1i2j2vertex}.

\begin{prop}
	
	\label{dynamicalweights1}
	
	Follow the notation of Definition \ref{stochastic1}, and denote $v = s q^k z^{-1}$. Then, the stochasticized weights $S ( i_1, j_1; i_2, j_2) = S \big( i_1, j_1; i_2, j_2 \b| x, y; v \big) = S \big( i_1, j_1; i_2, j_2 \b| x, y; s; k, z \big)$ are given by
	\begin{flalign}
	\label{s01dynamicalweights}
	\begin{aligned}
	& S (1, 0; 1, 0)  =  \displaystyle\frac{q(x - y) (1 - v x)}{(x - qy) (1 - qv x)} ; \qquad S (1, 0; 0, 1) = \displaystyle\frac{(1 - q) x (1 - q v y)}{(x - qy) (1 - qv x)}; \\
	& S (0, 1; 1, 0)  =  \displaystyle\frac{(1 - q) y (1  - v x)}{(x - qy) (1 - v y)} ; \qquad \quad S (0, 1; 0, 1) = \displaystyle\frac{(x - y) (1 - qv y)}{(x - qy) (1 - v y)} \\
	& \qquad \qquad \qquad \qquad \quad S (0, 0; 0, 0) = 1 = S (1, 1; 1, 1),
	\end{aligned}
	\end{flalign}
	
	\noindent and $S(i_1, j_1; i_2, j_2) = 0$ if $(i_1, j_1; i_2, j_2)$ is not of the above form. 
	
\end{prop}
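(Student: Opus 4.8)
The plan is to prove \Cref{dynamicalweights1} by direct substitution of the weights from \Cref{spin12spins} into the defining formula \eqref{si1j1i2j2vertex} and simplifying, one arrow configuration at a time. Since $w \big( i_1, j_1; i_2, j_2 \b| x, y \big) = 0$ unless $(i_1, j_1; i_2, j_2)$ is one of the six configurations appearing in \Cref{spin12spins}, the weight $S(i_1, j_1; i_2, j_2)$ vanishes outside these six cases, and it remains only to evaluate it on each of them.

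First I would observe that every $\chi_s$ factor occurring in \eqref{si1j1i2j2vertex} has outgoing horizontal index $0$ and incoming horizontal index in $\{ 0, 1 \}$, hence is of one of the two forms
\[
\chi_s \big( m, 0; m, 0 \b| u, z \big) = \frac{z - s q^m u}{z - s u}, \qquad \chi_s \big( m, 1; m + 1, 0 \b| u, z \big) = \frac{(1 - q^{m + 1}) z}{z - s u},
\]
with $u \in \{ x, y \}$ and $m$ one of $k$, $k + i_1$, $k + j_2$, $k + i_2 + j_2$; in particular none of them is of the ``not of the above form'' type, so the evaluations from \Cref{spin12spins} apply verbatim. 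Substituting these into \eqref{si1j1i2j2vertex} for each of the six configurations, the denominators $z - s u$ cancel between the numerator and denominator of each of the two $\chi_s$-ratios, and any factor $(1 - q^{m + 1}) z$ appearing in both a numerator and a denominator cancels as well; what remains is $w \big( i_1, j_1; i_2, j_2 \b| x, y \big)$ times a single ratio of the shape $\tfrac{z - s q^k u}{z - s q^{k + 1} u'}$, $\tfrac{z - s q^{k + 1} u}{z - s q^{k + 1} u'}$, or $\tfrac{z - s q^k u}{z - s q^k u'}$ with $u, u' \in \{ x, y \}$. For the two frozen configurations $(0, 0; 0, 0)$ and $(1, 1; 1, 1)$ the two $\chi_s$-ratios multiply to $1$, giving $S = 1$.

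Finally I would substitute $v = s q^k z^{-1}$, equivalently $s q^k = v z$ and $s q^{k + 1} = q v z$, so that each surviving ratio collapses to $\tfrac{1 - v x}{1 - q v x}$, $\tfrac{1 - q v y}{1 - q v x}$, $\tfrac{1 - v x}{1 - v y}$, or $\tfrac{1 - q v y}{1 - v y}$; comparing with \Cref{spin12spins} then reproduces exactly the six expressions in \eqref{s01dynamicalweights}. I do not expect a genuine obstacle here: the computation is entirely mechanical, and the only point requiring care is tracking the shifted spin labels $k + i_1$, $k + j_2$, $k + i_2 + j_2$ in the four $\chi_s$ factors correctly so that the advertised cancellations actually occur --- together with the attendant observation, automatic from the structure above, that $S$ depends on $s$, $k$, and $z$ only through the single combination $v = s q^k z^{-1}$.
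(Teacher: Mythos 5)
Your proposal is correct and is exactly the paper's argument: the paper's proof consists of the single remark that the result ``follows from inserting the weights from Definition \ref{spin12spins} into \eqref{si1j1i2j2vertex},'' and your case-by-case substitution, cancellation of the $z - su$ denominators, and rewriting via $sq^k = vz$ is precisely that computation carried out explicitly. The bookkeeping of the shifted indices $k+i_1$, $k+j_2$, $k+i_2+j_2$ in your outline is consistent with the definitions, so no gap remains.
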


Observe that these weights depend on $s$, $k$, and $z$ only through $v$; in particular, if $v = 0$ then the $S$ weights reduce to the original $w$ weights from \Cref{spin12spins}. In the next section we will explain in what sense the $S$ weights still satisfy the Yang-Baxter equation for nonzero $v$.

\subsection{Stochasticizing a Path Ensemble}

\label{DynamicalVertex1} 

As seen by \Cref{dynamicalweights1}, the stochasticization procedure deforms the original $w$ weights by an additional parameter $v$. If we wish to stochasticize a directed path ensemble, we therefore require an assignment of this parameter $v$ to each vertex of our domain $\mathcal{D}$. Under an arbitrary such assignment, the model will not retain its integrability, that is, the Yang-Baxter equation will no longer hold. However, in this section we will describe a specific assignment that preserves the Yang-Baxter equation.

To that end, let us begin with a more diagrammatic interpretation of \Cref{stochastic1}, which is depicted in \Cref{arrowstochastic}. Suppose we would like to stochasticize some vertex $u = (a, b)$ with arrow configuration $(i_1, j_1; i_2, j_2)$. We draw this vertex as unshaded in the plane; see the left side of \Cref{arrowstochastic}, in which $(i_1, j_1; i_2, j_2) = (1, 0; 0, 1)$. We then attach two shaded vertices to $u$, one above it (denoted by $u_2$) and one to its right (denoted by $u_1$). As depicted in \Cref{arrowstochastic}, $k$ arrows (which are drawn as dashed) enter through $u_1$ from the bottom, and any arrows exiting $u$ are ``collected'' by this dashed curve; in particular, this means that $k + j_2$ arrows are directed from $u_1$ to $u_2$, and $k + i_2 + j_2$ exit through $u_2$. We call this dashed curve the \emph{stochasticization curve}, to which we assign the rapidity parameter $z$.

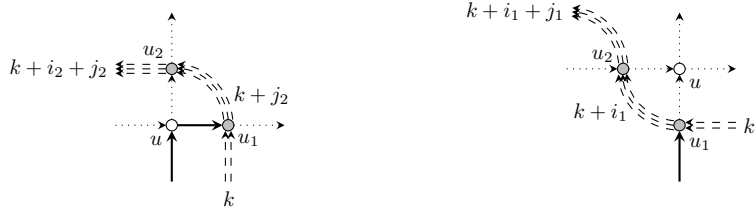
\begin{figure}

	\begin{center}

		\begin{tikzpicture}[
		>=stealth,
		auto,
		style={
			scale = .75
		}
		]

		\draw[->, black, thick] (0, -1) -- (0, -.1);
		\draw[->, black, thick] (.1, 0) -- (.9, 0);
		
		\draw[->, black, dotted] (-1, 0) -- (-.1, 0);
		\draw[->, black, dotted] (0, .1) -- (0, .9); 
		
		\draw[->, black, dotted] (0, 1.1) -- (0, 2); 
		\draw[->, black, dotted] (1.1, 0) -- (2, 0); 
		
		\draw[->, black, dashed] (.95, -1) -- (.95, -.1); 
		\draw[->, black, dashed] (1.05, -1) -- (1.05, -.1) node[scale = .8, left = 1, below = 25]{$k$}; 
		
		\draw[->, black, dashed] (-.1, .92) -- (-1, .92); 
		\draw[->, black, dashed] (-.1, 1) -- (-1, 1) node[scale = .8, left = 0]{$k + i_2 + j_2$};
		\draw[->, black, dashed] (-.1, 1.08) -- (-1, 1.08);
		
		\draw[->, black, dashed] (.92, .1) arc(5:85:.92);
		\draw[->, black, dashed] (1, .1) arc(5:85:1) node[scale = .8, right = 40, below = 5]{$k + j_2$};
		\draw[->, black, dashed] (1.08, .1) arc(5:85:1.08);

		\filldraw[fill=white, draw=black] (0, 0) circle [radius = .1] node[scale = .8, below = 7, left = 1]{$u$};
		\filldraw[fill=gray!50!white, draw=black] (0, 1) circle [radius = .1] node[scale = .8, above = 8, left = 0]{$u_2$};
		\filldraw[fill=gray!50!white, draw=black] (1, 0) circle [radius = .1] node[scale = .8, below = 7, right = 1]{$u_1$};

		\draw[->, black, dotted] (9.1, 1) -- (10, 1);
		\draw[->, black, dotted] (9, .1) -- (9, .9);

		\draw[->, black, dotted] (9, 1.1) -- (9, 2); 
		\draw[->, black, dotted] (8.1, 1) -- (8.9, 1); 
		\draw[->, black, dotted] (7, 1) -- (7.9, 1);
		
		\draw[->, black, thick] (9, -1) -- (9, -.1); 
		
		\draw[->, black, dashed] (10, -.05) -- (9.1, -.05) node[scale = .8, above = 1, right = 24]{$k$}; 
		\draw[->, black, dashed] (10, .05) -- (9.1, .05);

		\draw[->, black, dashed] (7.92, 1.1) arc(5:85:.92);
		\draw[->, black, dashed] (8, 1.1) arc(5:85:1) node[scale = .8, left = 0]{$k + i_1 + j_1$};
		\draw[->, black, dashed] (8.08, 1.1) arc(5:85:1.08);

		\draw[->, black, dashed] (8.9, .08) arc(265:185:.92);
		\draw[->, black, dashed] (8.9, 0) arc(265:185:1) node[scale = .8, below = 18, left = -6]{$k + i_1$};
		\draw[->, black, dashed] (8.9, -.08) arc(265:185:1.08);

		\filldraw[fill=white, draw=black] (9,1) circle [radius = .1] node[scale = .8, below = 7, right = 1]{$u$};
		\filldraw[fill=gray!50!white, draw=black] (8, 1) circle [radius = .1] node[scale = .8, above = 6, left = 1]{$u_2$};
		\filldraw[fill=gray!50!white, draw=black] (9, 0) circle [radius = .1] node[scale = .8, below = 9, right = 1]{$u_1$};

		\end{tikzpicture}
		
	\end{center}
	
	\caption{\label{arrowstochastic} The process of the (dashed) stochasticization curve being ``pushed'' through a vertex is depicted above.  }
\end{figure}

We then ``push'' the stochasticization curve through the vertex $u$, as shown on the right side of \Cref{arrowstochastic}. Due to the boundary conditions, all arrows that would have otherwise entered $u$ are now collected by the stochasticization curve; in particular, no arrows enter or exit the vertex $u$ after the pushing procedure. 

If we weight all unshaded vertices by $w$ and all shaded vertices by $\chi_s$, then \Cref{stochastic1} states that the stochasticized weight $S(i_1, j_1; i_2, j_2)$ of the original vertex $u$ (with arrow configuration $(i_1, j_1; i_2, j_2)$) is the weight of the left diagram in \Cref{arrowstochastic} divided by that of the right one. 

This weight depends on the number $k$ of arrows that are originally placed in the stochasticization curve. Thus, if we would like to stochasticize each vertex in a path ensemble (in a domain $\mathcal{D}$), we require a way to assign $k$ to each vertex of $\mathcal{D}$. This can be done by applying the same pushing procedure to a directed path ensemble instead of to a single vertex; see \Cref{arrowstochasticvertices}.

\begin{figure}

	\begin{center}

		\begin{tikzpicture}[
		>=stealth,
		auto,
		style={
			scale = .6
		}
		]

		\draw[->, black, thick] (0, -1) -- (0, -.1);
		
		\draw[->, black, thick] (1, -1) -- (1, -.1);
		\draw[->, black, thick] (1, .1) -- (1, .9);
		\draw[->, black, thick] (1, 1.1) -- (1, 1.9);
		
		\draw[->, black, thick] (2, .1) -- (2, .9);
		
		\draw[->, black, thick] (3, 1.1) -- (3, 1.9);
		
		\draw[->, black, thick] (4, -1) -- (4, -.1); 
		\draw[->, black, thick] (4, .1) -- (4, .9);

		\draw[->, black, thick] (.1, 0) -- (.9, 0);
		
		\draw[->, black, thick] (1.1, 0) -- (1.9, 0);
		
		\draw[->, black, thick] (2.1, 1) -- (2.9, 1);
		
		\draw[->, black, thick] (4.1, 1) -- (4.9, 1);

		\draw[->, black, dashed] (4.95, -.9) -- (4.95, -.1) node[scale = .8, right = 1, below = 18]{$k$}; 
		\draw[->, black, dashed] (5.05, -.9) -- (5.05, -.1);
		
		\draw[->, black, dashed] (4.95, .1) -- (4.95, .9); 
		\draw[->, black, dashed] (5.05, .1) -- (5.05, .9); 
		
		\draw[->, black, dashed] (4.92, 1.1) arc(5:85:.92);
		\draw[->, black, dashed] (5, 1.1) arc(5:85:1);
		\draw[->, black, dashed] (5.08, 1.1) arc(5:85:1.08);
		
		\draw[->, black, dashed] (3.9, 1.92) -- (3.1, 1.92); 
		\draw[->, black, dashed] (3.9, 2) -- (3.1, 2);
		\draw[->, black, dashed] (3.9, 2.08) -- (3.1, 2.08);
		
		\draw[->, black, dashed] (2.9, 1.91) -- (2.1, 1.91); 
		\draw[->, black, dashed] (2.9, 1.97) -- (2.1, 1.97);
		\draw[->, black, dashed] (2.9, 2.03) -- (2.1, 2.03);
		\draw[->, black, dashed] (2.9, 2.09) -- (2.1, 2.09);
		
		\draw[->, black, dashed] (1.9, 1.91) -- (1.1, 1.91); 
		\draw[->, black, dashed] (1.9, 1.97) -- (1.1, 1.97);
		\draw[->, black, dashed] (1.9, 2.03) -- (1.1, 2.03);
		\draw[->, black, dashed] (1.9, 2.09) -- (1.1, 2.09);

		\draw[->, black, dashed] (.9, 1.9) -- (.1, 1.9); 
		\draw[->, black, dashed] (.9, 1.95) -- (.1, 1.95);
		\draw[->, black, dashed] (.9, 2) -- (.1, 2);
		\draw[->, black, dashed] (.9, 2.05) -- (.1, 2.05);
		\draw[->, black, dashed] (.9, 2.1) -- (.1, 2.1);

		\draw[->, black, dashed] (-.1, 1.9) -- (-.9, 1.9); 
		\draw[->, black, dashed] (-.1, 1.95) -- (-.9, 1.95);
		\draw[->, black, dashed] (-.1, 2) -- (-.9, 2);
		\draw[->, black, dashed] (-.1, 2.05) -- (-.9, 2.05);
		\draw[->, black, dashed] (-.1, 2.1) -- (-.9, 2.1);

		\filldraw[fill=white, draw=black] (0, 0) circle [radius = .1] node[scale = .6, below = 5, left = 3]{$(1, 1)$};
		\filldraw[fill=white, draw=black] (0, 1) circle [radius = .1];
		\filldraw[fill=white, draw=black] (1, 0) circle [radius = .1];
		\filldraw[fill=white, draw=black] (1, 1) circle [radius = .1];
		\filldraw[fill=white, draw=black] (2, 0) circle [radius = .1];
		\filldraw[fill=white, draw=black] (2, 1) circle [radius = .1];
		\filldraw[fill=white, draw=black] (3, 0) circle [radius = .1];
		\filldraw[fill=white, draw=black] (3, 1) circle [radius = .1];
		\filldraw[fill=white, draw=black] (4, 0) circle [radius = .1];
		\filldraw[fill=white, draw=black] (4, 1) circle [radius = .1];
		
		\filldraw[fill=gray!50!white, draw=black] (5, 0) circle [radius = .1];
		\filldraw[fill=gray!50!white, draw=black] (0, 2) circle [radius = .1];
		
		\filldraw[fill=gray!50!white, draw=black] (5, 1) circle [radius = .1];
		\filldraw[fill=gray!50!white, draw=black] (1, 2) circle [radius = .1];
		
		\filldraw[fill=gray!50!white, draw=black] (2, 2) circle [radius = .1];
		\filldraw[fill=gray!50!white, draw=black] (3, 2) circle [radius = .1];
		
		\filldraw[fill=gray!50!white, draw=black] (4, 2) circle [radius = .1];

		\draw[->, black, thick] (9, -1) -- (9, -.1);
		
		\draw[->, black, thick] (10, -1) -- (10, -.1);
		\draw[->, black, thick] (10, .1) -- (10, .9);
		\draw[->, black, thick] (10, 1.1) -- (10, 1.9);
		
		\draw[->, black, thick] (11, .1) -- (11, .9);
		
		\draw[->, black, thick] (12, 1.1) -- (12, 1.9);
		
		\draw[->, black, thick] (13, -1) -- (13, -.1); 
		\draw[->, black, thick] (13, .1) -- (13, .4);

		\draw[->, black, thick] (9.1, 0) -- (9.9, 0);
		
		\draw[->, black, thick] (10.1, 0) -- (10.9, 0);
		
		\draw[->, black, thick] (11.1, 1) -- (11.9, 1);

		\draw[->, black, dashed] (13.95, -.9) -- (13.95, -.1); 
		\draw[->, black, dashed] (14.05, -.9) -- (14.05, -.1);

		\draw[->, black, dashed] (13, .58) arc(265:185:.42);
		\draw[->, black, dashed] (13, .5) arc(265:185:.5);
		\draw[->, black, dashed] (13, .42) arc(265:185:.58);

		\draw[->, black, dashed] (13.95, .1) arc(50:88:1.45);
		\draw[->, black, dashed] (14.05, .1) arc(47:89:1.45);

		\draw[->, black, dashed] (12.42, 1.1) arc(2:40:1.42);
		\draw[->, black, dashed] (12.5, 1.1) arc(2:43:1.45);
		\draw[->, black, dashed] (12.58, 1.1) arc(2:46:1.48);

		\draw[->, black, dashed] (11.9, 1.91) -- (11.1, 1.91); 
		\draw[->, black, dashed] (11.9, 1.97) -- (11.1, 1.97);
		\draw[->, black, dashed] (11.9, 2.03) -- (11.1, 2.03);
		\draw[->, black, dashed] (11.9, 2.09) -- (11.1, 2.09);
		
		\draw[->, black, dashed] (10.9, 1.91) -- (10.1, 1.91); 
		\draw[->, black, dashed] (10.9, 1.97) -- (10.1, 1.97);
		\draw[->, black, dashed] (10.9, 2.03) -- (10.1, 2.03);
		\draw[->, black, dashed] (10.9, 2.09) -- (10.1, 2.09);

		\draw[->, black, dashed] (9.9, 1.9) -- (9.1, 1.9); 
		\draw[->, black, dashed] (9.9, 1.95) -- (9.1, 1.95);
		\draw[->, black, dashed] (9.9, 2) -- (9.1, 2);
		\draw[->, black, dashed] (9.9, 2.05) -- (9.1, 2.05);
		\draw[->, black, dashed] (9.9, 2.1) -- (9.1, 2.1);

		\draw[->, black, dashed] (8.9, 1.9) -- (8.1, 1.9); 
		\draw[->, black, dashed] (8.9, 1.95) -- (8.1, 1.95);
		\draw[->, black, dashed] (8.9, 2) -- (8.1, 2);
		\draw[->, black, dashed] (8.9, 2.05) -- (8.1, 2.05);
		\draw[->, black, dashed] (8.9, 2.1) -- (8.1, 2.1);

		\filldraw[fill=white, draw=black] (9, 0) circle [radius = .1];
		\filldraw[fill=white, draw=black] (9, 1) circle [radius = .1];
		\filldraw[fill=white, draw=black] (10, 0) circle [radius = .1];
		\filldraw[fill=white, draw=black] (10, 1) circle [radius = .1];
		\filldraw[fill=white, draw=black] (11, 0) circle [radius = .1];
		\filldraw[fill=white, draw=black] (11, 1) circle [radius = .1];
		\filldraw[fill=white, draw=black] (12, 0) circle [radius = .1];
		\filldraw[fill=white, draw=black] (12, 1) circle [radius = .1];
		\filldraw[fill=white, draw=black] (13, 0) circle [radius = .1];
		\filldraw[fill=white, draw=black] (13, 1) circle [radius = .1];
		
		\filldraw[fill=gray!50!white, draw=black] (14, 0) circle [radius = .1];
		\filldraw[fill=gray!50!white, draw=black] (9, 2) circle [radius = .1];
		
		\filldraw[fill=gray!50!white, draw=black] (10, 2) circle [radius = .1];
		
		\filldraw[fill=gray!50!white, draw=black] (11, 2) circle [radius = .1];
		\filldraw[fill=gray!50!white, draw=black] (12, 2) circle [radius = .1];

		\filldraw[fill=gray!50!white, draw=black] (13, .5) circle [radius = .1];

		\filldraw[fill=gray!50!white, draw=black] (12.5, 1) circle [radius = .1];

		\draw[->, black, thick] (18, -1) -- (18, -.1);
		
		\draw[->, black, thick] (19, -1) -- (19, -.1);
		\draw[->, black, thick] (19, .1) -- (19, .9);
		\draw[->, black, thick] (19, 1.1) -- (19, 1.9);
		
		\draw[->, black, thick] (20, .1) -- (20, .9);

		\draw[->, black, thick] (22, -1) -- (22, -.1); 
		\draw[->, black, thick] (22, .1) -- (22, .4);

		\draw[->, black, thick] (18.1, 0) -- (18.9, 0);
		
		\draw[->, black, thick] (19.1, 0) -- (19.9, 0);
		
		\draw[->, black, thick] (20.1, 1) -- (20.4, 1);

		\draw[->, black, dashed] (22.95, -.9) -- (22.95, -.1); 
		\draw[->, black, dashed] (23.05, -.9) -- (23.05, -.1);
		
		\draw[->, black, dashed] (19.9, 1.91) -- (19.1, 1.91); 
		\draw[->, black, dashed] (19.9, 1.97) -- (19.1, 1.97);
		\draw[->, black, dashed] (19.9, 2.03) -- (19.1, 2.03);
		\draw[->, black, dashed] (19.9, 2.09) -- (19.1, 2.09);

		\draw[->, black, dashed] (17.9, 1.9) -- (17.1, 1.9); 
		\draw[->, black, dashed] (17.9, 1.95) -- (17.1, 1.95);
		\draw[->, black, dashed] (17.9, 2) -- (17.1, 2);
		\draw[->, black, dashed] (17.9, 2.05) -- (17.1, 2.05);
		\draw[->, black, dashed] (17.9, 2.1) -- (17.1, 2.1);

		\draw[->, black, dashed] (18.9, 1.9) -- (18.1, 1.9); 
		\draw[->, black, dashed] (18.9, 1.95) -- (18.1, 1.95);
		\draw[->, black, dashed] (18.9, 2) -- (18.1, 2);
		\draw[->, black, dashed] (18.9, 2.05) -- (18.1, 2.05);
		\draw[->, black, dashed] (18.9, 2.1) -- (18.1, 2.1);

		\draw[->, black, dashed] (21, .58) arc(265:185:.42);
		\draw[->, black, dashed] (21, .5) arc(265:185:.5);
		\draw[->, black, dashed] (21, .42) arc(265:185:.58);

		\draw[->, black, dashed] (22.95, .1) arc(50:88:1.45);
		\draw[->, black, dashed] (23.05, .1) arc(47:89:1.45);
		
		\draw[->, black, dashed] (21.9, .58) -- (21.1, .58); 
		\draw[->, black, dashed] (21.9, .5) -- (21.1, .5); 
		\draw[->, black, dashed] (21.9, .42) -- (21.1, .42); 
		
		\draw[->, black, dashed] (20.41, 1.1) arc(2:39:1.41);
		\draw[->, black, dashed] (20.47, 1.1) arc(2:42:1.44);
		\draw[->, black, dashed] (20.53, 1.1) arc(2:45:1.47);
		\draw[->, black, dashed] (20.59, 1.1) arc(2:48:1.5);

		\filldraw[fill=white, draw=black] (18, 0) circle [radius = .1];
		\filldraw[fill=white, draw=black] (18, 1) circle [radius = .1];
		\filldraw[fill=white, draw=black] (19, 0) circle [radius = .1];
		\filldraw[fill=white, draw=black] (19, 1) circle [radius = .1];
		\filldraw[fill=white, draw=black] (20, 0) circle [radius = .1];
		\filldraw[fill=white, draw=black] (20, 1) circle [radius = .1];
		\filldraw[fill=white, draw=black] (21, 0) circle [radius = .1];
		\filldraw[fill=white, draw=black] (21, 1) circle [radius = .1];
		\filldraw[fill=white, draw=black] (22, 0) circle [radius = .1];
		\filldraw[fill=white, draw=black] (22, 1) circle [radius = .1];
		
		\filldraw[fill=gray!50!white, draw=black] (23, 0) circle [radius = .1];
		\filldraw[fill=gray!50!white, draw=black] (18, 2) circle [radius = .1];
		
		\filldraw[fill=gray!50!white, draw=black] (19, 2) circle [radius = .1];
		
		\filldraw[fill=gray!50!white, draw=black] (20, 2) circle [radius = .1];

		\filldraw[fill=gray!50!white, draw=black] (22, .5) circle [radius = .1];

		\filldraw[fill=gray!50!white, draw=black] (20.5, 1) circle [radius = .1];

		\filldraw[fill=gray!50!white, draw=black] (21, .5) circle [radius = .1];

		\draw[->, black, thick] (0, -6) -- (0, -5.1);
		
		\draw[->, black, thick] (1, -6) -- (1, -5.1);
		\draw[->, black, thick] (1, -4.9) -- (1, -4.6);
		
		\draw[->, black, thick] (2, -4.9) -- (2, -4.6);
		
		\draw[->, black, thick] (4, -4.9) -- (4, -4.6);
		\draw[->, black, thick] (4, -6) -- (4, -5.1);

		\draw[->, black, thick] (.1, -5) -- (.9, -5);
		
		\draw[->, black, thick] (1.1, -5) -- (1.9, -5);

		\draw[->, black, dashed] (4.95, -5.9) -- (4.95, -5.1); 
		\draw[->, black, dashed] (5.05, -5.9) -- (5.05, -5.1);
		
		\draw[->, black, dashed] (2.9, -4.58) -- (2.1, -4.58);
		\draw[->, black, dashed] (2.9, -4.5) -- (2.1, -4.5);
		\draw[->, black, dashed] (2.9, -4.42) -- (2.1, -4.42);
		
		\draw[->, black, dashed] (1.9, -4.59) -- (1.1, -4.59); 
		\draw[->, black, dashed] (1.9, -4.53) -- (1.1, -4.53);
		\draw[->, black, dashed] (1.9, -4.47) -- (1.1, -4.47);
		\draw[->, black, dashed] (1.9, -4.41) -- (1.1, -4.41);
		
		\draw[->, black, dashed] (.9, -4.6) -- (.1, -4.6); 
		\draw[->, black, dashed] (.9, -4.55) -- (.1, -4.55);
		\draw[->, black, dashed] (.9, -4.5) -- (.1, -4.5);
		\draw[->, black, dashed] (.9, -4.45) -- (.1, -4.45);
		\draw[->, black, dashed] (.9, -4.4) -- (.1, -4.4);
		
		\draw[->, black, dashed] (0, -4.4) arc(265:185:.4);
		\draw[->, black, dashed] (0, -4.45) arc(265:185:.43);
		\draw[->, black, dashed] (0, -4.5) arc(265:185:.46);
		\draw[->, black, dashed] (0, -4.55) arc(265:185:.49);
		\draw[->, black, dashed] (0, -4.6) arc(265:185:.52);

		\draw[->, black, dashed] (4.95, -4.9) arc(50:88:1.45);
		\draw[->, black, dashed] (5.05, -4.9) arc(47:89:1.45);
		
		\draw[->, black, dashed] (3.9, -4.42) -- (3.1, -4.42); 
		\draw[->, black, dashed] (3.9, -4.5) -- (3.1, -4.5); 
		\draw[->, black, dashed] (3.9, -4.58) -- (3.1, -4.58); 
		
		\draw[->, black, dashed] (-.6, -3.9) arc(2:39:1.4);
		\draw[->, black, dashed] (-.55, -3.9) arc(2:42:1.43);
		\draw[->, black, dashed] (-.5, -3.9) arc(2:45:1.46);
		\draw[->, black, dashed] (-.45, -3.9) arc(2:48:1.49);
		\draw[->, black, dashed] (-.4, -3.9) arc(2:48:1.52);

		\filldraw[fill=white, draw=black] (0, -5) circle [radius = .1];
		\filldraw[fill=white, draw=black] (0, -4) circle [radius = .1];
		\filldraw[fill=white, draw=black] (1, -5) circle [radius = .1];
		\filldraw[fill=white, draw=black] (1, -4) circle [radius = .1];
		\filldraw[fill=white, draw=black] (2, -5) circle [radius = .1];
		\filldraw[fill=white, draw=black] (2, -4) circle [radius = .1];
		\filldraw[fill=white, draw=black] (3, -5) circle [radius = .1];
		\filldraw[fill=white, draw=black] (3, -4) circle [radius = .1];
		\filldraw[fill=white, draw=black] (4, -5) circle [radius = .1];
		\filldraw[fill=white, draw=black] (4, -4) circle [radius = .1];
		
		\filldraw[fill=gray!50!white, draw=black] (5, -5) circle [radius = .1];
		
		\filldraw[fill=gray!50!white, draw=black] (0, -4.5) circle [radius = .1];
		\filldraw[fill=gray!50!white, draw=black] (1, -4.5) circle [radius = .1];
		\filldraw[fill=gray!50!white, draw=black] (2, -4.5) circle [radius = .1];
		\filldraw[fill=gray!50!white, draw=black] (3, -4.5) circle [radius = .1];
		\filldraw[fill=gray!50!white, draw=black] (4, -4.5) circle [radius = .1];
		
		\filldraw[fill=gray!50!white, draw=black] (-.5, -4) circle [radius = .1];

		\draw[->, black, thick] (9, -6) -- (9, -5.1);
		
		\draw[->, black, thick] (10, -6) -- (10, -5.1);
		\draw[->, black, thick] (10, -4.9) -- (10, -4.6);
		
		\draw[->, black, thick] (11, -4.9) -- (11, -4.6);

		\draw[->, black, thick] (13, -6) -- (13, -5.6);

		\draw[->, black, thick] (9.1, -5) -- (9.9, -5);
		
		\draw[->, black, thick] (10.1, -5) -- (10.9, -5);

		\draw[->, black, dashed] (11.9, -4.58) -- (11.1, -4.58);
		\draw[->, black, dashed] (11.9, -4.5) -- (11.1, -4.5);
		\draw[->, black, dashed] (11.9, -4.42) -- (11.1, -4.42);
		
		\draw[->, black, dashed] (10.9, -4.59) -- (10.1, -4.59); 
		\draw[->, black, dashed] (10.9, -4.53) -- (10.1, -4.53);
		\draw[->, black, dashed] (10.9, -4.47) -- (10.1, -4.47);
		\draw[->, black, dashed] (10.9, -4.41) -- (10.1, -4.41);

		\draw[->, black, dashed] (9.9, -4.6) -- (9.1, -4.6); 
		\draw[->, black, dashed] (9.9, -4.55) -- (9.1, -4.55);
		\draw[->, black, dashed] (9.9, -4.5) -- (9.1, -4.5);
		\draw[->, black, dashed] (9.9, -4.45) -- (9.1, -4.45);
		\draw[->, black, dashed] (9.9, -4.4) -- (9.1, -4.4);

		\draw[->, black, dashed] (8.6, -3.9) -- (8.6, -3.1);
		\draw[->, black, dashed] (8.55, -3.9) -- (8.55, -3.1);
		\draw[->, black, dashed] (8.5, -3.9) -- (8.5, -3.1);
		\draw[->, black, dashed] (8.45, -3.9) -- (8.45, -3.1);
		\draw[->, black, dashed] (8.4, -3.9) -- (8.4, -3.1);

		\draw[->, black, dashed] (13, -5.42) arc(265:185:.42);
		\draw[->, black, dashed] (13, -5.5) arc(265:185:.5);
		\draw[->, black, dashed] (13, -5.58) arc(265:185:.58);

		\draw[->, black, dashed] (12.42, -5) arc(5:85:.42);
		\draw[->, black, dashed] (12.5, -5) arc(5:85:.5);
		\draw[->, black, dashed] (12.58, -5) arc(5:85:.58);

		\draw[->, black, dashed] (13.9, -5.45) -- (13.1, -5.45); 
		\draw[->, black, dashed] (13.9, -5.55) -- (13.1, -5.55);

		\draw[->, black, dashed] (9, -4.4) arc(265:185:.4);
		\draw[->, black, dashed] (9, -4.45) arc(265:185:.43);
		\draw[->, black, dashed] (9, -4.5) arc(265:185:.46);
		\draw[->, black, dashed] (9, -4.55) arc(265:185:.49);
		\draw[->, black, dashed] (9, -4.6) arc(265:185:.52);

		\filldraw[fill=white, draw=black] (9, -5) circle [radius = .1];
		\filldraw[fill=white, draw=black] (9, -4) circle [radius = .1];
		\filldraw[fill=white, draw=black] (10, -5) circle [radius = .1];
		\filldraw[fill=white, draw=black] (10, -4) circle [radius = .1];
		\filldraw[fill=white, draw=black] (11, -5) circle [radius = .1];
		\filldraw[fill=white, draw=black] (11, -4) circle [radius = .1];
		\filldraw[fill=white, draw=black] (12, -5) circle [radius = .1];
		\filldraw[fill=white, draw=black] (12, -4) circle [radius = .1];
		\filldraw[fill=white, draw=black] (13, -5) circle [radius = .1];
		\filldraw[fill=white, draw=black] (13, -4) circle [radius = .1];
		
		\filldraw[fill=gray!50!white, draw=black] (8.5, -4) circle [radius = .1];
		\filldraw[fill=gray!50!white, draw=black] (9, -4.5) circle [radius = .1];
		
		\filldraw[fill=gray!50!white, draw=black] (10, -4.5) circle [radius = .1];
		
		\filldraw[fill=gray!50!white, draw=black] (11, -4.5) circle [radius = .1];

		\filldraw[fill=gray!50!white, draw=black] (13, -5.5) circle [radius = .1];
		
		\filldraw[fill=gray!50!white, draw=black] (12.5, -5) circle [radius = .1];
		
		\filldraw[fill=gray!50!white, draw=black] (12, -4.5) circle [radius = .1];

		\draw[->, black, thick] (18, -6) -- (18, -5.6);
		
		\draw[->, black, thick] (19, -6) -- (19, -5.6);
		
		\draw[->, black, thick] (22, -6) -- (22, -5.6);

		\draw[->, black, dashed] (18.9, -5.59) -- (18.1, -5.59);
		\draw[->, black, dashed] (18.9, -5.53) -- (18.1, -5.53);
		\draw[->, black, dashed] (18.9, -5.47) -- (18.1, -5.47);
		\draw[->, black, dashed] (18.9, -5.41) -- (18.1, -5.41);
		
		\draw[->, black, dashed] (19.9, -5.58) -- (19.1, -5.58); 
		\draw[->, black, dashed] (19.9, -5.5) -- (19.1, -5.5);
		\draw[->, black, dashed] (19.9, -5.42) -- (19.1, -5.42);
		
		\draw[->, black, dashed] (20.9, -5.58) -- (20.1, -5.58);
		\draw[->, black, dashed] (20.9, -5.5) -- (20.1, -5.5);
		\draw[->, black, dashed] (20.9, -5.42) -- (20.1, -5.42);
		
		\draw[->, black, dashed] (21.9, -5.42) -- (21.1, -5.42); 
		\draw[->, black, dashed] (21.9, -5.5) -- (21.1, -5.5); 
		\draw[->, black, dashed] (21.9, -5.58) -- (21.1, -5.58); 
		
		\draw[->, black, dashed] (22.9, -5.55) -- (22.1, -5.55);
		\draw[->, black, dashed] (22.9, -5.45) -- (22.1, -5.45);

		\draw[->, black, dashed] (18, -5.4) arc(265:185:.4);
		\draw[->, black, dashed] (18, -5.45) arc(265:185:.43);
		\draw[->, black, dashed] (18, -5.5) arc(265:185:.46);
		\draw[->, black, dashed] (18, -5.55) arc(265:185:.49);
		\draw[->, black, dashed] (18, -5.6) arc(265:185:.52);
		
		\draw[->, black, dashed] (17.4, -4.9) -- (17.4, -4.1);
		\draw[->, black, dashed] (17.45, -4.9) -- (17.45, -4.1);
		\draw[->, black, dashed] (17.5, -4.9) -- (17.5, -4.1);
		\draw[->, black, dashed] (17.55, -4.9) -- (17.55, -4.1);
		\draw[->, black, dashed] (17.6, -4.9) -- (17.6, -4.1);

		\draw[->, black, dashed] (17.4, -3.9) -- (17.4, -3.1);
		\draw[->, black, dashed] (17.45, -3.9) -- (17.45, -3.1);
		\draw[->, black, dashed] (17.5, -3.9) -- (17.5, -3.1);
		\draw[->, black, dashed] (17.55, -3.9) -- (17.55, -3.1);
		\draw[->, black, dashed] (17.6, -3.9) -- (17.6, -3.1);

		\filldraw[fill=white, draw=black] (18, -5) circle [radius = .1];
		\filldraw[fill=white, draw=black] (18, -4) circle [radius = .1];
		\filldraw[fill=white, draw=black] (19, -5) circle [radius = .1];
		\filldraw[fill=white, draw=black] (19, -4) circle [radius = .1];
		\filldraw[fill=white, draw=black] (20, -5) circle [radius = .1];
		\filldraw[fill=white, draw=black] (20, -4) circle [radius = .1];
		\filldraw[fill=white, draw=black] (21, -5) circle [radius = .1];
		\filldraw[fill=white, draw=black] (21, -4) circle [radius = .1];
		\filldraw[fill=white, draw=black] (22, -5) circle [radius = .1];
		\filldraw[fill=white, draw=black] (22, -4) circle [radius = .1];

		\filldraw[fill=gray!50!white, draw=black] (18, -5.5) circle [radius = .1];
		\filldraw[fill=gray!50!white, draw=black] (19, -5.5) circle [radius = .1];
		\filldraw[fill=gray!50!white, draw=black] (20, -5.5) circle [radius = .1];
		\filldraw[fill=gray!50!white, draw=black] (21, -5.5) circle [radius = .1];
		\filldraw[fill=gray!50!white, draw=black] (22, -5.5) circle [radius = .1];
		
		\filldraw[fill=gray!50!white, draw=black] (17.5, -4) circle [radius = .1];
		\filldraw[fill=gray!50!white, draw=black] (17.5, -5) circle [radius = .1];

		\end{tikzpicture}
		
	\end{center}
	
	\caption{\label{arrowstochasticvertices} The process of the stochasticization curve being ``pushed'' through a path ensemble is depicted above.  }
\end{figure}
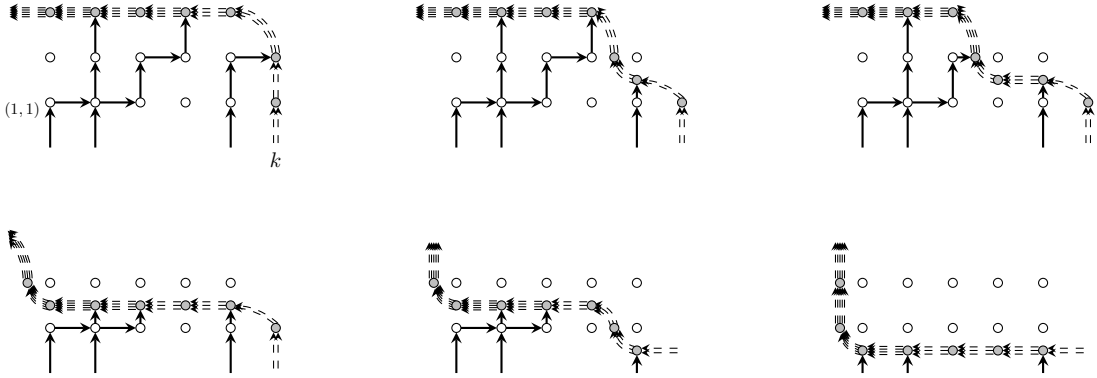

Specifically, suppose that we have a path ensemble $\mathcal{E}$ on domain $\mathcal{D}$, whose vertices we depict as unshaded (in \Cref{arrowstochasticvertices}, $\mathcal{D}$ is a $5 \times 2$ rectangle). We then attach a path of shaded vertices along the top-right boundary of $\mathcal{D}$, which are connected by dashed arrows, as shown in the top-left diagram of \Cref{arrowstochasticvertices}. We again refer to this path of dashed arrows as the \emph{stochasticization curve}, which begins with $k$ arrows entering the bottom-right shaded vertex and collects all arrows that exit $\mathcal{D}$.  

Next, we push the stochasticization curve through $\mathcal{D}$, one vertex at a time, as described above in \Cref{arrowstochastic}. For example, the top-middle diagram in \Cref{arrowstochasticvertices} depicts the stochasticization curve after being pushed through the top-right vertex $(5, 2)$ of $\mathcal{D}$; the bottom-left diagram there shows it after being pushed through the top row of $\mathcal{D}$; and the bottom-right diagram shows the stochasticization curve after being pushed through all of $\mathcal{D}$.

Now, unit squares whose vertices are lattice points in $\mathbb{Z}_{\ge 0}^2$ are faces of the graph $\mathbb{Z}_{> 0}^2$; if one of the four vertices of some face $F$ belongs to $\mathcal{D}$ we say that $F$ is a \emph{face of $\mathcal{D}$}. The pushing procedure described above provides a way to assign a nonnegative integer $k (F)$ to each face $F$ of $\mathcal{D}$ by setting $k (F)$ equal to the number of arrows in the stochasticization curve when it passes through $F$.

 By identifying $F$ with its top-left corner, this assigns an integer $k(u)$ to each vertex $u \in \mathcal{D}$. For example, if $u$ is the bottom-right vertex of $\mathcal{D}$ (which is $(5, 1)$ in \Cref{arrowstochasticvertices}), then $k(u) = k = 2$. Moreover, the top-right and bottom-middle diagrams in \Cref{arrowstochasticvertices} indicate that if $u = (3, 2)$ or $u = (4, 1)$, then $k (u) = 3$; similarly, one can verify that $k (2, 2) = 4$ and $k (1, 2) = 5$.

This provides a way to assign an integer $k(u)$ to each $u \in \mathcal{D}$, which depends on the number of arrows $k$ initially entering the stochasticization curve, as well as on the dircted path ensemble that is being stochasticized. Under this assignment, each $u \in \mathcal{D}$ can be stochasticized as in \eqref{si1j1i2j2vertex}, with weight given explicitly by \eqref{s01dynamicalweights}, where the $v = v(u)$ there is $s q^{k(u)} z^{-1}$. 

Since these parameters $k(u)$ and $v(u)$ change between vertices, they are sometimes referred to as \emph{dynamical parameters}. Based on the above description, one can quickly determine the identities governing these dynamical parameters. Specifically, let $u = (a, b)$ denote a vertex with arrow configuration $(i_1, j_1; i_2, j_2)$; denote $k(u) = k$ and $v(u) = v$. Then, the dynamical parameters at the vertices $(a - 1, b)$ and $(a, b + 1)$ are given by
\begin{flalign}
\label{ukv}
k (a - 1, b) = k + i_1; \quad k (a, b + 1) = k + j_2; \quad v (a - 1, b) = q^{i_1} v; \quad v (a, b + 1) = q^{j_2} v. 
\end{flalign}

\noindent We refer to \Cref{vdynamicalvertex1} for a depiction of these identities, which shows dynamical parameters corresponding to four adjacent faces (and therefore four adjacent vertices, since each face is associated with its top-left vertex). In this way, if we fix a path ensemble on some domain $\mathcal{D}$ and the dynamical parameter $k$ or $v$ at one vertex in $\mathcal{D}$, then the dynamical parameters $k (u)$ and $v (u)$ are defined at all $u \in \mathcal{D}$ through \eqref{ukv} and the fact that $v(u) = s q^{k(u)} z^{-1}$. This procedure of assigning dynamical parameters is not restricted to subdomains of $\mathbb{Z}^2$; it can be applied to any oriented, planar graph admitting a stochasticization curve that can be passed through it using the Yang-Baxter equation (see \Cref{DefinitionWeights}).

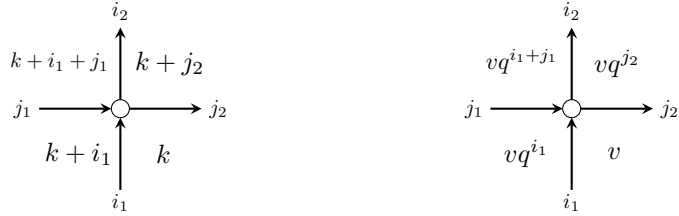
\begin{figure}

	\begin{center}

		\begin{tikzpicture}[
		>=stealth,
		auto,
		style={
			scale = 1.2
		}
		]

		\filldraw[fill=white, draw=black] (1, 0) circle [radius=.1] node[below = 17, right = 10]{$k$} node[scale = .8, above = 22, left = 2]{$k + i_1 + j_1$};
		
		\filldraw[fill=white, draw=black] (1, 1) circle [radius=0] node[below = 17, right = 2]{$k + j_2$};
		
		\filldraw[fill=white, draw=black] (0, 0) circle [radius=0] node[below = 17, right = 2]{$k + i_1$};

		\draw[->, black, thick] (1, -.9) -- (1, -.1) node[below = 26, scale = .8]{$i_1$}; 
		
		\draw[->, black, thick] (1, .1) -- (1, .9) node[scale = .8, above = 0]{$i_2$}; 
		
		\draw[->, black, thick] (.1, 0) -- (.9, 0)  node[left = 27, scale = .8]{$j_1$}; 
		
		\draw[->, black, thick] (1.1, 0) -- (1.9, 0) node[right = 0, scale = .8]{$j_2$};

		\filldraw[fill=white, draw=black] (6, 0) circle [radius=.1] node[below = 17, right = 10]{$v$} node[scale = .9, above = 20, left = 2]{$v q^{i_1 + j_1}$};
		
		\filldraw[fill=white, draw=black] (6, 1) circle [radius=0] node[below = 17, right = 5]{$v q^{j_2}$};
		
		\filldraw[fill=white, draw=black] (5, 0) circle [radius=0] node[below = 17, right = 5]{$v q^{i_1}$};

		\draw[->, black, thick] (6, -.9) -- (6, -.1) node[below = 26, scale = .8]{$i_1$}; 
		
		\draw[->, black, thick] (6, .1) -- (6, .9) node[scale = .8, above = 0]{$i_2$}; 
		
		\draw[->, black, thick] (5.1, 0) -- (5.9, 0) node[left = 27, scale = .8]{$j_1$}; 
		
		\draw[->, black, thick] (6.1, 0) -- (6.9, 0) node[right = 0, scale = .8]{$j_2$};

		\end{tikzpicture}
		
	\end{center}

	\caption{\label{vdynamicalvertex1} Depicted above is the way in which the dynamical parameters $k$ and $v$ change between faces.  }
\end{figure}

The following proposition states that, under the above choice of assignment of dynamical parameter $v(u)$, the $S$ weights satisfy the Yang-Baxter equation; see \Cref{equationpaths}. 

\begin{prop}
	
	\label{sdynamicalequation1}
	
	Fix $i_1, j_1, k_1, i_3, j_3, k_3 \in \{ 0, 1 \}$ and $x, y, z, v \in \mathbb{C}$. Then, 
	\begin{flalign} 
	\label{dynamicalequationspin1}
	\begin{aligned}
	\displaystyle\sum_{i_2, j_2, k_2 \in \{ 0, 1 \}}  & S \big( i_1, j_1; i_2, j_2 \b| x, y; q^{k_1} v \big) S \big( k_1, j_2; k_2, j_3 \b| x, z; v \big)  S \big( k_2, i_2; k_3, i_3 \b| y, z; q^{j_3} v \big) \\
	\qquad = \displaystyle\sum_{i_2, j_2, k_2 \in \{ 0, 1 \} } & S \big( k_1, i_1; k_2, i_2 \b| y, z; v \big)  S \big( k_2, j_1; k_3, j_2 \b| x, z; q^{i_2} v \big)  S \big( i_2, j_2; i_3, j_3 \b| x, y; v \big). 
	\end{aligned}
	\end{flalign}
\end{prop}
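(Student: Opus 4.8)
The plan is to deduce the hexagon identity \eqref{dynamicalequationspin1} from the ``all $w$'' version of the ordinary Yang--Baxter equation of \Cref{equationspin} by attaching an auxiliary stochasticization curve to the hexagon and pushing it through both sides. Recall that this version of \eqref{equationspin1} equates two configurations of three $w$-vertices placed at the pairwise crossings of rapidity lines $x$, $y$, and $z$, with the six outer edge occupations $i_1, j_1, k_1, i_3, j_3, k_3$ held fixed and the three inner ones $i_2, j_2, k_2$ summed; the labels in \eqref{dynamicalequationspin1} distribute exactly along the $x$-, $y$-, and $z$-strands of this hexagon, so the three $S$-vertices there sit at the $x \cap y$, $x \cap z$, and $y \cap z$ crossings on each side.

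The starting point is the elementary ``single-vertex pushing'' relation, which is \eqref{si1j1i2j2vertex} rewritten (and, via \Cref{dynamicalweights1}, expressed through the parameter $v = s q^k \zeta^{-1}$) as
\begin{flalign}
\label{singlepush}
\begin{aligned}
& w \big( i_1, j_1; i_2, j_2 \b| a, b \big) \, \chi_s \big( k, j_2; k + j_2, 0 \b| a, \zeta \big) \, \chi_s \big( k + j_2, i_2; k + i_2 + j_2, 0 \b| b, \zeta \big) \\
& \qquad = S \big( i_1, j_1; i_2, j_2 \b| a, b; s q^k \zeta^{-1} \big) \, \chi_s \big( k + i_1, j_1; k + i_1 + j_1, 0 \b| a, \zeta \big) \, \chi_s \big( k, i_1; k + i_1, 0 \b| b, \zeta \big),
\end{aligned}
\end{flalign}
valid whenever the two $\chi_s$-factors on the right are nonzero; here $\zeta$ is an auxiliary rapidity. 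Diagrammatically \eqref{singlepush} says that, for a single $w$-vertex with row rapidity $a$ and column rapidity $b$, attaching a stochasticization curve of rapidity $\zeta$ carrying $k$ arrows at its upper-right corner and pushing it through to the lower-left turns $w$ into $S$, while moving the two boundary $\chi_s$-factors from the output edges of the vertex to its input edges. I would then fix such a curve, carrying $k$ arrows initially and with its exit edges frozen, attached to the upper-right boundary of the $w$-hexagon in a position admissible for both of its sides at once (possible since the two sides share the same outer boundary), and push it through the three $w$-vertices one at a time using \eqref{singlepush}. On the left-hand side this converts $w(x,y)\,w(x,z)\,w(y,z)$, summed over $i_2, j_2, k_2$, into $S \cdot S \cdot S$ times the ratio of the product of $\chi_s$-weights along the curve in its initial position to that in its final position; the same happens on the right.

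It then remains to match everything up. First, the dynamical parameters of the three $S$-vertices are read off from the arrow counts of the curve as it passes the three crossings, which are governed by \eqref{ukv}; a direct tracking of these increments gives, on the left-hand side, parameters $v$, $q^{k_1} v$, $q^{j_3} v$ at the $x \cap z$, $x \cap y$, $y \cap z$ vertices (with $v = s q^k \zeta^{-1}$), and on the right-hand side parameters $v$, $q^{i_2} v$, $v$ at the $y \cap z$, $x \cap z$, $x \cap y$ vertices --- exactly as in \eqref{dynamicalequationspin1}. Second, the $\chi_s$-ratio produced on the two sides is the same: arrow conservation for the $w$- and $\chi_s$-weights together with the frozen boundary force the curve to enter with $k$ arrows and to leave with the same total number on both sides, and after simplification the $\chi_s$-weights along the curve's initial and final portions depend only on the common outer edge data and on $k$. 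Dividing the $w$-hexagon identity, with the curve pushed through on both sides, by this common factor and using \Cref{equationspin} for the bulk $w$-vertices then yields \eqref{dynamicalequationspin1}.

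The step I expect to be the main obstacle is exactly this matching: one must check that a single stochasticization curve can be threaded through both hexagon configurations so as to realize each side of \eqref{dynamicalequationspin1} and to leave identical boundary factors --- equivalently, that in the four-strand diagram built from $x$, $y$, $z$ and the curve $\zeta$, sliding the $\zeta$-strand and performing the $w$-hexagon flip can be carried out in either order. This reduces to repeated application of \Cref{equationspin} to move $\zeta$ past pairs of the other three strands, together with careful bookkeeping via \eqref{ukv} of which vertex contributes which increment to the curve's arrow count; once that is set up, the remaining verifications are routine. As an independent check, \eqref{dynamicalequationspin1} can also be confirmed directly: substituting the explicit weights \eqref{s01dynamicalweights} reduces it, for each of the finitely many choices of the outer labels $i_1, j_1, k_1, i_3, j_3, k_3 \in \{0, 1\}$, to an elementary rational-function identity in the rapidities and $v$.
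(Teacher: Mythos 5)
Your proposal is correct and follows essentially the same route as the paper: the paper's proof also writes each $S$ as $w$ times a ratio of $\chi_s$ weights, shows (using arrow conservation, e.g.\ $k_1 + j_2 = k_2 + j_3$) that the product of these ratios telescopes to a quantity depending only on the fixed outer indices and is identical on both sides, and then divides out to reduce the claim to the $w$-weight Yang--Baxter equation of \Cref{equationspin}. Your ``pushing the stochasticization curve through the hexagon'' is exactly the diagrammatic rendering of that telescoping computation, and your reading of the dynamical parameters $q^{k_1}v$, $v$, $q^{j_3}v$ and $v$, $q^{i_2}v$, $v$ at the three crossings matches the paper's.
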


\begin{proof} 
	
	We will derive \eqref{dynamicalequationspin1} from the Yang-Baxter equation for the $w$ weights given by the last statement of \Cref{equationspin}. To that end, set $v = s q^r$ for some integer $r > 0$ and complex number $s$. In view of \eqref{si1j1i2j2vertex} (with the $k$ there replaced by our $r$ here and the $z$ there equal to $1$ here), the summand on the left side of \eqref{dynamicalequationspin1} divided by $w \big( i_1, j_1; i_2, j_2 \b| x, y \big) w \big( k_1, j_2; k_2, j_3 \b| x, z \big)  w \big( k_2, i_2; k_3, i_3 \b| y, z \big)$ is equal to
	\begin{flalign}
	\label{stochastic1dynamical}
	\begin{aligned}
	&  \displaystyle\frac{\chi_s \big(  r + k_1, j_2; r + k_1 + j_2, 0 \b| x, 1 \big)}{\chi_s \big( r + k_1 + i_1, j_1; r + i_1 + j_1 + k_1; 0 \b| x, 1 \big)} \displaystyle\frac{\chi_s \big( r + k_1 + j_2, i_2; r + i_2 + j_2 + k_1, 0 \b| y, 1 \big)}{\chi_s \big( r + k_1, i_1; r + k_1 + i_1; 0 \b| y, 1 \big)} \\
	& \quad \times\displaystyle\frac{\chi_s \big( r, j_3; r + j_3, 0 \b| x, 1 \big)}{\chi_s \big( r + k_1, j_2; r + k_1 + j_2; 0 \b| x, 1 \big)} \displaystyle\frac{\chi_s \big( r + j_3, k_2; r + k_2 + j_3, 0 \b| z, 1 \big)}{\chi_s \big( r, k_1; r + k_1; 0 \b| z, 1 \big)} \\
	& \quad \times \displaystyle\frac{\chi_s \big( r + j_3, i_3; r + i_3 + j_3, 0 \b| y, 1 \big)}{\chi_s \big( r + j_3 + k_2, i_2; r + i_2 + j_3 + k_2; 0 \b| y, 1 \big)} \displaystyle\frac{\chi_s \big( r + i_3 + j_3, k_3; r + i_3 + j_3 + k_3, 0 \b| z, 1 \big)}{\chi_s \big( r + j_3, k_2; r + j_3 + k_2; 0 \b| z, 1 \big)} \\
	& \qquad  =  \displaystyle\frac{\chi_s \big( r, j_3; r + j_3, 0 \b| x, 1 \big)}{\chi_s \big( r + k_1 + i_1, j_1; r + i_1 + j_1 + k_1; 0 \b| x, 1 \big)} \displaystyle\frac{\chi_s \big( r + j_3, i_3; r + i_3 + j_3, 0 \b| y, 1 \big)}{\chi_s \big( r + k_1, i_1; r + k_1 + i_1; 0 \b| y, 1 \big)} \\
	& \qquad \qquad \times  \displaystyle\frac{\chi_s \big( r + i_3 + j_3, k_3; r + i_3 + j_3 + k_3, 0 \b| z, 1 \big)}{\chi_s \big( r, k_1; r + k_1; 0 \b| z, 1 \big)},
	\end{aligned}
	\end{flalign}
		
	\noindent where we are using the fact that replacing $v$ by $q^a v$ corresponds to replacing $r$ by $r + a$ for any integer $a$, and also that $k_1 + j_2 = k_2 + j_3$ (due to arrow conservation). By similar reasoning, the right side of \eqref{dynamicalequationspin1} divided by $w \big( k_1, i_1; k_2, i_2 \b| y, z \big) w \big( k_2, j_1; k_3, j_2 \b| x, z \big) w \big( i_2, j_2; i_3, j_3 \b| x, y \big)$ is also equal to the right side of \eqref{stochastic1dynamical}. 
	
	Since that quantity is only dependent on the fixed boundary parameters $i_1, j_1, k_1, i_3, j_3, k_3$ (equivalently, they are independent of the interior, summed parameters $i_2$, $j_2$, and $k_2$), \eqref{dynamicalequationspin1} follows from the last statement of \Cref{equationspin}. 
\end{proof}

\begin{rem} 

\label{dynamicalsixvertexequationparameter}

Observe that the proof of \Cref{sdynamicalequation1} did not require the explicit values of the $\chi_s$ and $w$ weights given by \Cref{spin12spins}. Instead, it used the fact that \eqref{stochastic1dynamical} is independent of the intermediate indices $i_2, j_2, k_2$ (which are the ones that are summed over in \eqref{dynamicalequationspin1}), which was in turn due to the fact that each appearance of a $\chi_s$ weight in the numerator of the left side of \eqref{stochastic1dynamical} is matched by one in the denominator. 

In fact, by imposing this matching, one could guess the way in which the dynamical parameters should appear in \eqref{dynamicalequationspin1} without initially having the diagrammatic interpretation for these parameters given by \Cref{arrowstochasticvertices}. To that end, one would search for integers $a, b, c$ such that the quantity 
\begin{flalign}
\label{swi2j2k2}
\displaystyle\frac{  S \big( i_1, j_1; i_2, j_2 \b| x, y; q^a v \big) S \big( k_1, j_2; k_2, j_3 \b| x, z; q^b v \big)  S \big( k_2, i_2; k_3, i_3 \b| y, z; q^c v \big) }{ w \big( i_1, j_1; i_2, j_2 \b| x, y \big) w \big( k_1, j_2; k_2, j_3 \b| x, z	 \big)  w \big( k_2, i_2; k_3, i_3 \b| y, z \big) }
\end{flalign}

\noindent is independent of $i_2$, $j_2$, and $k_2$. Following the proof of \Cref{sdynamicalequation1}, we find that \eqref{swi2j2k2} is equal to 
\begin{flalign}
\label{stochastic1dynamical1}
\begin{aligned}
&  \displaystyle\frac{\chi_s \big(  r + a, j_2; r + a + j_2, 0 \b| x, 1 \big)}{\chi_s \big( r + a + i_1, j_1; r + i_1 + j_1 + a; 0 \b| x, 1 \big)} \displaystyle\frac{\chi_s \big( r + a + j_2, i_2; r + i_2 + j_2 + a, 0 \b| y, 1 \big)}{\chi_s \big( r + a, i_1; r + a + i_1; 0 \b| y, 1 \big)} \\
& \quad \times\displaystyle\frac{\chi_s \big( r + b, j_3; r + b + j_3, 0 \b| x, 1 \big)}{\chi_s \big( r + b + k_1, j_2; r + b + k_1 + j_2; 0 \b| x, 1 \big)} \displaystyle\frac{\chi_s \big( r + b + j_3, k_2; r + b + k_2 + j_3, 0 \b| z, 1 \big)}{\chi_s \big( r + b, k_1; r + k_1; 0 \b| z, 1 \big)} \\
& \quad \times \displaystyle\frac{\chi_s \big( r + c, i_3; r + c + i_3, 0 \b| y, 1 \big)}{\chi_s \big( r + c + k_2, i_2; r + c + i_2 + k_2; 0 \b| y, 1 \big)} \displaystyle\frac{\chi_s \big( r + c + i_3, k_3; r + c + i_3 + k_3, 0 \b| z, 1 \big)}{\chi_s \big( r + c, k_2; r + c + k_2; 0 \b| z, 1 \big)}.
\end{aligned}
\end{flalign}

As explained above, one way of guaranteeing that \eqref{stochastic1dynamical1} be independent of $i_2, j_2, k_2$ would be to impose a matching between any term in the numerator involving one of these indices with one in the denominator. For instance, this would require us to match 
\begin{flalign*}
\chi_s \big(  r + a, j_2; r + a + j_2, 0 \b| x, 1 \big) = \chi_s \big( r + b + k_1, j_2; r + b + k_1 + j_2; 0 \b| x, 1 \big),
\end{flalign*}

\noindent which would be true if $a = b + k_1$. Similarly, one equates 
\begin{flalign*}
& \chi_s \big( r + a + j_2, i_2; r + i_2 + j_2 + a, 0 \b| y, 1 \big) = \chi_s \big( r + c + k_2, i_2; r + c + i_2 + k_2; 0 \b| y, 1 \big); \\
& \chi_s \big( r + b + j_3, k_2; r + b + k_2 + j_3, 0 \b| z, 1 \big) = \chi_s \big( r + c, k_2; r + c + k_2; 0 \b| z, 1 \big),
\end{flalign*} 

\noindent which are satisfied if $a + j_2 = c + k_2$ and $c = b + j_3$, respectively. In particular, these equations are all satisfied for $(a, b, c) = (k_1, 0, j_3)$, which is indeed the choice of parameters on the left side of \eqref{dynamicalequationspin1}. One could then similarly guess the way in which the dynamical parameters should appear on the right side of \eqref{dynamicalequationspin1}. 

This form of reasoning is not quite necessary in the present situation, since the identities \eqref{ukv} governing the dynamical parameter $v$ can be obtained through the diagrammatic procedure depicted in \Cref{arrowstochasticvertices}. However, we will implement a variant of this analysis in the proof of \Cref{3equationdimensiondynamical} in \Cref{EquationStochastic3}, where such a diagram appears to be unavailable. 
\end{rem}

\section{The Stochasticization Procedure}

\label{Domain}

In this section we provide a general procedure for stochasticizing solutions to the (two-dimensional) Yang-Baxter equation; we explain it in Section \ref{DefinitionWeights} and then establish some properties about the stochasticized weights in Section \ref{PropertiesWeights}. 
	
\subsection{Stochasticization of Face Weights}

\label{DefinitionWeights}

In this section we describe the general (two-dimensional version of the) stochasticization procedure that produces a stochastic solution to the Yang-Baxter equation starting from a nonnegative one. The setting here will be that of an \emph{interaction round-a-face (IRF) model} as opposed to of a vertex model of the type studied in \Cref{VertexModel}, and so our weights will be with respect to faces (and not vertices) of a graph. This will cause the notation here to be slightly different from what was described in \Cref{VertexModel} but, as we will explain further below, these two perspectives are essentially equivalent by dualizing the underlying graph. 

We begin with a finite family $\mathcal{L}$ of directed lines\footnote{For convenience, we will sometimes draw these lines as curves in the plane to simplify diagrams.} in $\mathbb{R}^2$, that is, each line $\ell \in \mathcal{L}$ has both a positive and negative direction. We assume that these directions are \emph{consistent}, meaning that there exists an open half-plane $H \subset \mathbb{R}^2$ that intersects each $\ell \in \mathcal{L}$ in its positive direction. Stated equivalently, there do not exist three lines in $\mathfrak{L}$ whose positive and negative directions ``interlace.'' We refer to Figure \ref{directions} for examples.

\begin{figure}

	\begin{center}

		\begin{tikzpicture}[
		>=stealth,
		auto,
		style={
			scale = 1
		}
		]

		\fill[fill=white!70!gray] (-1.5, .5) -- (-1.5, 2.5) -- (2.5, 2.5) -- (2.5, -.5) -- (-1.5, .5);

		\draw[->, black] (0, -1)  -- (0, 2); 
		\draw[->, black] (1, -1) -- (-1, 2); 
		\draw[->, black] (-1, -.5) -- (2, .5);
		\draw[->, black] (.5,-1) -- (2, 2); 
		\draw[->, black] (-1, 0) -- (1, 2);

		\draw[->, black] (7, 2) -- (7, -1); 
		\draw[->, black] (8, -1) -- (6, 2); 
		\draw[->, black] (6, -.5) -- (9, .5);

		\end{tikzpicture}
		
	\end{center}

	\caption{\label{directions} Shown to the left is a set of five lines whose directions are consistent. Shown to the right are three lines whose directions are inconsistent since their directions interlace.}

\end{figure}
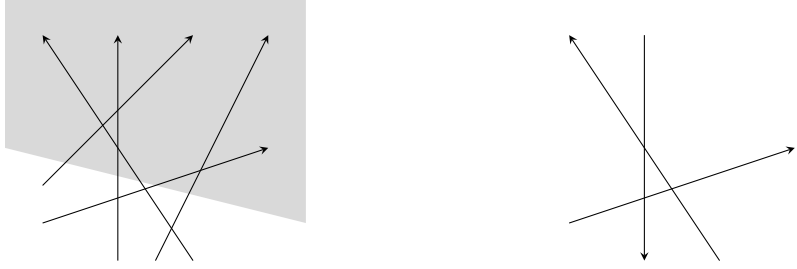

The set $\mathcal{L}$ induces a directed, planar graph $\mathcal{D}$, whose vertex set consists of the intersection points between elements of $\mathcal{L}$ and whose edge set consists of segments of the (directed) lines in $\mathcal{L}$ connecting adjacent vertices. Let $\mathcal{G}$ denote the dual graph to $\mathcal{D}$, that is, a vertex of $\mathcal{G}$ is a face of $\mathcal{D}$ and two vertices of $\mathcal{G}$ are connected by an edge if the corresponding faces in $\mathcal{D}$ share an edge.

We call the faces of $\mathcal{G}$ \emph{plaquettes}; they correspond to the vertices of $\mathcal{D}$. Each plaquette $P$ has four vertices $(v_1, v_2, v_3, v_4)$, which we order counterclockwise and in such a way that the two edges adjacent to $v_1$ are the segments closer to the negative directions of the two lines in $\mathcal{L}$ that intersect to form $P$; see the left side of Figure \ref{frozenvertices} for an example. We will call the edges in $\mathcal{G}$ connecting $(v_1, v_2)$ and $(v_3, v_4)$ \emph{horizontal} and the ones connecting $(v_2, v_3)$ and $(v_1, v_4)$ \emph{vertical} (although these edges might not be parallel to the axes). 

Denoting the vertex set of $\mathcal{G}$ by $V$, fix some complex vector space $\mathfrak{h}$ and two functions $\mathcal{F}: V \rightarrow \mathfrak{h}$ and $w: \mathfrak{h} \times \mathfrak{h} \times \mathfrak{h} \times \mathfrak{h} \rightarrow \mathbb{C}$; we refer to $\mathcal{F}$ as a \emph{height function} and $w$ as a \emph{weight function}. We view $w (a, b, c, d)$ as the weight of a plaquette $P = (v_1, v_2, v_3, v_4)$ that satisfies $\big( \mathcal{F} (v_1), \mathcal{F} (v_2), \mathcal{F} (v_3), \mathcal{F} (v_4) \big) = (a, b, c, d)$. 

\begin{rem} 
	
\label{vertexweightsmodel}	
	
Let us explain how the setting described here degenerates to that of \Cref{VertexModel}. There, the weights were with respect to vertices of a domain $\mathcal{D}$. By taking $\mathcal{G}$ to be the dual graph of $\mathcal{D}$ as above, these vertex weights now become plaquette weights on $\mathcal{G}$. 

The quadruple $\big( \mathcal{F} (a), \mathcal{F} (b), \mathcal{F} (c), \mathcal{F} (d) \big)$ corresponding to a plaquette here is the analog of the arrow configuration of a vertex defined in \Cref{SixVertexEquation}. Explicitly, we may define the height function $\mathfrak{h}$ on $\mathcal{G}$ by setting $\mathfrak{h} (v)$ to be the number of paths to the right of the vertex $v$ of $\mathcal{G}$ (or, equivalently, of the corresponding face in $\mathcal{D}$). Then, the height function at the four corners of a plaquette $P$ in $\mathcal{G}$ determine the arrow configuration of the vertex in $\mathcal{D}$ corresponding to $P$ through the identities 
\begin{flalign}
\label{fi1j1i2j2}
i_1 = \mathcal{F} (v_1) - \mathcal{F} (v_2); \qquad j_1 = \mathcal{F} (v_4) - \mathcal{F} (v_1); \qquad i_2 = \mathcal{F} (v_4) - \mathcal{F} (v_3);  \qquad j_2 = \mathcal{F} (v_3) - \mathcal{F} (v_2). 
\end{flalign}

Thus, by dualizing, the weight of a vertex in $u \in \mathcal{D}$ (dependent on the arrow configuration of $u$) becomes the weight of the corresponding plaquette $P$ in $\mathcal{G}$ (dependent on the height function at the four vertices of $P$). 

In the examples to be discussed in \Cref{StochasticElliptic}, \Cref{RankDynamical}, and \Cref{DynamicalDimension}, we will adopt this vertex perspective as opposed to the face model one described here (including in situations where we analyze face models, such as in \Cref{StochasticElliptic}). However, as indicated above, these two points of view are equivalent, and so we will omit comments of this type in later sections.

\end{rem} 

The first and second definitions below provide notation on when the weight function is nonzero or zero, respectively.

\begin{definition}

\label{admissible}

 We call a quadruple $(a, b, c, d) \in \mathfrak{h} \times \mathfrak{h} \times \mathfrak{h} \times \mathfrak{h}$ \emph{admissible (with respect to $w$)} if $w(a, b, c, d) \ne 0$. Let $\Adm_1 (a, b, c; w), \Adm_3 (a, b, c; w),\subset \mathfrak{h}$ denote the sets of $d \in \mathfrak{h}$ such that $(d, a, b, c)$ and $(a, b, d, c)$ are admissible, respectively. Also define $\Adm (a, b, c; w) =  \Adm_1 (a, b, c; w) \cup \Adm_3 (a, b, c; w)$.

\end{definition}

\begin{definition}
	
	\label{triplefrozen} 
	
We  call a triple $(b, c, d) \in \mathfrak{h} \times \mathfrak{h} \times \mathfrak{h}$ \emph{frozen} (with respect to $w$) if $\big| \Adm_1 (b, c, d) \big| = 1$. Furthermore, we say that a plaquette $P = (v_1, v_2, v_3, v_4)$ has an \emph{frozen boundary} (with respect to a height function $\mathcal{F}$) if $\big( \mathcal{F} (v_2), \mathcal{F} (v_3), \mathcal{F} (v_4) \big)$ is frozen; we depict this diagrammatically by placing squares in the corners of any plaquette with a frozen boundary (see the right side of Figure \ref{frozenvertices}). 

\end{definition}

\begin{rem}
	
\label{vertexadmissiblefrozen} 

Again, let us take a moment to explain the analogs of these two notions in the setting of \Cref{VertexModel}. As indicated in \Cref{vertexweightsmodel}, the height function $\mathfrak{h}$ evaluated at the four vertices of a plaquette $P = (v_1, v_2, v_3, v_4)$ determines the arrow configuration $(i_1, j_1; i_2, j_2)$ at the vertex $u \in \mathcal{D}$ corresponding to $P$ through the identities \eqref{fi1j1i2j2}. The admissibility constraint here means that $(a, b, c, d) = \big( \mathcal{F} (v_1), \mathcal{F} (v_2), \mathcal{F} (v_3), \mathcal{F} (v_4) \big)$ are chosen such that $w (i_1, j_1; i_2, j_2) \ne 0$. This requires $i_1, j_1, i_2, j_2 \ge 0$, which means that $a, c \in [b, d]$; in particular, $\big| \Adm (x, y, z) \big| < \infty$ for any $x, y, z \in \mathbb{Z}$. 

Now, \eqref{fi1j1i2j2} implies that fixing the last three elements of the height function quadruple $(a, b, c, d)$ of a plaquette in $\mathcal{G}$ fixes the numbers $(i_2, j_2)$ of outgoing vertical and horizontal arrows at the corresponding vertex of $\mathcal{D}$. Thus, the statement that a triple $(b, c, d)$ freezes is equivalent to that the corresponding pair $(i_2, j_2)$ admits only one pair $(i_1, j_1) \in \mathbb{Z}_{\ge 0}^2$ such that $w (i_1, j_1; i_2, j_2) \ne 0$. As mentioned in \Cref{SixVertexEquation}, this is guaranteed if $i_2 = 0 = j_2$, in which case $b = c = d$. 

\end{rem}

\begin{figure}

	\begin{center}

		\begin{tikzpicture}[
		>=stealth,
		auto,
		style={
			scale = 1.5
		}
		]

		\draw[black] (-5, 0) -- (-5, 1); 
		\draw[black] (-5, 0) -- (-4, 0);
		\draw[black] (-4, 0) -- (-4, 1); 
		\draw[black] (-5, 1) -- (-4, 1);

		\draw[->, black, dotted] (-5.5, .4) -- (-3.5, .6);
		
		\draw[->, black, dotted] (-4.6, -.5) -- (-4.4, 1.5);

		\fill[black] (-5, 0) circle [radius=.03] node [scale = .8, black, below = 0]{$v_1$};
		\fill[black] (-4, 0) circle [radius=.03] node [scale = .8, black, below = 0]{$v_2$};
		\fill[black] (-4, 1) circle [radius=.03] node [scale = .8, black, above = 0]{$v_3$};
		\fill[black] (-5, 1) circle [radius=.03] node [scale = .8, black, above = 0]{$v_4$};

		\draw[black] (0, 0) -- (0, 1); 
		\draw[black] (0, 0) -- (1, 0);
		\draw[black] (1, 0) -- (1, 1); 
		\draw[black] (0, 1) -- (1, 1);

		\fill[black] (-.08, -.08) rectangle (.08, .08) node [scale = .8, black, left = 2, below = 9] {$v_1$};
		\fill[black] (.92, -.08) rectangle (1.08, .08) node [scale = .8, black, left = 2, below = 9] {$v_2$};
		\fill[black] (.92, .92) rectangle (1.08, 1.08) node [scale = .8, black, left = 2, above = 0] {$v_3$};
		\fill[black] (-.08, .92) rectangle (.08, 1.08) node [scale = .8, black, left = 2, above = 0] {$v_4$};

		\end{tikzpicture}
		
	\end{center}

	\caption{\label{frozenvertices} To the left is the labeling of vertices on a plaquette in $\mathcal{G}$; the dotted lines there are elements of $\mathcal{L}$. To the right is a frozen plaquette, which we indicate by placing squares in its corners. }

\end{figure}
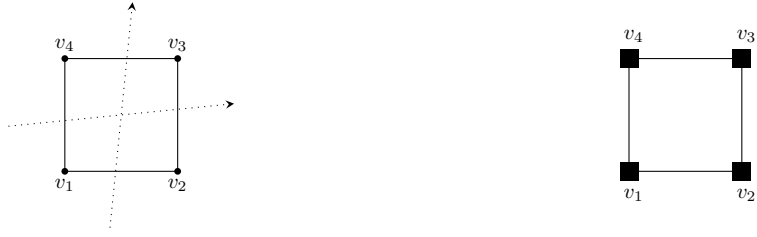

Now let us fix an additional function $\chi: \mathfrak{h} \times \mathfrak{h} \times \mathfrak{h} \times \mathfrak{h} \rightarrow \mathbb{C}$. The following definition stipulates certain conditions on the pair $(w, \chi)$ indicating when it is possible to use $\chi$ to create a stochastic solution to the Yang-Baxter relation from $w$. 

\begin{definition}
	
	\label{wchiconditions}
	
	Adopting the notation above, we say that $w$ is \emph{stochasticizable} with respect to $\chi$ if the following three conditions hold.

\begin{enumerate}
	
	\item For each $a, b, c \in \mathfrak{h}$, both	 $\big| \Adm (a, b, c; w) \big|$ and $\big| \Adm (a, b, c; \chi) \big|$ are finite. 
	
	\item The weights $w$ satisfy the \emph{Yang-Baxter equation}, which states that 
	\begin{flalign}
	\label{unshadedequation1}
	\begin{aligned}
	\displaystyle\sum_{c \in \mathfrak{h}} &	w (a, b, c, d)  w (b, e, f, c) w (c, f, g, d) = \displaystyle\sum_{c \in \mathfrak{h}}	w (b, e, c, a) w (a, c, g, d) w (c, e, f, g),
	\end{aligned}
	\end{flalign}
	
	\noindent for any fixed $a, b, d, e, f, g \in \mathfrak{h}$. Diagrammatically, this is the equation
		\begin{flalign}
		\label{unshadedequation}
		\begin{aligned}
			\begin{tikzpicture}[
			>=stealth,
			auto,
			style={
				scale = .75
			}
			]
			\draw (-2, 0) -- (-1, 1) --  (-1, 2) -- (-2, 1) -- (-2, 0); 
			\draw (-3, 1) -- (-2, 2) -- (-1, 2) -- (-2, 1) -- (-3, 1); 
			\draw[-, black] (-3, 0) -- (-2,  0); 
			\draw[-, black] (-3, 0) -- (-3,  1); 
			\draw[-, black] (-3, 1) -- (-2,  1); 
			\draw[-, black] (-2, 0) -- (-2,  1); 
			\draw[->, black, dotted] (-2.6, -.5) arc(180:135:4) node[scale = .6, above = 0]{$\ell_1$};
			\draw[->, black, dotted] (-1.5, .3) arc(0:70:2) node[scale = .6, above = 0]{$\ell_3$};
			\draw[->, black, dotted] (-3.4, .35) arc(270:315:4) node[scale = .6, above = 0]{$\ell_2$};
			\draw[->, black, dotted] (4.7, -.4) arc(-45:0:4) node[scale = .6, above = 0]{$\ell_1$};
			\draw[->, black, dotted] (6.2, .5) arc(270:197:2) node[scale = .6, above = 0]{$\ell_3$};
			\draw[->, black, dotted] (4, .6) arc(135:92:4) node[scale = .6, left = 1, right = 0]{$\ell_2$};
			\filldraw[fill=black, draw=black] (-3, 0) circle [radius=.03] node [scale = .75, black, below = 0] {$a$};
			\filldraw[fill=black, draw=black] (-2,  0) circle [radius=.03] node [scale = .75, black, left = 1, below = 0] {$b$};
			\filldraw[fill=black, draw=black] (-3, 1) circle [radius=.03] node [scale = .75, black, above = 2 ] {$d$};
			\fill[black] (-2, 1) circle [radius=.03] node [scale = .75, black, right = 0, below = 6, left = 0] {$c$};
			\fill[black] (-1, 1) circle [radius=.03] node [scale = .75, black, below = 2, right = 0] {$e$};
			\fill[black] (-2, 2) circle [radius=.03] node [scale = .75, black, above = 0] {$g$};
			\fill[black] (-1, 2) circle [radius=.03] node [scale = .75, black, above = 0] {$f$};		
			\draw[black] (.8, .85) circle[radius=0] node[scale = 1.8]{$=$}; 	
			\filldraw[fill=black, draw=black] (-3.5, .75) circle [radius=0] node [scale = 1.8, black, left = 1] {$\displaystyle\sum_c$};
			\draw (4.25, 0) -- (5.25, 1) --  (5.25, 2) -- (4.25, 1) -- (4.25, 0); 
			\draw (4.25, 0) -- (5.25, 1) -- (6.25, 1) -- (5.25, 0) -- (4.25, 0); 
			\draw[-, black] (5.25, 1) -- (6, 1); 
			\draw[-, black] (5.25, 2) -- (6.25, 2); 
			\draw[-, black] (6.25, 1) -- (6.25, 2); 
			\filldraw[fill=black, draw=black] (4.25, 0) circle [radius=.03] node [scale = .75, black, below = 0] {$a$};
			\filldraw[fill=black, draw=black] (5.25, 0) circle [radius=.03] node [scale = .75, black, left = 1, below = 0] {$b$};
			\filldraw[fill=black, draw=black] (4.25, 1) circle [radius=.03] node [scale = .75, black, above = 2 ] {$d$};
			\fill[black] (5.25, 1) circle [radius=.03] node [scale = .75, black, right = 0, above = 6, right = 0] {$c$};
			\fill[black] (6.25, 1) circle [radius=.03] node [scale = .75, black, below = 2, right = 0] {$e$};
			\fill[black] (5.25, 2) circle [radius=.03] node [scale = .75, black, above = 0] {$g$};
			\fill[black] (6.25, 2) circle [radius=.03] node [scale = .75, black, above = 0] {$f$};
			\filldraw[fill=black, draw=black] (3.75, .75) circle [radius=0] node [scale = 1.8, black, left = 1] {$\displaystyle\sum_c$};
			\end{tikzpicture}
		\end{aligned}
		\end{flalign}

	\item Let $P = (v_1, v_2, v_3, v_4)$ be any plaquette, and denote $a = \mathcal{F} (v_1)$; $b = \mathcal{F} (v_2)$; $c = \mathcal{F} (v_3)$; and $d = \mathcal{F} (v_4)$. We can associate with $P$ a \emph{shaded triple}\footnote{This triple need not be unique. In fact, it usually will not be, and this non-uniqueness will give rise to additional, dynamical parameters.} $(x, y, z) \in \mathfrak{h} \times \mathfrak{h} \times \mathfrak{h}$, which is independent of the value of $a = \mathcal{F} (v_1)$, such that the following conditions hold. In what follows, we view this shaded triple as ``attaching'' two new plaquettes to $P$ along the positive direction. We distinguish these new plaquettes by shading them; see the left side of  \Cref{assignment}. We can also view this procedure as introducing a frozen plaquette and then attaching two shaded plaquettes to it along the negative direction, as on the right side of \Cref{assignment}. In either case, the weights of the original or frozen plaquettes will be with respect to $w$, and weights of the shaded ones will be with respect to $\chi$.

	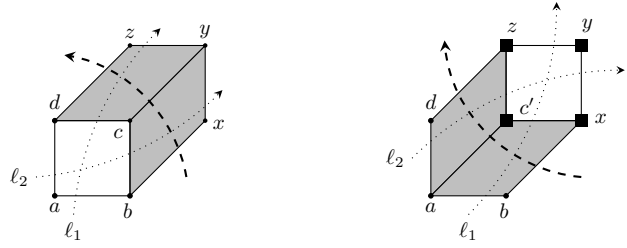
\begin{figure} 
		
		\begin{center}

			\begin{tikzpicture}[
			>=stealth,
			auto,
			style={
				scale = 1
			}
			]

			\filldraw[fill=gray!50!white] (1, 0) -- (2, 1) --  (2, 2) -- (1, 1) -- (1, 0); 
			\filldraw[fill=gray!50!white] (0, 1) -- (1, 2) -- (2, 2) -- (1, 1) -- (0, 1); 
			\draw[-, black] (0, 0) -- (1,  0); 
			\draw[-, black] (0, 0) -- (0,  1); 
			\draw[-, black] (0, 1) -- (1,  1); 
			\draw[-, black] (1, 0) -- (1,  1); 
			\filldraw[fill=black, draw=black] (0, 0) circle [radius=.03] node [scale = .8, black, below = 0] {$a$};
			\filldraw[fill=black, draw=black] (1,  0) circle [radius=.03] node [scale = .8, black, left = 1, below = 0] {$b$};
			\filldraw[fill=black, draw=black] (0, 1) circle [radius=.03] node [scale = .8, black, above = 2 ] {$d$};
			\fill[black] (1, 1) circle [radius=.03] node [scale = .8, black, right = 0, below = 6, left = 0] {$c$};
			\fill[black] (2, 1) circle [radius=.03] node [scale = .8, black, below = 2, right = 0] {$x$};
			\fill[black] (1, 2) circle [radius=.03] node [scale = .8, black, above = 0] {$z$};
			\fill[black] (2	, 2) circle [radius=.03] node [scale = .8, black, above = 0] {$y$};

			\filldraw[fill=gray!50!white] (5, 0) -- (6, 1) --  (6, 2) -- (5, 1) -- (5, 0); 
			\filldraw[fill=gray!50!white] (5, 0) -- (6, 1) -- (7, 1) -- (6, 0) -- (5, 0); 
			\draw[-, black] (6, 1) -- (7,  1); 
			\draw[-, black] (6, 1) -- (6,  2); 
			\draw[-, black] (6, 2) -- (7,  2); 
			\draw[-, black] (7, 1) -- (7,  2); 
			\filldraw[fill=black, draw=black] (5, 0) circle [radius=.03] node [scale = .8, black, below = 0] {$a$};
			\filldraw[fill=black, draw=black] (6,  0) circle [radius=.03] node [scale = .8, black, left = 1, below = 0] {$b$};
			\filldraw[fill=black, draw=black] (5, 1) circle [radius=.03] node [scale = .8, black, above = 2 ] {$d$};
			\fill[black] (5.92, .92) rectangle (6.08, 1.08) node [scale = .8, black, right = 0, above = 4, right = 0] {$c'$};
			\fill[black] (6.92, .92) rectangle (7.08, 1.08) node [scale = .8, black, below = 2, right = 0] {$x$};
			\fill[black] (5.92, 1.92) rectangle (6.08, 2.08) node [scale = .8, black, above = 0] {$z$};
			\fill[black] (6.92, 1.92) rectangle (7.08, 2.08) node [scale = .8, black, above = 0] {$y$};

			\draw[->, black, dashed, thick] (1.75, .25) arc(10:80:2);
			\draw[->, black, dotted] (-.25, .25) arc(275:315:4);
			\draw[->, black, dotted] (.25, -.25) arc(175:135:4);

			\draw[->, black, dashed, thick] (7, .25) arc(265:185:2);
			\draw[->, black, dotted] (4.75, .5) arc(135:90:4);
			\draw[->, black, dotted] (5.5, -.25) arc(-45:0:4);
			
			\draw (5.5, -.25) circle[radius = 0] node[scale = .8, below = 0]{$\ell_1$};
			
			\draw (.25, -.25) circle[radius = 0] node[scale = .8, below = 0]{$\ell_1$};
			
			\draw (4.75, .5) circle[radius = 0] node[scale = .8, left = 0]{$\ell_2$};
			
			\draw (-.25, .25) circle[radius = 0] node[scale = .8, left = 0]{$\ell_2$};
			
			\end{tikzpicture}
			
		\end{center}

		\caption{\label{assignment} We assign to each plaquette in our original graph two shaded plaquettes (as shown to the left), such that the top-right square (shown to the right) has a frozen boundary.}
	\end{figure}

	\begin{enumerate}  
		
		\item The shaded triple $(x, y, z)$ is frozen (with respect to $w$); let $c'$ be such that $(c', x, y, z)$ is admissible. 
		
		\item Both $\chi (b, x, c', a)$ and $\chi (a, c', z, d)$ are nonzero. 
		
		\item The special case of the Yang-Baxter equation given by 
		\begin{flalign}
		\label{wchirelation}
		\displaystyle\sum_{c \in \mathfrak{h}} w(a, b, c, d) \chi (b, x, y, c) \chi (c, y, z, d) =  \chi (b, x, c', a) \chi (a, c', z, d) w (c', x, y, z)
		\end{flalign}
		
		\noindent holds; the right side of \eqref{wchirelation} has one term since $(x, y, z)$ is frozen. Diagrammatically, this is the equation
		\begin{flalign*}
				\begin{tikzpicture}[
				>=stealth,
				auto,
				style={
					scale = .5
				}
				]
				\filldraw[fill=gray!50!white, draw=black] (-2, 0) -- (-1, 1) --  (-1, 2) -- (-2, 1) -- (-2, 0); 
				\filldraw[fill=gray!50!white, draw=black] (-3, 1) -- (-2, 2) -- (-1, 2) -- (-2, 1) -- (-3, 1); 
				\draw[-, black] (-3, 0) -- (-2,  0); 
				\draw[-, black] (-3, 0) -- (-3,  1); 
				\draw[-, black] (-3, 1) -- (-2,  1); 
				\draw[-, black] (-2, 0) -- (-2,  1); 
				\filldraw[fill=black, draw=black] (-3, 0) circle [radius=.03] node [scale = .5, black, below = 0] {$a$};
				\filldraw[fill=black, draw=black] (-2,  0) circle [radius=.03] node [scale = .5, black, left = 1, below = 0] {$b$};
				\filldraw[fill=black, draw=black] (-3, 1) circle [radius=.03] node [scale = .5, black, left = 0] {$d$};
				\fill[black] (-2, 1) circle [radius=.03] node [scale = .5, black, right = 0, below = 6, left = 0] {$c$};
				\fill[black] (-1, 1) circle [radius=.03] node [scale = .5, black, below = 2, right = 0] {$x$};
				\fill[black] (-2, 2) circle [radius=.03] node [scale = .5, black, above = 0] {$z$};
				\fill[black] (-1, 2) circle [radius=.03] node [scale = .5, black, above = 0] {$y$};
				\draw[black] (.925, .85) circle[radius=0] node[scale = 1.2]{$=$}; 		
				\filldraw[fill=black, draw=black] (-3.75, .75) circle [radius=0] node [scale = 1.2, black, left = 1] {$\displaystyle\sum_{c}$};
				\filldraw[fill=gray!50!white, draw=black] (2.75, 0) -- (3.75, 1) --  (3.75, 2) -- (2.75, 1) -- (2.75, 0); 
				\filldraw[fill=gray!50!white, draw=black] (2.75, 0) -- (3.75, 1) -- (4.75, 1) -- (3.75, 0) -- (2.75, 0); 
				\draw[-, black] (3.75, 1) -- (4.75, 1); 
				\draw[-, black] (3.75, 2) -- (4.75, 2); 
				\draw[-, black] (4.75, 1) -- (4.75, 2); 
				\filldraw[fill=black, draw=black] (2.75, 0) circle [radius=.03] node [scale = .5, black, below = 0] {$a$};
				\filldraw[fill=black, draw=black] (3.75, 0) circle [radius=.03] node [scale = .5, black, left = 1, below = 0] {$b$};
				\filldraw[fill=black, draw=black] (2.75, 1) circle [radius=.03] node [scale = .5, black, left = 0] {$d$};
				\fill[black] (3.67, .92) rectangle (3.83, 1.08) node [scale = .5, black, right = 4, above = 0] {$c'$};
				\fill[black] (4.67, .92) rectangle (4.83, 1.08) node [scale = .5, black, below = 2, right = 0] {$x$};
				\fill[black] (3.67, 1.92) rectangle (3.83, 2.08) node [scale = .5, black, above = 0] {$z$};
				\fill[black] (4.67, 1.92) rectangle (4.83, 2.08) node [scale = .5, black, above = 0] {$y$};
				\end{tikzpicture}
		\end{flalign*}

		\item \emph{Consistency Condition:} If $P$ and $Q$ are adjacent plaquettes, then the new, attached plaquettes satisfy the consistency relations depicted in Figure \ref{consistent}. Explicitly, suppose that $P$ and $Q$ share a vertical edge and that the height functions on their vertices are as in the top of Figure \ref{consistent}. Denote the shaded triple associated with $P$ by $(x, y, z)$ and the one associated with $Q$ by $(x', y', z')$ (again, as in the top of Figure \ref{consistent}). Then, we have that $(b, x, y, c) = (b, f, z', c)$ (so $x = f$ and $y = z'$). Similarly, if $P$ and $Q$ share a horizontal edge and the height function on the vertices of $P$ and $Q$ are labeled as in the bottom of Figure \ref{consistent}, then $(c, y, z, d) = (c, x', e, d)$ (so $y = x'$ and $z = e$).

		\begin{figure} 
			
			\begin{center}

				\begin{tikzpicture}[
				>=stealth,
				auto,
				style={
					scale = 1
				}
				]

				\draw[-, black] (-4, .5) -- (-5, .5) node[below] {$a$}; 
				\draw[-, black] (-4, 1.5) -- (-4, .5) node[below] {$b$};
				\draw[-, black] (-3, 1.5) -- (-3, .5) node[below] {$e$};
				
				\draw[-, black] (-5, .5) -- (-5, 1.5) node[above] {$d$};
				\draw[-, black] (-5, 1.5) -- (-4, 1.5) node[above] {$c$};
				\draw[-, black] (-4, 1.5) -- (-3, 1.5) node[above] {$f$};
				\draw[-, black] (-3, .5) -- (-4, .5);

				\fill[black] (-3, .5) circle [radius=.03] node [scale = .8, black, right = 0, below = 6, left = 0] {};
				\fill[black] (-3, 1.5) circle [radius=.03] node [scale = .8, black, below = 2, right = 0] {};
				\fill[black] (-4, .5) circle [radius=.03] node [scale = .8, black, above = 0] {};
				\fill[black] (-4, 1.5) circle [radius=.03] node [scale = .8, black, above = 0] {};
				\fill[black] (-5, .5) circle [radius=.03] node [scale = .8, black, above = 0] {};
				\fill[black] (-5, 1.5) circle [radius=.03] node [scale = .8, black, above = 0] {};
				
				\filldraw[fill=gray!50!white] (1, 0) -- (2, 1) --  (2, 2) -- (1, 1) -- (1, 0); 
				\filldraw[fill=gray!50!white] (0, 1) -- (1, 2) -- (2, 2) -- (1, 1) -- (0, 1); 
				\draw[-, black] (0, 0) -- (1,  0); 
				\draw[-, black] (0, 0) -- (0,  1); 
				\draw[-, black] (0, 1) -- (1,  1); 
				\draw[-, black] (1, 0) -- (1,  1); 
				\filldraw[fill=black, draw=black] (0, 0) circle [radius=.03] node [scale = .8, black, below = 0] {$a$};
				\filldraw[fill=black, draw=black] (1,  0) circle [radius=.03] node [scale = .8, black, left = 1, below = 0] {$b$};
				\filldraw[fill=black, draw=black] (0, 1) circle [radius=.03] node [scale = .8, black, above = 2 ] {$d$};
				\fill[black] (1, 1) circle [radius=.03] node [scale = .8, black, right = 0, below = 6, left = 0] {$c$};
				\fill[black] (2, 1) circle [radius=.03] node [scale = .8, black, below = 2, right = 0] {$x$};
				\fill[black] (1, 2) circle [radius=.03] node [scale = .8, black, above = 0] {$z$};
				\fill[black] (2	, 2) circle [radius=.03] node [scale = .8, black, above = 0] {$y$};

				\filldraw[fill=black, draw=black] (1,  0) circle [radius=.03] node [scale = .8, black, right = 18, above = 29] {$=$};

				\filldraw[fill=gray!50!white] (4.5,0) -- (5.5,1) --  (5.5, 2) -- (4.5, 1) -- (4.5, 0); 
				\filldraw[fill=gray!50!white] (4.5, 0) -- (5.5, 1) -- (6.5, 1) -- (5.5, 0) -- (4.5, 0); 
				\draw[-, black] (5.5, 1) -- (6.5, 1); 
				\draw[-, black] (5.5, 1) -- (5.5, 2); 
				\draw[-, black] (5.5, 2) -- (6.5, 2); 
				\draw[-, black] (6.5, 1) -- (6.5, 2);

				\filldraw[fill=black, draw=black] (4.5,0) circle [radius=.03] node [scale = .8, black, below = 0] {$b$};
				\filldraw[fill=black, draw=black] (5.5, 0) circle [radius=.03] node [scale = .8, black, left = 1, below = 0] {$e$};
				\fill[black] (5.5,1) circle [radius=.03] node [scale = .8, black, right = 0, above = 6, right = 0] {$f$};
				\filldraw[fill=black, draw=black] (4.5,1) circle [radius=.03] node [scale = .8, black, above = 2 ] {$c$};
				\fill[black] (6.5,1) circle [radius=.03] node [scale = .8, black, below = 2, right = 0] {$x'$};
				\fill[black] (5.5,2) circle [radius=.03] node [scale = .8, black, above = 0] {$z'$};
				\fill[black] (6.5, 2) circle [radius=.03] node [scale = .8, black, above = 0] {$y'$};

				\filldraw[fill=black, draw=black] (4.5, 0) circle [radius=.03] node [scale = .8, black, above = 35, right = 10] {$=$};

				\draw[-, black] (-4.5, -2) -- (-4.5, -3) node[below] {$a$}; 
				\draw[-, black] (-4.5, -3) -- (-3.5, -3) node[below] {$b$};
				\draw[-, black] (-3.5, -2) -- (-4.5, -2) node[left] {$d$};
				\draw[-, black] (-3.5, -3) -- (-3.5, -2) node[right] {$c$};
				\draw[-, black] (-3.5, -2) -- (-3.5, -1) node[above] {$e$};
				\draw[-, black] (-3.5, -1) -- (-4.5, -1) node[above] {$f$};
				\draw[-, black] (-4.5, -2) -- (-4.5, -1);

				\filldraw[fill=gray!50!white] (1, -3) -- (2, -2) --  (2, -1) -- (1, -2) -- (1, -3); 
				\filldraw[fill=gray!50!white] (0, -2) -- (1, -1) -- (2, -1) -- (1, -2) -- (0, -2); 
				\draw[-, black] (0, -3) -- (1,  -3); 
				\draw[-, black] (0, -3) -- (0,  -2); 
				\draw[-, black] (0, -2) -- (1,  -2); 
				\draw[-, black] (1, -3) -- (1,  -2); 
				\filldraw[fill=black, draw=black] (0, -3) circle [radius=.03] node [scale = .8, black, below = 0] {$a$};
				\filldraw[fill=black, draw=black] (1,  -3) circle [radius=.03] node [scale = .8, black, left = 1, below = 0] {$b$};
				\filldraw[fill=black, draw=black] (0, -2) circle [radius=.03] node [scale = .8, black, above = 2 ] {$d$};
				\fill[black] (1, -2) circle [radius=.03] node [scale = .8, black, right = 0, below = 6, left = 0] {$c$};
				\fill[black] (2, -2) circle [radius=.03] node [scale = .8, black, below = 2, right = 0] {$x$};
				\fill[black] (1, -1) circle [radius=.03] node [scale = .8, black, above = 0] {$z$};
				\fill[black] (2	, -1) circle [radius=.03] node [scale = .8, black, above = 0] {$y$};

				\fill[black] (1, -2) circle [radius=.03] node [scale = .8, black, right = 0, above = 14] {$=$};

				\filldraw[fill=gray!50!white] (4.5, -3) -- (5.5, -2) --  (5.5, -1) -- (4.5, -2) -- (4.5, -3); 
				\filldraw[fill=gray!50!white] (4.5, -3) -- (5.5, -2) -- (6.5, -2) -- (5.5, -3) -- (4.5, -3); 
				\draw[-, black] (5.5, -2) -- (6.5, -2); 
				\draw[-, black] (5.5, -2) -- (5.5, -1); 
				\draw[-, black] (5.5, -1) -- (6.5, -1); 
				\draw[-, black] (6.5, -2) -- (6.5, -1); 
				\filldraw[fill=black, draw=black] (4.5, -3) circle [radius=.03] node [scale = .8, black, below = 0] {$d$};
				\filldraw[fill=black, draw=black] (5.5, -3) circle [radius=.03] node [scale = .8, black, left = 1, below = 0] {$c$};
				\filldraw[fill=black, draw=black] (4.5, -2) circle [radius=.03] node [scale = .8, black, above = 2 ] {$f$};
				\fill[black] (5.5, -2) circle [radius=.03] node [scale = .8, black, right = 0, above = 6, right = 0] {$e$};
				\fill[black] (6.5, -2) circle [radius=.03] node [scale = .8, black, below = 2, right = 0] {$x'$};
				\fill[black] (5.5, -1) circle [radius=.03] node [scale = .8, black, above = 0] {$z'$};
				\fill[black] (6.5, -1) circle [radius=.03] node [scale = .8, black, above = 0] {$y'$};

				\fill[black] (5.5, -2) circle [radius=.03] node [scale = .8, black, below = 15] {$=$};

				\draw[->, black, dotted] (-5.25, 1) -- (-2.75, 1);
				\draw[->, black, dotted] (-4.5, .25) -- (-4.5, 1.75);
				\draw[->, black, dotted] (-3.5, .25) -- (-3.5, 1.75);

				\draw[->, black, dotted] (-4.75, -2.5) -- (-3.25, -2.5);
				\draw[->, black, dotted] (-4.75, -1.5) -- (-3.25, -1.5);
				\draw[->, black, dotted] (-4, -3.25) -- (-4, -.75);
				
				\fill[black] (-3.5, -3) circle [radius=.03];
				\fill[black] (-4.5, -3) circle [radius=.03];
				\fill[black] (-3.5, -2) circle [radius=.03];
				\fill[black] (-4.5, -2) circle [radius=.03];
				\fill[black] (-3.5, -1) circle [radius=.03];
				\fill[black] (-4.5, -1) circle [radius=.03];
				\end{tikzpicture}

			\end{center}

			\caption{\label{consistent} The shaded plaquettes must satisfy the two consistency relations depicted above. }
		\end{figure}
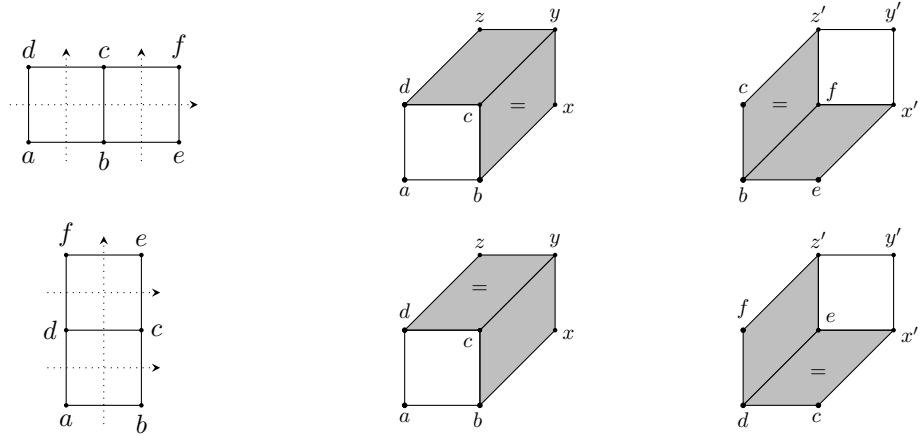

	\end{enumerate}
\end{enumerate}

\end{definition}

\begin{rem}

\label{stochasticwchis}

Once again, let us explain how the discussion from \Cref{VertexModel} can be viewed as a special case of this framework. As explained in \Cref{vertexweightsmodel}, we view the face model on $\mathcal{G}$ as a vertex model on the dual graph $\mathcal{D}$, so that the plaquette weights here become vertex weights. Now, the weights $w$ and $\chi$ here will be the $w$ and $\chi_s$ from \Cref{spin12spins}, respectively. 

We claim that $w$ is stochasticizable with respect to $\chi_s$. The first condition there was verified in \Cref{vertexadmissiblefrozen}, and the second is a consequence of the vertex form of the Yang-Baxter equation, given by \eqref{equationspin1}. To verify the third condition, we must explain how to attach two shaded vertices to each vertex $u \in \mathcal{D}$.

This was essentially done in \Cref{DynamicalVertex1}. Specifically, we begin with a stochasticization curve to the right of $\mathcal{D}$ that starts with $k$ arrows and collects any arrow outputted by $\mathcal{D}$, as in the top-left diagram of \Cref{arrowstochasticvertices}. We then move the stochasticization curve through $\mathcal{D}$, one vertex at a time using the procedure depicted in \Cref{arrowstochastic}. 

The two shaded vertices attached to $u$ are then the two vertices of the stochasticization curve adjacent to $u$ directly before being passed through $u$, as in the left side of \Cref{arrowstochastic}. These vertices are assigned the weight $\chi_s$ and can seen to be independent of the incoming arrow configuration $\big( i_1 (u), j_1 (u) \big)$. The two shaded vertices attached in the negative direction (whose duals are depicted on the right side of \Cref{assignment}) are the two vertices of the stochasticization curve adjacent to $u$ directly after being passed through $u$, as in the right side of \Cref{arrowstochastic}. 

Now one can quickly verify the four parts of the third condition of \Cref{wchiconditions}. The first is a consequence of the content of \Cref{vertexadmissiblefrozen}; the second holds for generic values of $s, x, y, k$ by \Cref{spin12spins}; and the third follows from \eqref{equationspin1}. The fourth (called the consistency condition) follows from the fact that a shaded vertex attached to some $u \in \mathcal{D}$ in the positive direction was attached to a vertex $u' \in \mathcal{D}$ adjacent to $u$ in the negative direction (this can be seen diagrammatically, from \Cref{arrowstochasticvertices}). This confirms that $w$ is stochasticizable with respect to $\chi_s$. 
	
\end{rem} 

\begin{rem} 

Our reason for introducing the consistency condition is similar to what was explained in the beginning of \Cref{DynamicalVertex1}. In the examples of interest to us, there will be a number of ways to ``locally'' assign shaded plaquettes (or vertices) to a given one in our domain. However, under an arbitrary global assignment of plaquettes to each vertex in our domain, the Yang-Baxter equation will typically no longer hold for the stochasticized weights. Imposing the consistency condition is one way of ensuring that the Yang-Baxter equation is preserved; see \Cref{consistencyvertexequation} below. 

\end{rem}

\begin{rem}

In \Cref{DynamicalVertex1}, the attachment of a pair of shaded vertices resulted in a dynamical parameter $v(u)$ (or $k(u)$) associated with each vertex $u$ in our domain. The consistency condition then essentially explains how the dynamical parameter changes between vertices, according to the identities \eqref{ukv} (see also \Cref{dynamicalsixvertexequationparameter}). A similar phenomenon will appear in later examples of stochasticization. 

\end{rem}

Now we can define the stochasticized plaquette weights analogously to \Cref{stochastic1}. 

\begin{definition}

\label{wschiweights}

Suppose that $w$ is stochasticizable with respect to $\chi$ in the sense of Definition \ref{wchiconditions}. Let $P = (v_1, v_2, v_3, v_4)$ be a plaquette and denote $\mathcal{F} (v_1) = a$, $\mathcal{F} (v_2) = b$, $\mathcal{F} (v_3) = c$, and $\mathcal{F} (v_4) = d$. Let $(x, y, z)$ denote the shaded triple associated with $P$ as given by the third part of Definition \ref{wchiconditions}, and let $\Adm_1 (x, y, z) = \{ c' \}$; see the right side of Figure \ref{assignment}. Then, define the \emph{stochasticization} $S$ of $w$ with respect to $\chi$ (and $\mathcal{F}$) by
\begin{flalign} 
\label{ws} 
S_P (a, b, c, d) = \displaystyle\frac{w (a, b, c, d) \chi (b, x, y, c) \chi (c, y, z, d)}{\chi (b, x, c', a) \chi (a, c', z, d) w (c', x, y, z)}. 
\end{flalign}

\noindent Diagrammatically, this is equivalent to
\begin{flalign*}
\begin{tikzpicture}
\node (0, 0) [left = 0	] {$S_P \Bigg( \qquad \Bigg)$}; 
\node (0, 0) [scale = .9, right = -24, above = -13] {\begin{tikzpicture}[
	style={
		scale = .65
	}
	]	
	\draw[-, black] (0, 1) -- (1, 1);
	\draw[-, black] (1, 0) -- (1, 1);
	\draw[-, black] (0, 0) -- (1, 0);
	\draw[-, black] (0, 0) -- (0, 1);
	\filldraw[fill=black, draw=black] (0, 0) circle [radius=.03] node [scale = .8, black, above = 0] {};
	\filldraw[fill=black, draw=black] (0, 1) circle [radius=.03] node [scale = .8, black, above = 0] {};
	\filldraw[fill=black, draw=black] (1, 0) circle [radius=.03] node [scale = .8, black, above = 0] {};
	\filldraw[fill=black, draw=black] (1, 1) circle [radius=.03] node [scale = .8, black, above = 0] {};
	\end{tikzpicture}};
\node (5, 0) [right = -5] {$=$};
\node (7, 0) [right = 5] {$\displaystyle\frac{\text{Weight} \Bigg( \qquad \quad \Bigg)}{\text{Weight} \Bigg( \qquad \quad \Bigg)}$.};
\node (7, 0) [above = 17, right = 45]{\begin{tikzpicture}[
	style={
		scale = .45
	}]
	\filldraw[fill=gray!50!white] (1, 0) -- (2, 1) --  (2, 2) -- (1, 1) -- (1, 0); 
	\filldraw[fill=gray!50!white] (0, 1) -- (1, 2) -- (2, 2) -- (1, 1) -- (0, 1); 
	\draw[-, black] (0, 0) -- (1,  0); 
	\draw[-, black] (0, 0) -- (0,  1); 
	\draw[-, black] (0, 1) -- (1,  1); 
	\draw[-, black] (1, 0) -- (1,  1); 
	\filldraw[fill=black, draw=black] (1,  0) circle [radius=.03] node [scale = .8, black, left = 1] {};
	\filldraw[fill=black, draw=black] (0, 0) circle [radius=.03] node [scale = .8, black, below = 0] {};
	\filldraw[fill=black, draw=black] (0, 1) circle [radius=.03] node [scale = .8, black, above = 0] {};
	\filldraw[fill=black, draw=black] (1,  1) circle [radius=.03] node [scale = .8, black, left = 2] {};
	\filldraw[fill=black, draw=black] (2,  1) circle [radius=.03] node [scale = .8, black, below = 0] {};
	\filldraw[fill=black, draw=black] (1,  2) circle [radius=.03] node [scale = .8, black, right = 1] {};
	\filldraw[fill=black, draw=black] (2, 2) circle [radius=.03] node [scale = .8, black, above = 0] {};
	\end{tikzpicture}}; 
\node (7, 0) [below = 17, right = 45] {\begin{tikzpicture}[
	style={
		scale = .45
	}]
	\filldraw[fill=gray!50!white] (0, 0) -- (1, 1) --  (1, 2) -- (0, 1) -- (0, 0); 
	\filldraw[fill=gray!50!white] (0, 0) -- (1, 1) -- (2, 1) -- (1, 0) -- (0, 0); 
	\draw[-, black] (1, 1) -- (2,  1); 
	\draw[-, black] (1, 1) -- (1,  2); 
	\draw[-, black] (1, 2) -- (2,  2); 
	\draw[-, black] (2, 1) -- (2,  2); 
	\filldraw[fill=black, draw=black] (1,  0) circle [radius=.03] node [scale = .8, black, left = 1] {};
	\filldraw[fill=black, draw=black] (0, 0) circle [radius=.03] node [scale = .8, black, below = 0] {};
	\filldraw[fill=black, draw=black] (0, 1) circle [radius=.03] node [scale = .8, black, above = 0] {};
	\fill[black] (.92, .92) rectangle (1.08, 1.08) node [scale = .8, black, right = 0] {};
	\fill[black] (1.92, .92) rectangle (2.08, 1.08) node [scale = .8, black, below = 2] {};
	\fill[black] (.92, 1.92) rectangle (1.08, 2.08) node [scale = .8, black, right = 1] {};
	\fill[black] (1.92, 1.92) rectangle (2.08, 2.08) node [scale = .8, black, above = 0] {};
	\end{tikzpicture}};
\end{tikzpicture}
\end{flalign*}

\noindent We will occasionally refer to the quantity $\frac{ S_P (a, b, c, d)}{w(a, b, c, d)}$ as a \emph{stochastic correction}.

\end{definition}

\subsection{Properties of Stochasticization} 

\label{PropertiesWeights}

In this section we establish various properties of the stochasticized weights $S$. The first is that they are indeed stochastic. 

\begin{lem} 

\label{stochasticws}

Adopt the notation of Definition \ref{wschiweights}. Then, 
\begin{flalign}
\label{stochasticweights}
  \displaystyle\sum_{m \in \mathfrak{h}} S_P (a, b, m, d) = 1, 
\end{flalign}

\noindent for any plaquette $P = (v_1, v_2, v_3, v_4)$ in our domain and $a = \mathcal{F} (v_1)$, $b = \mathcal{F} (v_2)$, and $d = \mathcal{F} (v_4)$ in $\mathfrak{h}$. Furthermore, if $w(a, b, c, d), \chi (a, b, c, d) \ge 0$ for each $a, b, c, d \in \mathfrak{h}$, then $S_P (a, b, c, d) \ge 0$ for each $a, b, c, d \in \mathfrak{h}$.

\end{lem}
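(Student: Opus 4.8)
The plan is to prove the normalization identity \eqref{stochasticweights} by exploiting the definition \eqref{ws} together with the special case of the Yang--Baxter equation given in \eqref{wchirelation}. First I would fix the plaquette $P = (v_1, v_2, v_3, v_4)$, set $a = \mathcal{F}(v_1)$, $b = \mathcal{F}(v_2)$, $d = \mathcal{F}(v_4)$, and let $(x, y, z)$ be the shaded triple associated with $P$ as provided by the third part of \Cref{wchiconditions}, with $\Adm_1(x, y, z; w) = \{c'\}$. The key observation is that the denominator of \eqref{ws}, namely $\chi(b, x, c', a)\, \chi(a, c', z, d)\, w(c', x, y, z)$, depends only on $a$, $b$, $d$ (through $c'$, which is determined by the frozen triple $(x, y, z)$), and not on the interior label $m$ over which we are summing. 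Hence
\begin{flalign*}
\displaystyle\sum_{m \in \mathfrak{h}} S_P(a, b, m, d) = \displaystyle\frac{\displaystyle\sum_{m \in \mathfrak{h}} w(a, b, m, d)\, \chi(b, x, y, m)\, \chi(m, y, z, d)}{\chi(b, x, c', a)\, \chi(a, c', z, d)\, w(c', x, y, z)}.
\end{flalign*}
By \eqref{wchirelation}, the numerator equals exactly the denominator, so the ratio is $1$. This requires knowing that the denominator is nonzero, which is guaranteed by parts (a) and (b) of the third condition in \Cref{wchiconditions}: $(c', x, y, z)$ is admissible so $w(c', x, y, z) \ne 0$, and $\chi(b, x, c', a)$ and $\chi(a, c', z, d)$ are both nonzero by hypothesis. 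Also note that the sum over $m$ is finite since $\Adm(x, y, z; w)$ and $\Adm(x, y, z; \chi)$ are finite by the first condition of \Cref{wchiconditions} (so only finitely many $m$ contribute nonzero terms); this makes the interchange of sum and the algebraic manipulation legitimate.

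For the nonnegativity claim, suppose $w(a, b, c, d) \ge 0$ and $\chi(a, b, c, d) \ge 0$ for all arguments. The numerator of \eqref{ws} is then a product of nonnegative quantities, hence nonnegative. For the denominator, $w(c', x, y, z) \ge 0$ and it is nonzero (since $(c', x, y, z)$ is admissible), so $w(c', x, y, z) > 0$; similarly $\chi(b, x, c', a) > 0$ and $\chi(a, c', z, d) > 0$. Therefore $S_P(a, b, c, d)$ is a ratio of a nonnegative number by a strictly positive number, hence nonnegative.

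I do not anticipate a serious obstacle here; the statement is essentially a direct consequence of the axioms built into the definition of stochasticizability, and the only point requiring care is confirming that the denominator is $m$-independent and nonvanishing so that one may legitimately factor it out of the sum and apply \eqref{wchirelation}. If anything, the subtlety is purely bookkeeping: one must be careful that the shaded triple $(x, y, z)$ and thus $c'$ are genuinely functions of $(b, c, d)$ alone — or rather, independent of $a$ — as stipulated, but in \eqref{stochasticweights} we are summing over the third coordinate $m$ (playing the role of $c$), and the third part of \Cref{wchiconditions} only asserts independence of the shaded triple from $\mathcal{F}(v_1) = a$, not from $\mathcal{F}(v_3) = c$. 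This is the one place to double-check: the shaded triple must actually be independent of $c$ as well for the argument to go through, and indeed this is forced by the consistency condition (part 4 of condition 3) together with the structure of the construction — the shaded triple attached along the positive direction is determined by the data lying in the positive half-plane, which does not include $v_3 = c$ in the relevant sense. I would state this carefully and then the proof reduces to the one-line computation above.
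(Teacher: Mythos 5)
Your proof is correct and takes essentially the same route as the paper, whose proof simply observes that \eqref{stochasticweights} is equivalent to \eqref{wchirelation} in view of \eqref{ws}; your version just makes explicit the bookkeeping that the denominator of \eqref{ws} is independent of the summed index $m$ and nonvanishing. The point you flag about $c$-independence of the shaded triple is resolved exactly as you suspect: \eqref{wchirelation} is formulated with $(x, y, z)$ held fixed under the sum over $c$, so this independence is built into the definition of stochasticizability.
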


\begin{proof} 

The second statement of the lemma follows from \eqref{ws}. The first, given by \eqref{stochasticweights}, is equivalent to \eqref{wchirelation} in view of \eqref{ws}. 	
\end{proof}

Let us proceed to explain the Yang-Baxter equation that holds for the stochasticized weights. To that end, let $\ell_1, \ell_2, \ell_3 \in \mathcal{L}$ be three lines that are \emph{adjacent}, meaning that they intersect to form three neighboring vertices in $\mathcal{D}$. Denote the corresponding plaquettes in $\mathcal{G}$ by $P$, $Q$, and $R$, and denote the height function are the vertices of $P$, $Q$, and $R$ by $(a, b, c, d)$, $(b, e, f, c)$, and $(c, f, g, d)$, respectively. This is depicted in the summand on the left side of \eqref{unshadedequation}. 

Now, let us perturb $\mathcal{L}$ by shifting $\ell_3$ past $\ell_1$ and $\ell_2$. This forms a family $\mathcal{L}'$ of lines, corresponding to a perturbed graph $\mathcal{D}'$, in which the new $\ell_1, \ell_2, \ell_3 \in \mathcal{L}'$ intersect to form three adjacent vertices. We denote the corresponding plaquettes by $P'$, $Q'$, and $R'$; the height function at the vertices of these plaquettes are $(c, e, f, g)$, $(a, c, g, d)$, and $(b, e, c, a)$, respectively. This is depicted in the summand on the right side of \eqref{unshadedequation}.

Next, we attach shaded plaquettes to $Q$, $R$, $P'$, and $Q'$, as given by the third part of \Cref{wchiconditions}; this is depicted in the top-left and bottom-left diagrams of \Cref{3equationshadedunshaded}. In what follows, we adopt the notation from the figure. In particular, we assume that the same plaquettes are attached in both the original (top-left) graph and the perturbed (bottom-left) one.

\begin{figure} 
	
	\begin{center}

		\begin{tikzpicture}[
		>=stealth,
		auto,
		style={
			scale = 1
		}
		]

		\filldraw[fill = white]  (.3, 5.7) -- (1.7, 5.7) -- (2, 5) -- (1, 5) -- (.3, 5.7);
		\filldraw[fill = white] (.3, 4.7) -- (.3, 5.7) -- (1, 5) -- (1, 4) -- (.3, 4.7);
		\filldraw[fill = white] (1, 4) -- (2, 4) -- (2, 5) -- (1, 5) -- (1, 4); 
		
		\filldraw[fill = white!50!gray] (.3, 5.7) -- (1.7, 5.7) -- (2, 6.4) -- (1, 6.4) -- (.3, 5.7);
		\filldraw[fill = white!50!gray] (1.7, 5.7) -- (2, 5) -- (2.7, 5.7) -- (2, 6.4) -- (1.7, 5.7);	
		\filldraw[fill = white!50!gray] (2, 5) -- (2.7, 5.7) -- (2.7, 4.7) -- (2, 4) -- (2, 5);

		\filldraw[fill = white]  (5.7, 5.7) -- (6.7, 5.7) -- (6, 6.4) -- (5, 6.4) -- (5.7, 5.7);
		\filldraw[fill = white] (4.3, 4.7) -- (4.3, 5.7) -- (5, 5) -- (5, 4) -- (4.3, 4.7);
		\filldraw[fill = white] (5, 4) -- (6, 4) -- (6, 5) -- (5, 5) -- (5, 4); 
		
		\filldraw[fill = white!50!gray] (4.3, 5.7) -- (5, 6.4) -- (5.7, 5.7) -- (5, 5) -- (4.3, 5.7);
		\filldraw[fill = white!50!gray] (5.7, 5.7) -- (5, 5) -- (6, 5) -- (6.7, 5.7) -- (5.7, 5.7);	
		\filldraw[fill = white!50!gray] (6, 5) -- (6.7, 5.7) -- (6.7, 4.7) -- (6, 4) -- (6, 5);

		\filldraw[fill = white]  (9.7, 5.7) -- (10.7, 5.7) -- (10, 6.4) -- (9, 6.4) -- (9.7, 5.7);
		\filldraw[fill = white] (8.3, 4.7) -- (8.3, 5.7) -- (9, 5) -- (9, 4) -- (8.3, 4.7);
		\filldraw[fill = white] (10, 5) -- (9.7, 5.7) -- (10.7, 5.7) -- (10.7, 4.7) -- (10, 5); 
		
		\filldraw[fill = white!50!gray] (8.3, 5.7) -- (9, 6.4) -- (9.7, 5.7) -- (9, 5) -- (8.3, 5.7);
		\filldraw[fill = white!50!gray] (9.7, 5.7) -- (9, 5) -- (9, 4) -- (10, 5) -- (9.7, 5.7);	
		\filldraw[fill = white!50!gray] (10, 5) -- (10.7, 4.7) -- (10, 4) -- (9, 4) -- (10, 5);

		\filldraw[fill = white]  (13.7, 5.7) -- (14.7, 5.7) -- (14, 6.4) -- (13, 6.4) -- (13.7, 5.7);
		\filldraw[fill = white] (13, 5) -- (14, 5) -- (13.7, 5.7) -- (13, 6.4) -- (13, 5);
		\filldraw[fill = white] (14, 5) -- (13.7, 5.7) -- (14.7, 5.7) -- (14.7, 4.7) -- (14, 5); 
		
		\filldraw[fill = white!50!gray] (13, 5) -- (12.3, 4.7) -- (12.3, 5.7)  -- (13, 6.4) -- (13, 5);
		\filldraw[fill = white!50!gray] (13, 5) -- (14, 5) -- (13, 4) -- (12.3, 4.7) -- (13, 5);	
		\filldraw[fill = white!50!gray] (14, 5) -- (14.7, 4.7) -- (14, 4) -- (13, 4) -- (14, 5);

		\filldraw[fill=black, draw=black] (1, 4) circle [radius=.03] node [scale = .8, black, below = 0] {$b$};
		\filldraw[fill=black, draw=black] (2, 4) circle [radius=.03] node [scale = .8, black, below = 0] {$e$};
		\filldraw[fill=black, draw=black] (.3, 4.7) circle [radius=.03] node [scale = .8, black, below = 2] {$a$};
		\filldraw[fill=black, draw=black] (.3, 5.7) circle [radius=.03] node [scale = .8, black, above = 0] {$d$};
		\filldraw[fill=black, draw=black] (1, 6.4) circle [radius=.03] node [scale = .8, black, above = 0] {$z$};
		\filldraw[fill=black, draw=black] (2, 6.4) circle [radius=.03] node [scale = .8, black, above = 0] {$y$};
		\filldraw[fill=black, draw=black] (2.7, 4.7) circle [radius=.03] node [scale = .8, black, right = 0] {$v$};
		\filldraw[fill=black, draw=black] (2.7, 5.7) circle [radius=.03] node [scale = .8, black, right = 0] {$x$};

		\filldraw[fill=black, draw=black] (1, 5) circle [radius=.03] node [scale = .8, black, below = 5, right = 0] {$c$};	
		\filldraw[fill=black, draw=black] (2, 5) circle [radius=.03] node [scale = .8, black, below = 9, left = -2] {$f$};
		\filldraw[fill=black, draw=black] (1.7, 5.7) circle [radius=.03] node [scale = .8, black, left =2, below = 0] {$g$};

		\filldraw[fill = black, draw = black] (6.08, 6.32) -- (5.92, 6.32) -- (5.92, 6.48) -- (6.08, 6.48) -- (6.08, 6.32);
		\filldraw[fill = black, draw = black] (5.08, 6.32) -- (4.92, 6.32) -- (4.92, 6.48) -- (5.08, 6.48) -- (5.08, 6.32);
		\filldraw[fill = black, draw = black] (6.78, 5.62) -- (6.62, 5.62) -- (6.62, 5.78) -- (6.78, 5.78) -- (6.78, 5.62);
		\filldraw[fill = black, draw = black] (5.78, 5.62) -- (5.62, 5.62) -- (5.62, 5.78) -- (5.78, 5.78) -- (5.78, 5.62);
		
		\filldraw[fill=black, draw=black] (6, 4) circle [radius=.03] node [scale = .8, black, below = 0] {$e$};
		\filldraw[fill=black, draw=black] (5, 4) circle [radius=.03] node [scale = .8, black, below = 0] {$b$};
		\filldraw[fill=black, draw=black] (4.3, 4.7) circle [radius=.03] node [scale = .8, black, left = 0] {$a$};
		\filldraw[fill=black, draw=black] (4.3, 5.7) circle [radius=.03] node [scale = .8, black, left = 0] {$d$};
		\filldraw[fill=black, draw=black] (5, 6.4) circle [radius=.03] node [scale = .8, black, above = 2] {$z$};
		\filldraw[fill=black, draw=black] (6, 6.4) circle [radius=.03] node [scale = .8, black, above = 2] {$y$};
		\filldraw[fill=black, draw=black] (6.7, 4.7) circle [radius=.03] node [scale = .8, black, right = 0] {$v$};
		\filldraw[fill=black, draw=black] (6.7, 5.7) circle [radius=.03] node [scale = .8, black, right = 2] {$x$};

		\filldraw[fill=black, draw=black] (5, 5) circle [radius=.03] node [scale = .8, black, below = 5, right = 0] {$c$};
		\filldraw[fill=black, draw=black] (6, 5) circle [radius=.03] node [scale = .8, black, below = 9, left = -2] {$f$};
		\filldraw[fill=black, draw=black] (5.7, 5.7) circle [radius=.03] node [scale = .8, black, above = 0] {$g'$};
		
		\filldraw[fill=black, draw=black] (10, 4) circle [radius=.03] node [scale = .8, black, below = 0] {$e$};
		\filldraw[fill=black, draw=black] (9, 4) circle [radius=.03] node [scale = .8, black, below = 0] {$b$};
		\filldraw[fill=black, draw=black] (8.3, 4.7) circle [radius=.03] node [scale = .8, black, left = 0] {$a$};
		\filldraw[fill=black, draw=black] (8.3, 5.7) circle [radius=.03] node [scale = .8, black, left = 0] {$d$};
		\filldraw[fill=black, draw=black] (9, 6.4) circle [radius=.03] node [scale = .8, black, above = 2] {$z$};
		\filldraw[fill=black, draw=black] (10, 6.4) circle [radius=.03] node [scale = .8, black, above = 2] {$y$};
		\filldraw[fill=black, draw=black] (10.7, 4.7) circle [radius=.03] node [scale = .8, black, right = 2] {$v$};
		\filldraw[fill=black, draw=black] (10.7, 5.7) circle [radius=.03] node [scale = .8, black, right = 2] {$x$};

		\filldraw[fill=black, draw=black] (9, 5) circle [radius=.03] node [scale = .8, black, below = 2, left = 0] {$c$};
		\filldraw[fill=black, draw=black] (10, 5) circle [radius=.03] node [scale = .8, black, above = 5, right = 0] {$f'$};
		\filldraw[fill=black, draw=black] (9.7, 5.7) circle [radius=.03] node [scale = .8, black, above = 0] {$g'$};

		\filldraw[fill = black, draw = black] (10.08, 6.32) -- (9.92, 6.32) -- (9.92, 6.48) -- (10.08, 6.48) -- (10.08, 6.32);
		\filldraw[fill = black, draw = black] (9.08, 6.32) -- (8.92, 6.32) -- (8.92, 6.48) -- (9.08, 6.48) -- (9.08, 6.32);
		\filldraw[fill = black, draw = black] (10.78, 5.62) -- (10.62, 5.62) -- (10.62, 5.78) -- (10.78, 5.78) -- (10.78, 5.62);
		\filldraw[fill = black, draw = black] (10.78, 4.62) -- (10.62, 4.62) -- (10.62, 4.78) -- (10.78, 4.78) -- (10.78, 4.62);
		\filldraw[fill = black, draw = black] (10.08, 4.92) -- (9.92, 4.92) -- (9.92, 5.08) -- (10.08, 5.08) -- (10.08, 4.92);
		\filldraw[fill = black, draw = black] (9.78, 5.62) -- (9.62, 5.62) -- (9.62, 5.78) -- (9.78, 5.78) -- (9.78, 5.62);
		
		\filldraw[fill=black, draw=black] (14, 4) circle [radius=.03] node [scale = .8, black, below = 0] {$e$};
		\filldraw[fill=black, draw=black] (13, 4) circle [radius=.03] node [scale = .8, black, below = 0] {$b$};
		\filldraw[fill=black, draw=black] (12.3, 4.7) circle [radius=.03] node [scale = .8, black, left = 0] {$a$};
		\filldraw[fill=black, draw=black] (12.3, 5.7) circle [radius=.03] node [scale = .8, black, left = 0] {$d$};
		\filldraw[fill=black, draw=black] (13, 6.4) circle [radius=.03] node [scale = .8, black, above = 2] {$z$};
		\filldraw[fill=black, draw=black] (14, 6.4) circle [radius=.03] node [scale = .8, black, above = 2] {$y$};
		\filldraw[fill=black, draw=black] (14.7, 4.7) circle [radius=.03] node [scale = .8, black, below = 2] {$v$};
		\filldraw[fill=black, draw=black] (14.7, 5.7) circle [radius=.03] node [scale = .8, black, above = 2] {$x$};

		\filldraw[fill=black, draw=black] (13, 5) circle [radius=.03] node [scale = .8, black, above = 7, right = 0] {$c'$};
		\filldraw[fill=black, draw=black] (14, 5) circle [radius=.03] node [scale = .8, black, above = 5, right = 0] {$f'$};
		\filldraw[fill=black, draw=black] (13.7, 5.7) circle [radius=.03] node [scale = .8, black, above = 0] {$g'$};

		\filldraw[fill = black, draw = black] (14.08, 6.32) -- (13.92, 6.32) -- (13.92, 6.48) -- (14.08, 6.48) -- (14.08, 6.32);
		\filldraw[fill = black, draw = black] (13.08, 6.32) -- (12.92, 6.32) -- (12.92, 6.48) -- (13.08, 6.48) -- (13.08, 6.32);
		\filldraw[fill = black, draw = black] (14.78, 5.62) -- (14.62, 5.62) -- (14.62, 5.78) -- (14.78, 5.78) -- (14.78, 5.62);
		\filldraw[fill = black, draw = black] (14.78, 4.62) -- (14.62, 4.62) -- (14.62, 4.78) -- (14.78, 4.78) -- (14.78, 4.62);
		\filldraw[fill = black, draw = black] (13.78, 5.62) -- (13.62, 5.62) -- (13.62, 5.78) -- (13.78, 5.78) -- (13.78, 5.62);
		\filldraw[fill = black, draw = black] (13.08, 4.92) -- (12.92, 4.92) -- (12.92, 5.08) -- (13.08, 5.08) -- (13.08, 4.92);
		\filldraw[fill = black, draw = black] (14.08, 4.92) -- (13.92, 4.92) -- (13.92, 5.08) -- (14.08, 5.08) -- (14.08, 4.92);

		\filldraw[fill = white]  (.3, 1.7) -- (.3, .7) -- (1, 1) --  (.9, 1.6) -- (.3, 1.7);
		\filldraw[fill = white] (.3, .7) -- (1, 1) -- (2, 0) -- (1, 0) -- (.3, .7);
		\filldraw[fill = white] (2, 0) -- (2, 1.7) -- (.9, 1.6) -- (1, 1) -- (2, 0); 
		
		\filldraw[fill = white!50!gray] (.3, 1.7) -- (.9, 1.6) -- (2, 2.4) -- (1, 2.4) -- (.3, 1.7);
		\filldraw[fill = white!50!gray] (.9, 1.6) -- (2, 1.7) -- (2.7, 1.7) -- (2, 2.4) -- (.9, 1.6);	
		\filldraw[fill = white!50!gray] (2, 1.7) -- (2.7, 1.7) -- (2.7, .7) -- (2, 0) -- (2, 1.7);

		\filldraw[fill = white]  (4.3, 1.7) -- (4.3, .7) -- (5, 1) --  (4.9, 1.6) -- (4.3, 1.7);
		\filldraw[fill = white] (4.3, .7) -- (5, 1) -- (6, 0) -- (5, 0) -- (4.3, .7);
		\filldraw[fill = white] (6, 1.7) -- (6.7, .7) -- (6.7, 1.7) -- (6, 2.4) -- (6, 1.7); 
		
		\filldraw[fill = white!50!gray] (4.3, 1.7) -- (4.9, 1.6) -- (6, 2.4) -- (5, 2.4) -- (4.3, 1.7);
		\filldraw[fill = white!50!gray] (4.9, 1.6) -- (6, 2.4) -- (6, 1.7) -- (5, 1) -- (4.9, 1.6);	
		\filldraw[fill = white!50!gray] (6, 1.7) -- (6.7, .7) -- (6, 0) -- (5, 1) -- (6, 1.7);

		\filldraw[fill = white] (8.9, 1.6) -- (9, 2.4) -- (10, 2.4) -- (10, 1.7) -- (8.9, 1.6);
		\filldraw[fill = white] (8.3, .7) -- (9, 1) -- (10, 0) -- (9, 0) -- (8.3, .7);
		\filldraw[fill = white] (10, 1.7) -- (10.7, .7) -- (10.7, 1.7) -- (10, 2.4) -- (10, 1.7); 
		
		\filldraw[fill = white!50!gray] (8.3, 1.7) -- (8.3, .7) -- 	(8.9, 1.6) -- (9, 2.4) -- (8.3, 1.7);
		\filldraw[fill = white!50!gray] (8.9, 1.6) -- (8.3, .7) -- (9, 1) -- (10, 1.7) -- (8.9, 1.6);	
		\filldraw[fill = white!50!gray] (10, 1.7) -- (10.7, .7) -- (10, 0) -- (9, 1) -- (10, 1.7);		
		
		\filldraw[fill = white] (12.9, 1.6) -- (13, 2.4) -- (14, 2.4) -- (14, 1.7) -- (12.9, 1.6);
		\filldraw[fill = white] (14, 1.7) -- (14.7, .7) -- (14.7, 1.7) -- (14, 2.4) -- (14, 1.7); 
		\filldraw[fill = white] (13, 1) -- (14.7, .7) -- (14, 1.7) -- (12.9, 1.6) -- (13, 1); 
		
		\filldraw[fill = white!50!gray] (13, 1) -- (13, 0) -- (12.3, .7)  -- (12.9, 1.6) -- (13, 1);
		\filldraw[fill = white!50!gray] (13, 1) -- (13, 0) -- (14, 0) -- (14.7, .7) -- (13, 1);	
		\filldraw[fill = white!50!gray] (12.3, .7) -- (12.3, 1.7) -- (13, 2.4) -- (12.9, 1.6) -- (12.3, .7);

		\filldraw[fill=black, draw=black] (1, 0) circle [radius=.03] node [scale = .8, black, below = 0] {$b$};
		\filldraw[fill=black, draw=black] (2, 0) circle [radius=.03] node [scale = .8, black, below = 0] {$e$};
		\filldraw[fill=black, draw=black] (.3, .7) circle [radius=.03] node [scale = .8, black, below = 2] {$a$};
		\filldraw[fill=black, draw=black] (.3, 1.7) circle [radius=.03] node [scale = .8, black, above = 0] {$d$};
		\filldraw[fill=black, draw=black] (1, 2.4) circle [radius=.03] node [scale = .8, black, above = 0] {$z$};
		\filldraw[fill=black, draw=black] (2, 2.4) circle [radius=.03] node [scale = .8, black, above = 0] {$y$};
		\filldraw[fill=black, draw=black] (2.7, .7) circle [radius=.03] node [scale = .8, black, right = 0] {$v$};
		\filldraw[fill=black, draw=black] (2.7, 1.7) circle [radius=.03] node [scale = .8, black, right = 0] {$x$};

		\filldraw[fill=black, draw=black] (1, 1) circle [radius=.03] node [scale = .8, black, below = 0] {$c$};	
		\filldraw[fill=black, draw=black] (2, 1.7) circle [radius=.03] node [scale = .8, black, below = 9, left = -2] {$f$};
		\filldraw[fill=black, draw=black] (.9, 1.6) circle [radius=.03] node [scale = .8, black, left = 4, below = 0] {$g$};

		\filldraw[fill = black, draw = black] (6.08, 2.32) -- (5.92, 2.32) -- (5.92, 2.48) -- (6.08, 2.48) -- (6.08, 2.32);
		\filldraw[fill = black, draw = black] (6.08, 1.62) -- (5.92, 1.62) -- (5.92, 1.78) -- (6.08, 1.78) -- (6.08, 1.62);
		\filldraw[fill = black, draw = black] (6.78, 1.62) -- (6.62, 1.62) -- (6.62, 1.78) -- (6.78, 1.78) -- (6.78, 1.62);
		\filldraw[fill = black, draw = black] (6.78, .62) -- (6.62, .62) -- (6.62, .78) -- (6.78, .78) -- (6.78, .62);
		
		\filldraw[fill=black, draw=black] (6, 0) circle [radius=.03] node [scale = .8, black, below = 0] {$e$};
		\filldraw[fill=black, draw=black] (5, 0) circle [radius=.03] node [scale = .8, black, below = 0] {$b$};
		\filldraw[fill=black, draw=black] (4.3, .7) circle [radius=.03] node [scale = .8, black, left = 0] {$a$};
		\filldraw[fill=black, draw=black] (4.3, 1.7) circle [radius=.03] node [scale = .8, black, left = 0] {$d$};
		\filldraw[fill=black, draw=black] (5, 2.4) circle [radius=.03] node [scale = .8, black, above = 2] {$z$};
		\filldraw[fill=black, draw=black] (6, 2.4) circle [radius=.03] node [scale = .8, black, above = 2] {$y$};
		\filldraw[fill=black, draw=black] (6.7, .7) circle [radius=.03] node [scale = .8, black, right = 2] {$v$};
		\filldraw[fill=black, draw=black] (6.7, 1.7) circle [radius=.03] node [scale = .8, black, right = 2] {$x$};

		\filldraw[fill=black, draw=black] (5, 1) circle [radius=.03] node [scale = .8, black, below = 0] {$c$};
		\filldraw[fill=black, draw=black] (6, 1.7) circle [radius=.03] node [scale = .8, black, right = 0] {$f''$};
		\filldraw[fill=black, draw=black] (4.9, 1.6) circle [radius=.03] node [scale = .8, black, left = 4, below = 0] {$g$};
		
		\filldraw[fill=black, draw=black] (10, 0) circle [radius=.03] node [scale = .8, black, below = 0] {$e$};
		\filldraw[fill=black, draw=black] (9, 0) circle [radius=.03] node [scale = .8, black, below = 0] {$b$};
		\filldraw[fill=black, draw=black] (8.3, .7) circle [radius=.03] node [scale = .8, black, left = 0] {$a$};
		\filldraw[fill=black, draw=black] (8.3, 1.7) circle [radius=.03] node [scale = .8, black, left = 0] {$d$};
		\filldraw[fill=black, draw=black] (9, 2.4) circle [radius=.03] node [scale = .8, black, above = 2] {$z$};
		\filldraw[fill=black, draw=black] (10, 2.4) circle [radius=.03] node [scale = .8, black, above = 2] {$y$};
		\filldraw[fill=black, draw=black] (10.7, .7) circle [radius=.03] node [scale = .8, black, right = 2] {$v$};
		\filldraw[fill=black, draw=black] (10.7, 1.7) circle [radius=.03] node [scale = .8, black, right = 2] {$x$};

		\filldraw[fill=black, draw=black] (9, 1) circle [radius=.03] node [scale = .8, black, below = 2, below = 0] {$c$};
		\filldraw[fill=black, draw=black] (10, 1.7) circle [radius=.03] node [scale = .8, black, above = 3, right = 1] {$f''$};
		\filldraw[fill=black, draw=black] (8.9, 1.6) circle [radius=.03] node [scale = .8, black, above = 7, right = 0] {$g''$};

		\filldraw[fill = black, draw = black] (10.08, 2.32) -- (9.92, 2.32) -- (9.92, 2.48) -- (10.08, 2.48) -- (10.08, 2.32);
		\filldraw[fill = black, draw = black] (9.08, 2.32) -- (8.92, 2.32) -- (8.92, 2.48) -- (9.08, 2.48) -- (9.08, 2.32);
		\filldraw[fill = black, draw = black] (10.78, 1.62) -- (10.62, 1.62) -- (10.62, 1.78) -- (10.78, 1.78) -- (10.78, 1.62);
		\filldraw[fill = black, draw = black] (10.78, .62) -- (10.62, .62) -- (10.62, .78) -- (10.78, .78) -- (10.78, .62);
		\filldraw[fill = black, draw = black] (10.08, 1.62) -- (9.92, 1.62) -- (9.92, 1.78) -- (10.08, 1.78) -- (10.08, 1.62);
		\filldraw[fill = black, draw = black] (8.98, 1.52) -- (8.82, 1.52) -- (8.82, 1.68) -- (8.98, 1.68) -- (8.98, 1.52);
		
		\filldraw[fill=black, draw=black] (14, 0) circle [radius=.03] node [scale = .8, black, below = 0] {$e$};
		\filldraw[fill=black, draw=black] (13, 0) circle [radius=.03] node [scale = .8, black, below = 0] {$b$};
		\filldraw[fill=black, draw=black] (12.3, .7) circle [radius=.03] node [scale = .8, black, left = 0] {$a$};
		\filldraw[fill=black, draw=black] (12.3, 1.7) circle [radius=.03] node [scale = .8, black, left = 0] {$d$};
		\filldraw[fill=black, draw=black] (13, 2.4) circle [radius=.03] node [scale = .8, black, above = 2] {$z$};
		\filldraw[fill=black, draw=black] (14, 2.4) circle [radius=.03] node [scale = .8, black, above = 2] {$y$};
		\filldraw[fill=black, draw=black] (14.7, .7) circle [radius=.03] node [scale = .8, black, below = 2] {$v$};
		\filldraw[fill=black, draw=black] (14.7, 1.7) circle [radius=.03] node [scale = .8, black, above = 2] {$x$};

		\filldraw[fill=black, draw=black] (13, 1) circle [radius=.03] node [scale = .8, black, above = 7, right = 0] {$c''$};
		\filldraw[fill=black, draw=black] (14, 1.7) circle [radius=.03] node [scale = .8, black, above = 3, right = 2] {$f''$};
		\filldraw[fill=black, draw=black] (12.9, 1.6) circle [radius=.03] node [scale = .8, black, above = 6, right = 0] {$g''$};

		\filldraw[fill = black, draw = black] (14.08, 2.32) -- (13.92, 2.32) -- (13.92, 2.48) -- (14.08, 2.48) -- (14.08, 2.32);
		\filldraw[fill = black, draw = black] (13.08, 2.32) -- (12.92, 2.32) -- (12.92, 2.48) -- (13.08, 2.48) -- (13.08, 2.32);
		\filldraw[fill = black, draw = black] (14.78, 1.62) -- (14.62, 1.62) -- (14.62, 1.78) -- (14.78, 1.78) -- (14.78, 1.62);
		\filldraw[fill = black, draw = black] (14.78, .62) -- (14.62, .62) -- (14.62, .78) -- (14.78, .78) -- (14.78, .62);
		\filldraw[fill = black, draw = black] (12.98, 1.52) -- (12.82, 1.52) -- (12.82, 1.68) -- (12.98, 1.68) -- (12.98, 1.52);
		\filldraw[fill = black, draw = black] (13.08, .92) -- (12.92, .92) -- (12.92, 1.08) -- (13.08, 1.08) -- (13.08, .92);
		\filldraw[fill = black, draw = black] (14.08, 1.62) -- (13.92, 1.62) -- (13.92, 1.78) -- (14.08, 1.78) -- (14.08, 1.62);

		\end{tikzpicture}
		
	\end{center}

	\caption{\label{3equationshadedunshaded} Shown above is process of moving the shaded plaquettes through our domain, which is used to establish the Yang-Baxter equation. }
\end{figure}
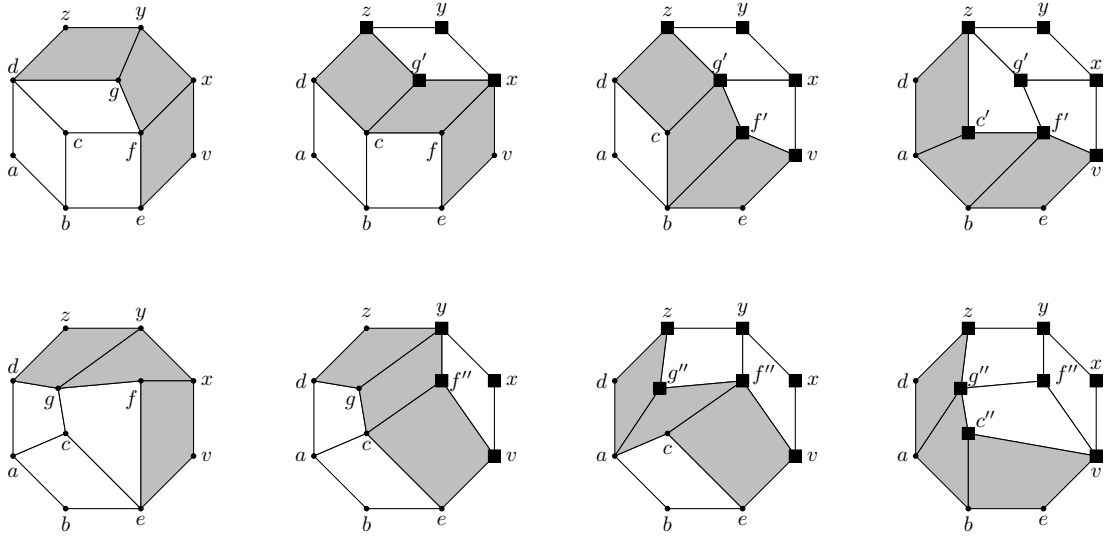

Then we can establish the Yang-Baxter equation in this setting.

\begin{thm} 
	
	\label{relationwstochastic} 
	
	Under the notation and assumptions explained above, we have that 
	\begin{flalign}
	\label{sequation}
	\begin{aligned}
	& \displaystyle\sum_{c \in \mathfrak{h}} S_P (a, b, c, d) S_Q (b, e, f, c) S_R (c, f, g, d)  = \displaystyle\sum_{c \in \mathfrak{h}}  S_{P'} (c, e, f, g) S_{Q'} (a, c, g, d) S_{R'} (b, e, c, a). 
	\end{aligned}
	\end{flalign}	
	
\end{thm}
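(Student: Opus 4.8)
The plan is to deduce \eqref{sequation} from the ordinary Yang--Baxter equation \eqref{unshadedequation1} for the weights $w$, in the same spirit as the proof of \Cref{sdynamicalequation1} (see also \Cref{dynamicalsixvertexequationparameter}): I will show that on each side of \eqref{sequation} the \emph{stochastic correction} --- the ratio of the summand to the product of the three underlying $w$-weights --- is independent of the summation index $c$, and that both sides produce the \emph{same} constant correction, after which \eqref{sequation} collapses onto \eqref{unshadedequation1}.

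First, applying the definition \eqref{ws} of the stochasticized weights three times, one writes
\[
S_P(a,b,c,d)\,S_Q(b,e,f,c)\,S_R(c,f,g,d) = w(a,b,c,d)\,w(b,e,f,c)\,w(c,f,g,d)\cdot \Phi_{\mathrm L}(c),
\]
where $\Phi_{\mathrm L}(c)$ is a product of $\chi$-values (the numerators in \eqref{ws}) divided by a product of $\chi$-values and \emph{frozen} $w$-values (the denominators), all expressed through the shaded triples attached to $P$, $Q$, $R$ as in the third part of \Cref{wchiconditions}; the right side of \eqref{sequation} is expanded in the same way, producing $\Phi_{\mathrm R}(c)$.

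The crux is to prove that $\Phi_{\mathrm L}(c)$ does not depend on $c$ and that $\Phi_{\mathrm L}\equiv\Phi_{\mathrm R}$. Here the consistency condition in \Cref{wchiconditions} forces the shaded triples of the three plaquettes to be compatible with one another and with the outer boundary data $x,y,z,v$ of \Cref{3equationshadedunshaded}, so that the frozen $w$-denominators and the outermost $\chi$-factors of $\Phi_{\mathrm L}(c)$ are manifestly $c$-independent; each remaining $\chi$-factor that still involves $c$ is then removed, one at a time, by a matched numerator/denominator cancellation carried out via the relation \eqref{wchirelation} and the consistency identities of \Cref{consistent}. Diagrammatically this is exactly the process --- shown schematically in the top row of \Cref{3equationshadedunshaded} --- of ``pushing'' the two shaded plaquettes, initially attached along the positive directions of the lines meeting at $Q$ and $R$, through the hexagon $PQR$ until they emerge attached along the negative directions; every elementary move is an instance of \eqref{wchirelation} (with its single summation) or of \Cref{consistent}, and after all of them $\Phi_{\mathrm L}$ has been rewritten purely in terms of the fixed data $a,b,d,e,f,g$ and $x,y,z,v$. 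Running the identical argument on the perturbed configuration (bottom row of \Cref{3equationshadedunshaded}) shows $\Phi_{\mathrm R}(c)$ is $c$-independent, and, since the two pictures differ only by sliding $\ell_3$ past $\ell_1,\ell_2$ --- a move that does not touch the fully pushed-out shaded plaquettes --- the two final configurations coincide, giving $\Phi_{\mathrm L}=\Phi_{\mathrm R}=:\Phi$.

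With this common constant $\Phi$ in hand, the left side of \eqref{sequation} equals $\Phi\sum_{c}w(a,b,c,d)w(b,e,f,c)w(c,f,g,d)$ and its right side equals $\Phi\sum_{c}w(b,e,c,a)w(a,c,g,d)w(c,e,f,g)$; these two sums agree by the Yang--Baxter equation \eqref{unshadedequation1}, completing the proof. The main obstacle is the middle step: one must carefully track the shaded triples through the consistency conditions --- recalling from the footnote to \Cref{wchiconditions} that such a triple need not be unique --- and turn the pictorial ``pushing'' of \Cref{3equationshadedunshaded} into an explicit finite chain of applications of \eqref{wchirelation} and \Cref{consistent}; once that bookkeeping is done, recognizing that $\Phi_{\mathrm L}$ and $\Phi_{\mathrm R}$ are the same $c$-free expression, and the final appeal to \eqref{unshadedequation1}, are routine.
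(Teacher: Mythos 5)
Your overall strategy is exactly the paper's: expand each side via \eqref{ws}, show that the ratio of the summand to the product of the three $w$-weights is independent of $c$ and the same on both sides, and then reduce to \eqref{unshadedequation1}. The $c$-independence part is handled correctly — the consistency condition does force every $c$-dependent $\chi$-factor in the numerator to be matched by one in the denominator, and what survives depends only on the fixed boundary data.

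There is, however, a genuine gap in the step where you conclude $\Phi_{\mathrm L}=\Phi_{\mathrm R}$. You justify this by saying the two pictures differ only by sliding $\ell_3$ past $\ell_1,\ell_2$, ``a move that does not touch the fully pushed-out shaded plaquettes.'' But after the pushing is complete, $\Phi_{\mathrm L}$ and $\Phi_{\mathrm R}$ are not expressed purely in the outer data $a,b,d,e,f,g,x,y,z,v$: they also involve the intermediate frozen heights created during the pushing — in the paper's notation, the triple $(f',g',c')$ for the original configuration and $(c'',f'',g'')$ for the perturbed one (see \Cref{3equationshadedunshaded}). These are produced by two different orders of applying the freezing procedure, and their equality is not a diagrammatic triviality; it is exactly the content of equation \eqref{fgc} in the paper, which is the Yang--Baxter equation \eqref{unshadedequation1} applied to the fully frozen configuration, combined with the uniqueness statement $\bigl|\Adm_1(\cdot)\bigr|=1$ from \Cref{triplefrozen} (nonvanishing of the left side of \eqref{fgc} forces $w(c',f'',y,z)\neq 0$, and frozenness of $(f'',y,z)$ then pins $c'=g''$, and similarly for the other two components). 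Without this argument your identification of the ``two final configurations'' is an assertion, not a proof. Supplying it — one more application of \eqref{unshadedequation1} to the frozen boundary plus the uniqueness from frozenness — closes the gap and recovers the paper's proof.
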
 

\begin{proof} 
	
We will establish this theorem by showing that both sides of \Cref{sequation} are the same multiples of the corresponding sides of \eqref{unshadedequation1}. 

To that end, let us begin by explaining the diagrams in \Cref{3equationshadedunshaded}. Since $(x, y, z)$ is the shaded triple associated with the plaquette $(c, f, d, g)$, there exists a unique $g' \in \mathfrak{h}$ such that $(g', x, y, z)$ is admissible. Then, $(f, x, g', c)$ and $(c, g', z, d)$ are the two plaquettes attached to the frozen plaquette $(g', x, y, z)$ along the negative direction, as in the right diagram in \Cref{assignment}; this is depicted in the second diagram on the top row of \Cref{3equationshadedunshaded}. 

Now, the consistency condition implies that $(v, x, g')$ is the shaded triple associated with the plaquette $(b, e, f, c)$, and so there again exists a unique $f' \in \mathfrak{h}$ such that $(f', v, x, g')$ is admissible. Continuing in this manner, we obtain the third and fourth diagrams on the top row of \Cref{3equationshadedunshaded}. Applying the same reasoning to the bottom-left diagram in \Cref{3equationshadedunshaded} yields the bottom row of that figure. 

This gives rise to triples  $(f', g', c')$ and $(c'', f'', g'')$ and the top and bottom row, respectively. We claim that these two triples are equal, that is, $f' = c''$, $g' = f''$, and $c' = g''$. To that end, we apply the Yang-Baxter equation \eqref{unshadedequation1} and the facts that $\Adm_1 (x, y, z) = \{ g' \}$ and $\Adm_1 (v, x, y) = \{ f'' \}$ to deduce that  
\begin{flalign}
\label{fgc}
w (g', x, y, z) w (f', v, x, g') w (c', f', g', z) = w (f'', v, x, y) w (f', v, f'', c') w (c', f'', y, z).
\end{flalign}

\noindent Now, since the left side of \eqref{fgc} is nonzero, the right side is also nonzero. Thus, $w (c', f'', y, z) \ne 0$. However, as depicted in the bottom-right diagram of \Cref{3equationshadedunshaded}, the triple $(f'', y, z)$ is frozen with $\Adm_1 (f'', y, z) = \{ g'' \}$. Hence, $c' = g''$. Similar reasoning yields $f' = c''$ and $g' = f''$. 

Now, let us evaluate the $S$ weights. By \Cref{wschiweights}, we obtain that 
\begin{flalign*}
S_P (a, b, c, d) & = \displaystyle\frac{w (a, b, c, d) \chi (c, g', z, d) \chi (b, f', g', c)}{\chi (b, f', c', a) \chi (a, c', z, d)} \displaystyle\frac{1}{ w (c', f', g', z)}; \\
S_Q (b, e, f, c) & =   \displaystyle\frac{w (b, e, f, c)\chi (e, v, x, f) \chi (f, x, g', c)}{\chi (e, v, f', b) \chi (b, f', g', c)} \displaystyle\frac{1}{ w (f', v, x, g')}; \\
S_R (c, f, g, d) & =  \displaystyle\frac{w (c, f, g, d) \chi (f, x, y, g) \chi (g, y, z, d)}{\chi (f, x, g', c) \chi (c, g', z, d)} \displaystyle\frac{1}{ w (g', x, y, z)},
\end{flalign*}

\noindent so that the product of the stochastic corrections at $P$, $Q$, and $R$ is equal to 
\begin{flalign*}	
& \displaystyle\frac{S_P (a, b, c, d) S_Q (b, e, f, c) S_R (c, f, g, d)}{w (a, b, c, d) w (b, e, f, c)  w (c, f, g, d) } \\
& \quad =  \displaystyle\frac{\chi (e, v, x, f)\chi (f, x, y, g) \chi (g, y, z, d) }{\chi (e, v, f', b) \chi (b, f', c', a) \chi (a, c', z, d)} \displaystyle\frac{1}{ w (c', f', g', z) w (f', v, x, g') w (g', x, y, z)}.
\end{flalign*}

\noindent Similar reasoning using the lower row of \Cref{3equationshadedunshaded} implies that the product of the stochastic corrections at $P'$, $Q'$, and $R'$ is equal to
\begin{flalign}
\label{swpqr1}
\begin{aligned}	
& \displaystyle\frac{S_{P'} (c, e, f, g) S_{Q'} (a, c, g, d) S_{R'} (b, e, c, a)}{w (c, e, f, g) w (a, c, g, d)  w (b, e, c, a) } \\
& \quad =  \displaystyle\frac{\chi (e, v, x, f)\chi (f, x, y, g) \chi (g, y, z, d) }{\chi (e, v, c'', b) \chi (b, c'', g'', a) \chi (a, g'', z, d)} \displaystyle\frac{1}{ w (g'', f'', y, z) w (f'', v, x, y) w (c'', v, f'', g'')}. 
\end{aligned} 
\end{flalign}

\noindent Thus, \eqref{fgc} and the fact that $(f', g', c') = (c'', f'', g'')$ implies that 
\begin{flalign} 
\label{swpqr}
\displaystyle\frac{S_P (a, b, c, d) S_Q (b, e, f, c) S_R (c, f, g, d)}{w (a, b, c, d) w (b, e, f, c)  w (c, f, g, d) }  = \displaystyle\frac{S_{P'} (c, e, f, g) S_{Q'} (a, c, g, d) S_{R'} (b, e, c, a)}{w (c, e, f, g) w (a, c, g, d)  w (b, e, c, a) }.
\end{flalign} 

\noindent Furthermore, both sides of \eqref{swpqr} are independent of $c$ since the same is true about each index appearing on the right side of \eqref{swpqr1}. Thus, \eqref{sequation} is obtained by multiplying both sides of \eqref{unshadedequation1} by either side of \eqref{swpqr}, from which we deduce the theorem. 
\end{proof}

\begin{rem}

\label{consistencyvertexequation}

As in \Cref{dynamicalsixvertexequationparameter}, what allowed for the proof of \Cref{relationwstochastic} was that the quantities appearing on either side of \eqref{swpqr} were independent of the interior vertex $c$ over which the sums in \eqref{sequation} were taken. This independence was essentially stipulated by the consistency condition, which imposed that any $c$-dependent $\chi$-weight appearing in the numerator of \eqref{swpqr1} must also appear in the denominator.
\end{rem}

\section{A Stochastic Elliptic Model} 

\label{StochasticElliptic}

In this section we explain how to use to the framework of \Cref{Domain} to stochasticize solutions to the dynamical Yang-Baxter equation coming from the fully fused eight-vertex SOS model. We begin in Section \ref{Hypergeometric} with some notation on elliptic functions, Pochhammer symbols, and hypergeometric series. Then, in Section \ref{EllipticFused} we provide and stochasticize the fused solution to the dynamical Yang-Baxter equation; in Section \ref{EllipticStochasticC} we evaluate these stochasticized weights; and in Section \ref{Elliptic01} we provide degenerations of these weights to the six-vertex and higher spin cases.

\subsection{Theta Functions and Hypergeometric Series}

\label{Hypergeometric}

Throughout this section, we fix $\eta, \tau \in \mathbb{C}$ with $\Im \tau > 0$. We will repeatedly come across the \emph{first Jacobi theta function} $\theta_1 (z; \tau) = \theta (z; \tau) = \theta (z) = f(z)$, defined (for any $z \in \mathbb{C}$) by 
\begin{flalign}
\label{theta1}
f (z) = - \displaystyle\sum_{j = - \infty}^{\infty} \exp \left( \pi \textbf{i} \tau \left( j + \displaystyle\frac{1}{2} \right)^2 + 2 \pi \textbf{i} \left( j + \displaystyle\frac{1}{2} \right) \left( z + \displaystyle\frac{1}{2} \right) \right), 
\end{flalign}

\noindent which converges since $\Im \tau > 0$ and satisfies (see, for instance, equation (2.1.19) of \cite{ESSOSM})
\begin{flalign}
\label{quarticrelationf}
\begin{aligned}
f(x + z) & f(x - z) f(y + w) f(y - w) \\
& = f(x + y) (x - y) f(z + w) f (z - w) + f(x + w) f(x - w) f(y + z) f(y - z), 
\end{aligned}
\end{flalign}

\noindent for any $w, x, y, z \in \mathbb{C}$.

For any complex number $a$ and nonnegative integer $k$, the \textit{rational Pochhammer symbol} $(a)_k$; \emph{$q$-Pochhammer symbol} $(a; q)_k$ and \emph{elliptic Pochhammer symbol} $[a]_k = [a; \eta]_k$ are defined by 
\begin{flalign}
\label{productbasicelliptic}
(a)_k = \prod_{j = 0}^{k - 1} (a + j); \qquad (a; q)_k = \prod_{j = 0}^{k - 1} (1 - q^j a); \qquad [a]_k = \prod_{j = 0}^{k - 1} f(a - 2 \eta j),
\end{flalign}

\noindent respectively. In particular, when $k = 0$, all three of these expressions above are set to $1$ (independently of the parameters $a$, $q$, and $\eta$). The definitions of these Pochhammer symbols are extended to negative integers $k$ by 
\begin{flalign*}
(a)_k = \displaystyle\prod_{j = 1}^{-k} \displaystyle\frac{1}{a - j}; \qquad (a; q)_k = \displaystyle\prod_{j = 1}^{-k} \displaystyle\frac{1}{1 - q^{-j} a}; \qquad [a]_k = \displaystyle\prod_{j = 1}^{-k} \displaystyle\frac{1}{f (a + 2 \eta j)}.
\end{flalign*}

\noindent One can quickly verify that these Pochhammer symbols satisfy the identities
\begin{flalign}
\label{elliptichypergeometricidentities1}
\begin{aligned} 
(a; q)_k (a q^k; q)_m = (a; q)_{k + m}; \qquad [a]_{m - k} = \displaystyle\frac{[a]_m}{[a - 2 \eta k]_k}; \qquad  \displaystyle\lim_{q \rightarrow 1} (1 - q)^{-k} (q^a; q)_k = (a)_k,
\end{aligned} 
\end{flalign}

\noindent for any $a \in \mathbb{C}$ and $k, m \in \mathbb{Z}$. 

From the Pochhammer symbols, we can define the \emph{rational hypergeometric series},
\begin{flalign}
\label{rationalhypergeometric}
_p F_r \left( \begin{array}{ccccc}  a_1, a_2, \ldots , a_p \\ b_1, b_2, \ldots , b_r \end{array} \right| z \bigg) = \displaystyle\sum_{k = 0}^{\infty} \displaystyle\frac{z^k}{ k!} \displaystyle\prod_{j = 1}^p (a_j)_k \displaystyle\prod_{j = 1}^r (b_j)_k^{-1},
\end{flalign}

\noindent the \emph{basic hypergeometric series},
\begin{flalign}
\label{basichypergeometric}
_p \varphi_r \left( \begin{array}{ccccc}  a_1, a_2, \ldots , a_p \\ b_1, b_2, \ldots , b_r \end{array} \right| q, z \bigg) = \displaystyle\sum_{k = 0}^{\infty} \displaystyle\frac{z^k}{ \big( q; q \big)_k} \displaystyle\prod_{j = 1}^p (a_j; q)_k \displaystyle\prod_{j = 1}^r (b_j; q)_k^{-1},
\end{flalign}

\noindent and the \emph{elliptic hypergeometric series}, 
\begin{flalign}
\label{elliptichypergeometric}
_p e_r \left( \begin{array}{ccccc}  a_1, a_2, \ldots , a_p \\ b_1, b_2, \ldots , b_r \end{array} \right| z  \bigg) = \displaystyle\sum_{k = 0}^{\infty} \displaystyle\frac{z^k}{ [-2 \eta]_k} \displaystyle\prod_{j = 1}^p [a_j]_k \displaystyle\prod_{j = 1}^r [b_j]_k^{-1},
\end{flalign}

\noindent for any positive integers $p, r$ and complex numbers $a_1, a_2, \ldots , a_p, b_1, b_2, \ldots , b_r, z$. Here, we must assume that the series \eqref{rationalhypergeometric}, \eqref{basichypergeometric}, and \eqref{elliptichypergeometric} converge. In the situations we come across, this will be guaranteed since these series will always \textit{terminate}. For \eqref{rationalhypergeometric} this means that some $a_j = - r$; for \eqref{basichypergeometric} that some $a_j = q^{-r}$; and for \eqref{elliptichypergeometric} that some $a_j = 2 \eta r$, where in all cases $r$ is a nonnegative integer. 

In what follows, we will require several identities satisfied by hypergeometric series. The first is the \emph{Vandermonde-Chu identity}, which states (see equation (1.2.9) of \cite{BHS}) that
\begin{flalign}
\label{221identityhypergeometric}
{_2 F_1} \Bigg( \begin{array}{cc}  -k, b \\ c \end{array}\Bigg| 1 \Bigg) = \displaystyle\frac{(c - b)_k}{(c)_k},
\end{flalign}

\noindent for any $k \in \mathbb{Z}_{\ge 0}$ and $b, c \in \mathbb{C}$. The second is the \emph{$q$-Heine transformation identity} for the ${_2 \varphi_1}$ hypergeometric series, which states (see equation (1.4.1) of \cite{BHS}) that
\begin{flalign}
\label{21identityhypergeometric}
{_2 \varphi_1} \Bigg( \begin{array}{cc}  q^{-k}, b \\ c \end{array}\Bigg| q, z \Bigg) = \displaystyle\frac{(b; q)_{\infty} (q^{-k} z; q)_k}{(c; q)_{\infty}} {_2 \varphi_1}  \Bigg( \begin{array}{cc}  b^{-1} c, z \\ q^{-k} z \end{array}\Bigg| q, b \Bigg),
\end{flalign}

\noindent for any $k \in \mathbb{Z}_{\ge 0}$ and $b, c, z, q \in \mathbb{C}$ with $|q|, |b|< 1$. 

To state additional identities of use to us, we must first explain certain specializations of parameters under which hypergeometric series are known to simplify. The two we will discuss here are \textit{very well-poised hypergeometric series} and \textit{balanced hypergeometric series}. 

The \emph{very well-poised elliptic hypergeometric series} is defined by 
\begin{flalign}
\label{elliptichypergeometricpoised}
_{r + 1} v_r (a_1; a_6, a_7, \ldots , a_{r + 1}; z) = \displaystyle\sum_{k = 0}^{\infty} \displaystyle\frac{z^k [a_1]_k}{ [-2 \eta]_k} \displaystyle\frac{f (a_1 - 4 \eta k)}{f (a_1)} \displaystyle\prod_{j = 6}^{r + 1} \displaystyle\frac{[a_j]_k}{[a_1 - a_j - 2 \eta ]_k},
\end{flalign}

\noindent which can be viewed as a special case of the elliptic hypergeometric series \eqref{elliptichypergeometric} where $a_2$, $a_3$, $a_4$, and $a_5$ are fixed in terms of $a_1$, $\tau$, and $\eta$; see Section 11.3 of \cite{BHS}. We also call the very well-posed elliptic hypergeometric series $_{r + 1} v_r (a_1; a_6, a_7, \ldots , a_{r + 1})$ \textit{balanced} if 
\begin{flalign*}
\displaystyle\frac{(r - 5) (a_1 - 2 \eta)}{2} = \sum_{j = 6}^{r + 1} a_j - 2 \eta, 
\end{flalign*}

\noindent which is often imposed in order to ensure total ellipticity of the sum \eqref{elliptichypergeometricpoised}. 

Now, the \textit{(terminating) elliptic Jackson identity} (which was originally found by Frenkel-Turaev \cite{ESEMHF}; see also equation (11.3.19) of \cite{BHS}, in which all parameters are divided by $-2 \eta$), states that 
\begin{flalign}
\label{hypergeometric109sumterminating} 
_{10} v_9 (a; b, c, d, e, 2 n \eta ; 1) = \displaystyle\frac{[a - 2 \eta]_n [a - b - c - 2 \eta]_n [a - b - d - 2 \eta]_n [a - c - d - 2 \eta]_n}{[a - b - 2 \eta]_n [a - c - 2 \eta]_n [a - d - 2 \eta]_n [a - b - c - d - 2 \eta]_n},
\end{flalign}

\noindent for any nonnegative integer $n$, if $_{10} v_9 (a; b, c, d, e, 2 n \eta ; 1)$ is balanced.

\subsection{Stochasticizing Fused Elliptic Weights} 

\label{EllipticFused}

In this section we stochasticize a solution to the dynamical Yang-Baxter equation that arises from fusing those coming from the elliptic solid-on-solid (SOS) or interaction round-a-face (IRF) model \cite{ESSOSM}; these solutions are essentially reparameterizations of the ones provided in \cite{DSHSVM}. 

We begin with a definition of the dynamical Yang-Baxter equation. 

\begin{definition} 
	
	\label{definitiondynamicalelliptic} 
	
	We say that a weight function $W_{J; \Lambda} \big( i_1, j_1; i_2, j_2 \b| \lambda; x, y)$ is a solution to the \emph{dynamical Yang-Baxter equation} for any $i_1, j_1, i_2, j_2 \in \mathbb{Z}_{\ge 0}$ and $\lambda, x, y, J, \Lambda \in \mathbb{C}$ if the following holds. For any fixed $x, y, z, \lambda, T \in \mathbb{C}$; $i_1, j_1, k_1, i_3, j_3, k_3 \in \mathbb{Z}_{\ge 0}$; and $J, \Lambda \in \mathbb{Z}_{> 0}$, we have that 
	\begin{flalign}
	\label{dynamicalellipticequation1}
	\begin{aligned}
	\displaystyle\sum_{i_2, j_2, k_2 \in \mathbb{Z}_{\ge 0}} & W_{J; \Lambda} \big( i_1, j_1; i_2, j_2 \b| \lambda; x, y \big) W_{J; T} \big( k_1, j_2; k_2, j_3 \b| \lambda + 2 \eta (2i_2 - \Lambda); x, z \big) \\
	& \times W_{\Lambda; T} \big( k_2, i_2; k_3, i_3 \b| \lambda; y, z \big)  \\
	\qquad \qquad = \displaystyle\sum_{i_2, j_2, k_2 \in \mathbb{Z}_{\ge 0}} & W_{J; T} \big( k_2, j_1; k_3, j_2 \b| \lambda; x, z \big) W_{\Lambda; T} \big( k_1, i_1; k_2, i_2 \b| \lambda + 2 \eta (2j_1 - J); y, z \big) \\
	& \times	 W_{J; \Lambda} \big( i_2, j_2; i_3, j_3 \b| \lambda + 2 \eta (2k_3 - T); x, y \big).
	\end{aligned} 
	\end{flalign}
	
\end{definition}

Observe that, without the parameters $\lambda$, $J$, $\Lambda$, and $T$, \eqref{dynamicalellipticequation1} is similar to \eqref{equationspin1}. The diagrammatic interpretation of \eqref{dynamicalellipticequation1} is again that $W_{J; \Lambda} \big( i_1, j_1; i_2, j_2 \b| \lambda; x, y \big)$ is the weight of a vertex $u$ in a directed path ensemble with arrow configuration $(i_1, j_1; i_2, j_2)$, where $x$ and $y$ denote the horizontal and vertical rapidity parameters associated with $u$, respectively; see the left side of \Cref{vdynamicalvertex1elliptic}. Here, $J$ and $\Lambda$ are \emph{spin parameters} that are associated with the horizontal and vertical edges that intersect to form $u$, respectively.

\begin{figure}

	\begin{center}

		\begin{tikzpicture}[
		>=stealth,
		auto,
		style={
			scale = 1.8
		}
		]

		\filldraw[fill=white, draw=black] (1, 0) circle [radius=.1] node[scale = .7, below = 37, right = 4]{$\lambda + 2 \eta (2i_2 + 2j_2 - \Lambda - J)$} node[scale = 1, above = 28, left = 18]{$\lambda$};
		
		\filldraw[fill=white, draw=black] (1, 1) circle [radius=0] node[scale = .8, below = 30, right = 4]{$\lambda + 2 \eta (2i_2 - \Lambda)$};
		
		\filldraw[fill=white, draw=black] (0, 0) circle [radius=0];

		\draw[->, black, thick] (1, -.9) -- (1, -.1) node[below = 40, scale = .8]{$i_1$} node[below = 57, scale = .8]{$y$} node[below = 67, scale = .8]{$\Lambda$}; 
		
		\draw[->, black, thick] (1, .1) -- (1, .9) node[scale = .8, above = 0]{$i_2$}; 
		
		\draw[->, black, thick] (.1, 0) -- (.9, 0)  node[left = 40, scale = .8]{$j_1$} node[left = 57, scale = .8]{$x$} node[left = 67, scale = .8]{$J$} node[scale = .8, below = 33, left = 2]{$\lambda + 2 \eta (2j_1 - J)$}; 
		
		\draw[->, black, thick] (1.1, 0) -- (1.9, 0) node[right = 0, scale = .8]{$j_2$};

		\filldraw[fill=white, draw=black] (6, 0) circle [radius=.1] node[below = 25, right = 15]{$v$} node[scale = .75, above = 33, left = 2]{$v - 2 \eta (i_1 + j_1)$};
		
		\filldraw[fill=white, draw=black] (6, 1) circle [radius=0] node[scale = .9, below = 28, right = 5]{$v -2 \eta j_2$};
		
		\filldraw[fill=white, draw=black] (5, 0) circle [radius=0] node[scale = .9, below = 28, right = 7]{$v -2 \eta i_1$};

		\draw[->, black, thick] (6	, -.9) -- (6, -.1) node[below = 40, scale = .8]{$i_1$}; 
		
		\draw[->, black, thick] (6, .1) -- (6, .9) node[scale = .8, above = 0]{$i_2$}; 
		
		\draw[->, black, thick] (5.1, 0) -- (5.9, 0) node[left = 40, scale = .8]{$j_1$}; 
		
		\draw[->, black, thick] (6.1, 0) -- (6.9, 0) node[right = 0, scale = .8]{$j_2$};

		\end{tikzpicture}
		
	\end{center}

	\caption{\label{vdynamicalvertex1elliptic} Depicted above is the way in which the dynamical parameters $\lambda$ and $v$ change between faces.  }
\end{figure}
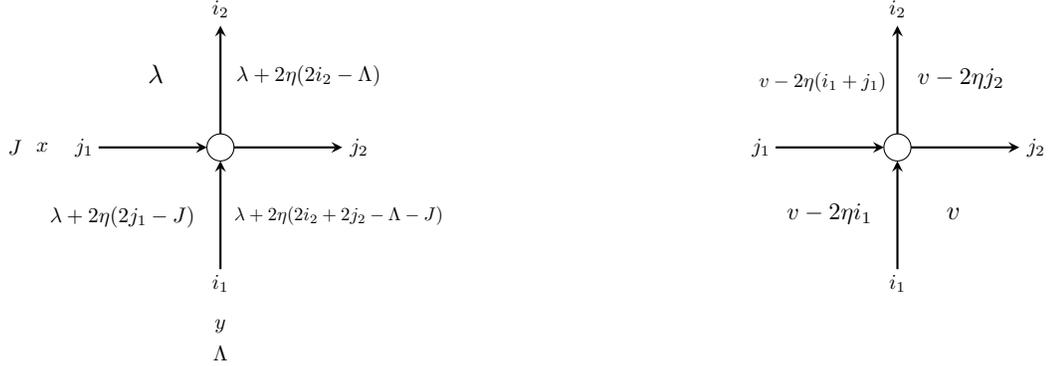

The complex number $\lambda = \lambda (u)$ is a \emph{dynamical parameter} that changes between vertices according to the identities 
\begin{flalign}
\label{lambdadynamical}
\lambda (a + 1, b) = \lambda (a, b) + 2 \eta (i_2 - \Lambda); \qquad \lambda (a, b - 1) = \lambda (a, b) + 2 \eta (2j_1 - J),
\end{flalign}

\noindent where $(i_1, j_1; i_2, j_2)$ is the arrow configuration associated with the vertex $u = (a, b)$; we refer to the left side of Figure \ref{vdynamicalvertex1elliptic} for a depiction of these identities, where for each vertex $u$, the dynamical parameter $\lambda (u)$ there is drawn in the upper-left face containing $u$.

Under this identification, the diagrammatic interpretation of \eqref{dynamicalellipticequation1} is again that of moving a line through a cross (similar to what was depicted earlier in Figure \ref{equationpaths}), where the dynamical parameter $\lambda (u)$ is now taken into account to evaluate the vertex weights. One can equivalently express the Yang-Baxter equation \eqref{dynamicalellipticequation1} in the face form \eqref{unshadedequation} on the dual graph, but here we will adhere to the vertex notation with dynamical parameters changing according to \eqref{lambdadynamical}.

Now let us provide an explicit solution to the dynamical Yang-Baxter equation in terms of very well-poised, balanced ${_{12} v_{11}}$ elliptic hypergeometric series.

\begin{definition}
	
	\label{fusedelliptic}

	Fix $x, y \in \mathbb{C}$, and let $J \in \mathbb{Z}_{> 0}$; $\lambda, \Lambda \in \mathbb{C}$; and $i_1, j_1, i_2, j_2 \in \mathbb{Z}_{\ge 0}$. We define the \emph{elliptic fused weight} $W_{J; \Lambda} \big( i_1, j_1; i_2, j_2 \b| \lambda; x, y \big) = W_{J; \Lambda} ( i_1, j_1; i_2, j_2)$ as follows. 
	
	If either $i_1 + j_1 \ne i_2 + j_2$, $j_1 \notin \{ 0, 1, \ldots , J \}$, or $j_2 \notin \{ 0, 1, \ldots , J \}$, then $W_{J; \Lambda} \big( i_1, j_1; i_2, j_2  \big) = 0$. Otherwise, set 
	\begin{flalign}
	\label{parametersa}
	\begin{aligned}
	a_1 & = \lambda + 2 \eta (2 j_1 + j_2 - J); \quad a_6 = 2 \eta j_1; \quad a_7 = 2 \eta j_2; \quad a_8 = \lambda + 2 \eta j_1; \\
	a_9 & = \lambda + 2 \eta (i_1 + 2 j_1 - J - 1 - \Lambda); \quad a_{10} = \eta (\Lambda - J) + x - y + 2 \eta (j_2 - i_1 - 1); \\
	a_{11} & = \eta (\Lambda - J) - x + y + 2 \eta (j_2 - i_1); \qquad \quad a_{12} = \lambda + 2 \eta (i_2 + j_1 + j_2 - J), 
	\end{aligned}
	\end{flalign} 
	
	\noindent and, if $i_1 + j_1 = i_2 + j_2$ and $j_1, j_2 \in \{ 0, 1, \ldots , J \}$, then define
	\begin{flalign}
	\label{wjlambda} 
	\begin{aligned}
	W_{J; \Lambda} & \big( i_1, j_1; i_2, j_2 \b| \lambda; x, y \big) \\
	& = \displaystyle\frac{[2 \eta i_2]_{i_2}}{[2 \eta i_1]_{i_1}} \displaystyle\frac{[2 \eta \Lambda]_{i_1}}{[2 \eta \Lambda]_{i_2}} \displaystyle\frac{\big[ 2 \eta (J - j_2) \big]_{j_1} }{\big[ 2 \eta j_1 \big]_{j_1} } \displaystyle\frac{\big[ 2 \eta i_1 \big]_{j_2}  \big[ 2 \eta (\Lambda - i_1 + j_2) \big]_{j_1} }{\big[ \eta (\Lambda + J) - x + y  \big]_J  } \\
	& \quad \times \big[ \eta (\Lambda + J) - x + y  - 2 \eta (i_1 + j_1) \big]_{J - j_1 - j_2} \big[ \lambda + x - y + 2 \eta (i_1 + 2j_1 - 1) - \eta (\Lambda + J) \big]_{j_1} \\
	& \quad \times \displaystyle\frac{ \big[ \lambda + 2 \eta i_2 \big]_{J - j_1 - j_2}  \big[ \lambda + 2 \eta (i_1 + 2j_1 - J) + \eta (J - \Lambda) - x + y \big]_{j_2} }{ \big[ \lambda + 2 \eta j_1 \big]_{J - j_1} \big[ \lambda + 2 \eta (2 j_1 + j_2 - J - 1) \big]_{j_1}} \\
	& \quad {\times _{12} v_{11}} (a_1; a_6, a_7, a_8, a_9, a_{10}, a_{11}, a_{12}; 1). 
	\end{aligned}
	\end{flalign}

\end{definition}

\begin{rem}
	
	\label{fusedellipticfusedrepresentation} 
	
	The elliptic fused weight $W_{J; \Lambda} \big( i_1, j_1; i_2, j_2 \b| \lambda; x, y \big)$ from Definition \ref{fusedelliptic} is directly related to the $W_J (i_1, j_1; i_2, j_2 \b| \lambda; v)$ weight given by Definition 3.6 of \cite{DSHSVM} as follows. Denoting the latter by $U_J \big( i_1, j_1; i_2, j_2 \b| \lambda, v \big)$, we have, by Theorem 3.9 of \cite{DSHSVM}, that
	\begin{flalign}
	\label{wjuj}
	W_{J; \Lambda}  \big( i_1, j_1; i_2, j_2 \b| \lambda, x, y \big) = f(2 \eta)^{i_1 - i_2} \displaystyle\frac{[2 \eta i_2]_{i_2}}{[ 2 \eta i_1 ]_{i_1}}  U_J  \big( i_1, j_1; i_2, j_2 \b| \lambda, x - y - \eta J \big).
	\end{flalign} 
	
	\noindent Thus, $W$ and $U$ are related by a gauge transformation and a change of variables replacing the $w$ and $z$ from \cite{DSHSVM} by $x + \eta (J - 1)$ and $y$ here, respectively. 
	
\end{rem}

The following proposition states that the dynamical Yang-Baxter equation holds for the elliptic fused weights. Its proof follows from Section 3 of \cite{DSHSVM} and Theorem 2.1.2 of \cite{ESSOSM} and is therefore omitted.

\begin{prop}
	
	\label{fusedellipticequation} 
	
	The elliptic fused weights $W_{J; \Lambda} \big( i_1, j_1; i_2, j_2 \b| \lambda; x, y\big)$ from \Cref{fusedelliptic} satisfy the dynamical Yang-Baxter equation in the sense of \Cref{definitiondynamicalelliptic}. 
\end{prop}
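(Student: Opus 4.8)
The plan is to deduce \eqref{dynamicalellipticequation1} from a dynamical Yang--Baxter equation already available in the literature, rather than re-deriving it from scratch out of the elliptic hypergeometric identities of \Cref{Hypergeometric}. The chain of implications I would use is: the unfused elliptic SOS Boltzmann weights satisfy the dynamical star--triangle relation (this is Theorem~2.1.2 of \cite{ESSOSM}, and can alternatively be verified directly from the quartic theta relation \eqref{quarticrelationf}); fusing these weights in both the horizontal and vertical directions as in Section~3 of \cite{DSHSVM} produces the weights denoted $W_J$ in \cite{DSHSVM} (written $U_J$ in \Cref{fusedellipticfusedrepresentation}), which satisfy a fully fused dynamical Yang--Baxter equation; and, by \Cref{fusedellipticfusedrepresentation}, that is \eqref{wjuj}, the weights $W_{J;\Lambda}$ of \Cref{fusedelliptic} are obtained from the $U_J$ by a gauge transformation together with the affine change of spectral variables recorded there. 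Since a gauge transformation and an invertible affine reparametrization of the rapidities both preserve the class of solutions to the dynamical Yang--Baxter equation, it then suffices to check that, under \eqref{wjuj}, the fused equation of \cite{DSHSVM} becomes exactly \eqref{dynamicalellipticequation1}.

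Carrying this out requires two verifications. First, one substitutes \eqref{wjuj} into each of the six weights in \eqref{dynamicalellipticequation1}; the spectral arguments then become $x - y - \eta J$, $x - z - \eta J$, and $y - z - \eta\Lambda$ on the left side (and the corresponding permutation on the right), which is precisely the rapidity configuration of the fused dynamical Yang--Baxter equation of \cite{DSHSVM} after the substitution $w \mapsto x + \eta(J-1)$, $z \mapsto y$ of \Cref{fusedellipticfusedrepresentation}; one also checks that the $\lambda$-shifts $2\eta(2i_2 - \Lambda)$, $2\eta(2j_1 - J)$, $2\eta(2k_3 - T)$ appearing in \eqref{dynamicalellipticequation1} are the images under this substitution of the analogous shifts in \cite{DSHSVM}. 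Second, one must see that the scalar prefactors $f(2\eta)^{i_1 - i_2}[2\eta i_2]_{i_2}[2\eta i_1]_{i_1}^{-1}$ contributed by the three vertices on each side combine, along each of the three lines, into a quantity that depends only on the fixed boundary indices $i_1, j_1, k_1, i_3, j_3, k_3$ and not on the summation variables $i_2, j_2, k_2$, and that this combined quantity is the same on both sides of \eqref{dynamicalellipticequation1}. Granting this, the prefactor cancels and \eqref{dynamicalellipticequation1} reduces to the $U_J$-version of the equation supplied by \cite{DSHSVM}.

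The main obstacle is entirely the bookkeeping in the second verification: one has to track, for each of the three weights in each of the two arrangements, which edge plays the role of the ``vertical'' edge carrying the prefactor, and confirm that the factors $[2\eta\,\cdot\,]$ telescope down each of the three lines to their values at the boundary of the configuration. This in turn requires carefully aligning the orientation, rapidity, and $\lambda$-shift conventions of \cite{DSHSVM} with those of \Cref{definitiondynamicalelliptic}; once the conventions are matched, both the summand-independence of the prefactor and the equality of the two prefactors become immediate, and the proposition follows. (A fully self-contained alternative, bypassing \cite{DSHSVM}, would be to expand each fused weight as the fusion sum of products of unfused elliptic SOS weights, apply the unfused dynamical star--triangle relation of \cite{ESSOSM} repeatedly to pass a line through a cross, and then re-sum the result in closed form using the terminating elliptic Jackson identity \eqref{hypergeometric109sumterminating} to recover the ${}_{12} v_{11}$ expression of \Cref{fusedelliptic}; this route is longer but uses only the identities collected in \Cref{Hypergeometric}.)
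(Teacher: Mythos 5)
Your proposal follows essentially the same route as the paper, which in fact omits the proof entirely and simply cites Theorem~2.1.2 of \cite{ESSOSM} for the unfused star--triangle relation and Section~3 of \cite{DSHSVM} for its fused version, with \eqref{wjuj} supplying the translation to the $W_{J;\Lambda}$ normalization. Your writeup is, if anything, more explicit than the paper's about the one genuinely delicate point --- that the gauge prefactor $f(2\eta)^{i_1-i_2}[2\eta i_2]_{i_2}[2\eta i_1]_{i_1}^{-1}$ is attached to whichever index pair is ``vertical'' at each of the three vertices (the $\Lambda$-line at the cross but the $T$-line at the two $L$-vertices), so the cancellation must be checked against the precise form of the $RLL$ relation in \cite{DSHSVM} rather than by telescoping along a single line.
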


Now let us implement the stochasticization procedure on the elliptic fused weights $W_{J; \Lambda}$. This will be similar to what was done in \Cref{VertexModel} except, in the situation here, we have the dynamical parameter $\lambda$. 

In particular, the shaded vertices will again have arrow configurations of the form $(i, j; i + j, 0)$. For a fixed parameter $T \in \mathbb{C}$ (which serves as the analog of $s$ from \Cref{spin12spins}), the $\chi$ weights from \Cref{wchiconditions} will be given by $W_{J; T} (i, j; i + j, 0 \b| \lambda; x, 0)$ and $W_{\Lambda; T} (i, j; i + j, 0 \b| \lambda; y, 0)$.\footnote{The rapidity parameter associated with the stochasticization curve will be $0$ to simplify notation; setting it to be nonzero causes in a shift in the new dynamical parameter to come from the stochasticization procedure.} The first and second conditions in \Cref{wchiconditions} hold due to the fact that $W_{J; \Lambda} (i_1, j_1; i_2, j_2) = 0$ unless $i_1, j_1, i_2, j_2 \ge 0$, and the Yang-Baxter equation \eqref{dynamicalellipticequation1}, respectively.

To verify the third condition, we must assign an attachment of shaded vertices to each vertex $u$ in our domain. This is analogous to what was done in \Cref{DynamicalVertex1}. Specifically, we begin with a stochasticization curve that initially starts with some number $k$ of arrows and then collects those that our domain outputs. We then push the stochasticization curve through our domain one vertex at a time, as depicted in \Cref{arrowstochasticvertices} or \Cref{arrowstochasticelliptic} (the latter is very similar to \Cref{arrowstochastic} but also labels the dynamical parameters in the relevant faces). The two vertices of stochasticization curve adjacent to $u$ before being pushed through $u$ will be the two shaded vertices assigned to $u$.

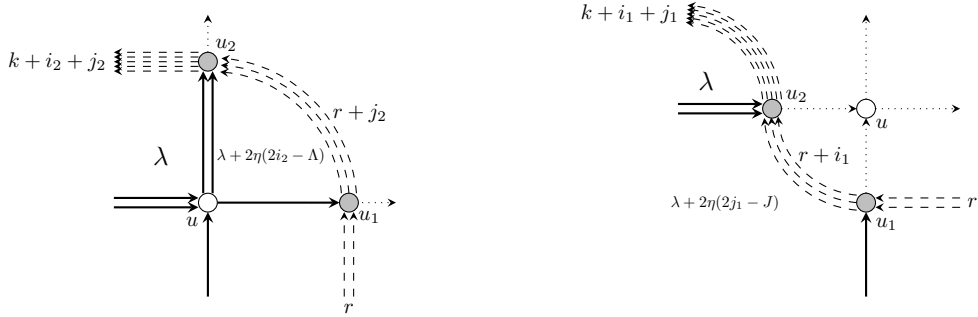
\begin{figure}

	\begin{center}

		\begin{tikzpicture}[
		>=stealth,
		auto,
		style={
			scale = 1.25
		}
		]

		\draw[->, black, thick] (2, -1) -- (2, -.1);
		\draw[->, black, thick] (2.1, 0) -- (3.4, 0);
		
		\draw[->, black, thick] (1, .05) -- (1.9, .05);
		\draw[->, black, thick] (1, -.05) -- (1.9, -.05);
		\draw[->, black, thick] (1.95, .1) -- (1.95, 1.4); 
		\draw[->, black, thick] (2.05, .1) -- (2.05, 1.4); 
		
		\draw[->, black, dotted] (2, 1.6) -- (2, 2); 
		\draw[->, black, dotted] (3.1, 0) -- (4, 0); 
		
		\draw[->, black, dashed] (3.45, -1) -- (3.45, -.1); 
		\draw[->, black, dashed] (3.55, -1) -- (3.55, -.1); 
		
		\draw[->, black, dashed] (1.9, 1.4) -- (1, 1.4); 
		\draw[->, black, dashed] (1.9, 1.5) -- (1, 1.5) node[scale = .8, left = 0]{$k + i_2 + j_2$};
		\draw[->, black, dashed] (1.9, 1.6) -- (1, 1.6);
		\draw[->, black, dashed] (1.9, 1.45) -- (1, 1.45); 
		\draw[->, black, dashed] (1.9, 1.55) -- (1, 1.55);
		
		\draw[->, black, dashed] (3.42, .1) arc(5:85:1.42);
		\draw[->, black, dashed] (3.5, .1) arc(5:85:1.5);
		\draw[->, black, dashed] (3.58, .1) arc(5:85:1.58);

		\filldraw[fill=white, draw=black] (2, 0) circle [radius = .1] node[scale = .8, below = 9, left = 1]{$u$};
		\filldraw[fill=gray!50!white, draw=black] (2, 1.5) circle [radius = .1] node[scale = .8, above = 8, right = 0]{$u_2$};
		\filldraw[fill=gray!50!white, draw=black] (3.5, 0) circle [radius = .1] node[scale = .8, below = 7, right = 1]{$u_1$};

		\filldraw[fill=white, draw=black] (1.5, .5) circle [radius = 0] node[scale = 1]{$\lambda$};
		\filldraw[fill=white, draw=black] (2.67, .5) circle [radius = 0] node[scale = .6]{$\lambda + 2 \eta (2 i_2 - \Lambda)$};

		\filldraw[fill=white, draw=black] (3.25, .95) circle [radius = 0] node[scale = .8, right = 0]{$r + j_2$};
		\filldraw[fill=white, draw=black] (3.5, -1) circle [radius = 0] node[scale = .8, below = 0]{$r$};

		\filldraw[fill=white, draw=black] (7.3, 1.3) circle [radius = 0] node[scale = 1]{$\lambda$};
		
		\filldraw[fill=white, draw=black] (7.5, 0) circle [radius = 0] node[scale = .6]{$\lambda + 2 \eta (2j_1 - J)$};

		\draw[->, black, dotted] (9.1, 1) -- (10, 1);
		\draw[->, black, dotted] (9, .1) -- (9, .9);

		\draw[->, black, dotted] (9, 1.1) -- (9, 2); 
		\draw[->, black, dotted] (8.1, 1) -- (8.9, 1); 
		
		\draw[->, black, thick] (7, .95) -- (7.9, .95);
		\draw[->, black, thick] (7, 1.05) -- (7.9, 1.05);
		
		\draw[->, black, thick] (9, -1) -- (9, -.1); 
		
		\draw[->, black, dashed] (10, -.05) -- (9.1, -.05);
		\draw[->, black, dashed] (10, .05) -- (9.1, .05);

		\draw[->, black, dashed] (7.9, 1.1) arc(5:85:.9);
		\draw[->, black, dashed] (8, 1.1) arc(5:85:1) node[scale = .8, left = 0]{$k + i_1 + j_1$};
		\draw[->, black, dashed] (8.1, 1.1) arc(5:85:1.1);
		\draw[->, black, dashed] (7.95, 1.1) arc(5:85:.95);
		\draw[->, black, dashed] (8.05, 1.1) arc(5:85:1.05);

		\draw[->, black, dashed] (8.9, .08) arc(265:185:.92);
		\draw[->, black, dashed] (8.9, 0) arc(265:185:1);
		\draw[->, black, dashed] (8.9, -.08) arc(265:185:1.08);

		\filldraw[fill=white, draw=black] (9,1) circle [radius = .1] node[scale = .8, below = 7, right = 1]{$u$};
		\filldraw[fill=gray!50!white, draw=black] (8, 1) circle [radius = .1] node[scale = .8, above = 6, right = 3]{$u_2$};
		\filldraw[fill=gray!50!white, draw=black] (9, 0) circle [radius = .1] node[scale = .8, below = 10, right = 1]{$u_1$};

		\draw (10, 0) circle [radius = 0] node[scale = .8, right = 0]{$r$}; 
		\draw (8.2, .5) circle [radius = 0] node[scale = .8, right = 0]{$r + i_1$};

		\end{tikzpicture}
		
	\end{center}
	
	\caption{\label{arrowstochasticelliptic} The process of the stochasticization curve being pushed through a vertex in the elliptic setting is depicted above.  }
\end{figure}

Since $W (0, 0; 0, 0) = 1$, the degeneration of \eqref{ws} to our situation is given by the following definition (analogous to \Cref{stochastic1}), where in what follows, $r$ denotes the number of arrows in the stochasticization curve before encountering the vertex to be stochasticized (as in \Cref{arrowstochasticelliptic}). 

\begin{definition}
	
	\label{selliptic} 
	
		Fix $x, y, \lambda, T \in \mathbb{C}$; $i_1, j_1, i_2, j_2, r \in \mathbb{Z}_{\ge 0}$; and $J, \Lambda \in \mathbb{Z}_{> 0}$. Define the \emph{stochasticized elliptic fused weights}
	\begin{flalign}
	\label{sjlambdaelliptic}	
	S_{J; \Lambda} & \big( i_1, j_1; i_2, j_2 \b| \lambda; x, y; T; r \big) = W_{J; \Lambda} \big( i_1, j_1; i_2, j_2 \b| \lambda; x, y; T; r \big) C_{J; \Lambda}  \big( i_1, j_1; i_2, j_2 \b| \lambda; x, y \big),
	\end{flalign}
	
	\noindent where we have denoted the \emph{stochastic correction} by
	\begin{flalign}
	\label{cjlambdadefinition}
	\begin{aligned}
	C_{J; \Lambda} & \big( i_1, j_1; i_2, j_2 \b| \lambda; x, y; T; r \big) \\
	& = \displaystyle\frac{W_{\Lambda; T} \big( r + j_2, i_2; r + i_2 + j_2, 0 \b| \lambda; y, 0 \big) }{W_{\Lambda; T} \big( r, i_1; r + i_1, 0 \b| \lambda + 2 \eta (2j_1 - J); y, 0 \big)} \displaystyle\frac{W_{J; T} \big( r, j_2; r + j_2, 0 \b| \lambda + 2 \eta (2i_2 - \Lambda); x, 0 \big)}{W_{J; T} \big( r + i_1, j_1; r + i_1 + j_1, 0 \b| \lambda; x, 0 \big)  }.
	\end{aligned}
	\end{flalign}

\end{definition} 

The following result explicitly evaluates the stochatsicized elliptic fused weights given by \Cref{selliptic}; we will establish it in \Cref{EllipticStochasticC}.

\begin{thm}
	
	\label{ellipticfusedstochastic} 
	
	Adopting the notation of Definition \ref{selliptic} and defining $v = \eta (J + T - 2r)$, we have that 
	\begin{flalign}
	\label{sjlambda1} 
	\begin{aligned}
	& S_{J; \Lambda} \big( i_1, j_1; i_2, j_2 \b| \lambda, v; x, y \big) \\
	& \quad  = \big[ 2 \eta (J - j_1) \big]_{j_2} \big[ 2 \eta (\Lambda - i_1 + j_2) \big]_{j_1}  \displaystyle\frac{\big[ 2 \eta i_1 \big]_{j_2}}{ \big[ 2 \eta j_2 \big]_{j_2}}  \displaystyle\frac{ \big[ \eta (\Lambda + J) - x + y  - 2 \eta (i_1 + j_1) \big]_{J - j_1 - j_2}  }{\big[ \eta (\Lambda + J) - x + y  \big]_J  }  \\
	&  \qquad \times \displaystyle\frac{ \big[ \lambda + 2 \eta (i_2  - \Lambda - 1) \big]_{J - j_1 - j_2} \big[ \lambda + 2 \eta (2j_1 - J - 1) \big]_{j_1}}{\big[ \lambda + 2 \eta (2i_2 + j_2 - \Lambda) \big]_{j_2} \big[ \lambda + 2 \eta (2 i_2 - \Lambda - 1) \big]_{J - j_2} \big[ \lambda + 2 \eta (2 j_1 + j_2 - J - 1) \big]_{j_1}} \\
	& \qquad \times \big[ \lambda + x - y + 2 \eta (i_1 + 2j_1 - 1) - \eta (\Lambda + J) \big]_{j_1}  \big[ \lambda - x + y + 2 \eta (i_1 + 2j_1 - J) + \eta (J - \Lambda) \big]_{j_2}  \\
	&  \qquad \times  \displaystyle\frac{\big[ \lambda + x - v + 2 \eta (2i_2 + 2j_2 - \Lambda - 1)  \big]_{j_2}}{\big[ \lambda + x - v + 2 \eta (i_1 + 2j_1 - 1)  \big]_{j_1} } \displaystyle\frac{\big[  v - x - 2 \eta j_2 \big]_{i_2}}{\big[ v - x - 2 \eta J \big]_{i_1}} \\
	&  \qquad \times \displaystyle\frac{\big[ \lambda + y - v + 2 \eta (2 i_2 + j_2 - 1) + \eta (J - \Lambda) \big]_{i_2}}{\big[ \lambda + y - v + 	2 \eta (2 i_1 + 2j_1 - 1) - \eta (\Lambda + J) \big]_{i_1}}\displaystyle\frac{ \big[ v - y - \eta (\Lambda + J)  \big]_{j_2}}{\big[ v - y + \eta (\Lambda - J - 2 i_1) \big]_{j_1}} \\
	& \qquad  \times   {_{12} v_{11}} (a_1; a_6, a_7, a_8, a_9, a_{10}, a_{11}, a_{12}; 1) \textbf{\emph{1}}_{i_1 + j_1 = i_2 + j_2},
	\end{aligned} 
	\end{flalign}
	
	\noindent where we have denoted $S_{J; \Lambda} \big( i_1, j_1; i_2, j_2 \b| \lambda, v; x, y \big) = S_{J; \Lambda} \big( i_1, j_1; i_2, j_2 \b| \lambda; x, y; T, r \big)$ (since \eqref{sjlambda1} indicates that it only depends on $T$ and $r$ through $v$).

\end{thm}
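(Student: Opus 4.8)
The plan is to evaluate the stochastic correction $C_{J;\Lambda}$ from \eqref{cjlambdadefinition} in closed form and then multiply it by the elliptic--Pochhammer prefactor of $W_{J;\Lambda}$ in \eqref{wjlambda}. The crucial observation is that each of the four weights entering $C_{J;\Lambda}$ has an arrow configuration of the frozen type $(p,q;p+q,0)$, with vanishing outgoing horizontal count. For such a configuration one has $a_7 = 2\eta j_2 = 0$ in \eqref{parametersa}, so the elliptic Pochhammer $[a_7]_k = \prod_{\ell=0}^{k-1} f(-2\eta\ell)$ appearing in the very well-poised series \eqref{elliptichypergeometricpoised} contains the factor $f(0) = 0$ whenever $k \ge 1$. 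Hence the ${_{12} v_{11}}(a_1; a_6,\dots,a_{12};1)$ series in \eqref{wjlambda} terminates after its $k=0$ term and equals $1$, and (using $i_1+j_1 = i_2+j_2$, which is automatic here) the weight $W_{J;\Lambda}\big(p,q;p+q,0 \b| \mu; x, 0\big)$ reduces to an explicit finite product of elliptic Pochhammer symbols obtained by setting $j_2 = 0$, $i_1 = p$, $j_1 = q$, $i_2 = p+q$ in the prefactor of \eqref{wjlambda}; I would first record this product, being careful with the several Pochhammer lengths that shorten (e.g.\ $J - j_1 - j_2 \rightsquigarrow J - j_1$) and with the factors $[2\eta i_1]_{j_2}$, $[\lambda + \cdots]_{j_2}$ that become $1$.

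Next I would substitute the four resulting closed forms --- with spins $(J,T)$ or $(\Lambda,T)$, rapidities $(x,0)$ or $(y,0)$, and dynamical parameters $\lambda$, $\lambda + 2\eta(2j_1 - J)$, $\lambda + 2\eta(2i_2 - \Lambda)$ as dictated by \eqref{cjlambdadefinition} --- into $C_{J;\Lambda}$ and collapse the resulting ratio of products. The tools here are the elliptic Pochhammer identities in \eqref{elliptichypergeometricidentities1}, principally the reflection $[a]_{m-k} = [a]_m / [a - 2\eta k]_k$ and the concatenation $[a]_k [a - 2\eta k]_m = [a]_{k+m}$, which cancel the bulk of the factors between numerator and denominator; it is convenient to group the surviving factors according to whether their arguments are affine in $x$, in $y$, in $\lambda$ alone, or are ``spin--only'' (multiples of $\eta$). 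At this stage I would introduce $v = \eta(J + T - 2r)$ and check that every remaining argument involving the stochasticization data $r$ and $T$ can be written through $v$: the $2\eta r$ shifts coming from the lengths of the frozen weights pair with the combination $\eta(J+T)$ (respectively $\eta(\Lambda + T)$) entering through the $[\eta(\Lambda + J) - x + y]_J$--type factors and their $y\to 0$, $x\to 0$ specializations, so that $C_{J;\Lambda}$, and hence $S_{J;\Lambda}$, depends on $(T,r)$ only through $v$, as asserted.

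Finally I would multiply the simplified $C_{J;\Lambda}$ by the prefactor of $W_{J;\Lambda}\big(i_1,j_1;i_2,j_2 \b| \lambda; x, y\big)$ from \eqref{wjlambda} --- the ${_{12} v_{11}}$ factor is carried along unchanged throughout, since its parameters \eqref{parametersa} involve neither $v$, $T$, nor $r$ --- cancel common elliptic Pochhammer symbols, and reconcile the outcome group by group with the right-hand side of \eqref{sjlambda1}; these details are the content of \Cref{EllipticStochasticC}. As a sanity check one can specialize $J = \Lambda = T = 1$ (passing, if convenient, to the trigonometric or rational regime of $f$) and verify that \eqref{sjlambda1} collapses to the $S_{1;1}^{\text{ell}}$ weights of \Cref{1jsdefinition}.

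The main obstacle will be organizational rather than conceptual: the asserted identity \eqref{sjlambda1} equates products of order thirty elliptic Pochhammer symbols, and an error of a sign or of one unit in any Pochhammer length destroys all of the cancellations. The most delicate points are (i) the correct specialization of \eqref{wjlambda} at $j_2 = 0$, with the attendant shortening of lengths and disappearance of several factors, and (ii) verifying that the $(T,r)$ dependence genuinely collapses into $v = \eta(J + T - 2r)$ --- the place where a miscount would be invisible term by term but fatal for the final formula --- so I would carry out both of these with explicit indexed products rather than schematically.
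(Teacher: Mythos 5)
Your proposal is correct and follows the same route as the paper: the proof of \Cref{ellipticfusedstochastic} consists exactly of evaluating the stochastic correction $C_{J;\Lambda}$ in closed form (\Cref{cjlambdaelliptic}) from the factored frozen weights via the Pochhammer identities \eqref{elliptichypergeometricidentities1}, checking that the $(T,r)$ dependence collapses into $v=\eta(J+T-2r)$, and then inserting the result into \eqref{sjlambdaelliptic}. The only difference is cosmetic: the paper imports the factorization of $W_{J;\Lambda}(i,j;i+j,0)$ from Proposition 4.3 of \cite{DSHSVM} (\Cref{wjj0}) rather than re-deriving it from the termination of the ${_{12}v_{11}}$ series at $a_7=2\eta j_2=0$ as you propose, and the two agree.
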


As in \Cref{DynamicalVertex1}, the parameter $v$ here is dynamical; it depends on the vertex $u$ that is being stochasticized, and so we sometimes write $v = v(u)$. The parameter $r$ is analogous to the parameter $k$ from \Cref{DynamicalVertex1} and is governed by the same identities (given by the first two in \eqref{ukv}). Thus, from the definition $v = \eta (J + T - 2r)$, one can quickly deduce that $v (u)$ is governed by 
\begin{flalign}	
\label{vdynamicalelliptic}
v (a - 1, b) = v (a, b) - 2 \eta i_1 (a, b); \qquad v (a, b + 1) = v(a, b) - 2 \eta j_2 (a, b),
\end{flalign}

\noindent for any $(a, b) \in \mathbb{Z}_{> 0}^2$; this is depicted on the right side of \Cref{vdynamicalvertex1elliptic}, where there the dynamical parameter $v(u)$ is drawn in the lower-right face containing $u$. 

The stochasticity and dynamical Yang-Baxter in this setting is given by the following theorem, which takes into account both dynamical parameters $v$ and $\lambda$.

\begin{thm}
	
	\label{dynamicalequationellipticstochastic}
	
	Adopt the notation of Theorem \ref{ellipticfusedstochastic}. For any fixed $\Lambda, x, y, \lambda, v  \in \mathbb{C}$; $J \in \mathbb{Z}_{> 0}$; and $i_1, j_1 \in \mathbb{Z}_{\ge 0}$, we have that $\sum_{i_2, j_2 \in \mathbb{Z}_{\ge 0}} S_{J; \Lambda} \big( i_1, j_1; i_2, j_2 \b| \lambda, v; x, y \big) = 1$. 
	
	Furthermore, for any fixed $i_1, j_1, i_3, j_3 \in \mathbb{Z}_{\ge 0}$; $J, \Lambda \in \mathbb{Z}_{> 0}$; $\lambda, T, v, x, y, z \in \mathbb{C}$, the $S$ weights satisfy
		\begin{flalign}
	\label{dynamicalellipticequation2}
	\begin{aligned}
	\displaystyle\sum_{i_2, j_2, k_2 \in \mathbb{Z}_{\ge 0}} & S_{J; \Lambda} \big( i_1, j_1; i_2, j_2 \b| \lambda; v - 2 \eta k_1; x, y \big) S_{J; T} \big( k_1, j_2; k_2, j_3 \b| \lambda + 2 \eta (2i_2 - \Lambda); v; x, z \big) \\
	&  \times  S_{\Lambda; T} \big( k_2, i_2; k_3, i_3 \b| \lambda; v - 2 \eta j_3; y, z \big) \\
	\quad = \displaystyle\sum_{i_2, j_2, k_2 \in \mathbb{Z}_{\ge 0}} & S_{\Lambda; T} \big( k_1, i_1; k_2, i_2 \b| \lambda + 2 \eta (2j_1 - J); v; y, z \big) S_{J; T} \big( k_2, j_1; k_3, j_2 \b| \lambda; v - 2 \eta i_2; x, z \big) 	\\
	& \times S_{J; \Lambda} \big( i_2, j_2; i_3, j_3 \b| \lambda + 2 \eta (2k_3 - T); v; x, y \big).
	\end{aligned} 
	\end{flalign}

\end{thm}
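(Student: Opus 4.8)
The plan is to obtain both assertions as instances of the general stochasticization theory of \Cref{Domain}, with $W_{J;\Lambda}$ playing the role of $w$ and the frozen specializations $W_{J;T}(i,j;i+j,0\b|\lambda;x,0)$, $W_{\Lambda;T}(i,j;i+j,0\b|\lambda;y,0)$ playing the role of $\chi$; concretely, I would follow the argument of \Cref{sdynamicalequation1}, reducing everything to the dynamical Yang-Baxter equation \eqref{dynamicalellipticequation1} for the bare weights, which holds by \Cref{fusedellipticequation}. The stochasticity statement is essentially immediate, and the Yang-Baxter statement comes down to checking that the stochastic correction is independent of the summation variables.

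For stochasticity, substitute \eqref{sjlambdaelliptic} and \eqref{cjlambdadefinition} into $\sum_{i_2,j_2}S_{J;\Lambda}(i_1,j_1;i_2,j_2\b|\lambda,v;x,y)$. After clearing the (constant, since $W_{J;\Lambda}(0,0;0,0)=1$) denominators $W_{J;T}(r+i_1,j_1;r+i_1+j_1,0)$ and $W_{\Lambda;T}(r,i_1;r+i_1,0)$, this is exactly the special case of \eqref{dynamicalellipticequation1} in which the outgoing data of one of the three vertices is frozen to the form $(i+j,0)$ — that is, the relation \eqref{wchirelation} in the language of \Cref{Domain} — and hence it follows from \Cref{fusedellipticequation}. (Equivalently, this is \Cref{stochasticws} once one verifies that $W_{J;\Lambda}$ is stochasticizable with respect to these $\chi$-weights, which I discuss below.) Rewriting the plaquette sum as a sum over outgoing arrow configurations via \eqref{fi1j1i2j2} gives the stated form.

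For the Yang-Baxter equation, set $v=\eta(J+T-2r)$ and substitute \eqref{sjlambdaelliptic} into both sides of \eqref{dynamicalellipticequation2}. Dividing the generic summand on each side by the product $W_{J;\Lambda}W_{J;T}W_{\Lambda;T}$ of the three bare weights produces, as in \eqref{stochastic1dynamical}, a product of six $\chi$-type weights $W_{\bullet;T}(r+\cdots,\cdots;r+\cdots,0\b|\cdots)$. The precise dynamical shifts appearing in \eqref{dynamicalellipticequation2} — the shifts of $v$ by $-2\eta k_1$, $-2\eta j_3$, $-2\eta i_2$, and of $\lambda$ by $2\eta(2i_2-\Lambda)$, $2\eta(2j_1-J)$, $2\eta(2k_3-T)$ — are exactly what is needed so that every $\chi$-weight in the numerator that contains one of the summation indices $i_2,j_2,k_2$ is matched by an identical $\chi$-weight in the denominator (these shifts encode the consistency condition of \Cref{wchiconditions}; they come from the identities governing $r$ in \eqref{ukv} and $\lambda$ in \eqref{lambdadynamical}, and the required pairings are the elliptic analogs of those displayed in \Cref{dynamicalsixvertexequationparameter}). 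Consequently both sides of \eqref{dynamicalellipticequation2}, after dividing by the corresponding $W$-products, equal one and the same quantity depending only on the fixed boundary data $i_1,j_1,k_1,i_3,j_3,k_3$, so \eqref{dynamicalellipticequation2} follows by applying \Cref{fusedellipticequation} to those $W$-products. That the result may be phrased purely in terms of $v$, rather than $(T,r)$, is guaranteed by \Cref{ellipticfusedstochastic}.

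The main obstacle is the bookkeeping in the previous paragraph: verifying that the shifts recorded in \eqref{dynamicalellipticequation2} genuinely produce a numerator/denominator cancellation of the intermediate $\chi$-weights. This amounts to checking a short list of linear identities among the arguments of the $W_{\bullet;T}$ weights — the elliptic counterparts of the relations ``$a=b+k_1$, $c=b+j_3$, $a+j_2=c+k_2$'' from \Cref{dynamicalsixvertexequationparameter} — and uses arrow conservation $i_1+j_1=i_2+j_2$ (and its analogs at the other intersection points). If instead one prefers to invoke \Cref{relationwstochastic} directly, the obstacle shifts to checking condition (3) of \Cref{wchiconditions} for the elliptic weights: one must verify that the stochasticization-curve construction of \Cref{DynamicalVertex1} (see \Cref{arrowstochasticelliptic}) applies verbatim, with the frozen shaded triples of the form $(i,j;i+j,0)$ supplying parts (a)--(c), the diagram of \Cref{arrowstochasticvertices} supplying the consistency condition (d), and the dynamical parameter $\lambda$ transported along according to \eqref{lambdadynamical} — all exactly as in \Cref{stochasticwchis}.
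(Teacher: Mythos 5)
Your proposal is correct and follows essentially the same route as the paper: the paper's own proof simply cites \Cref{stochasticws} for the stochasticity statement and \Cref{relationwstochastic} for the Yang-Baxter equation, noting that the latter "can alternatively be established directly, similar to the proof of \Cref{sdynamicalequation1}," which is exactly the numerator/denominator cancellation argument you sketch. Your writeup just supplies more detail on both the general-framework route and the direct verification than the paper chooses to record.
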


\begin{proof} 
	
The first and second statement of this theorem are the applications of \Cref{stochasticws} and \Cref{relationwstochastic} to our setting, respectively; the second statement \eqref{dynamicalellipticequation2} can alternatively be established directly, similar to the proof of \Cref{sdynamicalequation1}, but we will not implement this here.
\end{proof}

\begin{rem} 

\label{ellipticequationstochastic} 

We are unaware of any stochastic, fully elliptic solution to the dynamical Yang-Baxter equation that does not involve the second dynamical parameter $v$. This is different from what was described in \Cref{VertexModel}, where setting $v = 0$ in the weights \eqref{s01dynamicalweights} still gives rise to stochastic weights satisfying the non-dynamical Yang-Baxter equation. 

\end{rem}

\subsection{Proof of \Cref{ellipticfusedstochastic}} 

\label{EllipticStochasticC}

In this section we establish \Cref{ellipticfusedstochastic}; to that end, we must evaluate the stochastic corrections $C_{J; \Lambda}$ from \eqref{cjlambdadefinition}. This is done by the following proposition. 

\begin{prop} 
	
	\label{cjlambdaelliptic}
	
	Denoting $v = \eta (J + T - 2r)$, we have that 
	\begin{flalign*}
	C_{J; \Lambda} & \big( i_1, j_1; i_2, j_2 \b| \lambda; x, y; T; r \big) \\
	& = \displaystyle\frac{\big[ 2 \eta J \big]_{j_2} }{\big[ 2 \eta J \big]_{j_1} } \displaystyle\frac{\big[ 2 \eta \Lambda \big]_{i_2}}{\big[ 2 \eta \Lambda \big]_{i_1} } \displaystyle\frac{\big[ 2 \eta i_1 \big]_{i_1}}{\big[ 2 \eta i_2 \big]_{i_2}} \displaystyle\frac{\big[ 2 \eta j_1 \big]_{j_1}}{ \big[ 2 \eta j_2 \big]_{j_2}}  \displaystyle\frac{\big[ \lambda + 2 \eta j_1 \big]_{J - j_1} \big[ \lambda + 2 \eta (2j_1 - J - 1) \big]_{j_1}}{\big[ \lambda + 2 \eta (2i_2 + j_2 - \Lambda) \big]_{j_2} \big[ \lambda + 2 \eta (2 i_2 - \Lambda - 1) \big]_{J - j_2}} \\
	& \qquad \qquad \times \displaystyle\frac{\big[ \lambda + 2 \eta (i_1 + 2j_1 - J) \big]_{j_2} \big[ \lambda + 2 \eta (i_2 + j_1 - \Lambda - 1) \big]_{J - j_2}}{\big[ \lambda + 2 \eta i_2 \big]_{J - j_1} \big[ \lambda + 2 \eta (i_2 + j_1  - \Lambda - 1) \big]_{j_1}} \\
	& \qquad \qquad \times  \displaystyle\frac{\big[ \lambda + x - v + 2 \eta (2i_2 + 2j_2 - \Lambda - 1)  \big]_{j_2}}{\big[ \lambda + x - v + 2 \eta (i_1 + 2j_1 - 1)  \big]_{j_1} } \displaystyle\frac{\big[  v - x - 2 \eta j_2 \big]_{i_2}}{\big[ v - x - 2 \eta J \big]_{i_1}} \\
	& \qquad \qquad \times \displaystyle\frac{\big[ \lambda + y - v + 2 \eta (2 i_2 + j_2 - 1) + \eta (J - \Lambda) \big]_{i_2}}{\big[ \lambda + y - v + 	2 \eta (2 i_1 + 2j_1 - 1) - \eta (\Lambda + J) \big]_{i_1}}\displaystyle\frac{ \big[ v - y - \eta (\Lambda + J)  \big]_{j_2}}{\big[ v - y + \eta (\Lambda - J - 2 i_1) \big]_{j_1}}.
	\end{flalign*}

\end{prop}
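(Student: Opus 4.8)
The plan is to first obtain a closed product formula for the ``frozen'' fused weight $W_{J; \Lambda}\big(i_1, j_1; i_1 + j_1, 0 \b| \lambda; x, y\big)$ that appears (in four different guises) in the definition \eqref{cjlambdadefinition} of $C_{J; \Lambda}$, and then substitute it into that definition and simplify the resulting ratio.

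First I would specialize \Cref{fusedelliptic} to the case $j_2 = 0$, $i_2 = i_1 + j_1$. Here the parameter $a_7 = 2 \eta j_2$ from \eqref{parametersa} equals $0$; since the first Jacobi theta function $f$ is odd we have $f(0) = 0$, so $[a_7]_k = [0]_k = 0$ for every $k \ge 1$, and hence the series ${_{12} v_{11}}(a_1; a_6, \ldots, a_{12}; 1)$ collapses to its $k = 0$ term, which is $1$. Discarding also the factors of \eqref{wjlambda} with exponent $j_2 = 0$ via $[\cdot]_0 = 1$, one is left with
\[
W_{J; \Lambda}\big(i_1, j_1; i_1 + j_1, 0 \b| \lambda; x, y\big) = \big(\text{an explicit finite product of elliptic Pochhammer symbols}\big),
\]
which I would tidy using the identities $[a]_{m - k} = [a]_m / [a - 2 \eta k]_k$ and $[a]_{k + m} = [a]_k [a - 2 \eta k]_m$ from \eqref{elliptichypergeometricidentities1}. (Alternatively one can obtain this frozen weight from the relation \eqref{wjuj} to the weights $U_J$ of \cite{DSHSVM}, but direct specialization is cleanest.)

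Next I would substitute this frozen formula into each of the four $W$-factors in \eqref{cjlambdadefinition}: for the two $W_{\Lambda; T}$ factors one uses the formula with $(J, \Lambda, x, y)$ replaced by $(\Lambda, T, y, 0)$ together with the appropriate arrow labels $(r + j_2, i_2)$ or $(r, i_1)$ and $\lambda$-argument ($\lambda$ or $\lambda + 2\eta(2 j_1 - J)$), and for the two $W_{J; T}$ factors with $(J, \Lambda, x, y)$ replaced by $(J, T, x, 0)$ and arrow labels $(r, j_2)$ or $(r + i_1, j_1)$ and $\lambda$-argument ($\lambda + 2\eta(2i_2 - \Lambda)$ or $\lambda$). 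Forming the ratio, the purely-spin Pochhammer factors (those independent of the stochasticization parameter $r$) either cancel between numerator and denominator or combine into the $[2\eta J]_{j_2}/[2\eta J]_{j_1}$, $[2\eta\Lambda]_{i_2}/[2\eta\Lambda]_{i_1}$, $[2\eta i_1]_{i_1}/[2\eta i_2]_{i_2}$, $[2\eta j_1]_{j_1}/[2\eta j_2]_{j_2}$ prefactors of the claimed formula, and the $r$-dependent factors recombine — after the substitution $v = \eta(J + T - 2r)$, under which shifting $r$ by an integer shifts $v$ by an even integer multiple of $\eta$ — into the remaining $x$-, $y$-, and $\lambda$-dependent Pochhammer quotients in the statement.

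The main obstacle is purely organizational: each of the four factors unfolds into on the order of a dozen elliptic Pochhammer symbols, with three distinct $\lambda$-arguments and two distinct rapidity-argument patterns, so one must keep careful track of the substitutions — in particular that the vertical rapidity of every factor is $0$ while the horizontal rapidity is $y$ or $x$, and that $T$ plays the role of the vertical spin parameter. There is no conceptual difficulty; every step is an instance of \eqref{wjlambda}, the vanishing $f(0) = 0$, or an identity from \eqref{elliptichypergeometricidentities1}. A useful consistency check is that the $J = \Lambda = T = 1$ specialization of the resulting $C_{1; 1}$ must reproduce the ratio $S(i_1, j_1; i_2, j_2)/w(i_1, j_1; i_2, j_2)$ implicit in \Cref{dynamicalweights1}, and that the full $S_{J;\Lambda}$ obtained by combining $C_{J;\Lambda}$ with \eqref{wjlambda} via \eqref{sjlambdaelliptic} matches \eqref{sjlambda1} of \Cref{ellipticfusedstochastic}.
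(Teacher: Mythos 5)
Your proposal is correct and follows essentially the same route as the paper: the paper's proof likewise substitutes a factored formula for the frozen weights $W_{J;\Lambda}(i,j;i+j,0)$ (stated as \Cref{wjj0}) into \eqref{cjlambdadefinition} and simplifies the four-fold ratio using the Pochhammer identities \eqref{elliptichypergeometricidentities1}. The only difference is that the paper imports the frozen-weight factorization from Proposition 4.3 of \cite{DSHSVM}, whereas you derive it directly from \eqref{wjlambda} via the (correct) observation that $a_7 = 2\eta j_2 = 0$ forces $[a_7]_k = 0$ for $k \ge 1$ and collapses the ${_{12}v_{11}}$ series to $1$; this makes the argument marginally more self-contained but is otherwise the same proof.
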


Theorem \ref{ellipticfusedstochastic} follows from inserting Proposition \ref{cjlambdaelliptic} into \eqref{sjlambdaelliptic}. To establish \Cref{cjlambdaelliptic}, we require explicit expressions for fused weights of the form $W_{J; \Lambda} (i, j; i + j, 0)$. In principle, these are given by \Cref{ellipticfusedstochastic} by an elliptic hypergeometric series; however, the following result, which can be found as Proposition 4.3 of \cite{DSHSVM}, indicates that these hypergeometric weights simplify considerably when $j_2 = 0$.

\begin{prop}[{\cite[Proposition 4.3]{DSHSVM}}]
	
	\label{wjj0} 
	
	Let $i \in \mathbb{Z}_{\ge 0}$, $J \in \mathbb{Z}_{> 0}$, and $j \in \mathbb{Z}$. If $j < 0$ or $j > J$, then $W_J (i, j; i + j, 0) = 0$. Otherwise, 
	\begin{flalign}
	\label{wjj20identity}
	\begin{aligned}
	W_{J; \Lambda} \big( & i, j; i + j, 0 \b| \lambda; x, y \big) \\
	& = \displaystyle\frac{\big[ 2 \eta J \big]_j }{\big[ 2 \eta j \big]_j} \displaystyle\frac{\big[ 2 \eta (i + j) \big]_j \big[ \lambda + 2 \eta (i + j) \big]_{J - j} }{\big[ \eta (\Lambda + J) - x + y\big]_J} \\
	& \qquad \times \displaystyle\frac{\big[ x - y + \lambda + 2 \eta (i + 2j - 1) - \eta (\Lambda + J) \big]_j \big[ \eta (\Lambda + J) - 2 \eta (i + j) - x + y \big]_{J - j}}{\big[ \lambda + 2 \eta j \big]_{J - j} \big[ \lambda + 2 \eta (2j - J - 1) \big]_j}.
	\end{aligned}
	\end{flalign}
	
\end{prop}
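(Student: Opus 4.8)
The plan is to prove \Cref{wjj0} by directly specializing the fused weight formula \eqref{wjlambda} of \Cref{fusedelliptic} to the arrow configuration $(i_1, j_1; i_2, j_2) = (i, j; i + j, 0)$ and observing that the elliptic hypergeometric factor degenerates to $1$. The vanishing statement is immediate from \Cref{fusedelliptic}: the weight $W_{J; \Lambda}(i_1, j_1; i_2, j_2)$ vanishes unless $j_1, j_2 \in \{ 0, 1, \ldots, J \}$ and $i_1 + j_1 = i_2 + j_2$. Here $j_2 = 0$ and $i_1 + j_1 = i + j = i_2 + j_2$ hold automatically, so the only possible obstruction is $j_1 = j \notin \{ 0, \ldots, J \}$; this yields $W_{J; \Lambda}(i, j; i + j, 0) = 0$ precisely when $j < 0$ or $j > J$.

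The crucial simplification is that the very well-poised series ${_{12} v_{11}}(a_1; a_6, \ldots, a_{12}; 1)$ collapses to its $k = 0$ term when $j_2 = 0$. Indeed, \eqref{parametersa} gives $a_7 = 2 \eta j_2 = 0$, and the factor $[a_7]_k = [0]_k$ appearing in the product in \eqref{elliptichypergeometricpoised} satisfies $[0]_0 = 1$ but $[0]_k = 0$ for every $k \ge 1$, since $[0]_k = \prod_{l = 0}^{k - 1} f(-2 \eta l)$ contains the factor $f(0) = 0$ (the first Jacobi theta function from \eqref{theta1} being odd). Hence every term with $k \ge 1$ vanishes and ${_{12} v_{11}}(a_1; a_6, \ldots, a_{12}; 1) = 1$, so the entire weight reduces to its prefactor.

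It then remains to simplify the prefactor of \eqref{wjlambda} under $i_1 = i$, $j_1 = j$, $i_2 = i + j$, $j_2 = 0$, and match it to \eqref{wjj20identity}. Using the product formula $[a]_k = \prod_{l = 0}^{k - 1} f(a - 2 \eta l)$ from \eqref{productbasicelliptic}, the factors of \eqref{wjlambda} that are absent from the target cancel through two telescoping identities. First, the ratio $\frac{[2 \eta (i + j)]_{i + j}}{[2 \eta i]_i}$ telescopes to $\prod_{m = i + 1}^{i + j} f(2 \eta m) = [2 \eta (i + j)]_j$, which supplies exactly the factor $[2 \eta (i + j)]_j$ in \eqref{wjj20identity}. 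Second, the product of the $\Lambda$- and $i$-dependent factors $\frac{[2 \eta \Lambda]_i}{[2 \eta \Lambda]_{i + j}} \cdot [2 \eta (\Lambda - i)]_j$ equals $1$, since $[2 \eta (\Lambda - i)]_j = \prod_{l = i}^{i + j - 1} f(2 \eta \Lambda - 2 \eta l)$ is exactly the reciprocal of $\frac{[2 \eta \Lambda]_i}{[2 \eta \Lambda]_{i + j}}$. After these cancellations, the surviving factors already coincide with those in \eqref{wjj20identity}, completing the proof.

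The main obstacle is purely bookkeeping: one must track the several elliptic Pochhammer symbols in the prefactor of \eqref{wjlambda}—in particular the $\Lambda$- and $i$-dependent factors that do not appear in the final answer—and verify their cancellation via the two identities above. As an alternative that bypasses this direct computation, one may invoke the gauge relation \eqref{wjuj} of \Cref{fusedellipticfusedrepresentation}, which expresses $W_{J; \Lambda}$ in terms of the weight $U_J$ of \cite{DSHSVM}, and then cite Proposition 4.3 of \cite{DSHSVM}; this route requires only checking that the gauge factor $f(2 \eta)^{i_1 - i_2} \frac{[2 \eta i_2]_{i_2}}{[2 \eta i_1]_{i_1}}$ accounts for the discrepancy between the two normalizations.
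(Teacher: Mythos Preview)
Your argument is correct. The paper itself does not prove this proposition at all: it is simply imported as \cite[Proposition 4.3]{DSHSVM}, with no argument given beyond the citation. Your alternative route via the gauge relation \eqref{wjuj} and an appeal to \cite{DSHSVM} is therefore exactly what the paper does.

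Your primary approach, however, is a genuine self-contained proof that the paper does not supply. The key observation---that $a_7 = 2\eta j_2 = 0$ forces $[a_7]_k = 0$ for all $k \ge 1$ via $f(0) = 0$, collapsing the ${_{12}v_{11}}$ series to its $k=0$ term---is correct and clean. Your two telescoping identities for the prefactor are also right: $\frac{[2\eta(i+j)]_{i+j}}{[2\eta i]_i} = \prod_{m=i+1}^{i+j} f(2\eta m) = [2\eta(i+j)]_j$, and $\frac{[2\eta\Lambda]_i}{[2\eta\Lambda]_{i+j}} \cdot [2\eta(\Lambda-i)]_j = 1$ because both sides of the quotient enumerate exactly $\prod_{l=i}^{i+j-1} f(2\eta\Lambda - 2\eta l)$. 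After these cancellations the remaining factors match \eqref{wjj20identity} term by term. What your direct route buys is independence from the external reference: it shows the formula is an immediate consequence of \Cref{fusedelliptic} alone, whereas the paper's route (and your alternative) requires the reader to consult \cite{DSHSVM} and translate through the gauge transformation of \Cref{fusedellipticfusedrepresentation}.
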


Now we can prove \Cref{cjlambdaelliptic}. 

\begin{proof}[Proof of \Cref{cjlambdaelliptic}]
	
	By \eqref{wjj20identity}, we have that 
	\begin{flalign}
	\label{wjt}
	\begin{aligned}
	& \displaystyle\frac{W_{J; T} \big( r, j_2; r + j_2, 0 \b| \lambda + 2 \eta (2i_2 - \Lambda); x, 0 \big)}{W_{J; T} \big( r + i_1, j_1; r + i_1 + j_1, 0 \b| \lambda; x, 0 \big)  } \\
	& \qquad = \displaystyle\frac{\big[ 2 \eta (r + j_2) \big]_{j_2}}{\big[ 2 \eta (r + i_1 + j_1) 	\big]_{j_1}} \displaystyle\frac{\big[ 2 \eta J \big]_{j_2} }{\big[ 2 \eta J \big]_{j_1} } \displaystyle\frac{\big[ 2 \eta j_1 \big]_{j_1}}{ \big[ 2 \eta j_2 \big]_{j_2}} \displaystyle\frac{\big[ \lambda + 2 \eta (r + 2i_2 + j_2 - \Lambda) \big]_{J - j_2} }{\big[ \lambda + 2 \eta (r + i_1 + j_1) \big]_{J - j_1} } \\
	& \qquad \qquad \times \displaystyle\frac{\big[ \lambda + 2 \eta j_1 \big]_{J - j_1} \big[ \lambda + 2 \eta (2j_1 - J - 1) \big]_{j_1}}{\big[ \lambda + 2 \eta (2i_2 + j_2 - \Lambda) \big]_{J - j_2} \big[ \lambda + 2 \eta (2i_2 + 2j_2  - \Lambda - J - 1) \big]_{j_2}} \\
	& \qquad \qquad \times  \displaystyle\frac{\big[ \lambda + x + 2 \eta (r + 2i_2 + 2j_2 - \Lambda - 1) - \eta (T + J) \big]_{j_2}}{\big[ \lambda + x + 2 \eta (r + i_1 + 2j_1 - 1) - \eta (T + J) \big]_{j_1} } \displaystyle\frac{\big[  \eta (T + J) - 2 \eta (r + j_2) - x \big]_{i_2}}{\big[ \eta (T - J - 2r) - x \big]_{i_1}}, 
	\end{aligned} 
	\end{flalign}
	
	\noindent where we have used the fact that  
	\begin{flalign*}
	\displaystyle\frac{\big[  \eta (T + J) - 2 \eta (r + j_2) - x \big]_{J - j_2}} {\big[  \eta (T + J) - 2 \eta (r + i_1 + j_1) - x \big]_{J - j_1}} = \displaystyle\frac{\big[  \eta (T + J) - 2 \eta (r + j_2) - x \big]_{i_2}}{\big[ \eta (T - J - 2r) - x \big]_{i_1}},
	\end{flalign*}
	
	\noindent which holds from the second identity in \eqref{elliptichypergeometricidentities1} and the fact that $i_1 + j_1 = i_2 + j_2$. Again applying \eqref{wjj20identity} yields  
	\begin{flalign}
	\label{wlambdat}
	\begin{aligned}
	& \displaystyle\frac{W_{\Lambda; T} \big( r + j_2, i_2; r + i_2 + j_2, 0 \b| \lambda; y, 0 \big) }{W_{\Lambda; T} \big( r, i_1; r + i_1, 0 \b| \lambda + 2 \eta (2j_1 - J); y, 0 \big)}  \\
	& \qquad = \displaystyle\frac{\big[ 2 \eta (r + i_2 + j_2) \big]_{i_2}}{\big[ 2 \eta (r + i_1) \big]_{i_1}}  \displaystyle\frac{\big[ 2 \eta \Lambda \big]_{i_2}}{\big[ 2 \eta \Lambda \big]_{i_1} } \displaystyle\frac{\big[ 2 \eta i_1 \big]_{i_1}}{\big[ 2 \eta i_2 \big]_{i_2}}  \displaystyle\frac{\big[ \lambda + 2 \eta (r + i_1 + j_1)  \big]_{J - j_1} }{\big[ \lambda + 2 \eta (r + 2i_2 + j_2 - \Lambda) \big]_{J - j_2} } \\
	& \qquad \qquad \times \displaystyle\frac{\big[ \lambda + 2 \eta (i_1 + 2j_1 - J) \big]_{j_2}}{\big[ \lambda + 2 \eta (2 i_2 + 2j_2 - J - \Lambda) \big]_{2 j_2 - J} \big[ \lambda + 2 \eta i_2 \big]_{J - j_1 }} \displaystyle\frac{\big[ \lambda + 2 \eta (2i_2 + 2j_2  - \Lambda - J - 1) \big]_{j_2}}{\big[ \lambda + 2 \eta (2 i_2 - \Lambda - 1) \big]_{J - j_2}} \\
	& \qquad \qquad \times \displaystyle\frac{\big[ \lambda + 2 \eta (i_2 + j_1 - \Lambda - 1) \big]_{J - j_2}}{\big[ \lambda + 2 \eta (i_2 + j_1  - \Lambda - 1) \big]_{j_1}} \\
	& \qquad \qquad \times \displaystyle\frac{\big[ y + \lambda + 2 \eta (r + 2 i_2 + j_2 - 1) - \eta (\Lambda + T) \big]_{i_2}}{\big[ y + \lambda + 2 \eta (r + 2 i_1 + 2j_1 - J - 1) - \eta (\Lambda + T) \big]_{i_1}}\displaystyle\frac{ \big[ \eta (T - \Lambda - 2r) - y \big]_{j_2}}{\big[ \eta (\Lambda + T) - 2 \eta (r + i_1) - y \big]_{j_1}}, 
	\end{aligned}
	\end{flalign}
	
	\noindent where we have used the facts (again due to the second statement of \eqref{elliptichypergeometricidentities1} and the identity $i_1 + j_1 = i_2 + j_2$) that 
	\begin{flalign*}
	& \displaystyle\frac{\big[ \lambda + 2 \eta (r + i_2 + j_2) \big]_{\Lambda - i_2}}{\big[ \lambda + 2 \eta (r + i_1 + 2 j_1 - J ) \big]_{\Lambda - i_1}} = \displaystyle\frac{\big[ \lambda + 2 \eta (r + i_1 + j_1)  \big]_{J - j_1} }{\big[ \lambda + 2 \eta (r + 2i_2 + j_2 - \Lambda) \big]_{J - j_2} }; \\
	& \displaystyle\frac{ \big[ \eta (\Lambda + T) - 2 \eta (r + i_2 + j_2) - y \big]_{\Lambda - i_2}}{\big[ \eta (\Lambda + T) - 2 \eta (r + i_1) - y \big]_{\Lambda - i_1}} = \displaystyle\frac{ \big[ \eta (T - \Lambda - 2r) - y \big]_{j_2}}{\big[ \eta (\Lambda + T) - 2 \eta (r + i_1) - y \big]_{j_1}}; \\
	& \displaystyle\frac{\big[ \lambda + 2 \eta (i_1 + 2j_1 - J) \big]_{\Lambda - i_1}}{\big[ \lambda + 2 \eta i_2 \big]_{\Lambda - i_2}}  =  \displaystyle\frac{\big[ \lambda + 2 \eta (i_1 + 2j_1 - J) \big]_{j_2}}{\big[ \lambda + 2 \eta (2 i_2 + 2j_2 - J - \Lambda) \big]_{2 j_2 - J} \big[ \lambda + 2 \eta i_2 \big]_{J - j_1 }}; \\
	& \displaystyle\frac{ \big[ \lambda + 2 \eta (2 i_1 + 2 j_1 - J - \Lambda - 1) \big]_{i_1}}{\big[ \lambda + 2 \eta (2 i_2 - \Lambda - 1) \big]_{i_2}} = \displaystyle\frac{ \big[ \lambda + 2 \eta (2 i_1 + 2 j_1 - J - \Lambda - 1) \big]_{2j_2 - J}}{ \big[ \lambda + 2 \eta (i_2 + j_1 + j_2  - \Lambda - J - 1) \big]_{j_1 + j_2 - J}} \\
	& \qquad \qquad \qquad \qquad \qquad \qquad \qquad \quad  = \displaystyle\frac{\big[ \lambda + 2 \eta (2i_2 + 2j_2  - \Lambda - J - 1) \big]_{j_2}}{\big[ \lambda + 2 \eta (2 i_2 - \Lambda - 1) \big]_{J - j_2}}  \\
	& \qquad \qquad \qquad \qquad \qquad \qquad \qquad \qquad \quad \times \displaystyle\frac{\big[ \lambda + 2 \eta (i_2 + j_1 - \Lambda - 1) \big]_{J - j_2}}{\big[ \lambda + 2 \eta (i_2 + j_1  - \Lambda - 1) \big]_{j_1}}. 
	\end{flalign*}
	
	\noindent Again using the second identity in \eqref{elliptichypergeometricidentities1} and the fact that $i_1 + j_1 = i_2 + j_2$, we find that 
	\begin{flalign*}
	& \big[ \lambda + 2 \eta (2i_2 + j_2 - \Lambda) \big]_{J - j_2} \big[ \lambda + 2 \eta (2 i_2 + 2j_2 - J - \Lambda) \big]_{2 j_2 - J} = \big[ \lambda + 2 \eta (2i_2 + j_2 - \Lambda) \big]_{j_2}; \\
	& \big[ 2 \eta (r + i_1 + j_1) \big]_{j_1} \big[ 2 \eta (r + i_1) \big]_{i_1}  = \big[ 2 \eta (r + i_2 + j_2) \big]_{i_2}  \big[ 2 \eta (r + j_2) \big]_{j_2}, 
	\end{flalign*} 
	
	\noindent using which, and inserting \eqref{wjt} and \eqref{wlambdat} into \eqref{cjlambdadefinition}, we deduce the proposition (after recalling that $v = \eta (J + T - 2r)$). 
\end{proof}

\subsection{Degenerations}

\label{Elliptic01}

In this section we consider several degenerations of the stochasticized fully fused elliptic weights. 

First observe that there exists a \emph{trigonometric limit} under which the theta function given by \eqref{theta1} converges to a (scale of) a trigonometric function, in that $\lim_{\tau \rightarrow \textbf{i} \infty} e^{- \pi \textbf{i} \tau / 4} \theta (z) = 2 \sin (\pi z)$. Using this fact, one can show that as $\tau$ tends to $\textbf{i} \infty$ and $v$ tends to $\textbf{i} \infty$, the stochastic weights defined by \eqref{sjlambdaelliptic} (or explicitly by \Cref{ellipticfusedstochastic}) degenerate to those provided in Definition 1.1 of \cite{DSHSVM} that give rise to the dynamical stochastic higher spin vertex models. Already under the trigonometric limit (as $\tau$ tends to $\textbf{i} \infty$) and when $v$ remains finite, the weights given by \Cref{ellipticfusedstochastic} appear to be new, except in the case when $J = 1 = \Lambda$, when they coincide with those of \Cref{dynamicalweights1}, which were considered earlier in \cite{FSSF}. 

Here, we will mainly focus on the case when the trigonometric limit is not taken, that is, when $\tau$ remains finite. This gives rise to an elliptic, stochastic, integrable face model that does not seem to have appeared before in the literature. The fully general weights of this model given by \Cref{ellipticfusedstochastic} might not appear so pleasant, but they become significantly simpler under certain specializations of the parameters. 

We begin with the case $J = 1 = \Lambda$. 

\begin{prop}

\label{jlambda1s}

For any $\lambda, v, x, y \in \mathbb{C}$, the stochasticized elliptic fused weights at $J = 1 = \Lambda$, given by $S_{1; 1} (i_1, j_1; i_2, j_2 \b| \lambda, v; x, y)$, coincide with the ones provided in \Cref{1jsdefinition}. Here, we recall that the dynamical parameters $\lambda$ and $v$ are governed by the identities \eqref{lambdadynamical} and \eqref{vdynamicalelliptic} that are depicted in \Cref{vdynamicalvertex1elliptic} (with $J = 1 = \Lambda$). 
\end{prop}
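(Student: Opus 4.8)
The plan is to specialize the general formula \eqref{sjlambda1} from \Cref{ellipticfusedstochastic} to the case $J = 1 = \Lambda$ and check that it reproduces each of the five nonzero weights listed in \Cref{1jsdefinition}. First I would note that when $J = \Lambda = 1$, the arrow conservation indicator $\textbf{1}_{i_1 + j_1 = i_2 + j_2}$ together with the support constraints $j_1, j_2 \in \{0, 1, \dots, J\} = \{0,1\}$ forces $i_1, j_1, i_2, j_2 \in \{0, 1\}$, so the only configurations to check are $(1,0;1,0)$, $(0,1;1,0)$, $(1,0;0,1)$, $(0,1;0,1)$, $(0,0;0,0)$, and $(1,1;1,1)$ — exactly the six appearing in \Cref{1jsdefinition}. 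The two "frozen" cases $(0,0;0,0)$ and $(1,1;1,1)$ should be immediate: for $(0,0;0,0)$ every elliptic Pochhammer symbol in \eqref{sjlambda1} has length $0$ and hence equals $1$, and the ${_{12}v_{11}}$ series reduces to its $k=0$ term, which is $1$; for $(1,1;1,1)$ one checks the analogous collapse, using that $[a]_1 = f(a)$ and that the prefactors telescope.

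Next I would handle the four genuinely nontrivial configurations by direct substitution. For each, I would record the values of $(i_1, j_1, i_2, j_2)$, compute the eight parameters $a_1, a_6, \dots, a_{12}$ from \eqref{parametersa} (noting that many of them vanish or collapse since the relevant indices are $0$ or $1$), and observe that in each case the ${_{12}v_{11}}$ series terminates after at most its first one or two terms — when one of the $a_j$ equals $2\eta$ or $0$ the series is extremely short — and so evaluates to a simple ratio of theta functions (or to $1$). The bulk of the verification is then bookkeeping: collecting the surviving length-$1$ elliptic Pochhammer symbols $[a]_1 = f(a)$ from the several lines of \eqref{sjlambda1}, substituting $J = \Lambda = 1$ throughout, and simplifying the resulting product of $f$-values. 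One should find, for instance, that the $S_{1;1}(1,0;1,0)$ case produces $\frac{f(y-x)f(\lambda-2\eta)}{f(y-x+2\eta)f(\lambda)} \cdot \frac{f(\lambda+y-v+2\eta)f(x-v)}{f(x-v+2\eta)f(\lambda+y-v)}$, matching \Cref{1jsdefinition}, and similarly for the other three. Throughout one uses the elementary identities in \eqref{elliptichypergeometricidentities1} (especially $[a]_{m-k} = [a]_m / [a-2\eta k]_k$) to reconcile the index shifts, and the fact that $f$ is odd to match signs where the two expressions differ superficially by $f(z) \leftrightarrow -f(-z)$.

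I expect the main obstacle to be purely organizational rather than conceptual: the formula \eqref{sjlambda1} has roughly a dozen elliptic Pochhammer factors spread over seven lines, and at $J = \Lambda = 1$ a substantial fraction of them have length $0$ (hence equal $1$) while the remaining few have length $1$; the care lies in not dropping or double-counting a factor, and in correctly tracking which arguments get shifted by $2\eta$ versus $-2\eta$ as the indices toggle between $0$ and $1$. A secondary subtlety is confirming that the ${_{12}v_{11}}$ series truly terminates at the first term (or a short finite sum) in each of the four nontrivial cases; this follows from inspecting which $a_j$ becomes a nonpositive multiple of $2\eta$, but it must be checked configuration-by-configuration. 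Finally, I would close by recalling that the dynamical-parameter identities for $\lambda$ and $v$ specialize correctly: \eqref{lambdadynamical} with $J = \Lambda = 1$ gives $\lambda(a+1,b) = \lambda(a,b) + 2\eta(i_2 - 1)$ and $\lambda(a,b-1) = \lambda(a,b) + 2\eta(2j_1 - 1)$, and \eqref{vdynamicalelliptic} gives $v(a-1,b) = v(a,b) - 2\eta i_1$ and $v(a,b+1) = v(a,b) - 2\eta j_2$, which are precisely the identities stated just before \Cref{1jsdefinition}; this identifies the present weights with those of \Cref{1jsdefinition} as an integrable system, not merely as a table of numbers, completing the proof.
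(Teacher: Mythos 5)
Your proposal is correct and follows essentially the same route as the paper: the paper also proceeds by direct specialization, though it organizes the computation through the factorization $S_{1;1} = W_{1;1} \cdot C_{1;1}$ from \eqref{sjlambdaelliptic}, quoting the six simple theta-function values of $W_{1;1}$ and the explicit correction of \Cref{cjlambdaelliptic} rather than specializing the assembled formula \eqref{sjlambda1} with its ${_{12}v_{11}}$ series. One small inaccuracy in your sketch: for the configuration $(0,0;0,0)$ not every Pochhammer symbol in \eqref{sjlambda1} has length zero (the exponents $J - j_1 - j_2$, $J$, and $J - j_2$ all equal $1$ there); the weight equals $1$ because those length-one factors cancel in numerator--denominator pairs, which your bookkeeping would reveal in any case.
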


\begin{proof}
	
	Abbreviating $W_{1; 1} \big( i_1, j_1; i_2, j_2 \b| \lambda; x, y \big) = W_{1; 1} (i_1, j_1; i_2, j_2)$, one can deduce (directly from the definition \eqref{wjlambda} or from specializing $\Lambda = 1$ in equation (3.1) of \cite{DSHSVM} and applying the transformation from \Cref{fusedellipticfusedrepresentation}) that 
	\begin{flalign}
	\label{w11}
	\begin{aligned}
	W_{1; 1} (1, 0; 1, 0) & =  \displaystyle\frac{f(y - x) f(\lambda + 2 \eta)}{f(y - x + 2 \eta) f(\lambda)}; \qquad W_{1; 1} ( 0, 1; 1, 0) =  \displaystyle\frac{f(\lambda - y + x) f(2 \eta)}{f(y - x + 2 \eta) f(\lambda)}; \\
	W_{1; 1} ( 1, 0; 0, 1) & =  \displaystyle\frac{f(\lambda + y - x) f (2\eta)}{f(y - x + 2 \eta) f(\lambda)}; \qquad W_{1; 1} ( 0, 1; 0, 1) =  \displaystyle\frac{f(y - x) f(\lambda - 2 \eta)}{f(y - x + 2 \eta) f(\lambda)}; \\
	& \qquad \qquad W_{1; 1} ( 0, 0; 0, 0) = 1 = W_{1; 1} ( 1, 1; 1, 1).
	\end{aligned}
	\end{flalign}
	
	\noindent Now the proposition follows from inserting \eqref{w11} and the explicit form of the stochastic correction given by Proposition \ref{cjlambdaelliptic} into \eqref{sjlambdaelliptic}. 
\end{proof}

Since the $S_{1; 1} (i_1, j_1; i_2, j_2)$ are zero unless $i_1, j_1, i_2, j_2 \in \{ 0, 1 \}$, we view them as giving rise to an elliptic, stochastic, dynamical deformation of the six-vertex model. The higher spin analogs of these weights (allowing for arbitrary $i_1, i_2 \in \mathbb{Z}_{\ge 0}$ but still restricting $j_1, j_2 \in \{ 0, 1 \}$) is given by the following proposition.

\begin{prop}
	
	\label{1js}
	
	For any $\Lambda, \lambda, x, y, \in \mathbb{C}$ and $k \in \mathbb{Z}_{\ge 0}$, we have that
	\begin{flalign*}
	S_{1; \Lambda} (k, 0; k, 0) & = \displaystyle\frac{f \big( y - x + \eta (\Lambda - 2k + 1) \big) f \big( \lambda + 2 \eta (k - \Lambda - 1) \big) }{f \big( y - x + \eta (\Lambda + 1) \big) f \big( \lambda + 2 \eta (2k - \Lambda - 1) \big) } \\
	& \qquad \times \displaystyle\frac{f \big( x - v \big) f \big( \lambda + y + \eta (4k - \Lambda - 1) - v \big) }{f \big( x + 2k \eta - v \big) f \big( \lambda + y + \eta (2 k - \Lambda - 1) - v \big) }; 
	\end{flalign*}
	
	\begin{flalign*}
	S_{1; \Lambda} ( k, 1; k + 1, 0) & = \displaystyle\frac{f \big( \lambda - y + x + \eta (2k - \Lambda + 1) \big) f \big( 2 \eta (\Lambda - k) \big) }{f \big( y - x + \eta (\Lambda + 1) \big) f \big( \lambda + 2 \eta (2k - \Lambda + 1) \big)} \\
	& \qquad \times \displaystyle\frac{f \big( v - x \big) f \big( \lambda + y - v + \eta (4k - \Lambda + 3) \big)}{f \big(\lambda + x - v + 2 \eta (k + 1)  \big) f \big( v - y + \eta (\Lambda - 2k - 1)  \big)};
	\end{flalign*}
	
	\begin{flalign*}
	S_{1; \Lambda} (k, 0; k - 1, 1) & =  \displaystyle\frac{f \big( \lambda + y - x + \eta (2k - \Lambda - 1) \big) f ( 2 \eta k) }{f \big( y - x + \eta (\Lambda + 1) \big) f \big( \lambda + 2 \eta (2k - \Lambda - 1) \big) } \\
	& \qquad \times \displaystyle\frac{f \big( v - y - \eta (\Lambda + 1) \big) f \big( \lambda - v + x + 2 \eta (2k - \Lambda - 1) \big) }{f \big( v - x - 2k \eta	 \big) f \big( \lambda + y + \eta (2 k - \Lambda - 1) - v \big) };
	\end{flalign*}
	
	\begin{flalign*}
	S_{1; \Lambda} ( k, 1; k, 1) & = \displaystyle\frac{f \big(	y - x + \eta (2k - \Lambda + 1) \big) f \big( \lambda + 2 \eta (k + 1) \big) }{f \big( y - x + \eta (\Lambda + 1) \big) f \big(\lambda + 2 \eta (2k - \Lambda + 1) \big)} \\
	& \qquad \times \displaystyle\frac{ f \big( x + \lambda + 2 \eta (2k - \Lambda + 1) - v \big) f \big( v - y - \eta (\Lambda + 1)  \big)}{f \big(\lambda + x - v + 2 \eta (k + 1) \big) f \big( v - y + \eta (\Lambda - 2k - 1) \big) },
	\end{flalign*}
	
	\noindent and $S_{1; \Lambda} ( i_1, j_1; i_2, j_2 ) = 0$ for all $(i_1, j_1; i_2, j_2)$ not of the above form. Above, we have abbreviated $S_{1; \Lambda} \big( i_1, j_1; i_2, j_2 \b| \lambda, v; x, y \big) = S_{1; \Lambda} (i_1, j_1; i_2, j_2)$. Here, we recall that the dynamical parameters $\lambda$ and $v$ are governed by the identities \eqref{lambdadynamical} and \eqref{vdynamicalelliptic} that are depicted in \Cref{vdynamicalvertex1elliptic} (with $J = 1$). 
\end{prop}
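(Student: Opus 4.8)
The plan is to specialize the general formula \eqref{sjlambda1} from \Cref{ellipticfusedstochastic} to $J = 1$ and arbitrary $\Lambda$, for each of the four admissible arrow configurations $(k, 0; k, 0)$, $(k, 1; k+1, 0)$, $(k, 0; k-1, 1)$, $(k, 1; k, 1)$, and to check that the resulting products of elliptic Pochhammer symbols collapse to the stated single-ratio expressions. First I would record that at $J = 1$ the column occupation numbers $j_1, j_2$ lie in $\{0, 1\}$, so each of the four cases corresponds to a definite choice $(j_1, j_2) \in \{(0,0), (1,0), (0,1), (1,1)\}$ (together with $i_2 = i_1 + j_1 - j_2$ fixed by arrow conservation). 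In each case most of the Pochhammer symbols in \eqref{sjlambda1} have length $0$ or $1$: symbols like $[a]_{j_1}$, $[a]_{j_2}$, $[a]_{J - j_1 - j_2}$ reduce to either $1$ or a single theta factor $f(\cdot)$, and the ${_{12}v_{11}}$ series must be evaluated. The key simplification is that a very-well-poised balanced ${_{12}v_{11}}(a_1; a_6, \dots, a_{12}; 1)$ in which one of the upper parameters is $2\eta \cdot 0 = 0$ (the terminating parameter, here forced by $j_2 \le J = 1$ or $i_1$ small) becomes a terminating sum of length $0$ or $1$, hence is either $1$ or a ratio of four theta functions by the elliptic Jackson identity \eqref{hypergeometric109sumterminating}; in the length-$\le 1$ regime one can alternatively just expand the two-term sum directly and use the quartic theta relation \eqref{quarticrelationf} to combine terms.

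The cleanest route, rather than re-deriving everything from \eqref{sjlambda1}, is to combine the explicit unstochasticized weights $W_{1;\Lambda}(i, j; i+j, 0)$ from \Cref{wjj0} (equation \eqref{wjj20identity}, which is already a closed product with no hypergeometric series) with the explicit stochastic correction $C_{J;\Lambda}$ from \Cref{cjlambdaelliptic}, via $S_{1;\Lambda} = W_{1;\Lambda} \cdot C_{1;\Lambda}$ as in \eqref{sjlambdaelliptic}. For the two configurations with $j_2 = 0$ — namely $(k,0;k,0)$ and $(k,1;k+1,0)$ — the weight $W_{1;\Lambda}(i_1, j_1; i_1 + j_1, 0)$ is directly given by \eqref{wjj20identity} with $J = 1$, so those cases need only the product with the (fully explicit, series-free) $C_{1;\Lambda}$ and a Pochhammer bookkeeping: use the identities in \eqref{elliptichypergeometricidentities1}, especially $[a]_{m-k} = [a]_m / [a - 2\eta k]_k$, to telescope the long products down to the displayed four- or six-theta ratios. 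For the remaining two configurations with $j_2 = 1$ — $(k,0;k-1,1)$ and $(k,1;k,1)$ — one can instead use the $j_1 = 0$ or $j_1 = 1$ specialization of $W_{1;\Lambda}(i_1, j_1; i_2, j_2)$ with $j_2 = 1$; this still has a nontrivial (but very short, since $J = 1$) ${_{12}v_{11}}$, which by the reasoning above is a single theta ratio. Substituting this into \eqref{sjlambdaelliptic} and simplifying as before yields the claimed formulas. The final vanishing assertion, $S_{1;\Lambda}(i_1, j_1; i_2, j_2) = 0$ for all other quadruples, is immediate from the support condition $\mathbf{1}_{i_1 + j_1 = i_2 + j_2}$ in \eqref{sjlambda1} together with $j_1, j_2 \in \{0, \dots, J\} = \{0, 1\}$ (this restriction being inherited from \Cref{fusedelliptic}).

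I expect the main obstacle to be the Pochhammer-symbol bookkeeping in the $j_2 = 1$ cases: there the stochastic correction $C_{1;\Lambda}$ contributes several length-$J = 1$ symbols whose arguments differ from those in $W_{1;\Lambda}$ by shifts of the form $2\eta r$, $2\eta(2i_2 - \Lambda)$, etc., and one must carefully cancel them against the $v$-dependent factors (recall $v = \eta(J + T - 2r) = \eta(1 + T - 2r)$, so every occurrence of $r$ is traded for $(1 + T - v/\eta)/2$) to see that all dependence on $r$ and $T$ collapses into the single parameter $v$ exactly as asserted. A secondary check is to confirm that the ${_{12}v_{11}}$ appearing in the $j_2 = 1$ weights really is a one-term sum: one verifies that among $a_6, \dots, a_{12}$ in \eqref{parametersa} there is a terminating argument equal to $2\eta \cdot (\text{small nonnegative integer})$ forcing termination at order $\le 1$ when $J = 1$, and then applies \eqref{hypergeometric109sumterminating} (or direct two-term expansion plus \eqref{quarticrelationf}). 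Beyond these two points the argument is a routine, if somewhat lengthy, verification, and the $J = 1 = \Lambda$ sub-case is already handled by \Cref{jlambda1s} and \eqref{w11}, which can serve as a consistency check on the $\Lambda = 1$ specialization of the formulas proved here.
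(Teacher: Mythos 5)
Your proposal is correct and follows essentially the same route as the paper: the paper's proof also writes $S_{1;\Lambda} = W_{1;\Lambda}\cdot C_{1;\Lambda}$ as in \eqref{sjlambdaelliptic}, records the four explicit $W_{1;\Lambda}$ weights in closed form (citing \Cref{fusedelliptic} or the transformation of \Cref{fusedellipticfusedrepresentation} applied to \cite{DSHSVM}), and multiplies by the explicit stochastic correction of \Cref{cjlambdaelliptic}. Your additional observations --- that the $j_2=0$ cases follow from \eqref{wjj20identity} and that the ${_{12}v_{11}}$ series terminates after at most two terms when $J=1$ --- are accurate and simply make explicit what the paper leaves implicit.
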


\begin{proof}
	
The proof of this result is entirely analogous to that of \Cref{jlambda1s} after observing (again, either as a consequence of \Cref{fusedelliptic} or by applying the transformation given by \Cref{fusedellipticfusedrepresentation} to equation (3.1) of \cite{DSHSVM}) that  
\begin{flalign*}
W_{1; \Lambda} ( k, 0; k, 0) & = \displaystyle\frac{f\big(y - x + \eta (\Lambda - 2 k +  1) \big) f \big( \lambda + 2 \eta k \big) }{f \big( y - x + \eta (\Lambda + 1)  \big) f \big( \lambda \big) }; \\
W_{1; \Lambda} ( k, 1; k + 1, 0) &= \displaystyle\frac{f \big( \lambda - y + x +  \eta (2k - \Lambda +  1) \big) f \big( 2 \eta (k + 1) \big)}{f \big(y - x + \eta (\Lambda + 1) \big) f \big( \lambda )}; \\
W_{1; \Lambda} ( k, 0; k - 1, 1) &= \displaystyle\frac{f \big( \lambda + y - x + \eta (2 k - \Lambda - 1) \big) f \big(2 \eta (k + 1) \big)}{f \big(y - x + \eta (\Lambda + 1) \big) f \big( \lambda \big)}; \\
W_{1; \Lambda} ( k, 1; k, 1) &= \displaystyle\frac{f \big( y - x + \eta (2k - \Lambda + 1)	 \big) f \big( \lambda + 2 \eta (k - \Lambda) \big)}{f \big(y - x + \eta (\Lambda + 1)  \big) f \big( \lambda  \big)}.
\end{flalign*}
\end{proof}

Proposition \ref{1js} indicates that the elliptic hypergeometric weights $S_{J; \Lambda}$ simplify for arbitrary $\Lambda$ when $J = 1$. When $J \in \mathbb{Z}_{> 1}$, these weights typically do not simplify. However, an exception in the case $x = y + \eta (J - \Lambda)$ was found in Theorem 3.10 of \cite{DSHSVM} (as an elliptic analog of Proposition 6.7 of \cite{FSRF}). It was shown there that, under the trigonometric limit, these simplified weights give rise to a dynamical variant of the $q$-Hahn boson model. 

The following proposition shows that, under this specialization $x = y + \eta (J - \Lambda)$, our stochasticized fused elliptic weights also simplify; this gives rise to an elliptic variant of the $q$-Hahn boson model of \cite{TBP,IZRCMFSS}.

\begin{prop}
	
	\label{svlambdaelliptic}

	Fix $J \in \mathbb{Z}_{> 0}$ and $i_1, i_2 \in \mathbb{Z}_{\ge 0}$, and assume that $x = y + \eta (J - \Lambda)$; further let $j_1, j_2 \in \{ 0, 1, \ldots , J \}$. If $i_1 + j_1 \ne i_2 + j_2$, then $S_{J; \Lambda} \big( i_1, j_1; i_2, j_2 \b| v, \lambda; x, y \big) = 0$. Otherwise, 
	\begin{flalign*}
	& S_{J; \Lambda} \big( i_1, j_1; i_2, j_2 \b| \lambda, v; x, y \big) \\
	 & \quad = \displaystyle\frac{\big[ 2 \eta J \big]_{j_2}}{\big[ 2 \eta j_2 \big]_{j_2}} \displaystyle\frac{ \big[ 2 \eta i_1 \big]_{j_2} \big[ 2 \eta (\Lambda - i_1) \big]_{J - j_2} }{\big[ 2 \eta \Lambda \big]_J}  \displaystyle\frac{ \big[ \lambda + 2 \eta (i_1 + 2j_1 - J) \big]_{j_2} \big[ \lambda + 2 \eta (i_2 + j_1 - \Lambda - 1) \big]_{J - j_2} }{\big[ \lambda + 2 \eta (2i_2 + j_2 - \Lambda) \big]_{j_2} \big[ \lambda + 2 \eta (2 i_2 - \Lambda - 1) \big]_{J - j_2}} \\
	& \qquad \qquad \times  \displaystyle\frac{\big[ \lambda + x - v + 2 \eta (2i_2 + 2j_2 - \Lambda - 1)  \big]_{j_2}}{\big[ \lambda + x - v + 2 \eta (i_1 + 2j_1 - 1)  \big]_{j_1} } \displaystyle\frac{\big[  v - x - 2 \eta j_2 \big]_{i_2}}{\big[ v - x - 2 \eta J \big]_{i_1}} \\
	& \qquad \qquad \times \displaystyle\frac{\big[ \lambda + y - v + 2 \eta (2 i_2 + j_2 - 1) + \eta (J - \Lambda) \big]_{i_2}}{\big[ \lambda + y - v + 	2 \eta (2 i_1 + 2j_1 - 1) - \eta (\Lambda + J) \big]_{i_1}}\displaystyle\frac{ \big[ v - y - \eta (\Lambda + J)  \big]_{j_2}}{\big[ v - y + \eta (\Lambda - J - 2 i_1) \big]_{j_1}} \textbf{\emph{1}}_{i_1 \ge j_2} .
	\end{flalign*}
	
	\noindent Here, we recall that the dynamical parameters $\lambda$ and $v$ are governed by the identities \eqref{lambdadynamical} and \eqref{vdynamicalelliptic} that are depicted in \Cref{vdynamicalvertex1elliptic}.

\end{prop}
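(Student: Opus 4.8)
\textbf{Proof proposal for \Cref{svlambdaelliptic}.}

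The plan is to specialize $x = y + \eta(J - \Lambda)$ directly in the general formula \eqref{sjlambda1} of \Cref{ellipticfusedstochastic} and to show that the ${_{12}v_{11}}$ elliptic hypergeometric factor collapses to a single term (or to a closed product), leaving the stated expression. First I would substitute $x - y = \eta(J - \Lambda)$ into the eight parameters $a_1, a_6, \ldots, a_{12}$ defined in \eqref{parametersa}. The key observation, exactly as in the analogous non-stochastic statement (Theorem 3.10 of \cite{DSHSVM}, itself an elliptic lift of Proposition 6.7 of \cite{FSRF}), is that this specialization forces one of the ``numerator'' parameters of the very well-poised series to become a nonpositive even multiple of $\eta$ in such a way that the series terminates after a single step, or, more precisely, that the relevant instance of the terminating elliptic Jackson summation \eqref{hypergeometric109sumterminating} applies and evaluates ${_{12}v_{11}}(a_1; a_6, \ldots, a_{12}; 1)$ in closed product form. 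I would track which of $a_{10}, a_{11}$ degenerates: with $x - y = \eta(J - \Lambda)$ one gets $a_{10} = 2\eta(j_2 - i_1 - 1)$ and $a_{11} = 2\eta(j_2 - i_1 + \Lambda - J)$, so the argument $a_{10}$ is an even integer multiple of $\eta$, which is precisely the terminating condition for an elliptic hypergeometric series (an $a_j = 2\eta r$ with $r \in \mathbb{Z}_{\ge 0}$, after possibly using reflection/$f(-z) = -f(z)$ to arrange the sign), and moreover $i_1 \ge j_2$ is exactly the condition $r \ge 0$ that makes the sum nonempty — explaining the indicator $\textbf{1}_{i_1 \ge j_2}$.

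Concretely, the steps in order: (i) substitute the specialization into \eqref{parametersa} and simplify $a_1, \ldots, a_{12}$; (ii) identify the balanced, terminating ${_{10}v_9}$ (or the appropriate ${_{12}v_{11}}$ reducing to a ${_{10}v_9}$ after one parameter trivializes) and apply \eqref{hypergeometric109sumterminating} to get a ratio of elliptic Pochhammer symbols $[\,\cdot\,]_n$; (iii) multiply this evaluated sum against the explicit prefactor in \eqref{sjlambda1}, and (iv) simplify the resulting product of elliptic Pochhammer symbols using the identities in \eqref{elliptichypergeometricidentities1} — in particular $[a]_{m-k} = [a]_m / [a - 2\eta k]_k$ and $[a; q]_k [aq^k]_m = [a]_{k+m}$ — together with $i_1 + j_1 = i_2 + j_2$ to absorb and cancel factors, matching term by term the claimed right-hand side. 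I would also check that the factors $\big[\lambda + x - v + \cdots\big]$, $\big[v - x - \cdots\big]$, $\big[\lambda + y - v + \cdots\big]$, $\big[v - y - \cdots\big]$ appearing in the last three lines of \eqref{sjlambda1} are unaffected by the specialization (they do not involve $x - y$ in a way that degenerates) and so survive verbatim, while the $J = 1$ sanity check against \Cref{1js} should hold since there $j_1, j_2 \in \{0,1\}$ and the series is already trivial.

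The main obstacle I anticipate is purely bookkeeping: correctly matching the parameters of \eqref{parametersa} to the standard form of the elliptic Jackson identity \eqref{hypergeometric109sumterminating} — getting the roles of $a, b, c, d, e$ right, verifying the balancing condition $\tfrac{(r-5)(a_1 - 2\eta)}{2} = \sum_{j} a_j - 2\eta$ is met under the specialization, and then carrying out the (lengthy but mechanical) cancellation of elliptic Pochhammer symbols without sign errors coming from $f(-z) = -f(z)$ and from Pochhammer symbols with negative indices. A secondary point requiring care is confirming that, outside the range $i_1 \ge j_2$, the specialized weight genuinely vanishes rather than merely being given by a different formula; this follows because the terminating series has no terms (empty sum) once the terminating parameter becomes a negative multiple of $\eta$, matching $\textbf{1}_{i_1 \ge j_2}$. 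Since every ingredient — \eqref{sjlambda1}, \eqref{hypergeometric109sumterminating}, \eqref{elliptichypergeometricidentities1} — is already available, the argument is a guided computation with no conceptual gap, directly parallel to the proof of Theorem 3.10 in \cite{DSHSVM}.
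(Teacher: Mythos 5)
Your route is viable but genuinely different from the paper's, and one of your specific claims is wrong. The paper does not touch the ${_{12}v_{11}}$ series at all in this proof: it simply quotes Theorem 3.10 of \cite{DSHSVM}, which already gives the factorized form \eqref{wjlambdaxyspecialization} of the \emph{non-stochasticized} weight $W_{J;\Lambda}$ at $x = y + \eta(J-\Lambda)$, and then multiplies by the explicit stochastic correction of \Cref{cjlambdaelliptic} via the definition \eqref{sjlambdaelliptic}. Since the $v$-dependent correction $C_{J;\Lambda}$ has already been computed in closed product form, the whole proof is a one-line substitution. What you propose — specializing the fully assembled formula \eqref{sjlambda1} and re-summing the elliptic hypergeometric series via \eqref{hypergeometric109sumterminating} — amounts to re-proving the cited theorem of \cite{DSHSVM} from scratch; it should work, but it buys you nothing except a much longer computation, and it forces you to handle the delicate zero/pole cancellations between the prefactor of \eqref{wjlambda} and the ${_{12}v_{11}}$ (note, e.g., that $[2\eta i_1]_{j_2}$ in the prefactor vanishes whenever $i_1 < j_2$ even though the generic weight need not).

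The concrete error: your explanation of the indicator $\textbf{1}_{i_1 \ge j_2}$ has the inequality reversed. At the specialization, $a_{10} = 2\eta(j_2 - i_1 - 1)$, and the termination condition ``$a_j = 2\eta r$ with $r \in \mathbb{Z}_{\ge 0}$'' reads $j_2 - i_1 - 1 \ge 0$, i.e.\ $j_2 > i_1$ — the \emph{complement} of the regime where the weight is nonzero. So $a_{10}$ being a nonnegative multiple of $2\eta$ is not ``the condition that makes the sum nonempty''; when $i_1 \ge j_2$ the series still terminates, but via $a_6 = 2\eta j_1$ or $a_7 = 2\eta j_2$ as it always does. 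The indicator $\textbf{1}_{i_1 \ge j_2}$ in the final answer is carried by the surviving prefactor $[2\eta i_1]_{j_2}$ (visible already in \eqref{sjlambda1} and in \eqref{wjlambdaxyspecialization}), which vanishes precisely when $i_1 < j_2$; it is not produced by the termination mechanism of the series. If you insist on your route, you would need to redo step (ii) with this corrected, verify the balancing condition for the relevant ${_{10}v_9}$, and only then carry out the Pochhammer bookkeeping with \eqref{elliptichypergeometricidentities1}.
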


\begin{proof} 
	Theorem 3.10 of \cite{DSHSVM} states that, if  $x = y + \eta (J - \Lambda)$, then 
	\begin{flalign}
	\label{wjlambdaxyspecialization}
	\begin{aligned}
	W_{J; \Lambda} \big( i_1, j_1; i_2, j_2 \b| \lambda; x, y\big) & =  \displaystyle\frac{\textbf{1}_{i_1 \ge j_2} \big[ 2 \eta J \big]_J}{\big[ 2 \eta j_1 \big]_{j_1} \big[ 2 \eta (J - j_1) \big]_{J - j_1}} \displaystyle\frac{\big[ 2 \eta \Lambda \big]_{i_1}}{\big[ 2 \eta \Lambda \big]_{i_2}} \displaystyle\frac{[2 \eta i_2]_{i_2}}{ [2 \eta i_1]_{i_1}} \displaystyle\frac{ \big[ 2 \eta i_1 \big]_{j_2} \big[ 2 \eta (\Lambda - i_1) \big]_{J - j_2} }{\big[ 2 \eta \Lambda \big]_J} \\
	& \qquad \times \displaystyle\frac{\big[ \lambda + 2 \eta i_2 \big]_{J - j_1} \big[ \lambda + 2 \eta (i_2 + j_1 - \Lambda - 1) \big]_{j_1}}{\big[ \lambda + 2 \eta j_1 \big]_{J - j_1} \big[ \lambda + 2 \eta (2j_1 - J - 1) \big]_{j_1} },
	\end{aligned}
	\end{flalign}
	
	\noindent if $i_1 + j_1 = i_2 + j_2$ and $j_1, j_2 \in \{ 0, 1, \ldots , J \}$ and is otherwise equal to $0$. Now the proposition follows from inserting \eqref{wjlambdaxyspecialization} and \Cref{cjlambdaelliptic} into \eqref{sjlambdaelliptic}. 
\end{proof}

We will not analyze further degenerations of our stochastic fused elliptic model and instead leave such investigation for future work. Let us conclude this section by mentioning that the stochasticity of the general $S_{J; \Lambda}$ weights follows as a special case of \Cref{stochasticws}. However, in each of the three situations explained above, this stochasticity can be checked directly. In the six-vertex and higher spin elliptic models (given by \Cref{jlambda1s} and \Cref{1js}), this is a consequence of \eqref{quarticrelationf}. In the elliptic variant of the $q$-Hahn boson model (given by \Cref{svlambdaelliptic}), this is a special case of the elliptic Jackson identity \eqref{hypergeometric109sumterminating}.

\section{Dynamical Higher Rank Vertex Model} 

\label{RankDynamical} 

In this section we apply the stochasticization procedure to solutions to the Yang-Baxter equation coming from higher rank vertex models, which were studied previously in \cite{CSVMST,CDFSVMD,SM}; this will give rise to dynamical variants of the models studied in \cite{CSVMST,CDFSVMD,SM}. We begin in \Cref{HigherSum} by stochasticizing these solutions and then we detail some special cases in \Cref{L1}. 

\subsection{Dynamical Higher Rank Model}

\label{HigherSum}

In this section we implement the stochasticization procedure on (the transposed version of) a solution to the Yang-Baxter equation due to Bosnjak-Mangazeev \cite{STRR} that arises from symmetric tensor representations of $U_q (\widehat{\mathfrak{sl}}_{n + 1})$. Let us begin by defining this solution. Throughout this section, we fix $q \in \mathbb{C}$ and $n \in \mathbb{Z}_{> 0}$.

The below definition originally appeared as equation (7.10) of \cite{STRR} under a change of parameters; as stated below, it appears as equation (C.4) of \cite{CSVMST}. In what follows, we recall that a \emph{composition} of \emph{length} $k + 1$ and \emph{size} $N$ is a sequence $\mathcal{C} = (C_0, C_1, \ldots , C_k) = (C_i)$ of nonnegative integers such that $\sum_{j = 0}^k C_j = N$. We denote $\ell (\mathcal{C}) = k + 1$ and, for any $0 \le i \le j \le k$, we set $C_{[i, j]} = \sum_{m = i}^j C_m$. Two compositions of the same length $k + 1$ can be added or subtracted as $(k + 1)$-dimensional vectors, although the result might not be a composition. For any finite set $\mathcal{S} \subset \mathbb{Z}$, we also define $|\mathcal{S}| = \sum_{s \in \mathcal{S}} s$. 

\begin{definition}[{\cite[Equation (7.10)]{STRR}}] 
	
	\label{wzabcd}
	 For any $x, y \in \mathbb{C}$ and $n$-tuples $\lambda = (\lambda_1, \ldots , \lambda_n) \in \mathbb{Z}_{\ge 0}^n$ and $\mu = (\mu_1, \ldots , \mu_n) \in \mathbb{Z}_{\ge 0}^n$, define 
	\begin{flalign*}
	\Phi (\lambda, \mu; x, y) = \displaystyle\frac{(x; q)_{|\lambda|} (x^{-1} y; q)_{|\mu| - |\lambda|} }{(y; q)_{|\mu|}} \left( \displaystyle\frac{y}{x} \right)^{|\lambda|} q^{\sum_{1 \le i < j \le n} (\mu_i - \lambda_i) \lambda_j} \displaystyle\prod_{j = 1}^n \displaystyle\frac{(q; q)_{\mu_i}}{(q; q)_{\lambda_i} (q; q)_{\mu_i - \lambda_i}}.
	\end{flalign*}
	
	\noindent Now let $z \in \mathbb{C}$ and $L, M \in \mathbb{Z}_{> 0}$. Let $\textbf{A} = (A_i)$, $\textbf{B} = (B_i)$, $\textbf{C} = (C_i)$, and $\textbf{D} = (D_i)$ be four compositions of length $n + 1$. Define $\overline{\textbf{A}} = (A_1, A_2, \ldots , A_n) \in \mathbb{Z}_{\ge 0}^n$ (that is, $\overline{\textbf{A}}$ is obtained by removing $A_0$ from $\textbf{A}$), and similarly define $\overline{\textbf{B}}$, $\overline{\textbf{C}}$, and $\overline{\textbf{D}}$. 
	
	If $\textbf{A} + \textbf{B} = \textbf{C} + \textbf{D}$, $|\textbf{A}| = M = |\textbf{C}|$, and $|\textbf{B}| = L = |\textbf{D}|$, set 
	\begin{flalign} 
	\label{wabcd} 
	\begin{aligned}
	& U_{L; M} \big( \textbf{A}, \textbf{B}; \textbf{C}, \textbf{D} \b| z \big) \\
	& \quad = z^{|\overline{\textbf{D}}| - |\overline{\textbf{B}}|} q^{L |\overline{\textbf{A}}| - M |\overline{\textbf{{D}}}|} \displaystyle\sum_{\textbf{P} \le \overline{\textbf{B}}, \overline{\textbf{C}}} \Phi \big( \overline{\textbf{C}} - \textbf{P}, \overline{\textbf{C}} + \overline{\textbf{D}} - \textbf{P}; q^{L - M} z, q^{-M} z \big) \Phi \big( \textbf{P}, \overline{\textbf{B}}; q^{-L} z^{-1}, q^{-L} \big),  
	\end{aligned}
	\end{flalign} 
	
	\noindent where the sum is over all $\textbf{P} = (P_1, \ldots , P_n) \in \mathbb{Z}_{\ge 0}^n$ such that $0 \le P_i \le \min \{ B_i, C_i \}$ for each $i \in [1, n]$. Otherwise, set  $U_{L; M} \big( \textbf{A}, \textbf{B}; \textbf{C}, \textbf{D} \b| z \big) = 0$.

\end{definition}

The following proposition states that these $U$ weights satisfy the Yang-Baxter equation; it is due to equation (3.20) of \cite{STRR} but, as stated below, it appears as Theorem C.1.1 of \cite{CSVMST}. 

\begin{prop}[{\cite[Equation (3.20)]{STRR}}] 
	
\label{equationrank} 

Fix $L, M, T \in \mathbb{Z}_{> 0}$; $x, y, z \in \mathbb{C}$; and length $n + 1$ compositions $\textbf{\emph{I}}_1, \textbf{\emph{J}}_1, \textbf{\emph{K}}_1$ and $\textbf{\emph{I}}_3, \textbf{\emph{J}}_3, \textbf{\emph{K}}_3$. Then, 
\begin{flalign} 
\label{equationvertexrank} 
\begin{aligned}
& \displaystyle\sum_{\textbf{\emph{I}}_2, \textbf{\emph{J}}_2, \textbf{\emph{K}}_2}  U_{L; M}  \left( \textbf{\emph{I}}_1, \textbf{\emph{J}}_1; \textbf{\emph{I}}_2, \textbf{\emph{J}}_2 \Big| \frac{x}{y} \right) U_{L; T}  \left( \textbf{\emph{K}}_1, \textbf{\emph{J}}_2; \textbf{\emph{K}}_2, \textbf{\emph{J}}_3 \Big| \frac{x}{z} \right) U_{M; T}  \left( \textbf{\emph{K}}_2, \textbf{\emph{I}}_2; \textbf{\emph{K}}_3, \textbf{\emph{I}}_3 \Big| \frac{y}{z} \right) \\
& \quad = \displaystyle\sum_{\textbf{\emph{I}}_2, \textbf{\emph{J}}_2, \textbf{\emph{K}}_2}   U_{M; T}  \left( \textbf{\emph{K}}_1, \textbf{\emph{I}}_1; \textbf{\emph{K}}_2, \textbf{\emph{I}}_2 \Big| \frac{y}{z} \right) U_{L; T}  \left( \textbf{\emph{K}}_2, \textbf{\emph{J}}_1; \textbf{\emph{K}}_3, \textbf{\emph{J}}_2 \Big| \frac{x}{z} \right) U_{L; M}  \left( \textbf{\emph{I}}_2, \textbf{\emph{J}}_2; \textbf{\emph{I}}_3, \textbf{\emph{J}}_3 \Big| \frac{x}{y} \right),
\end{aligned}
\end{flalign}	

\noindent where the sums on both sides of \eqref{equationvertexrank} are over all compositions $\textbf{\emph{I}}_2, \textbf{\emph{J}}_2, \textbf{\emph{K}}_2$ of length $n + 1$. 
	\end{prop}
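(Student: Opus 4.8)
The plan is to recognize the weights $U_{L; M} \big( \textbf{A}, \textbf{B}; \textbf{C}, \textbf{D} \b| z \big)$ of \Cref{wzabcd} as the matrix coefficients, up to the prefactors displayed in \eqref{wabcd}, of the trigonometric $R$-matrix $\check{R}_{L, M}(z)$ that intertwines the tensor product of two symmetric tensor evaluation representations of $U_q(\widehat{\mathfrak{sl}}_{n + 1})$. Here $V_N$ denotes the $N$-th $q$-symmetric power of the vector representation $\mathbb{C}^{n + 1}$, its basis vectors are indexed by length-$(n + 1)$ compositions (arrow configurations) of size $N$, and $z$ is the ratio of the two spectral parameters. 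Granting this identification, \eqref{equationvertexrank} is exactly the component form of the operator identity $\check{R}_{12} \check{R}_{13} \check{R}_{23} = \check{R}_{23} \check{R}_{13} \check{R}_{12}$ on $V_L (x) \otimes V_M (y) \otimes V_T (z)$, which holds because the category of finite-dimensional $U_q(\widehat{\mathfrak{sl}}_{n + 1})$-modules is braided and its $R$-matrices satisfy the hexagon (Yang-Baxter) axiom. Thus the whole problem reduces to establishing the closed form \eqref{wabcd} for these matrix coefficients.

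There are two natural ways to carry this out. The first is by \emph{fusion}: for $L = M = T = 1$ the module $V_1 = \mathbb{C}^{n + 1}$ is the vector representation, for which $\check{R}(z)$ is the classical Perk-Schultz $R$-matrix of $U_q(\widehat{\mathfrak{sl}}_{n + 1})$ whose Yang-Baxter equation is standard; the higher-spin cases then follow by the Kulish-Reshetikhin-Sklyanin procedure, realizing $V_N$ as the image of the $q$-symmetrizer on $(\mathbb{C}^{n + 1})^{\otimes N}$ and building $\check{R}_{L, M}$ as an ordered product of fundamental $R$-matrices with $q$-shifted spectral parameters composed with these projectors. The Yang-Baxter equation for the fused weights is a formal consequence of the fundamental one, since each fused line can be unfused into parallel strands and the braid moves performed one strand at a time. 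The second route, which is the one of \cite{STRR} and is thematically aligned with \Cref{TetrahedronStochastic}, realizes $\check{R}_{L, M}$ as a layer transfer matrix assembled from the three-dimensional $R$-matrix of \cite{ILMPW}; then \eqref{equationvertexrank} descends from the tetrahedron equation satisfied by that $3D$ $R$-matrix through the standard reduction of a $3D$ lattice to a $2D$ one.

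The main obstacle, and the genuine content of \cite{STRR}, is precisely this closed-form identification: one must show that the iterated construction (whether the fusion product-and-project, or the $3D$ layer with its internal summations) collapses to the single convolution $\sum_{\textbf{P}} \Phi \big( \overline{\textbf{C}} - \textbf{P}, \overline{\textbf{C}} + \overline{\textbf{D}} - \textbf{P}; \cdot \big) \Phi \big( \textbf{P}, \overline{\textbf{B}}; \cdot \big)$ of \eqref{wabcd}, with the inner $\textbf{P}$-sum encoding the Clebsch-Gordan decomposition and each $\Phi$ being a $q$-multinomial weight. This collapse rests on $q$-hypergeometric summation identities of $q$-Vandermonde and $q$-Saalsch\"utz type that telescope the iterated sums into the stated form; carrying these out while correctly tracking the normalization factors $z^{|\overline{\textbf{D}}| - |\overline{\textbf{B}}|} q^{L |\overline{\textbf{A}}| - M |\overline{\textbf{D}}|}$ is where essentially all the work lies.

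Finally, as an independent check (and the route I would take were the representation-theoretic input unavailable) one can verify \eqref{equationvertexrank} directly: clearing the denominators arising from the $(x; q)$ and $(y; q)$ Pochhammer symbols makes both sides polynomial in the spectral parameters of bounded degree, so for each fixed choice of the external compositions $\textbf{I}_1, \textbf{J}_1, \textbf{K}_1, \textbf{I}_3, \textbf{J}_3, \textbf{K}_3$ it reduces to a finite $q$-series identity that can be matched coefficient by coefficient. For the purposes of the present paper, however, \eqref{equationvertexrank} enters only as an input to the stochasticization machinery of \Cref{Domain}, and so may be invoked as a black box.
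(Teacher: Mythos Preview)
Your proposal is correct and in fact goes well beyond what the paper does: the paper offers no proof at all, simply citing the result to equation (3.20) of \cite{STRR} (and Theorem C.1.1 of \cite{CSVMST}) and using it as a black box input to the stochasticization procedure. Your closing remark that \eqref{equationvertexrank} ``may be invoked as a black box'' is thus precisely the paper's own stance; the representation-theoretic and fusion/3D-reduction sketch you supply is accurate supplementary context but is not needed to match the paper.
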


Before proceeding, let us mention that if we were to stochasticize the $U$ weights using the direct analogs of \eqref{si1j1i2j2vertex} or \eqref{sjlambdaelliptic} (and \eqref{cjlambdadefinition}), we would be required to evaluate weights of the form $U_{L; M} \big( \textbf{A}, \textbf{B}, \textbf{C}, \textbf{D} \b| z \big)$ with $|\overline{\textbf{D}}| = 0$. Similar to what was indicated by \Cref{wjj0}, one might expect these weights to factor, but this does not seem to be transparent directly from the definition \eqref{wabcd}. However, $U_{L; M} \big( \textbf{A}, \textbf{B}, \textbf{C}, \textbf{D} \b| z \big)$ can be quickly seen to factor if $|\overline{\textbf{B}}| = 0$, since then the sum on the right side of \eqref{wabcd} defining this weight would be supported on $\textbf{P} = (0, 0, \ldots , 0)$. Therefore, it will be useful for us to define ``transposed'' modifications of the $U$ weights in which $\textbf{B}$ and $\textbf{D}$ are reversed; this is given by the following definition. 

\begin{definition}
	
	\label{weightdefinitionu} 
	
	Adopting the notation of \Cref{wzabcd}, define 
	\begin{flalign}
	\label{uabcd}
	W_{L; M} \big( \textbf{A}, \textbf{B}, \textbf{C}, \textbf{D} \b| z \big) = U_{L; M} \big( \textbf{C}, \textbf{D}, \textbf{A}, \textbf{B} \b| z \big). 
	\end{flalign}
	
\end{definition}

The fact that the $W$ weights satisfy the Yang-Baxter equation is then a consequence of \Cref{equationrank}. 

\begin{cor}
 \label{equationranku} 
 
 Adopt the notation of \Cref{equationrank}. Then \eqref{equationvertexrank} holds with the $U$ weights there replaced by the $W$ weights from \Cref{weightdefinitionu}.
\end{cor}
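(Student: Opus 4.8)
\textbf{Proof of \Cref{equationranku}.} The plan is to obtain \eqref{equationvertexrank} for the $W$ weights as a purely formal consequence of the same identity for the $U$ weights, by tracking how the transposition operation from \Cref{weightdefinitionu} interacts with each of the six factors in the Yang--Baxter relation. Recall that $W_{L;M}(\textbf{A}, \textbf{B}, \textbf{C}, \textbf{D}\b| z) = U_{L;M}(\textbf{C}, \textbf{D}, \textbf{A}, \textbf{B}\b| z)$; in other words, passing from $U$ to $W$ swaps the roles of the ``incoming'' pair $(\textbf{A}, \textbf{B})$ and the ``outgoing'' pair $(\textbf{C}, \textbf{D})$. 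In the diagrammatic language, this is the reflection of a vertex that exchanges each incoming edge-label with the corresponding outgoing edge-label while keeping the two rapidities (and hence $L$, $M$) in place.

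First I would write down both sides of the claimed $W$-version of \eqref{equationvertexrank}, which has the six external composition-tuples $\textbf{I}_1,\textbf{J}_1,\textbf{K}_1,\textbf{I}_3,\textbf{J}_3,\textbf{K}_3$ fixed and sums over the three internal ones $\textbf{I}_2,\textbf{J}_2,\textbf{K}_2$. Then I would substitute the definition \eqref{uabcd} into every factor. For instance, the first summand on the left becomes
\begin{flalign*}
& W_{L;M}\Big(\textbf{I}_1,\textbf{J}_1;\textbf{I}_2,\textbf{J}_2\Big|\tfrac{x}{y}\Big) W_{L;T}\Big(\textbf{K}_1,\textbf{J}_2;\textbf{K}_2,\textbf{J}_3\Big|\tfrac{x}{z}\Big) W_{M;T}\Big(\textbf{K}_2,\textbf{I}_2;\textbf{K}_3,\textbf{I}_3\Big|\tfrac{y}{z}\Big) \\
& \quad = U_{L;M}\Big(\textbf{I}_2,\textbf{J}_2;\textbf{I}_1,\textbf{J}_1\Big|\tfrac{x}{y}\Big) U_{L;T}\Big(\textbf{K}_2,\textbf{J}_3;\textbf{K}_1,\textbf{J}_2\Big|\tfrac{x}{z}\Big) U_{M;T}\Big(\textbf{K}_3,\textbf{I}_3;\textbf{K}_2,\textbf{I}_2\Big|\tfrac{y}{z}\Big),
\end{flalign*}
and similarly for the right-hand side. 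Once this is done, the key observation is that the resulting identity is exactly \eqref{equationvertexrank} for the $U$ weights, but read with the external and internal tuples relabeled: the triple of summation variables for the $W$-equation, $(\textbf{I}_2,\textbf{J}_2,\textbf{K}_2)$, plays the role of the \emph{external} data of a $U$-equation, while the fixed external tuples $(\textbf{I}_1,\textbf{J}_1,\textbf{K}_1)$ and $(\textbf{I}_3,\textbf{J}_3,\textbf{K}_3)$ of the $W$-equation become the \emph{internal} (summed) data there after swapping the order of the two sides. Concretely, I expect the cleanest way to see this is to apply \eqref{equationvertexrank} with the substitution
\begin{flalign*}
\textbf{I}_1 \leftrightarrow \textbf{I}_3, \qquad \textbf{J}_1 \leftrightarrow \textbf{J}_3, \qquad \textbf{K}_1 \leftrightarrow \textbf{K}_3, \qquad (\textbf{I}_2,\textbf{J}_2,\textbf{K}_2) \mapsto (\textbf{I}_2,\textbf{J}_2,\textbf{K}_2),
\end{flalign*}
and then read the equality ``backwards'' (i.e., exchange its left and right sides); the two sides of the $W$-equation are then matched term by term with the two sides of the transformed $U$-equation, using only that each individual $U$-factor in \Cref{equationrank} is converted to the matching $W$-factor by \eqref{uabcd}.

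The main point of care — and the place where an error is easiest to make — is getting the bookkeeping of the $\textbf{A}\textbf{B}$–$\textbf{C}\textbf{D}$ swap consistent across all three factors on each side simultaneously, and in particular checking that the swap is compatible with the rapidity assignments (the ratios $x/y$, $x/z$, $y/z$ and the spin labels $L$, $M$, $T$ attached to each factor). Since the transposition in \eqref{uabcd} touches only the edge-labels and not $z$ or $(L,M)$, each of the three factors on the left of the $W$-equation goes to the correspondingly-positioned factor on the right of the $U$-equation (after the left/right flip), with the \emph{same} argument $z$ and the \emph{same} spins; the three factors on the right of the $W$-equation do likewise. It therefore suffices to verify that the arrangement of which tuple appears as ``incoming'' vs.\ ``outgoing'' and which internal tuple is contracted against which, before and after the transformation, agrees exactly with \eqref{equationvertexrank}; this is a finite, mechanical check on six factors, and once it is done the corollary follows with no further input. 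No analytic or combinatorial difficulty arises beyond this relabeling, so I do not anticipate a genuine obstacle — only the need to present the index juggling cleanly enough to be checkable.
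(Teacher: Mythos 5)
Your proof is correct and is precisely the (one-line) argument the paper has in mind: the corollary is stated there as an immediate consequence of \Cref{equationrank}, and your concrete substitution $\textbf{I}_1 \leftrightarrow \textbf{I}_3$, $\textbf{J}_1 \leftrightarrow \textbf{J}_3$, $\textbf{K}_1 \leftrightarrow \textbf{K}_3$ (with the summation variables fixed), followed by exchanging the two sides of \eqref{equationvertexrank}, does match the six $U$-factors term by term with the same spins and rapidities. One small caveat: the sentence asserting that the summation triple $(\textbf{I}_2,\textbf{J}_2,\textbf{K}_2)$ ``plays the role of the external data'' while the fixed external tuples ``become the internal (summed) data'' is not what happens and contradicts the correct substitution you write immediately afterwards, under which internal data stays internal; delete that sentence and the argument is clean.
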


As in previous sections, the $W$ weights and \Cref{equationranku} have diagrammatic interpretations. Specifically, let $\mathcal{D}$ be a finite subset of the graph $\mathbb{Z}_{\ge 0}^2$. An \emph{$n$-colored directed path ensemble} on $\mathcal{D}$ is a family of \emph{colored paths} connecting adjacent vertices of $\mathcal{D}$, each edge of which is either directed up or to the right and is assigned one of $n$ colors (which are labeled by the integers $\{ 1, 2, \ldots , n \}$). We assume that each vertical edge and each horizontal edge can accommodate up to $M$ and $L$ paths, respectively.

The (colored) arrow configuration associated with some vertex $u \in \mathcal{D}$ is defined to be the quadruple of compositions of length $n + 1$ given by $(\textbf{A}, \textbf{B}, \textbf{C}, \textbf{D})$ with $|\textbf{A}| = M = |\textbf{C}|$ and $|\textbf{B}| = L = |\textbf{D}|$. Letting $X_i$ be the $i$-th component of $\textbf{X}$ for each $X \in \{ A, B, C, D \}$, the integers $A_i$, $B_i$, $C_i$, and $D_i$ denote the numbers of incoming vertical, incoming horizontal, outgoing vertical, and outgoing horizontal arrows of color $i$, respectively, for each $i \in [1, n]$. Thus, $A_0 = M - A_{[1, n]}$, $B_0 = L - B_{[1, n]}$, $C_0 = M - C_{[1, n]}$, and $D_0 = L - D_{[1, n]}$ denote the numbers of additional arrows that can be accommodated in each direction; we sometimes view these as arrows of color $0$. We refer to \Cref{configurationrank} for example.

\begin{figure}
	
	\begin{center} 
		
		\begin{tikzpicture}[
		>=stealth,
		scale = .8
		]

		\draw[->,black, dashed] (-.1, .2) -- (-.1, 2);
		\draw[->,black, thick] (0, .2) -- (0, 2);
		\draw[->,black, thick] (.1, .2) -- (.1, 2);
		
		\draw[->,black, dotted] (-.1,-2) -- (-.1, -.2);
		\draw[->,black, dashed] (0,-2) -- (0, -.2);
		\draw[->,black, thick] (.1,-2) -- (.1, -.2);
		
		\draw[->,black, thick] (-2, -.15) -- (-.2, -.15);
		\draw[->,black, dashed] (-2, -.05) -- (-.2, -.05);
		\draw[->,black, dotted] (-2, .05) -- (-.2, .05);
		\draw[->,black, dotted] (-2,.15) -- (-.2, .15);
		
		\draw[->,black, dashed] (.2, -.15) -- (2, -.15); 
		\draw[->,black, dotted] (.2, -.05) -- (2, -.075); 
		\draw[->,black, dotted] (.2, .05) -- (2, .05); 
		\draw[->,black, dotted] (.2, .15) -- (2, .15); 
		
		\draw (0, -2) circle[radius = 0] node[below = 0]{$\textbf{A}$} node[below = 20]{$y$} node[below = 30]{$M$}; 
		\draw (-2, 0) circle[radius = 0] node[left = 0]{$\textbf{B}$} node[left = 20]{$x$} node[left = 30]{$L$}; 
		\draw (0, 2) circle[radius = 0] node[above = 0]{$\textbf{C}$}; 
		\draw (2, 0) circle[radius = 0] node[right = 0]{$\textbf{D}$};

		\draw (-1.2, 1) circle[radius = 0] node[scale = .7]{$q^{L + M - A_0 - B_0} v$}; 
		\draw (1, 1) circle[radius = 0] node[scale = .7]{$q^{L - D_0} v$}; 
		\draw (-1, -1) circle[radius = 0] node[scale = .7]{$q^{M - A_0} v$}; 
		\draw (1, -1) circle[radius = 0] node[scale = .7]{$v$};

		\filldraw[fill=white, draw=black] (0, 0) circle [radius=.2];

		\end{tikzpicture}
		
	\end{center}

	\caption{\label{configurationrank} Depicted above is a colored vertex $u$, where the dotted, dashed, and solid arrows are associated with colors $0$, $1$, and $2$, respectively. The arrow configuration of $u$ is $(\textbf{A}, \textbf{B}, \textbf{C}, \textbf{D})$, where $\textbf{A} = (1, 1, 1)$;  $\textbf{B} = (2, 1, 1)$;  $\textbf{C} = (0, 1, 2)$; and $\textbf{D} = (3, 1, 0)$ (so that $L = 4$ and $M = 3$). The dynamical parameters are also labeled in each of the four faces passing through $u$. } 
\end{figure}
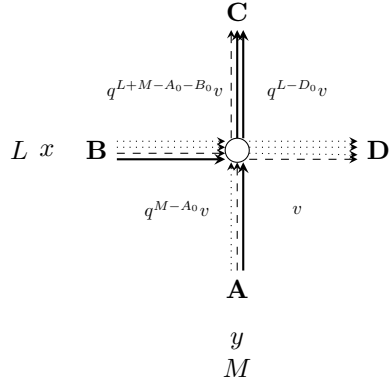

Then we view the quantity $W_{L; M} \big( \textbf{A}, \textbf{B}, \textbf{C}, \textbf{D} \b| \frac{x}{y} \big)$ as the weight associated with a vertex $u \in \mathcal{D}$ whose colored arrow configuration is $(\textbf{A}, \textbf{B}, \textbf{C}, \textbf{D})$. Here, $x$ and $y$ denote the rapidity parameters at $u$ in the horizontal and vertical directions, respectively, and $L$ and $M$ denote the spin parameters in the horizontal and vertical directions, respectively.  As in \Cref{SixVertexEquation}, the Yang-Baxter equation given by \Cref{equationranku} can be interpreted as moving a line through a cross, as depicted in \Cref{equationpaths} in the $n = 1$ case.

Now let us implement the stochasticization procedure on these $W_{L; M}$ weights; this will be similar to what was explained in \Cref{VertexModel} and \Cref{EllipticFused}. In particular, shaded vertices only output arrows of color $0$ (and therefore do not output arrows with colors between $1$ and $n$). Specifically, for fixed parameters $z \in \mathbb{C}$ and $T \in \mathbb{Z}_{> 0}$, the $\chi$ weights from \Cref{wchiconditions} will be of the form $W_{L; T} \big( \textbf{A}, \textbf{B}, \textbf{C}, L \textbf{e}_0 \b| \frac{x}{z} \big)$ and $W_{M; T} \big( \textbf{A}, \textbf{B}, \textbf{C}, M \textbf{e}_0 \b| \frac{y}{z} \big)$, where $k \textbf{e}_0$ denotes the composition $(k, 0, 0, \ldots , 0)$ of length $n + 1$ (with $k$ in coordinate zero and $0$ elsewhere) for each $k \in \mathbb{Z}_{\ge 0}$. The first and second conditions of \Cref{wchiconditions} follow from the facts that $W_{L; M} ( \textbf{A}, \textbf{B}, \textbf{C}, \textbf{D} ) = 0$ unless each part of $\textbf{A}, \textbf{B}, \textbf{C}, \textbf{D}$ is nonnegative, and \Cref{equationranku}, respectively. 

To verify the third condition, we must explain how to attach two shaded vertices to a given vertex $u$ in our domain, which will again be similar to what was explained in \Cref{DynamicalVertex1} and \Cref{EllipticFused}. In particular, we begin with a shaded stochasticization curve with rapidity parameter $z$ and spin parameter $T$ that starts to the right of our domain. Suppose that this curve initially has $H_i$ arrows of color $i$ for each $i \in [0, n]$; this gives rise to a composition $\textbf{H} = (H_0, H_1, \ldots , H_n)$ of length $n + 1$ and size $T$. 

Similarly to before, the stochasticization curve collects any arrow outputted by our domain and only outputs arrows of color $0$; see \Cref{arrowstochastic} for a depiction in the $n = 1$ case. We then move this stochasticization curve to the left, through one vertex of our domain at a time, as depicted in \Cref{arrowstochasticvertices}. The two vertices of the stochasticization curve adjacent to $u$ directly before the stochasticization curve passes through it will be the two shaded vertices we assign to $u$. 
	
Since $W_{L; M} (\textbf{A}, \textbf{B}, L \textbf{e}_0, M \textbf{e}_0) = \textbf{1}_{\textbf{A} = L \textbf{e}_0} \textbf{1}_{\textbf{B} = M \textbf{e}_0}$, the specialization of \eqref{ws} in our setting is given by the following definition. In the below, $\textbf{R} = (R_0, R_1, \ldots , R_n)$ denotes the composition such that there are $R_i$ arrows of color $i$ in the stochasticization curve before meeting $u$, for each $0 \le i \le n$.

\begin{definition} 
	
	\label{srankc} 

	Let $L, M, T \in \mathbb{Z}_{> 0}$; $x, y, z \in \mathbb{C}$; and $\textbf{A}, \textbf{B}, \textbf{C}, \textbf{D}$ be compositions of length $n + 1$. For any composition $\textbf{R}$ of size $T$ and length $n + 1$, define 
	\begin{flalign}
	\label{srankdefinition}
	S_{L; M} \big( \textbf{A}, \textbf{B}, \textbf{C}, \textbf{D} \b| x, y; T; \textbf{R}; z \big) = W_{L; M} \Big( \textbf{A}, \textbf{B}, \textbf{C}, \textbf{D} \Big| \frac{x}{y} \Big) C_{L; M} \big( \textbf{A}, \textbf{B}, \textbf{C}, \textbf{D} \b| x, y; T; \textbf{R}; z \big),
	\end{flalign}
	
	\noindent where we have denoted the stochastic corrections
	\begin{flalign}
	\label{rankcdefinition}
	\begin{aligned}
	C_{L; M} \big( \textbf{A}, \textbf{B}, \textbf{C}, \textbf{D} \b| x, y; T; \textbf{R}; z \big) & = \displaystyle\frac{W_{L; T} \big( \textbf{R}, \textbf{D}, \textbf{R} + \textbf{D} - L \textbf{e}_0, L \textbf{e}_0 \b| \frac{x}{z} \big)}{W_{L; T} \big( \textbf{R} + \textbf{A} - M \textbf{e}_0, \textbf{B}, \textbf{R} + \textbf{A} + \textbf{B} - (L + M) \textbf{e}_0, L \textbf{e}_0  \b| \frac{x}{z} \big)} \\
	& \quad \times \displaystyle\frac{W_{M; T} \big( \textbf{R} + \textbf{D} - L \textbf{e}_0, \textbf{C}, \textbf{R} + \textbf{C} + \textbf{D} - (L + M) \textbf{e}_0, M \textbf{e}_0  \b| \frac{y}{z} \big)}{W_{M; T} \big( \textbf{R}, \textbf{A}, \textbf{R} + \textbf{A} - M \textbf{e}_0, M \textbf{e}_0  \b| \frac{y}{z} \big)}.
	\end{aligned} 
	\end{flalign}
	
\end{definition} 

Now we proceed to determine these stochasticized weights. To that end, we begin with the following lemma, which explicitly evaluates the stochastic correction $C_{L; M}$.

\begin{lem}
	
\label{rankc} 

Adopt the notation of \Cref{srankc}, and let $\textbf{\emph{A}} = (A_i)$, $\textbf{\emph{B}} = (B_i)$, $\textbf{\emph{C}} = (C_i)$, $\textbf{\emph{D}} = (D_i)$, and $\textbf{\emph{R}} = (R_i)$ for $i \in [0, n]$. Denoting $v = q^{-R_0} z^{-1}$, we have that
\begin{flalign}
\label{cproductrank}
\begin{aligned}
C_{L; M} & \big( \textbf{\emph{A}}, \textbf{\emph{B}}, \textbf{\emph{C}}, \textbf{\emph{D}} \b| x, y; T; \textbf{\emph{R}}; z \big) \\
	& = q^{(M - L) (D_0 - B_0) + A_0 B_0 - C_0 D_0 + \sum_{1 \le i < j \le n} (D_j C_i - A_j B_i)} \left( \displaystyle\frac{y}{x} \right)^{D_0 - B_0}\\
	& \qquad \times \displaystyle\frac{(q^{L - D_0} xv; q)_{D_0}}{(q^{L + M - A_0 - B_0} xv; q)_{B_0}}  \displaystyle\frac{(q^{L + M -C_0 - D_0} yv; q)_{C_0}}{(q^{M - A_0} yv; q)_{A_0}}  \displaystyle\prod_{j = 0}^n \displaystyle\frac{(q; q)_{A_i} (q; q)_{B_i}}{ (q; q)_{C_i} (q; q)_{D_i}}.
\end{aligned}
\end{flalign}
\end{lem}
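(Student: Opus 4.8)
The plan is to evaluate the two ratios of $W$ weights appearing in \eqref{rankcdefinition} separately, using the factorization of $W_{L;M}(\textbf{A},\textbf{B},\textbf{C},\textbf{D} \,|\, z)$ that occurs when the outgoing horizontal composition is frozen to $L\textbf{e}_0$ (equivalently, when the incoming horizontal composition of the underlying $U$-weight is $L\textbf{e}_0$). Recall from \Cref{weightdefinitionu} that $W_{L;M}(\textbf{A},\textbf{B},\textbf{C},\textbf{D}\,|\,z) = U_{L;M}(\textbf{C},\textbf{D},\textbf{A},\textbf{B}\,|\,z)$; when $\textbf{B} = L\textbf{e}_0$, so that $\overline{\textbf{B}}$ (in the notation of \eqref{wabcd}, applied to the $U$-weight with second argument $\textbf{B}$) is the zero vector, the sum over $\textbf{P}$ in \eqref{wabcd} collapses to the single term $\textbf{P} = (0,\dots,0)$, leaving a single product of two $\Phi$-factors. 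The first step is therefore to write down this specialized form of $W_{L;M}(\textbf{A},\textbf{B},\textbf{C}, L\textbf{e}_0 \,|\, z)$ explicitly in terms of $q$-Pochhammer symbols and powers of $q$ and $z$, using \Cref{wzabcd}.

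Next I would substitute this factorized expression into each of the four $W$-weights in \eqref{rankcdefinition}, with the appropriate composition arguments ($\textbf{R}, \textbf{D}, \textbf{R}+\textbf{D}-L\textbf{e}_0$, etc.) and spectral parameters ($\frac{x}{z}$ or $\frac{y}{z}$). The key bookkeeping points are: (i) the ``color $0$'' coordinate of each composition is determined by the size constraint, e.g. $R_0 = T - R_{[1,n]}$, $A_0 = M - A_{[1,n]}$, and so on, and the arrow-conservation relation $\textbf{A}+\textbf{B} = \textbf{C}+\textbf{D}$ forces $A_0 + B_0 = C_0 + D_0$ and $A_{[1,n]}+B_{[1,n]} = C_{[1,n]}+D_{[1,n]}$; (ii) the $(q;q)$-product factors $\prod_i \frac{(q;q)_{\mu_i}}{(q;q)_{\lambda_i}(q;q)_{\mu_i-\lambda_i}}$ from the $\Phi$'s in the numerator and denominator telescope against each other, leaving precisely the claimed $\prod_{j=0}^n \frac{(q;q)_{A_i}(q;q)_{B_i}}{(q;q)_{C_i}(q;q)_{D_i}}$; (iii) the one-variable $(x;q)$-type factors combine, using the identity $(a;q)_k(aq^k;q)_m = (a;q)_{k+m}$ from \eqref{elliptichypergeometricidentities1}, into the four explicit $q$-Pochhammer ratios $\frac{(q^{L-D_0}xv;q)_{D_0}}{(q^{L+M-A_0-B_0}xv;q)_{B_0}}\cdot\frac{(q^{L+M-C_0-D_0}yv;q)_{C_0}}{(q^{M-A_0}yv;q)_{A_0}}$ after the substitution $v = q^{-R_0}z^{-1}$; and (iv) the prefactor powers of $q$ and of $y/x$ are collected by carefully adding up the exponents coming from the $q^{\sum_{i<j}(\mu_i-\lambda_i)\lambda_j}$ terms and the $q^{L|\overline{\textbf{A}}|-M|\overline{\textbf{D}}|}$ and $z^{|\overline{\textbf{D}}|-|\overline{\textbf{B}}|}$ prefactors in \eqref{wabcd}, together with the $(y/x)^{|\lambda|}$ factors in $\Phi$.

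The main obstacle will be step (iv): tracking the exponent of $q$. The $q$-powers arise from several sources — the global prefactor of each $U$-weight, the $(y/x)^{|\lambda|}$ and $q^{\sum_{i<j}(\mu_i-\lambda_i)\lambda_j}$ pieces inside each $\Phi$, and the shifts induced by replacing arguments like $q^{L-M}z$ by concrete values — and one must verify that, after all the telescoping in numerator and denominator, what survives is exactly $q^{(M-L)(D_0-B_0)+A_0B_0-C_0D_0+\sum_{i<j}(D_jC_i-A_jB_i)}$. This is a finite but intricate computation in which the quadratic-in-the-compositions terms $\sum_{i<j}(\mu_i-\lambda_i)\lambda_j$ must be reorganized using $\textbf{A}+\textbf{B}=\textbf{C}+\textbf{D}$; I expect the cleanest route is to first handle the two ``$x$-side'' $W$-weights (those with parameter $\frac{x}{z}$, which involve only $\textbf{R}, \textbf{A}, \textbf{B}, \textbf{D}$ and the zero vector for the inner $\textbf{P}$-sum) and the two ``$y$-side'' weights separately, record the $q$-exponent of each as a quadratic form in the parts of the compositions, and then add. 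Once the algebra is assembled, \Cref{ellipticfusedstochastic}'s analogue here — namely \Cref{cjlambdaelliptic}'s role in that section — is played by \Cref{rankc}, and substituting \eqref{cproductrank} into \eqref{srankdefinition} together with \Cref{wzabcd} will then yield the explicit stochasticized weights (this last substitution is the content of the subsequent result, not of \Cref{rankc} itself).
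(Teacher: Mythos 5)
Your proposal follows essentially the same route as the paper's proof: observe that when the fourth argument of the $W$-weight is $L\textbf{e}_0$ (equivalently, the second argument of the underlying $U$-weight has vanishing colored part), the $\textbf{P}$-sum in \eqref{wabcd} collapses to $\textbf{P}=0$, yielding a fully factored expression; then substitute this into the four weights of \eqref{rankcdefinition} and simplify using the Pochhammer identity from \eqref{elliptichypergeometricidentities1} together with the relations $\textbf{A}+\textbf{B}=\textbf{C}+\textbf{D}$ and $v=q^{-R_0}z^{-1}$. The only caution is notational: in \eqref{rankcdefinition} it is the \emph{fourth} argument of $W$ (hence the second argument of $U(\textbf{C},\textbf{D},\textbf{A},\textbf{B})$) that equals $L\textbf{e}_0$ or $M\textbf{e}_0$, so the bound $\textbf{P}\le\overline{\textbf{D}}$ is what forces the collapse, exactly as in the paper's derivation of \eqref{wijkd0}.
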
 

\begin{rem}

\label{cproductzr} 

Observe that \eqref{cproductrank} indicates that the stochastic correction $C_{L; M}$ is only dependent on $T$, $\textbf{R}$, and $z$ through $v$. The fact that these three parameters reduce to one in this way was not transparent to us before explicitly evaluating $C_{L; M}$. 

\end{rem}

\begin{proof}[Proof of \Cref{rankc}]
	
In order to determine the stochastic correction $C_{L; M}$ using \eqref{rankcdefinition}, we must first evaluate weights of the form $W \big( \textbf{A}, \textbf{B}, \textbf{C}, \textbf{D} \big) = U \big( \textbf{C}, \textbf{D}, \textbf{A}, \textbf{B} \big)$ when $ \big| \overline{\textbf{D}} \big| = 0$. To that end observe in the sum on the right side of \eqref{wabcd} that we must have that $|\textbf{P}| = 0$ if $|\overline{\textbf{D}}| = 0$. Thus, that sum consists of one term, and so we deduce that 
\begin{flalign}
\label{wijkd0}
\begin{aligned}
W_{L; M} & \big( \textbf{I}, \textbf{J}, \textbf{K}, L \textbf{e}_0 \b| z \big) \\
& = z^{L - J_0} q^{(M - L)(J_0 - L)} \displaystyle\frac{(q^{L - M} z; q)_{M - I_0} (q^{-L}; q)_{L - J_0} }{(q^{-M} z; q)_{L + M - I_0 - J_0}} q^{\sum_{1 \le s < t \le n} I_t J_s} \displaystyle\prod_{s = 1}^n \displaystyle\frac{(q; q)_{I_s + J_s}}{(q; q)_{I_s} (q; q)_{J_s}} \\
& = z^{L - J_0} q^{(M - L)(J_0 - L)} \displaystyle\frac{(q^{- K_0} z; q)_{J_0} (q^{-L}; q)_{L - J_0} }{(q^{-M} z; q)_L} q^{\sum_{1 \le s < t \le n} I_t J_s} \displaystyle\prod_{s = 1}^n \displaystyle\frac{(q; q)_{K_s}}{(q; q)_{I_s} (q; q)_{J_s}},
\end{aligned}
\end{flalign}

\noindent for any compositions $\textbf{I} = (I_0, I_1, \ldots , I_n)$, $\textbf{J} = (J_0, J_1, \ldots , J_n)$, and $\textbf{K} = (K_0, K_1, \ldots , K_n)$ such that $\textbf{I} + \textbf{J} = \textbf{K} + L \textbf{e}_0$. Here, we have used the first identity in \eqref{elliptichypergeometricidentities1} and the facts that $\big|\overline{\textbf{I}} \big| = \big| \textbf{I} \big| - I_0$ and similarly for $\textbf{J}$ and $\textbf{K}$;  that $K_0 + L = I_0 + J_0$; and that $I_m + J_m = K_m$ for each $1 \le m \le n$.  	

Inserting \eqref{wijkd0} into \eqref{rankcdefinition}, we find that 
\begin{flalign}
\label{c1}
\begin{aligned}
C_{L; M} & \big( \textbf{A}, \textbf{B}, \textbf{C}, \textbf{D} \b| x, y; T; \textbf{R}; z \big) \\ 
&  = q^{(M - L) (D_0 - B_0) + \sum_{1 \le i < j \le n} (D_j C_i - A_j B_i)} \left( \displaystyle\frac{y}{x} \right)^{D_0 - B_0}   \displaystyle\prod_{j = 1}^n \displaystyle\frac{(q; q)_{A_i} (q; q)_{B_i}}{ (q; q)_{C_i} (q; q)_{D_i}} \\
&  \quad \times \displaystyle\frac{(q^{L - R_0 - D_0} x z^{-1}; q)_{D_0}}{(q^{L + M - R_0 - A_0 - B_0} x z^{-1}; q)_{B_0}} \displaystyle\frac{(q^{L + M - R_0 -C_0 - D_0} y z^{-1}; q)_{C_0}}{(q^{M - R_0 - A_0} y z^{-1}; q)_{A_0}}  \displaystyle\frac{ (q^{-M}; q)_{M - C_0} (q^{-L}; q)_{L - D_0} }{(q^{-M}; q)_{M - A_0} (q^{-L}; q)_{L - B_0}}, 
\end{aligned}
\end{flalign}

\noindent where we have used the fact that $A_i + B_i = C_i + D_i$ for each $i \in [0, n]$. Now the lemma follows from \eqref{c1} and the fact that 
\begin{flalign*} 
\displaystyle\frac{ (q^{-M}; q)_{M - C_0} (q^{-L}; q)_{L - D_0} }{(q^{-M}; q)_{M - A_0} (q^{-L}; q)_{L - B_0} } = q^{A_0 B_0 - C_0 D_0} \displaystyle\frac{ (q; q)_{A_0} (q; q)_{B_0} }{(q; q)_{C_0} (q; q)_{D_0} },
\end{flalign*}

\noindent which holds due the first identity in \eqref{elliptichypergeometricidentities1} and the facts that $A_0 + B_0 = C_0 + D_0$ and $v = q^{-R_0} z^{-1}$. 
\end{proof}

Now we can quickly evaluate the stochasticized weights $S_{L; M}$. 

\begin{cor} 
	
\label{sweightsrank}

Adopt the notation from \Cref{rankc}. If $\textbf{\emph{A}} + \textbf{\emph{B}} = \textbf{\emph{C}} + \textbf{\emph{D}}$, $|\textbf{\emph{A}}| = M = |\textbf{\emph{C}}|$, and $|\textbf{\emph{B}}| = L = |\textbf{\emph{D}}|$, then 
\begin{flalign}
\label{svrank}
\begin{aligned}
S_{L; M} & \big( \textbf{\emph{A}}, \textbf{\emph{B}}, \textbf{\emph{C}}, \textbf{\emph{D}} \b| x, y; v \big) \\
& = q^{ M B_0 - L C_0  + (M - L) (D_0 - B_0) + A_0 B_0 - C_0 D_0 + \sum_{1 \le i < j \le n} (D_j C_i - A_j B_i)} \\
& \qquad \times \displaystyle\frac{(q^{L - D_0} v x; q)_{D_0}}{(q^{L + M - A_0 - B_0} v x; q)_{B_0}}  \displaystyle\frac{(q^{L + M - C_0 - D_0} v y; q)_{C_0}}{(q^{M - A_0} vy; q)_{A_0}}  \displaystyle\prod_{j = 0}^n \displaystyle\frac{(q; q)_{A_i} (q; q)_{B_i}}{ (q; q)_{C_i} (q; q)_{D_i}} \\
& \qquad \times \displaystyle\sum_{\textbf{\emph{P}} \le \overline{\textbf{\emph{A}}}, \overline{\textbf{\emph{D}}}} \Phi \big( \overline{\textbf{\emph{A}}} - \textbf{\emph{P}}, \overline{\textbf{\emph{A}}} + \overline{\textbf{\emph{B}}} - \textbf{\emph{P}}; q^{L - M} x y^{-1}, q^{-M} xy^{-1} \big) \Phi \big( \textbf{\emph{P}}, \overline{\textbf{\emph{D}}}; q^{-L} x^{-1} y, q^{-L} \big),
\end{aligned}
\end{flalign} 

\noindent where the function $\Phi$ was given in \Cref{wzabcd}, and the sum on the right side of \eqref{svrank} is over all $\textbf{\emph{P}} = (P_1, \ldots , P_n) \in \mathbb{Z}_{\ge 0}^n$ such that $0 \le P_i \le \min \{ A_i, D_i \}$ for each $i \in [1, n]$. Otherwise, $S_{L; M} \big( \textbf{\emph{A}}, \textbf{\emph{B}}; \textbf{\emph{C}}, \textbf{\emph{D}} \b| x, y; v \big) = 0$. 

Here, we have set $S_{L; M} \big( \textbf{\emph{A}}, \textbf{\emph{B}}, \textbf{\emph{C}}, \textbf{\emph{D}} \b| x, y; v \big)  = S_{L; M} \big( \textbf{\emph{A}}, \textbf{\emph{B}}, \textbf{\emph{C}}, \textbf{\emph{D}} \b| x, y; T; \textbf{\emph{R}}; z \big)$, since \eqref{svrank} indicates that it depends on $T$, $\textbf{\emph{R}}$, and $z$ only through $v$.
	
\end{cor}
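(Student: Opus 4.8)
The plan is to obtain \eqref{svrank} by directly combining the three ingredients already in place: the definition \eqref{srankdefinition} of $S_{L;M}$ as the product of $W_{L;M}\big(\mathbf{A},\mathbf{B},\mathbf{C},\mathbf{D}\big|\tfrac{x}{y}\big)$ with the stochastic correction $C_{L;M}$; the explicit form of $W_{L;M}$ provided by \Cref{weightdefinitionu} together with \eqref{wabcd}; and the closed formula for $C_{L;M}$ from \Cref{rankc}. There is no new idea needed beyond assembling these and simplifying.

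First I would unfold $W_{L;M}\big(\mathbf{A},\mathbf{B},\mathbf{C},\mathbf{D}\big|\tfrac{x}{y}\big)=U_{L;M}\big(\mathbf{C},\mathbf{D},\mathbf{A},\mathbf{B}\big|\tfrac{x}{y}\big)$ using \eqref{uabcd}, and then substitute \eqref{wabcd} with the roles of the four compositions permuted according to this transposition. Under that permutation the summation range $0\le P_i\le\min\{B_i,C_i\}$ in \eqref{wabcd} becomes $0\le P_i\le\min\{A_i,D_i\}$; the arguments of the two $\Phi$ factors become exactly those appearing on the right side of \eqref{svrank}, since with $z=x/y$ one has $q^{L-M}z=q^{L-M}xy^{-1}$, $q^{-M}z=q^{-M}xy^{-1}$, and $q^{-L}z^{-1}=q^{-L}x^{-1}y$; and the scalar prefactor $z^{|\overline{\mathbf{D}}|-|\overline{\mathbf{B}}|}q^{L|\overline{\mathbf{A}}|-M|\overline{\mathbf{D}}|}$ of $U_{L;M}$ becomes $(x/y)^{|\overline{\mathbf{B}}|-|\overline{\mathbf{D}}|}q^{L|\overline{\mathbf{C}}|-M|\overline{\mathbf{B}}|}$. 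Recalling $|\overline{\mathbf{B}}|=L-B_0$, $|\overline{\mathbf{D}}|=L-D_0$, and $|\overline{\mathbf{C}}|=M-C_0$, this prefactor equals $(x/y)^{D_0-B_0}\,q^{MB_0-LC_0}$.

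Next I would multiply this by the expression \eqref{cproductrank} for $C_{L;M}$. Two cancellations carry out the simplification: the factor $(y/x)^{D_0-B_0}$ in $C_{L;M}$ cancels the $(x/y)^{D_0-B_0}$ just produced by $W_{L;M}$, and the two powers of $q$ add to $q^{\,MB_0-LC_0+(M-L)(D_0-B_0)+A_0B_0-C_0D_0+\sum_{1\le i<j\le n}(D_jC_i-A_jB_i)}$, which is precisely the $q$-exponent in \eqref{svrank}. The Pochhammer ratios $\tfrac{(q^{L-D_0}vx;q)_{D_0}}{(q^{L+M-A_0-B_0}vx;q)_{B_0}}$ and $\tfrac{(q^{L+M-C_0-D_0}vy;q)_{C_0}}{(q^{M-A_0}vy;q)_{A_0}}$, and the product $\prod_{j=0}^n\tfrac{(q;q)_{A_i}(q;q)_{B_i}}{(q;q)_{C_i}(q;q)_{D_i}}$, are inherited verbatim from \eqref{cproductrank}, while the $\Phi$-sum is inherited verbatim from $W_{L;M}$; assembling these yields \eqref{svrank}. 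The vanishing clause is immediate, since $W_{L;M}$—hence $S_{L;M}$—is zero unless $\mathbf{A}+\mathbf{B}=\mathbf{C}+\mathbf{D}$, $|\mathbf{A}|=M=|\mathbf{C}|$, and $|\mathbf{B}|=L=|\mathbf{D}|$. Finally, since \eqref{cproductrank} depends on $T$, $\mathbf{R}$, $z$ only through $v=q^{-R_0}z^{-1}$ (as observed in \Cref{cproductzr}) and $W_{L;M}$ does not depend on these parameters at all, the same is true of $S_{L;M}$, which justifies writing $S_{L;M}\big(\mathbf{A},\mathbf{B},\mathbf{C},\mathbf{D}\big|x,y;v\big)$.

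The computation presents no genuine obstacle; the only point requiring care is bookkeeping—tracking the transposition $(\mathbf{A},\mathbf{B},\mathbf{C},\mathbf{D})\mapsto(\mathbf{C},\mathbf{D},\mathbf{A},\mathbf{B})$ inherited from \Cref{weightdefinitionu} when reading off the $\Phi$-arguments, the summation range, and the scalar prefactor of $U_{L;M}$, and then checking that the $(x/y)$-powers and $q$-exponents coming from $W_{L;M}$ and $C_{L;M}$ combine exactly as claimed (using, where convenient, the identities in \eqref{elliptichypergeometricidentities1}). Accordingly I would present the argument in a few lines, citing \eqref{srankdefinition}, \eqref{uabcd}, \eqref{wabcd}, and \eqref{cproductrank}.
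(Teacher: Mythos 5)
Your proposal is correct and follows the same route as the paper, which proves the corollary precisely by inserting \eqref{cproductrank} into \eqref{srankdefinition} and then unfolding $W_{L;M}$ via \eqref{uabcd} and \Cref{wzabcd}; your bookkeeping of the transposed prefactor $(x/y)^{D_0-B_0}q^{MB_0-LC_0}$, its cancellation against $(y/x)^{D_0-B_0}$, and the relabelled $\Phi$-arguments and summation range is accurate.
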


\begin{proof}

This follows from inserting \eqref{cproductrank} into \eqref{srankdefinition}, and then applying \eqref{uabcd} and \Cref{wzabcd}.
\end{proof}

The parameter $v = q^{-R_0} z^{-1}$ can again be viewed as a dynamical parameter, so we sometimes denote $v = v(u)$ by the value of $v$ at a vertex $u$ in $\mathbb{Z}_{> 0}^2$. One can quickly verify that the identities governing $v(u)$ are given by 
\begin{flalign} 
\label{vdynamicalrank}
v (a - 1, b) = q^{M - A_0} v; \qquad v (a, b + 1) = q^{L - D_0} v,
\end{flalign}

\noindent for any $(a, b) \in \mathbb{Z}_{> 0}^2$ whose arrow configuration is $(\textbf{A}, \textbf{B}, \textbf{C}, \textbf{D})$. Indeed, this is due to the fact that if the stochasticization curve passes an edge with spin parameter $K$ with $X_0$ incoming arrows of color $0$, then it loses $K - X_0$ arrows of color $0$ (here, we view $X \in \{A, B, C, D \}$ and $K \in \{ L, M \}$). The identities \eqref{vdynamicalrank} are depicted in \Cref{configurationrank}, where $v(u)$ is labeled in the lower-right face containing $u$.

The following theorem states that these $S$ weights are stochastic and satisfy the (dynamical) Yang-Baxter equation.

\begin{thm} 

\label{dynamicalrankstochastic} 

Adopt the notation of \Cref{sweightsrank}. Then, $\sum_{\textbf{\emph{C}}, \textbf{\emph{D}}} S_{L; M} \big( \textbf{\emph{A}}, \textbf{\emph{B}}, \textbf{\emph{C}}, \textbf{\emph{D}} \b| x, y; v \big) = 1$ for any $v \in \mathbb{C}$, where the sum is over all length $n + 1$ compositions $\textbf{\emph{C}}$ and $\textbf{\emph{D}}$. 

Furthermore, for any fixed $L, M, T \in \mathbb{Z}_{> 0}$; $x, y, z, v \in \mathbb{C}$; and length $n + 1$ compositions $\textbf{\emph{I}}_1, \textbf{\emph{J}}_1, \textbf{\emph{K}}_1$ and $\textbf{\emph{I}}_3, \textbf{\emph{J}}_3, \textbf{\emph{K}}_3$, we have that 
\begin{flalign} 
\label{rankdynamicalequation} 
\begin{aligned}
 \displaystyle\sum_{\textbf{\emph{I}}_2, \textbf{\emph{J}}_2, \textbf{\emph{K}}_2}  & S_{L; M}  \left( \textbf{\emph{I}}_1, \textbf{\emph{J}}_1; \textbf{\emph{I}}_2, \textbf{\emph{J}}_2 \Big| x, y; q^{T - (\textbf{\emph{K}}_1)_0} v \right) S_{L; T}  \left( \textbf{\emph{K}}_1, \textbf{\emph{J}}_2; \textbf{\emph{K}}_2, \textbf{\emph{J}}_3 \Big| x, z; v \right) \\
&  \times S_{M; T}  \left( \textbf{\emph{K}}_2, \textbf{\emph{I}}_2; \textbf{\emph{K}}_3, \textbf{\emph{I}}_3 \Big| y, z; q^{L - (\textbf{\emph{J}}_3)_0} v \right) \\
 \quad = \displaystyle\sum_{\textbf{\emph{I}}_2, \textbf{\emph{J}}_2, \textbf{\emph{K}}_2}  & S_{M; T}  \left( \textbf{\emph{K}}_1, \textbf{\emph{I}}_1; \textbf{\emph{K}}_2, \textbf{\emph{I}}_2 \Big| y, z; v \right) S_{L; T}  \left( \textbf{\emph{K}}_2, \textbf{\emph{J}}_1; \textbf{\emph{K}}_3, \textbf{\emph{J}}_2 \Big| x, z; q^{M - (\textbf{\emph{I}}_2)_0} \right) \\
&  \times S_{L; M}  \left( \textbf{\emph{I}}_2, \textbf{\emph{J}}_2; \textbf{\emph{I}}_3, \textbf{\emph{J}}_3 \Big| x, y; v \right),
\end{aligned}
\end{flalign}

\noindent where the sums on both sides of \eqref{rankdynamicalequation} are over all length $n + 1$ compositions $\textbf{\emph{I}}_2, \textbf{\emph{J}}_2, \textbf{\emph{K}}_2$.

\end{thm}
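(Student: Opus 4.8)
The plan is to deduce \Cref{dynamicalrankstochastic} directly from the general stochasticization machinery of \Cref{Domain}, exactly as \Cref{dynamicalequationellipticstochastic} was deduced in the elliptic setting. First I would verify that the weight function $w = W_{L;M}$ of \Cref{weightdefinitionu} is stochasticizable with respect to the family $\chi$ of weights $W_{L;T}(\textbf{A},\textbf{B},\textbf{C},L\textbf{e}_0 \b| \tfrac{x}{z})$ and $W_{M;T}(\textbf{A},\textbf{B},\textbf{C},M\textbf{e}_0 \b| \tfrac{y}{z})$ in the sense of \Cref{wchiconditions}. Condition (1) (finiteness of admissibility sets) is immediate since $W_{L;M}(\textbf{A},\textbf{B},\textbf{C},\textbf{D})=0$ unless each part of each composition is nonnegative, so only finitely many $\textbf{C},\textbf{D}$ can occur for fixed $\textbf{A},\textbf{B}$; condition (2) is \Cref{equationranku}. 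For condition (3) I would use the diagrammatic stochasticization-curve construction already described in the text preceding \Cref{srankc}: the curve carries a composition $\textbf{R}$ of size $T$, is pushed through the domain one vertex at a time (as in \Cref{arrowstochasticvertices}, whose $n=1$ picture is shown), and the two shaded vertices attached to a given vertex $u$ are the two stochasticization-curve vertices adjacent to $u$ immediately before it is passed through. The frozen-boundary property (3a) holds because $W_{L;M}(\textbf{A},\textbf{B},L\textbf{e}_0,M\textbf{e}_0)=\textbf{1}_{\textbf{A}=L\textbf{e}_0}\textbf{1}_{\textbf{B}=M\textbf{e}_0}$, so a shaded triple of the form $(x,y,z)=(\textbf{B},M\textbf{e}_0,L\textbf{e}_0)$ admits a unique preimage; (3b) holds for generic parameters by the explicit formula \eqref{wijkd0}; (3c) is the special case of \eqref{equationvertexrank} in which the outgoing arrows of two of the participating vertices are frozen to color $0$; and the consistency condition (3d) follows, just as in \Cref{stochasticwchis}, from the observation that a shaded vertex attached to $u$ in the positive direction was attached in the negative direction to the neighbor of $u$ through which the curve previously passed.

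Granting stochasticizability, the first assertion of the theorem, namely $\sum_{\textbf{C},\textbf{D}} S_{L;M}(\textbf{A},\textbf{B},\textbf{C},\textbf{D}\b| x,y;v)=1$, is an immediate application of \Cref{stochasticws} (equation \eqref{stochasticweights}), translated from the face picture to the vertex picture via the dictionary of \Cref{vertexweightsmodel}: fixing $\mathcal{F}(v_1),\mathcal{F}(v_2),\mathcal{F}(v_4)$ is the same as fixing the incoming colored arrow configuration and one outgoing count, and summing $S_P$ over $\mathcal{F}(v_3)$ is the same as summing over the outgoing configurations $(\textbf{C},\textbf{D})$ subject to color conservation. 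The second assertion, the dynamical Yang-Baxter equation \eqref{rankdynamicalequation}, is the application of \Cref{relationwstochastic} to three adjacent lines in $\mathcal{L}$. What remains is the bookkeeping translating the abstract identity \eqref{sequation} into the stated form: one must read off, from the pushing procedure, exactly how the dynamical parameter $v=q^{-R_0}z^{-1}$ shifts between the six vertices appearing in the equation. Here \eqref{vdynamicalrank} gives the local rule — passing an edge of spin $K$ carrying $X_0$ color-$0$ arrows multiplies $v$ by $q^{K-X_0}$ — and applying it along the two sides of \Cref{3equationshadedunshaded} produces precisely the arguments $q^{T-(\textbf{K}_1)_0}v$, $q^{L-(\textbf{J}_3)_0}v$, $q^{M-(\textbf{I}_2)_0}v$ (note the typo-prone last slot, which should read $q^{M-(\textbf{I}_2)_0}v$) displayed in \eqref{rankdynamicalequation}.

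An alternative, more self-contained route — which I would mention as a remark rather than carry out — mirrors the proof of \Cref{sdynamicalequation1}: write each $S_{L;M}$ as $W_{L;M}$ times the stochastic correction \eqref{rankcdefinition}, form the ratio of a summand on the left side of \eqref{rankdynamicalequation} to the corresponding product of three $W$ weights, and observe that the telescoping of $\chi$-type factors (each $W(\cdots,K\textbf{e}_0)$ appearing in a numerator is cancelled by one in a denominator, by the consistency matching) leaves a quantity depending only on the fixed boundary data $\textbf{I}_1,\textbf{J}_1,\textbf{K}_1,\textbf{I}_3,\textbf{J}_3,\textbf{K}_3$ and not on the summation indices $\textbf{I}_2,\textbf{J}_2,\textbf{K}_2$; the same ratio arises on the right side, and then \eqref{rankdynamicalequation} follows from \Cref{equationranku} by multiplying through. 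This is the analog of \Cref{dynamicalsixvertexequationparameter} and \Cref{consistencyvertexequation}, and it has the advantage of not requiring the full face-model formalism, only the explicit factored form \eqref{wijkd0}.

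The main obstacle, as in the six-vertex and elliptic cases, is not any deep computation but the careful verification of the consistency condition (3d) together with the correct identification of the dynamical shifts: one has to be sure that the shaded triple assigned to a vertex by the global pushing procedure is consistent across adjacent vertices (so that the attached plaquettes on $Q$, $R$, $P'$, $Q'$ in \Cref{3equationshadedunshaded} genuinely coincide when the intertwining line is moved), and that the induced rule \eqref{vdynamicalrank} for how $v$ propagates matches the arguments written in \eqref{rankdynamicalequation}. Once this is pinned down, everything else is a direct citation of \Cref{stochasticws} and \Cref{relationwstochastic} together with the explicit evaluation of $C_{L;M}$ from \Cref{rankc}.
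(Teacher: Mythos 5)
Your proposal matches the paper's proof: \Cref{dynamicalrankstochastic} is established there precisely by citing \Cref{stochasticws} and \Cref{relationwstochastic} as degenerations to this setting, with the stochasticization-curve attachment (and hence the consistency condition and the rule \eqref{vdynamicalrank} governing $v$) already set up in the text preceding \Cref{srankc}. Your closing remark on the direct cancellation argument and your observation of the missing $v$ in the last slot of \eqref{rankdynamicalequation} are both correct but go beyond what the paper writes.
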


\begin{proof}
	
	The first and second statements of this theorem comprise the degenerations of \Cref{stochasticws} and \Cref{relationwstochastic} to our setting, respectively. 
\end{proof}

\subsection{Higher Spin Degeneration} 

\label{L1}

In this section we evaluate the ``higher spin'' and ``spin $\frac{1}{2}$'' degenerations of the $S_{L; M}$ weights from \Cref{sweightsrank}. This is given by the following proposition (the terminology we use is parallel to in the $n = 1$ case).

In what follows, if a composition $\mathcal{C}$ of length $n + 1$ is equal to $\textbf{e}_j$ for some integer $j \in [0, n]$ (that is, the composition whose $i$-th component is $\textbf{1}_{i = j}$ for each $i$), we abbreviate $\mathcal{C} = j$. 

\begin{prop}
	
\label{l1s} 

Fix integers $M > 0$ and $1 \le h < j \le n$; complex numbers $v, x, y$; and compositions $\textbf{\emph{I}} = (I_m)$ and $\textbf{\emph{K}} = (K_m)$ of length $n + 1$ and size $M$. Abbreviating $S (\textbf{\emph{I}}, \textbf{\emph{B}}, \textbf{\emph{K}}, \textbf{\emph{D}}) = S_{1; M}  \big( \textbf{\emph{I}}, \textbf{\emph{B}}, \textbf{\emph{K}}, \textbf{\emph{D}} \b| x, y; v \big)$ for any compositions $\textbf{\emph{B}}$ and $\textbf{\emph{D}}$, we have that $S (\textbf{\emph{I}}, \textbf{\emph{B}}, \textbf{\emph{K}}, \textbf{\emph{D}}) = 0$ if arrow conservation does not hold (meaning that $\textbf{\emph{I}} + \textbf{\emph{B}} \ne \textbf{\emph{K}} + \textbf{\emph{D}}$). Otherwise,
\begin{flalign}
\label{m1rankdynamicals}
\begin{aligned}
S ( \textbf{\emph{I}}, j; \textbf{\emph{K}}, h ) & =  \displaystyle\frac{q^{I_{[1, h - 1]}} (1 - q^{I_h}) x (1 - q^M v y)}{(x - q^M y) (1 - q^{M - I_0} v y)}; \quad S  ( \textbf{\emph{I}}, h; \textbf{\emph{K}}, j )  =  \displaystyle\frac{q^{I_{[1, j - 1]}} (1 - q^{I_j})  y (1 - q^M v y) }{(x - q^M y) (1 - q^{M - I_0} v y)}; \\
S  ( \textbf{\emph{I}}, h; \textbf{\emph{K}}, h ) & = \displaystyle\frac{q^{I_{[1, h - 1]}} (x - q^{I_h} y) (1 - q^M v y)}{(x - q^M y) (1 - q^{M - I_0} v y)}; \quad S  ( \textbf{\emph{I}}, j; \textbf{\emph{K}}, 0 )  = \displaystyle\frac{q^{I_{[1, n]}} (1 - q^{I_0}) y (1 - v x)}{(x - q^M y) (1 - q^{M - I_0} v y)}; \\
S  ( \textbf{\emph{I}}, 0; \textbf{\emph{K}}, h ) & =  \displaystyle\frac{q^{I_{[1, h - 1]}} (1 - q^{I_h} ) x (1 - q^M vy)}{(x - q^M y) (1 - q^{M - I_0} v x)}; \quad S ( \textbf{\emph{I}}, 0; \textbf{\emph{K}}, 0  )  =   \displaystyle\frac{q^{I_{[1, n]}} (x - q^{I_0} y) (1 - vx)}{(x - q^M y) (1 - q^{M - I_0} v x)},
\end{aligned}
\end{flalign} 

\noindent assuming that arrow conservation holds in each of the above cases, and $S (\textbf{\emph{I}}, \textbf{\emph{B}}, \textbf{\emph{K}}, \textbf{\emph{D}}) = 0$ for any $(\textbf{\emph{I}}, \textbf{\emph{B}}, \textbf{\emph{K}}, \textbf{\emph{D}})$ not of the above form. Here, we recall that the dynamical parameter $v$ changes according to the identities indicated in \eqref{vdynamicalrank} and depicted in \Cref{configurationrank}. 

In particular, if $M = 1$, then these weights are given by \Cref{l1sdefinition}.
	
\end{prop}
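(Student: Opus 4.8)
\textbf{Proof proposal for \Cref{l1s}.}

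The plan is to derive the explicit formulas \eqref{m1rankdynamicals} directly from the general evaluation of the stochasticized weights in \Cref{sweightsrank}, by specializing $L = 1$. When $L = 1$, the composition $\textbf{D}$ has size $1$, so $\textbf{D} = \textbf{e}_h$ for some $h \in [0, n]$; likewise, only one of $B_0, B_1, \ldots, B_n$ is nonzero (and equals $1$), so $\textbf{B} = \textbf{e}_{h'}$ for some $h'$. Consequently $D_0 \in \{0, 1\}$, $B_0 \in \{0, 1\}$, and the $q$-Pochhammer products in \eqref{svrank} collapse: $(q; q)_{B_i}, (q;q)_{D_i} \in \{1, (q;q)_1\} = \{1, 1-q\}$, and the ratio $\prod_{j=0}^n \frac{(q;q)_{A_i}(q;q)_{B_i}}{(q;q)_{C_i}(q;q)_{D_i}}$ simplifies to a single factor of the form $\frac{1-q^{I_h}}{1}$ or $1$ depending on which color moves. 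Similarly, the constraint $0 \le P_i \le \min\{A_i, D_i\}$ in the sum over $\textbf{P}$ forces $\textbf{P} = 0$ unless $\textbf{D}$ and $\textbf{A} = \textbf{I}$ share the color $h \ge 1$ with $I_h \geq 1$, in which case $\textbf{P} \in \{0, \textbf{e}_h\}$; thus the elliptic hypergeometric sum degenerates to at most two terms, each a product of $\Phi$'s that are themselves single-term (rational) expressions because their composition arguments are near-trivial.

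First I would carry out the case analysis on which arrow configurations $(\textbf{I}, \textbf{B}, \textbf{K}, \textbf{D})$ are admissible under arrow conservation $\textbf{I} + \textbf{B} = \textbf{K} + \textbf{D}$ with $|\textbf{B}| = 1 = |\textbf{D}|$: the six cases listed in \eqref{m1rankdynamicals} correspond to $(\textbf{B}, \textbf{D}) \in \{(j, h), (h, j), (h, h), (j, 0), (0, h), (0, 0)\}$ (the cases $(\textbf{B},\textbf{D})$ both a positive color and equal, or one of them $0$), and I would note that in every admissible case $\textbf{K}$ is determined by $\textbf{I}, \textbf{B}, \textbf{D}$. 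Next, for each case I would substitute the relevant values of $A_0 = I_0$, $B_0$, $C_0 = K_0$, $D_0$ into the prefactor $q^{MB_0 - LC_0 + (M-L)(D_0 - B_0) + A_0 B_0 - C_0 D_0 + \sum_{i<j}(D_j C_i - A_j B_i)}$ of \eqref{svrank} with $L = 1$; the double sum $\sum_{1 \le i < j \le n}(D_j C_i - A_j B_i)$ reduces to a telescoping partial sum of the $I_i$'s, producing the exponents $q^{I_{[1,h-1]}}$, $q^{I_{[1,j-1]}}$, and $q^{I_{[1,n]}}$ appearing in \eqref{m1rankdynamicals}. Then I would simplify the $q$-Pochhammer ratios $\frac{(q^{1 - D_0} vx; q)_{D_0}}{(q^{1 + M - A_0 - B_0} vx; q)_{B_0}} \cdot \frac{(q^{1 + M - C_0 - D_0} vy; q)_{C_0}}{(q^{M - A_0} vy; q)_{A_0}}$ using $|\textbf{A}| = M = |\textbf{C}|$ (so $C_0 = M - K_{[1,n]}$, etc.) to get the factors $\frac{1 - q^M vy}{1 - q^{M - I_0} vy}$, $\frac{1 - vx}{1 - q^{M-I_0}vy}$, and $\frac{1-vx}{1 - q^{M - I_0} vx}$ respectively; the denominators $(q;q)_{A_i}$ vs. $(q;q)_{C_i}$ contribute the $(x - q^{I_h} y)$-type or $(1 - q^{I_h})$-type numerators once combined with the surviving terms of the $\Phi$-sum, and the overall $(x - q^M y)$ in the denominator comes from the $(y; q)_{|\mu|}$ factor in $\Phi$ after the substitution $x/y \to q^{L-M} x y^{-1}$ etc. Finally, the $M = 1$ specialization: setting $M = 1$ forces $\textbf{I}, \textbf{K}$ to each be a single $\textbf{e}_a$, so the formulas in \eqref{m1rankdynamicals} collapse to the eight weights of \Cref{l1sdefinition}; I would check this by matching the four ``colored'' cases $S(i, j, j, i)$, $S(j, i, i, j)$, $S(i, j, i, j)$, $S(j, i, j, i)$ with $i < j$ (for which $I_0 = 0$ and the $v$-dependent factors cancel, as they must since those weights in \Cref{l1sdefinition} are $v$-independent) and the four cases involving color $0$ with $S_{1;1}^{\text{col}}(i, 0, 0, i)$, $S_{1;1}^{\text{col}}(i, 0, i, 0)$, $S_{1;1}^{\text{col}}(0, i, 0, i)$, $S_{1;1}^{\text{col}}(0, i, i, 0)$, where $I_0 \in \{0, 1\}$ and the factor $\frac{1 - q vy}{1 - qvy}$ or $\frac{1-vx}{1-vy}$ etc. reproduces the stated rational functions.

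The main obstacle I anticipate is not conceptual but bookkeeping: correctly tracking the partial-sum exponents arising from $\sum_{1 \le i < j \le n}(D_j C_i - A_j B_i)$ and from the $q^{\sum_{i<j}(\mu_i - \lambda_i)\lambda_j}$ factor inside $\Phi$ when $\textbf{P}$ ranges over $\{0, \textbf{e}_h\}$, and making sure the two surviving terms of the degenerate elliptic hypergeometric sum combine into the compact single rational expressions of \eqref{m1rankdynamicals} (in particular, recognizing that the two-term sum is a disguised instance of the $q$-Vandermonde identity \eqref{221identityhypergeometric} or its basic analogue, which is what produces the clean factorization). A secondary subtlety is sign/orientation: because $W$ is the transposed weight $U(\textbf{C},\textbf{D},\textbf{A},\textbf{B}|z)$ from \Cref{weightdefinitionu}, one must be careful that the roles of $(\textbf{B}, \textbf{D})$ in the $\textbf{P}$-constraint of \eqref{svrank} are the ones inherited after the transposition (i.e. $\textbf{P} \le \overline{\textbf{A}}, \overline{\textbf{D}}$, as written), so that the color moving ``vertically vs. horizontally'' is assigned to the correct factor; I would double-check this against the $n = 1$ reduction, where \Cref{dynamicalweights1} and \Cref{l1sdefinition} provide an independent consistency check.
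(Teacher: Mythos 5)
Your proposal is correct and follows essentially the same route as the paper: the paper's proof also specializes $L=1$, evaluating the two-term $\textbf{P}$-sum for the $U_{1;M}$ weights explicitly (its \eqref{rzijij}) and the stochastic correction $C_{1;M}$ via \Cref{rankc} (its \eqref{cl1rank}), then multiplies; working instead from the already-combined formula \eqref{svrank} of \Cref{sweightsrank}, as you do, amounts to the same computation. Your attention to the transposition in \Cref{weightdefinitionu} and to the telescoping exponents is exactly where the bookkeeping lives, and the $M=1$ check against \Cref{l1sdefinition} is handled in the paper just as you describe.
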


\begin{proof}
	
Since the last statement of the proposition follows from setting $M = 1$ in \eqref{m1rankdynamicals}, it suffices to establish \eqref{m1rankdynamicals}.

To that end, first observe that by setting $L = 1$ in \Cref{wzabcd} and that, for any compositions $\textbf{I}$ and $\textbf{K}$ of length $n + 1$ and size $M$, and integers $0 \le h < j \le n$, we have that 
\begin{flalign}
\label{rzijij}
\begin{aligned}
U_{1; M} \big(\textbf{I}, j; \textbf{I}, j \b| z \big) = & \displaystyle\frac{(1 - q^{I_j} z) q^{I_{[j + 1, n]}}}{1 - q^M z}; \qquad U_{1; M} \big( \textbf{I}, h; \textbf{K}, j \b| z \big) = \displaystyle\frac{(1 - q^{I_j}) q^{I_{[j + 1, n]}}}{1 - q^M z}; \\
& U_{1; M} \big(\textbf{I}, j; \textbf{K}, h \b| z \big) = \displaystyle\frac{(1 - q^{I_h}) q^{I_{[h + 1, n]}} z}{1 - q^M z}.
\end{aligned} 
\end{flalign}

\noindent Furthermore, \Cref{rankc} applied with $L = 1$ yields that 	
\begin{flalign}
\label{cl1rank} 
\begin{aligned}
&  C (\textbf{I}, 0, \textbf{I}, 0) = \displaystyle\frac{1 - v x}{1 - q^{M - I_0} v x}; \qquad  C (\textbf{I}, j, \textbf{K}, 0) = q^{I_{[1, j]}} \displaystyle\frac{y (1 - q^{I_0}) (1 - v x)}{x (1 - q^{K_j}) (1 - q^{M - I_0} v y)}; \\
&  \qquad \qquad \qquad  C (\textbf{I}, 0, \textbf{K}, j) = q^{-K_{[j, n]}}  \displaystyle\frac{x (1 - q^{I_j}) (1 - q^M v y)}{y (1 - q^{K_0}) (1 - q^{M - I_0} v x)}; \\ 
& \qquad \qquad C (\textbf{I}, j_1, \textbf{K}, j_2) = q^{K_{[1, j_2 - 1]} - I_{[j_1 + 1, n]}} \displaystyle\frac{(1 - q^{I_{j_2}}) (1 - q^M vy)}{(1 - q^{K_{j_1}}) (1 - q^{M - I_0} v y)}, 
\end{aligned}
\end{flalign} 

\noindent for any $j, j_1, j_2 \in [1, n]$, where we have set $C ( \textbf{I}, m_1, \textbf{K}, m_2) = C_{1; M} \big( \textbf{I}, m_1, \textbf{I}, m_2 \b| x, y; T; \textbf{R}; z \big)$ for any integers $0 \le m_1, m_2 \le n$. Now \eqref{m1rankdynamicals} follows from \eqref{rzijij}, \eqref{cl1rank}, \eqref{srankdefinition}, and \eqref{uabcd}.
\end{proof}

\begin{rem}
	
	Since both the stochasticity and dynamical Yang-Baxter equation \eqref{rankdynamicalequation} for the $S$ weights from  \eqref{m1rankdynamicals} are identities of rational functions in $q^M$, we can analytically continue in $M$. Specifically, after replacing $q^M$ with an arbitrary complex parameter and omitting the size $M$ constraint on the compositions $\textbf{I}$, $\textbf{J}$, and $\textbf{K}$, the $S$ weights still remain stochastic and satisfy the dynamical Yang-Baxter equation.

\end{rem}

\begin{rem}
	
\label{sweightsrank2} 
	
Replacing $q^M$ by $s^2$, $y$ by $s^{-1}$, $x$ by $x^{-1}$, and letting $v$ tend to $\infty$ recovers the stochastic higher spin, higher rank vertex model studied in \cite{CSVMST} (see in particular equations (1.2.2) and (2.5.1) of that paper). These weights, as well as their $J \ge 1$ fused generalizations, satisfy the non-dynamical Yang-Baxter equation. 
\end{rem}

\section{A Dynamical Stochastic Tetrahedron Model} 

\label{DynamicalDimension}

Although the stochasticization procedure from \Cref{Domain} was explained for models on a two-dimensional graph, it can also be applied in higher dimensions. In this section, we provide an example of this by stochasticizing Mangazeev-Bazhanov-Sergeev's solution \cite{ILMPW} to the tetrahedron equation, which is a three-dimensional analog of the Yang-Baxter equation. This leads to a family of stochastic weights that satisfies a dynamical variant of the tetrahedron equation. We implement this stochasticization procedure in \Cref{DimensionStochastic}, derive properties of the stochasticized weights in \Cref{EquationStochastic3}, and discuss one of its degenerations that solves the original (non-dynamical) tetrahedron equation in \Cref{EquationDimension2}.

\subsection{Stochasticizing a Solution of the Tetrahedron Equation}

\label{DimensionStochastic}

Let us begin by defining the vertex weights that we will stochasticize; these weights are due to Mangazeev-Bazhanov-Sergeev and can be found as equation (31) of \cite{ILMPW}. Throughout this section, we fix a parameter $q \in \mathbb{C}$.

\begin{definition}[{\cite[Equation (31)]{ILMPW}}]

\label{weightsequation3}

For any nonnegative integers $n_1, n_2, n_3, n_1', n_2', n_3'$, define 
\begin{flalign}
\label{rn1n2n3}
\begin{aligned}
R_{n_1 n_2 n_3}^{n_1' n_2' n_3'} & = q^{n_2 (n_2 + 1) - (n_2 - n_1') (n_2 - n_3')} \displaystyle\frac{(q^{-2n_1'}; q^2)_{n_2}}{(q^2; q^2)_{n_2}} {_2 \varphi_1}  \Bigg( \begin{array}{cc}  q^{-2n_2}, q^{2n_1 + 2} \\ q^{2(n_1' - n_2 + 1)} \end{array}\Bigg| q^2, q^{-2n_3}\Bigg) \\
& \qquad \times \textbf{1}_{n_1 + n_2 = n_1' + n_2'} \textbf{1}_{n_2 + n_3 = n_2' + n_3'}.
\end{aligned}
\end{flalign}

\end{definition}

The diagrammatic interpretation of the quantities $R_{n_1 n_2 n_3}^{n_1' n_2' n_3'}$ is that they are weights associated with a vertex $u$ in $\mathbb{Z}^3$ with arrow configuration $(n_1, n_2, n_3; n_1', n_2', n_3')$; here, this means that $u$ has $n_1$ incoming arrows parallel to one direction (say the $x$-axis); $n_2$ incoming arrows parallel to the $y$-axis; $n_3$ incoming arrows parallel to the $z$-axis; $n_1'$ outgoing arrows parallel to the $x$-axis; $n_2'$ outgoing arrows parallel to the $y$-axis; and $n_3'$ outgoing arrows parallel to the $z$-axis. Observe that, with this understanding, the $R$ weights do not satisfy arrow conservation, in that they are not supported on arrow configurations $(n_1, n_2, n_3; n_1', n_2', n_3')$ satisfying $n_1 + n_2 + n_3 = n_1' + n_2' + n_3'$.

The following proposition, which was established in Section 2 of \cite{ILMPW}, indicates that the $R_{n_1 n_2 n_3}^{n_1' n_2' n_3'}$ weights are nonnegative and satisfy the \emph{tetrahedron equation}, given by \eqref{equation31} below.

\begin{prop}[{\cite[Section 2]{ILMPW}}]
	
	\label{requation3}
	
	For any fixed nonnegative integers $n_1, n_2, n_3, n_4, n_5, n_6$ and $n_1'', n_2'', n_3'', n_4'', n_5'', n_6''$, we have that
	\begin{flalign}
	\label{equation31} 
	& \displaystyle\sum_{\textbf{\emph{n}}'} R_{n_1 n_2 n_3}^{n_1' n_2' n_3'} R_{n_1' n_4 n_5}^{n_1'' n_4' n_5'} R_{n_3' n_5' n_6'}^{n_3'' n_5'' n_6''} R_{n_2' n_4' n_6}^{n_2'' n_4'' n_6'}  = \displaystyle\sum_{\textbf{\emph{n}}'} R_{n_3 n_5 n_6}^{n_3' n_5' n_6'} R_{n_2 n_4 n_6'}^{n_2' n_4' n_6''} R_{n_1 n_4' n_5'}^{n_1' n_4'' n_5''} R_{n_1' n_2' n_3'}^{n_1'' n_2'' n_3''}, 
	\end{flalign} 

	\noindent where the sums on both sides of \eqref{equation31} are over nonnegative integer sets $\textbf{\emph{n}}' = (n_1', n_2', n_3', n_4', n_5', n_6')$. 
	
	Furthermore, if $q \in (0, 1)$, then $R_{n_1 n_2 n_3}^{n_1' n_2' n_3'} \ge 0$ for any $n_1, n_2, n_3, n_1', n_2', n_3' \in \mathbb{Z}_{\ge 0}$.
\end{prop}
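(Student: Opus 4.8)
\Cref{requation3} bundles two assertions: the tetrahedron equation \eqref{equation31} for the weights \eqref{rn1n2n3}, and their nonnegativity for $q \in (0,1)$. I would treat these completely separately, since the second is a self-contained $q$-series manipulation while the first requires the full algebraic structure underlying Mangazeev--Bazhanov--Sergeev's construction and is where the genuine difficulty lies.

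For the nonnegativity, fix $q \in (0,1)$. A weight with $n_1+n_2 \ne n_1'+n_2'$ or $n_2+n_3 \ne n_2'+n_3'$ vanishes, so I may assume arrow conservation, which also yields $n_1-n_1' = n_3-n_3'$ and $n_1'-n_1-n_2 = -n_2'$. The obstruction to reading a sign off \eqref{rn1n2n3} directly is that the terminating ${_2\varphi_1}$ has an upper parameter $q^{-2n_2}>1$ and argument $q^{-2n_3}>1$, so its summands alternate in sign, and the prefactor $(q^{-2n_1'};q^2)_{n_2}$ is itself indefinite. The plan is to apply the $q$-Heine transformation \eqref{21identityhypergeometric}, with $q$ replaced by $q^2$, to the series in \eqref{rn1n2n3}; here $|q^2|<1$ and $|q^{2n_1+2}|<1$, so the hypotheses hold. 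Using the conservation relations, this recasts the series as a ratio of $q^2$-Pochhammer symbols times a new terminating ${_2\varphi_1}$ whose terminating parameter is $q^{-2n_2'}$ and whose argument is $q^{2n_1+2}\in(0,1)$. Converting the resulting ratio of infinite products into finite products via \eqref{elliptichypergeometricidentities1} and absorbing it, together with the original prefactor, I expect every factor to reorganize into symbols of the form $(q^{2a};q^2)_b$ with $a \ge 0$ -- each of whose factors lies in $(0,1)$ -- times a manifestly nonnegative power of $q$; nonnegativity then follows because the transformed series has all nonnegative summands. The one real subtlety is sign bookkeeping, which I would handle uniformly (rather than case-by-case on the relative sizes of $n_2', n_1, \dots$) by pairing each negative-exponent factor $1-q^{-2m}$ with its reflection $-q^{-2m}(1-q^{2m})$, so that the whole expression becomes a positive quantity times an even power of $-1$. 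Alternatively, and perhaps more transparently, one can read nonnegativity off the construction of \cite{ILMPW}: the weights \eqref{rn1n2n3} are matrix elements, in the occupation-number (Fock) basis, of a normal-ordered product of $q$-oscillator creation and annihilation operators whose structure constants are nonnegative powers of $q$ and $q$-binomial coefficients $\binom{m}{k}_{q^2}$, all nonnegative for $q \in (0,1)$.

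For the tetrahedron equation \eqref{equation31} I would follow the representation-theoretic route. The operator $\mathcal{R}$ with matrix elements \eqref{rn1n2n3} is the intertwiner attached to the three-dimensional $L$-operator acting on (two auxiliary spaces) $\otimes$ (a Fock quantum space); it is characterized, up to a scalar, as the unique solution of the local relation equating the two orders in which a pair of such $L$-operators can be composed. Arranging eight auxiliary spaces on the vertices of a cube and composing the six face $L$-operators in the two ways related by pushing a plane past a vertex produces, after eliminating the auxiliary spaces, exactly the two sides of \eqref{equation31} with the internal Fock indices $\mathbf{n}'$ summed. Equality is then forced by uniqueness of the intertwiner together with matching at a single reference configuration, which pins down the overall scalar. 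Equivalently -- and this is the concrete identity one must ultimately certify -- the consistency of the construction reduces to a quantum-dilogarithm (pentagon) identity for the $q$-exponential, which in the present terminating setting is a finite $q$-series identity provable from \eqref{221identityhypergeometric} and \eqref{21identityhypergeometric}.

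The main obstacle is unquestionably \eqref{equation31}: a brute-force verification means equating two four-fold $q$-hypergeometric sums, and there is no short manipulation that does this directly. My plan to avoid the brute-force computation is never to expand the sums, but to establish \eqref{equation31} at the operator level via intertwiner uniqueness as above, reducing the analytic content to the single quantum-dilogarithm identity; should the operator argument prove awkward to make rigorous, the fallback is $q$-creative telescoping (a $q$-analog of the Wilf--Zeilberger method) to certify the multisum identity \eqref{equation31} term-by-term in the free integer parameters.
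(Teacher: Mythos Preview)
The paper does not prove \Cref{requation3}; it is stated as a citation to \cite[Section 2]{ILMPW} and used as a black box. There is therefore no ``paper's own proof'' to compare against. Your outline is a reasonable sketch of how the original reference \cite{ILMPW} obtains the result: the tetrahedron equation there is indeed derived at the operator level from the $q$-oscillator realization and the associated intertwining relation (the ``RLLL'' approach), and nonnegativity is read off the Fock-basis matrix elements. Your alternative plan for nonnegativity via the Heine transformation \eqref{21identityhypergeometric} is also sound---in fact the present paper carries out exactly that rewriting later, in the proof of \Cref{q1sv}, when computing the $q\to 1$ limit of $S_{n_1 n_2 n_3}^{n_1' n_2' n_3'}(v)$; you could borrow that computation verbatim to make the sign bookkeeping explicit.
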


The tetrahedron equation is sometimes viewed as ``pushing'' a plane through a vertex (similar to how the Yang-Baxter equation is as pushing a line through a vertex) and is depicted in \Cref{3equationdimension1}.

\begin{figure}

	\begin{center}

		\begin{tikzpicture}[
		>=stealth,
		auto,
		style={
			scale = 1.5	
		}
		]
		
		\draw[-, black] (2.5, 1.5) -- (2, 1);
		\draw[-, black] (2, 1.5) -- (2, 1);
		\draw[-, black] (1.5, 1.5) -- (2, 1);
		\draw[-, black] (.5, .5) -- (1, 0);
		\draw[-, black] (.5, 0) -- (1, 0);
		\draw[-, black] (1.5, -1.5) -- (2, -1);
		
		\draw[->, black] (1, 0) -- (.5, -.5);
		\draw[->, black] (2, -1) -- (2, -1.5);
		\draw[->, black] (3, 0) -- (3.5, -.5);
		\draw[->, black] (2, -1) -- (2.5, -1.5);
		\draw[->, black] (3, 0) -- (3.5, 0);
		\draw[->, black] (3, 0) -- (3.5, .5);

		\draw[-, black, dashed] (1, 0) -- (3, 0); 
		\draw[-, black, dashed] (1, 0) -- (2, 1); 
		\draw[-, black, dashed] (1, 0) -- (2, -1); 
		\draw[-, black, dashed] (3, 0) -- (2, 1); 
		\draw[-, black, dashed] (3, 0) -- (2, -1); 
		\draw[-, black, dashed] (2, 1) -- (2, -1);

		\filldraw[fill = black] (2.5, 1.5) circle[radius = 0] node[scale = .8, above = 0]{$n_1$};
		\filldraw[fill = black] (2, 1.5) circle[radius = 0] node[scale = .8, above = 0]{$n_2$};
		\filldraw[fill = black] (1.5, 1.5) circle[radius = 0] node[scale = .8, above = 0]{$n_3$};
		\filldraw[fill = black] (.5, .5) circle[radius = 0] node[scale = .8, left = 0]{$n_4$};
		\filldraw[fill = black] (.5, 0) circle[radius = 0] node[scale = .8, left = 0]{$n_5$};
		\filldraw[fill = black] (1.5, -1.5) circle[radius = 0] node[scale = .8, left = 0]{$n_6$};

		\filldraw[fill = black] (.5, -.5) circle[radius = 0] node[scale = .8, left = 0]{$n_1''$};
		\filldraw[fill = black] (2, -1.5) circle[radius = 0] node[scale = .8, below = 0]{$n_2''$};
		\filldraw[fill = black] (3.5, -.5) circle[radius = 0] node[scale = .8, right = 0]{$n_3''$};
		\filldraw[fill = black] (2.5, -1.5) circle[radius = 0] node[scale = .8, right = 0]{$n_4''$};
		\filldraw[fill = black] (3.5, 0) circle[radius = 0] node[scale = .8, right = 0]{$n_5''$};
		\filldraw[fill = black] (3.5, .5) circle[radius = 0] node[scale = .8, right = 0]{$n_6''$};

		\filldraw[fill = black] (1.5, .5) circle[radius = 0] node[scale = .8, left = 3]{$n_1'$};
		\filldraw[fill = black] (2, .375) circle[radius = 0] node[scale = .8, right = -1]{$n_2'$};
		\filldraw[fill = black] (2.5, .5) circle[radius = 0] node[scale = .8, right = 2]{$n_3'$};
		\filldraw[fill = black] (1.5, -.5) circle[radius = 0] node[scale = .8, left = 2]{$n_4'$};
		\filldraw[fill = black] (1.625, 0) circle[radius = 0] node[scale = .8, below = -1]{$n_5'$};
		\filldraw[fill = black] (2.5, -.5) circle[radius = 0] node[scale = .8, left = 2]{$n_6'$};

		\draw[->, black] (8, 1) -- (8.5, 1.5);
		\draw[-, black] (8, 1.5) -- (8, 1);
		\draw[-, black] (7.5, 1.5) -- (8, 1);
		\draw[-, black] (6.5, .5) -- (7, 0);
		\draw[-, black] (6.5, 0) -- (7, 0);
		\draw[->, black] (8, -1) -- (7.5, -1.5);
		
		\draw[-, black] (6.5, -.5) -- (7, 0);
		\draw[->, black] (8, -1) -- (8, -1.5);
		\draw[->, black] (9, 0) -- (9.5, -.5);
		\draw[->, black] (8, -1) -- (8.5, -1.5);
		\draw[->, black] (9, 0) -- (9.5, 0);
		\draw[-, black] (9.5, .5) -- (9, 0);

		\draw[-, black, dashed] (7, 0) -- (9, 0); 
		\draw[-, black, dashed] (7, 0) -- (8, 1); 
		\draw[-, black, dashed] (7, 0) -- (8, -1); 
		\draw[-, black, dashed] (9, 0) -- (8, 1); 
		\draw[-, black, dashed] (9, 0) -- (8, -1); 
		\draw[-, black, dashed] (8, 1) -- (8, -1);

		\filldraw[fill = black] (9.5, .5) circle[radius = 0] node[scale = .8, right = 0]{$n_1$};
		\filldraw[fill = black] (8, 1.5) circle[radius = 0] node[scale = .8, above = 0]{$n_2$};
		\filldraw[fill = black] (6.5, .5) circle[radius = 0] node[scale = .8, left = 0]{$n_3$};
		\filldraw[fill = black] (7.5, 1.5) circle[radius = 0] node[scale = .8, above = 0]{$n_4$};
		\filldraw[fill = black] (6.5, 0) circle[radius = 0] node[scale = .8, left = 0]{$n_5$};
		\filldraw[fill = black] (6.5, -.5) circle[radius = 0] node[scale = .8, left = 0]{$n_6$};

		\filldraw[fill = black] (7.5, -1.5) circle[radius = 0] node[scale = .8, left = 0]{$n_1''$};
		\filldraw[fill = black] (8, -1.5) circle[radius = 0] node[scale = .8, below = 0]{$n_2''$};
		\filldraw[fill = black] (8.5, -1.5) circle[radius = 0] node[scale = .8, right = 0]{$n_3''$};
		\filldraw[fill = black] (9.5, -.5) circle[radius = 0] node[scale = .8, right = 0]{$n_4''$};
		\filldraw[fill = black] (9.5, 0) circle[radius = 0] node[scale = .8, right = 0]{$n_5''$};
		\filldraw[fill = black] (8.5, 1.5) circle[radius = 0] node[scale = .8, above = 0]{$n_6''$};
		
		\filldraw[fill = black] (8.5, -.5) circle[radius = 0] node[scale = .8, left = 2]{$n_1'$};
		\filldraw[fill = black] (8, .375) circle[radius = 0] node[scale = .8, right = -1]{$n_2'$};
		\filldraw[fill = black] (7.5, -.5) circle[radius = 0] node[scale = .8, left = 2]{$n_3'$};
		\filldraw[fill = black] (8.5, .5) circle[radius = 0] node[scale = .8, right = 2]{$n_4'$};
		\filldraw[fill = black] (7.625, 0) circle[radius = 0] node[scale = .8, below = -1]{$n_5'$};
		\filldraw[fill = black] (7.5, .5) circle[radius = 0] node[scale = .8, left = 3]{$n_6'$};

		\end{tikzpicture}
		
	\end{center}

	\caption{\label{3equationdimension1} The tetrahedron equation, as in \Cref{requation3}, is depicted above; the numbers of arrows on the dashed lines is summed over. }

\end{figure}
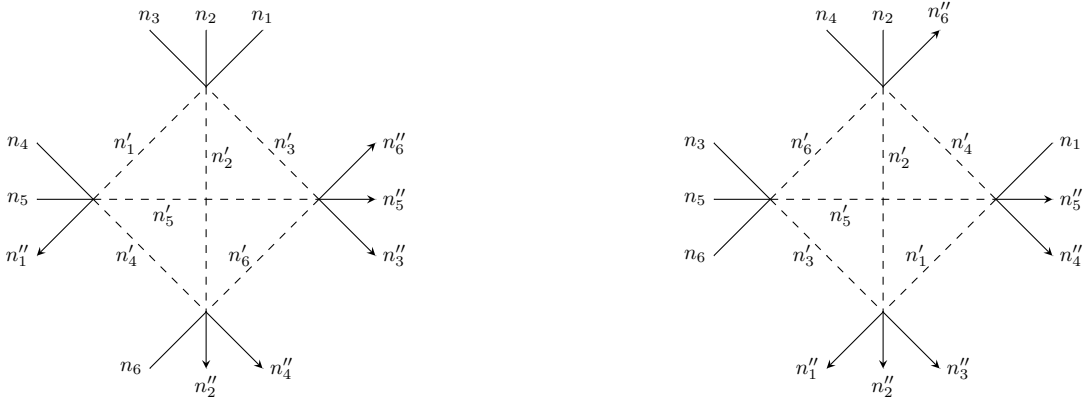

Following either \Cref{SixVertexEquation} or \Cref{DefinitionWeights}, we would then like to find a boundary condition $(n_1'', n_2'', n_3'', n_4'', n_5'', n_6'')$ such that the right side of \eqref{equation31} has at most one nonzero term. This can be imposed by setting $n_1'' = n_2'' = n_3'' = 0$ and allowing $(n_4, n_5, n_6) = (k_1, k_2, k_3) \in \mathbb{Z}_{\ge 0}^3$ to be arbitrary. Indeed, the fact that $R_{n_1 n_2 n_3}^{n_1' n_2' n_3'} = 0$ unless $n_1 + n_2 = n_1' + n_2'$ and $n_2 + n_3 = n_2' + n_3'$ implies that $R_{n_1' n_2' n_3'}^{0 0 0} = 0$ unless $(n_1, n_2', n_3') = (0, 0, 0)$. Under this setting, and once $(n_4, n_5, n_6) = (k_1, k_2, k_3)$ is fixed, then $(n_4', n_5', n_6') = (a_4', a_5', a_6')$ and $(n_4'', n_5'', n_6'') = (a_4'', a_5'', a_6'')$ on the right sides of \eqref{equation31} are also fixed to be  
\begin{flalign}
\label{n4n5n6}
\begin{aligned}
& a_4' = k_4 + n_2; \qquad \qquad	 a_5' = k_5 + n_3; \qquad \qquad  a_6' = k_6 - n_3; \\
&  a_4'' = k_4 + n_1 + n_2; \qquad a_5'' = k_5 - n_1 + n_3; \qquad a_6'' = k_6 - n_2 - n_3,
\end{aligned}
\end{flalign}

\noindent again due to the fact that $R_{n_1 n_2 n_3}^{n_1' n_2' n_3'} = 0$ unless $n_1 + n_2 = n_1' + n_2'$ and $n_2 + n_3 = n_2' + n_3'$. For the same reason, the $(n_4', n_5', n_6') = (b_4', b_5', b_6')$ from the left side of \eqref{equation31} is fixed to be 
\begin{flalign} 
\label{b4b5b6}
b_4' = k_4 + n_1'; \qquad b_5' = k_5 - n_1'; \qquad b_6' = k_6 - n_2'. 
\end{flalign} 

\noindent This is depicted in \Cref{vertex3}, which indicates the special case of the tetrahedron equation obtained by setting $(n_1'', n_2'', n_3'') = (0, 0, 0)$.

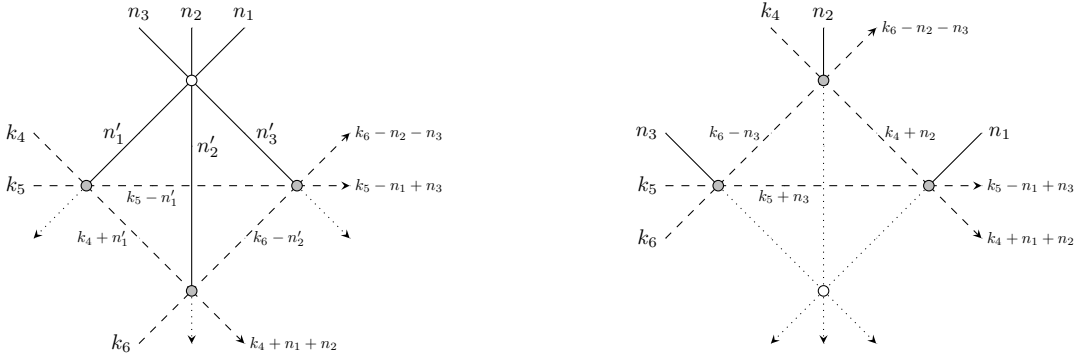
\begin{figure}

	\begin{center}

		\begin{tikzpicture}[
		>=stealth,
		auto,
		style={
			scale = 1.4
		}
		]
		
		\draw[-, black] (2.5, 1.5) -- (2, 1);
		\draw[-, black] (2, 1.5) -- (2, 1);
		\draw[-, black] (1.5, 1.5) -- (2, 1);
		\draw[-, black, dashed] (.5, .5) -- (1, 0);
		\draw[-, black, dashed] (.5, 0) -- (1, 0);
		\draw[-, black, dashed] (1.5, -1.5) -- (2, -1);
		
		\draw[->, dotted] (1, 0) -- (.5, -.5);
		\draw[->, dotted] (2, -1) -- (2, -1.5);
		\draw[->, black, dotted] (3, 0) -- (3.5, -.5);
		\draw[->, black, dashed] (2, -1) -- (2.5, -1.5);
		\draw[->, black, dashed] (3, 0) -- (3.5, 0);
		\draw[->, black, dashed] (3, 0) -- (3.5, .5);

		\draw[-, black, dashed] (1, 0) -- (3, 0); 
		\draw[-, black] (1, 0) -- (2, 1); 
		\draw[-, black, dashed] (1, 0) -- (2, -1); 
		\draw[-, black] (3, 0) -- (2, 1); 
		\draw[-, black, dashed] (3, 0) -- (2, -1); 
		\draw[-, black] (2, 1) -- (2, -1);

		\filldraw[fill = white!50!gray] (1, 0) circle[radius = .05];
		\filldraw[fill = white!50!gray] (3, 0) circle[radius = .05];
		\filldraw[fill = white] (2, 1) circle[radius = .05];
		\filldraw[fill = white!50!gray] (2, -1) circle[radius = .05];

		\filldraw[fill = black] (2.5, 1.5) circle[radius = 0] node[scale = .8, above = 0]{$n_1$};
		\filldraw[fill = black] (2, 1.5) circle[radius = 0] node[scale = .8, above = 0]{$n_2$};
		\filldraw[fill = black] (1.5, 1.5) circle[radius = 0] node[scale = .8, above = 0]{$n_3$};
		\filldraw[fill = black] (.5, .5) circle[radius = 0] node[scale = .8, left = 0]{$k_4$};
		\filldraw[fill = black] (.5, 0) circle[radius = 0] node[scale = .8, left = 0]{$k_5$};
		\filldraw[fill = black] (1.5, -1.5) circle[radius = 0] node[scale = .8, left = 0]{$k_6$};

		\filldraw[fill = black] (.5, -.5) circle[radius = 0] node[scale = .8, left = 0]{};
		\filldraw[fill = black] (2, -1.5) circle[radius = 0] node[scale = .8, below = 0]{};
		\filldraw[fill = black] (3.5, -.5) circle[radius = 0] node[scale = .8, right = 0]{};
		\filldraw[fill = black] (2.5, -1.5) circle[radius = 0] node[scale = .6, right = 0]{$k_4 + n_1 + n_2$};
		\filldraw[fill = black] (3.5, 0) circle[radius = 0] node[scale = .6, right = 0]{$k_5 - n_1 + n_3$};
		\filldraw[fill = black] (3.5, .5) circle[radius = 0] node[scale = .6, right = 0]{$k_6 - n_2 - n_3$};

		\filldraw[fill = black] (1.5, .5) circle[radius = 0] node[scale = .8, left = 3]{$n_1'$};
		\filldraw[fill = black] (2, .375) circle[radius = 0] node[scale = .8, right = -1]{$n_2'$};
		\filldraw[fill = black] (2.5, .5) circle[radius = 0] node[scale = .8, right = 2]{$n_3'$};
		\filldraw[fill = black] (1.5, -.5) circle[radius = 0] node[scale = .6, left = 3]{$k_4 + n_1'$};
		\filldraw[fill = black] (1.625, 0) circle[radius = 0] node[scale = .6, below = -1]{$k_5 - n_1'$};
		\filldraw[fill = black] (2.5, -.5) circle[radius = 0] node[scale = .6, right = 2]{$k_6 - n_2'$};

		\draw[->, black, dashed] (8, 1) -- (8.5, 1.5);
		\draw[-, black] (8, 1.5) -- (8, 1);
		\draw[-, black, dashed] (7.5, 1.5) -- (8, 1);
		\draw[-, black] (6.5, .5) -- (7, 0);
		\draw[-, black, dashed] (6.5, 0) -- (7, 0);
		\draw[->, black, dotted] (8, -1) -- (7.5, -1.5);
		
		\draw[-, black, dashed] (6.5, -.5) -- (7, 0);
		\draw[->, black, dotted] (8, -1) -- (8, -1.5);
		\draw[->, black, dashed] (9, 0) -- (9.5, -.5);
		\draw[->, black, dotted] (8, -1) -- (8.5, -1.5);
		\draw[->, black, dashed] (9, 0) -- (9.5, 0);
		\draw[-, black] (9.5, .5) -- (9, 0);

		\draw[-, black, dashed] (7, 0) -- (9, 0); 
		\draw[-, black, dashed] (7, 0) -- (8, 1); 
		\draw[-, black, dotted] (7, 0) -- (8, -1); 
		\draw[-, black, dashed] (9, 0) -- (8, 1); 
		\draw[-, black, dotted] (9, 0) -- (8, -1); 
		\draw[-, black, dotted] (8, 1) -- (8, -1);

		\filldraw[fill = white!50!gray] (7, 0) circle[radius = .05];
		\filldraw[fill = white!50!gray] (9, 0) circle[radius = .05];
		\filldraw[fill = white!50!gray] (8, 1) circle[radius = .05];
		\filldraw[fill = white] (8, -1) circle[radius = .05];

		\filldraw[fill = black] (9.5, .5) circle[radius = 0] node[scale = .8, right = 0]{$n_1$};
		\filldraw[fill = black] (8, 1.5) circle[radius = 0] node[scale = .8, above = 0]{$n_2$};
		\filldraw[fill = black] (6.5, .5) circle[radius = 0] node[scale = .8, left = 0]{$n_3$};
		\filldraw[fill = black] (7.5, 1.5) circle[radius = 0] node[scale = .8, above = 0]{$k_4$};
		\filldraw[fill = black] (6.5, 0) circle[radius = 0] node[scale = .8, left = 0]{$k_5$};
		\filldraw[fill = black] (6.5, -.5) circle[radius = 0] node[scale = .8, left = 0]{$k_6$};

		\filldraw[fill = black] (7.5, -1.5) circle[radius = 0] node[scale = .8, left = 0]{};
		\filldraw[fill = black] (8, -1.5) circle[radius = 0] node[scale = .8, below = 0]{};
		\filldraw[fill = black] (8.5, -1.5) circle[radius = 0] node[scale = .8, right = 0]{};
		\filldraw[fill = black] (9.5, -.5) circle[radius = 0] node[scale = .6, right = 0]{$k_4 + n_1 + n_2$};
		\filldraw[fill = black] (9.5, 0) circle[radius = 0] node[scale = .6, right = 0]{$k_5 - n_1 + n_3$};
		\filldraw[fill = black] (8.5, 1.5) circle[radius = 0] node[scale = .6, right = 0]{$k_6 - n_2 - n_3$};
		
		\filldraw[fill = black] (8.5, -.5) circle[radius = 0] node[scale = .8, right = 1]{};
		\filldraw[fill = black] (8, .375) circle[radius = 0] node[scale = .8, right = -1]{};
		\filldraw[fill = black] (7.5, -.5) circle[radius = 0] node[scale = .8, left = 2]{};
		\filldraw[fill = black] (8.5, .5) circle[radius = 0] node[scale = .6, right = 2]{$k_4 + n_2$};
		\filldraw[fill = black] (7.625, 0) circle[radius = 0] node[scale = .6, below = -1]{$k_5 + n_3$};
		\filldraw[fill = black] (7.5, .5) circle[radius = 0] node[scale = .6, left = 3]{$k_6 - n_3$};

		\end{tikzpicture}
		
	\end{center}

	\caption{\label{vertex3} The attachment of shaded vertices to a vertex is shown above and to the left; applying the tetrahedron equation yields a frozen vertex, as shown to the right.}

\end{figure}

Due to the fact that $R_{000}^{000} = 1$ (which follows from \eqref{rn1n2n3}), the following definition now serves as the analog of \Cref{stochastic1} and \Cref{wschiweights}.

\begin{definition}
	
	\label{sn1n2n3definitionk1k2k3} 
	
	For any nonnegative integers $n_1, n_2, n_3, n_1', n_2', n_3', k_4, k_5, k_6$, set $a_4', a_5', a_6', a_4'', a_5'', a_6''$ as in \eqref{n4n5n6} and $b_4', b_5', b_6'$ as in \eqref{b4b5b6}. Then, define  
	\begin{flalign}
	\label{skweight} 
	S_{n_1 n_2 n_3}^{n_1' n_2' n_3'} (k_4, k_5, k_6) = R_{n_1 n_2 n_3}^{n_1' n_2' n_3'} C_{n_1 n_2 n_3}^{n_1' n_2' n_3'} (k_4, k_5, k_6),
	\end{flalign}
	
	\noindent where
	\begin{flalign} 
	\label{kccorrection} 
	C_{n_1 n_2 n_3}^{n_1' n_2' n_3'} (k_4, k_5, k_6) = \displaystyle\frac{ R_{n_1' k_4 k_5}^{0 b_4' b_5'} R_{n_2' b_4' k_6}^{0 a_4'' b_6'} R_{n_3' b_5' b_6'}^{0 a_5'' a_6''}}{R_{n_3 k_5 k_6}^{0 a_5' a_6'} R_{n_2 k_4 a_6'}^{0 a_4' a_6''} R_{n_1 a_4' a_5'}^{0 a_4'' a_5''}}.
	\end{flalign}

\end{definition}  

As in \Cref{SixVertexEquation} and \Cref{DefinitionWeights}, we can interpret \Cref{sn1n2n3definitionk1k2k3} diagrammatically, by attaching three shaded vertices $u_1, u_2, u_3$ to our initial (unshaded) vertex $u$; this is depicted on the left side of \Cref{vertex3}. Applying the tetrahedron ``pushes'' these shaded vertices past $u$, after which the arrow configuration of $u$ becomes frozen; this is depicted on the right side of \Cref{vertex3}. Now the weight $S_{n_1 n_2 n_3}^{n_1' n_2' n_3'}$ is the weight of the left diagram in \Cref{vertex3} divided by the weight of the right one. 

We will see that the $S$ weights defined above are stochastic and satisfy (a dynamical variant of) the Yang-Baxter equation in \Cref{stochastic3s} and \Cref{3equationdimensiondynamical}, respectively. However, let us first evaluate them explicitly. To that end, we have the following proposition. 

\begin{prop} 
	
\label{ssk}

For any $n_1, n_2, n_3, n_1', n_2', n_3', k_4, k_5, k_6 \in \mathbb{Z}_{\ge 0}$, we have that 
\begin{flalign*}
S_{n_1 n_2 n_3}^{n_1' n_2' n_3'} (k_4, k_5, k_6) = S_{n_1 n_2 n_3}^{n_1' n_2' n_3'} (q^{2 k_5 + 2}),
\end{flalign*}

\noindent where $S_{n_1 n_2 n_3}^{n_1' n_2' n_3'} (v)$ is given by \eqref{sn1n2n3vweight}, for any $v \in \mathbb{C}$. 
\end{prop}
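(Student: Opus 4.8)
The plan is to prove \Cref{ssk} by direct computation: insert the explicit formula \eqref{rn1n2n3} for the $R$ weights into the stochastic correction \eqref{kccorrection} and simplify. The point is that although \eqref{kccorrection} is a ratio of six $R$ weights, each of which generically involves a ${_2 \varphi_1}$ series, all of the $R$ weights appearing in \eqref{kccorrection} have a frozen first output index $n_1^{(\cdot)} = 0$, and for such weights the ${_2 \varphi_1}$ degenerates dramatically. Concretely, the first step is to record the specialization of \eqref{rn1n2n3} when the ``$n_1'$'' slot equals $0$: then the hypergeometric argument becomes $q^{2(-n_2+1)}$, and one may apply the Vandermonde--Chu type summation (the $q$-analog, which is a special case of the $q$-Heine identity \eqref{21identityhypergeometric} or can be cited directly) to collapse ${_2\varphi_1}$ to a ratio of $q$-Pochhammer symbols. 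This yields a closed product form
\begin{flalign*}
R_{m_1 m_2 m_3}^{0 \, m_2' \, m_3'} = q^{m_2(m_2+1) - (m_2 - m_2')(m_2 - m_3')} \displaystyle\frac{(q^{-2 m_2'}; q^2)_{m_2} \, (\text{product of Pochhammers})}{(q^2; q^2)_{m_2} \cdots} \, \textbf{1}_{m_1 + m_2 = m_2'} \textbf{1}_{m_2 + m_3 = m_2' + m_3'},
\end{flalign*}
valid whenever the indicator constraints hold (which, by the choices \eqref{n4n5n6} and \eqref{b4b5b6}, they do for every factor in \eqref{kccorrection}).

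The second step is bookkeeping: substitute these product forms for all six $R$ weights in \eqref{kccorrection}, using \eqref{n4n5n6} and \eqref{b4b5b6} to express $a_4', a_5', a_6', a_4'', a_5'', a_6'', b_4', b_5', b_6'$ in terms of $n_1, n_2, n_3, n_1', n_2', n_3', k_4, k_5, k_6$. One then collects the $q$-powers, using the identity $n_1 + n_2 = n_1' + n_2'$ and $n_2 + n_3 = n_2' + n_3'$ (which hold since otherwise $R_{n_1 n_2 n_3}^{n_1' n_2' n_3'} = 0$ and both sides vanish) to simplify the exponent, and uses the first identity in \eqref{elliptichypergeometricidentities1}, namely $(a;q)_k (aq^k; q)_m = (a;q)_{k+m}$, repeatedly to telescope the Pochhammer ratios. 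The target is to show that $C_{n_1 n_2 n_3}^{n_1' n_2' n_3'}(k_4,k_5,k_6)$ equals the ratio $S_{n_1 n_2 n_3}^{n_1' n_2' n_3'}(q^{2k_5+2}) \big/ R_{n_1 n_2 n_3}^{n_1' n_2' n_3'}$, i.e. the product of all the factors in \eqref{sn1n2n3vweight} other than the leading ${_2\varphi_1}$ and the indicators, with $v$ specialized to $q^{2k_5+2}$. In particular one should see the variable $k_5$ enter only through the combination $q^{2k_5}$ (and $k_4, k_6$ drop out entirely), which is the content of the claim; this matches \Cref{cproductzr} and \Cref{sweightsrank} in spirit, where analogous stochastic corrections depended on the stochasticization data only through a single dynamical parameter.

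The main obstacle I anticipate is the $q$-power accounting and the Pochhammer telescoping in the second step: there are six $R$ factors, each contributing a quadratic-in-indices exponent and several Pochhammer symbols with shifted arguments, and matching them against the relatively compact expression \eqref{sn1n2n3vweight} requires careful use of $n_1 + n_2 = n_1' + n_2'$, $n_2 + n_3 = n_2' + n_3'$, and the reflection/shift identities for $(a;q^2)_k$ with possibly negative or $q$-power arguments (e.g. rewriting $(q^{-2m};q^2)_k$ via $(q^{-2m};q^2)_k = (-1)^k q^{-k(2m-k+1)} (q^2;q^2)_m / (q^2;q^2)_{m-k}$). I would organize this by handling the numerator triple and denominator triple of \eqref{kccorrection} separately, writing each as (power of $q$) $\times$ (product of Pochhammers), then dividing; the cancellation of the ``$k_4$'' and ``$k_6$'' dependence should be visible already at the level of the Pochhammer products once \eqref{elliptichypergeometricidentities1} is applied, and the residual $q$-exponent should simplify using the conservation relations. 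Once the correction is identified, combining \eqref{skweight} with \eqref{rn1n2n3} and the definition \eqref{sn1n2n3vweight} gives the proposition; I would also remark that the indicator constraints in \eqref{sn1n2n3vweight} are automatically inherited from those in \eqref{rn1n2n3}, so no separate verification of support is needed.
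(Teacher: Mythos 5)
Your overall architecture is exactly the paper's: reduce every $R$ factor in \eqref{kccorrection} to a closed product using the frozen first output index, then telescope Pochhammers with \eqref{elliptichypergeometricidentities1} and the conservation laws, observe that $k_4,k_6$ cancel and $k_5$ enters only through $q^{2k_5}$, and assemble via \eqref{skweight}. The gap is in the justification of the key step, the product formula for $R_{m_1 m_2 m_3}^{0\, m_2'\, m_3'}$. The $q$-Chu--Vandermonde summation evaluates ${_2\varphi_1}(q^{-2n}, b; c \,|\, q^2, z)$ in closed form only for the special arguments $z = q^2$ or $z = c q^{2n}/b$; here the argument is $z = q^{-2n_3}$, and with $c = q^{2(n_1'-n_2+1)}$, $b = q^{2n_1+2}$ one finds $cq^{2n_2}/b = q^{2(n_2-n_2')}$, which equals $q^{-2n_3}$ only in the degenerate case $n_3'=0$. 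So no Chu--Vandermonde evaluation is available generically (and \eqref{21identityhypergeometric} is a transformation, not a summation, so it cannot produce a closed form either). Worse, at $n_1'=0$ the expression \eqref{rn1n2n3} is formally of the type $0\cdot\infty$: the prefactor $(q^{-2n_1'};q^2)_{n_2} = (1;q^2)_{n_2}$ vanishes for $n_2\ge 1$, while the $k=n_2$ term of the ${_2\varphi_1}$ contains the vanishing factor $1-q^{2(n_1'-n_2+1)+2(n_2-1)} = 1-q^{2n_1'} = 0$ in its denominator Pochhammer, so the series cannot be evaluated ``on its own'' by any summation theorem.

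The correct mechanism (and the one the paper uses) is more elementary: first absorb the prefactor into the summand, rewriting the weight as $q^{n_2(n_2+1)-(n_2-n_1')(n_2-n_3')}\sum_{k=0}^{n_2} \frac{(q^{-2n_1'};q^2)_{n_2-k}(q^{2n_1+2};q^2)_k}{(q^2;q^2)_{n_2-k}(q^2;q^2)_k}\, q^{-2k(n_1'+n_3+1)}$. At $n_1'=0$ the factor $(q^{-2n_1'};q^2)_{n_2-k} = (1;q^2)_{n_2-k}$ equals $\textbf{1}_{k=n_2}$, so the sum collapses to its single top term and one gets $R_{n_1 n_2 n_3}^{0\, n_2'\, n_3'} = q^{-n_2(n_1+n_3+1)}\,(q^2;q^2)_{n_1+n_2}\,(q^2;q^2)_{n_1}^{-1}(q^2;q^2)_{n_2}^{-1}$ with no hypergeometric identity at all. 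If you replace your Chu--Vandermonde step by this observation, the rest of your plan goes through as written and reproduces the paper's computation.
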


\begin{rem} 
	
\label{sk5} 

Analogous to what was observed by \Cref{cproductzr} in the higher rank Yang-Baxter setting, Proposition \ref{ssk} implies that $S_{n_1 n_2 n_3}^{n_1' n_2' n_3'} (k_4, k_5, k_6)$ is independent of $k_4$ and $k_6$, that is, the three parameters initially defining this quantity reduce to one that governs it. This fact will be useful in deriving the dynamical tetrahedron equation, given by \Cref{3equationdimensiondynamical} below. 
	
\end{rem}

\begin{proof}[Proof of \Cref{ssk}]
	
We first evaluate the stochastic corrections $C_{n_1 n_2 n_3}^{n_1' n_2' n_3'} (k_1, k_2, k_3)$ given by \eqref{kccorrection}. Observe that all of the $R_{n_1 n_2 n_3}^{n_1' n_2' n_3'}$ terms appearing in the right side of that expression have $n_1' = 0$; thus, we will begin by deriving a factored form for weights of the type $R_{n_1 n_2 n_3}^{0 n_2' n_3'}$. 

To that end, if $n_1, n_2, n_3, n_1', n_2', n_3'$ are nonnegative integers satisfying $n_1 + n_2 = n_1' + n_2'$ and $n_2 + n_3 = n_2' + n_3'$, then \eqref{rn1n2n3} yields 
\begin{flalign*}
R_{n_1 n_2 n_3}^{n_1' n_2' n_3'} = q^{n_2 (n_2 + 1) - (n_2 - n_1') (n_2 - n_3')} \displaystyle\sum_{k = 0}^{n_2} \displaystyle\frac{ (q^{-2 n_1'}; q^2)_{n_2 - k} (q^{2n_1 + 2}; q^2)_k}{(q^2; q^2)_{n_2 - k} (q^2; q^2)_k } q^{-2k (n_1' + n_3 + 1)}.
\end{flalign*}

\noindent In particular, if $n_1' = 0$, then $(q^{-2 n_1'}; q^2)_{n_2 - k} = \textbf{1}_{k = n_2}$, and so the sum defining $R_{n_1 n_2 n_3}^{n_1' n_2' n_3'}$ is only supported on the $k = n_2$ term. Thus, 
\begin{flalign}
\label{r0n2n3}
R_{n_1 n_2 n_3}^{0 n_2' n_3'} & = q^{n_2 (n_3' - 2 n_3 - n_1' - 1) }  \displaystyle\frac{ (q^{2n_1 + 2}; q)_{n_2}}{(q^2; q^2)_{n_2} } = q^{- n_2 (n_1 + n_3 + 1)}  \displaystyle\frac{ (q^2; q^2)_{n_1 + n_2}}{(q^2; q^2)_{n_1} (q^2; q^2)_{n_2}},
\end{flalign}

\noindent where we have used the first identity in \eqref{elliptichypergeometricidentities1} and the fact that $n_3' - n_1' = n_3 - n_1$.

Recalling the definitions of $a_4', a_5', a_6'$ and $b_4', b_5', b_6'$ from \eqref{n4n5n6} and \eqref{b4b5b6}, respectively, and then inserting \eqref{r0n2n3} into \eqref{kccorrection} yields 
\begin{flalign}
\label{cn1n2n3}
\begin{aligned}
C_{n_1 n_2 n_3}^{n_1' n_2' n_3'} (k_1, k_2, k_3) & =   \displaystyle\frac{(q^2; q^2)_{n_1} 	(q^2; q^2)_{n_2} (q^2; q^2)_{n_3}}{(q^2; q^2)_{n_1'} (q^2; q^2)_{n_2'} (q^2; q^2)_{n_3'} }   \displaystyle\frac{ (q^2; q^2)_{k_5 - n_1' + n_3'}}{(q^2; q^2)_{k_5 - n_1'}} \displaystyle\frac{ (q^2; q^2)_{k_5}}{ (q^2; q^2)_{n_3 + k_5}} \displaystyle\frac{ (q^2; q^2)_{k_4 + n_1' + n_2'}}{ (q^2; q^2)_{k_4 + n_1 + n_2}} \\
& \qquad \times q^{ k_5 (n_2 + n_3 + n_2' - n_3') + k_4 (n_1 + n_2 - n_1' - n_2') + n_1 n_2 + n_2 n_3 + n_1' n_3' - 2n_1' n_2' + n_2} \\
& =  \displaystyle\frac{(q^2; q^2)_{n_1} (q^2; q^2)_{n_2} (q^2; q^2)_{n_3}}{(q^2; q^2)_{n_1'} (q^2; q^2)_{n_2'} (q^2; q^2)_{n_3'}} \displaystyle\frac{ (q^{2n_5 - 2n_1' + 2}; q^2)_{n_3'}}{ (q^{2n_5 + 2}; q^2)_{n_3} } \\
& \qquad \times q^{n_2 + 2 n_2' n_5 + n_1 n_2 + n_2 n_3 + n_1' n_3' - 2 n_1' n_2'}, 
\end{aligned} 
\end{flalign} 

\noindent where in the second equality we used the first identity in \eqref{elliptichypergeometricidentities1} and the facts that $n_1 + n_2 = n_1' + n_2'$ and $n_2 + n_3 = n_2' + n_3'$. Now the proposition follows from inserting \eqref{cn1n2n3} into \eqref{skweight} and using \eqref{rn1n2n3}. 
\end{proof}

\subsection{Properties of the Stochasticized Weights} 

\label{EquationStochastic3}

In this section we establish two properties of the stochasticized weights $S_{n_1 n_2 n_3}^{n_1' n_2' n_3'} (v)$ from \eqref{sn1n2n3vweight}, namely that they are stochastic (\Cref{stochastic3s}) and satisfy a dynamical version of the tetrahedron equation (\Cref{3equationdimensiondynamical}). We begin with the former.

\begin{prop}
	
\label{stochastic3s}

Fix $v \in \mathbb{C}$ and $(n_1, n_2, n_3) \in \mathbb{Z}_{\ge 0}^3$. Then, \eqref{stochasticsumsv} holds. Furthermore, if $q, v \in (0, 1)$ and $v \le q^{2 n_1'}$, then $S_{n_1 n_2 n_3}^{n_1' n_2' n_3'} (v) \ge 0$.
\end{prop}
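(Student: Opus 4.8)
The plan is to derive the stochasticity identity \eqref{stochasticsumsv} as a direct consequence of the abstract machinery of \Cref{Domain} together with the explicit computations already carried out for the tetrahedron weights. Recall that \Cref{stochasticws} asserts that any stochasticization $S_P$ of a nonnegative solution $w$ to the Yang--Baxter-type equation with respect to a suitable $\chi$ automatically satisfies $\sum_m S_P(a,b,m,d) = 1$. The analogous statement in the three-dimensional tetrahedron setting is precisely what \Cref{sn1n2n3definitionk1k2k3} sets up: by construction, $S_{n_1 n_2 n_3}^{n_1' n_2' n_3'}(k_4,k_5,k_6)$ is the weight of the ``attached shaded vertices'' diagram on the left of \Cref{vertex3} divided by the weight of the frozen-vertex diagram on the right, and the tetrahedron equation \eqref{equation31} with the boundary $(n_1'',n_2'',n_3'') = (0,0,0)$ forces the right-hand side to consist of a single term. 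Summing \eqref{equation31} over the intermediate variables and dividing by that single nonzero term gives $\sum_{\mathbf{n}'} S_{n_1 n_2 n_3}^{n_1' n_2' n_3'}(k_4,k_5,k_6) = 1$ for every $(k_4,k_5,k_6) \in \mathbb{Z}_{\ge 0}^3$. First I would spell this argument out, being careful that the sum on the left of \eqref{equation31} really does factor through the $S$-weights once one of the three factors (the one with frozen output) is recognized as the denominator in \eqref{kccorrection}, and that the remaining $R$-factors on the left match the $\chi$-type weights in the definition.

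Next I would transfer this to the stated form \eqref{stochasticsumsv}, which is phrased in terms of $S_{n_1 n_2 n_3}^{n_1' n_2' n_3'}(v)$ with $v \in \mathbb{C}$ rather than $S_{n_1 n_2 n_3}^{n_1' n_2' n_3'}(k_4,k_5,k_6)$. By \Cref{ssk} we have $S_{n_1 n_2 n_3}^{n_1' n_2' n_3'}(k_4,k_5,k_6) = S_{n_1 n_2 n_3}^{n_1' n_2' n_3'}(q^{2k_5+2})$, so the identity $\sum_{\mathbf{n}'} S_{n_1 n_2 n_3}^{n_1' n_2' n_3'}(v) = 1$ holds for all $v$ of the form $q^{2k}$ with $k \ge 1$ and $q$ generic. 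Both sides of this identity are rational functions of $v$ once one uses the closed form \eqref{sn1n2n3vweight} (the sum over $\mathbf{n}'$ is finite because the indicator factors $\mathbf{1}_{n_1+n_2 = n_1'+n_2'}\mathbf{1}_{n_2+n_3 = n_2'+n_3'}$ pin down $n_1', n_3'$ in terms of $n_2'$, and $n_2'$ ranges over a bounded set), so an identity valid on the infinite set $\{q^{2k} : k \ge 1\}$ extends to all $v \in \mathbb{C}$ by analytic continuation. I would state this continuation step carefully, noting the finiteness of the sum and that the $_2\varphi_1$ appearing in \eqref{sn1n2n3vweight} terminates.

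For the nonnegativity claim, assume $q, v \in (0,1)$ and $v \le q^{2n_1'}$. Here I would argue directly from \eqref{sn1n2n3vweight}. The prefactor $q^{(n_1+n_3+n_1'+n_3'+2)n_2 - 2(n_1'+1)n_2'}$ is a positive power of $q$, hence positive. The ratio of $q$-Pochhammer symbols $\frac{(q^2;q^2)_{n_1}(q^2;q^2)_{n_3}}{(q^2;q^2)_{n_1'}(q^2;q^2)_{n_2'}(q^2;q^2)_{n_3'}}$ is positive since each $(q^2;q^2)_k > 0$ for $q \in (0,1)$. The factor $(q^{-2n_1'};q^2)_{n_2}$ requires care: $(q^{-2n_1'};q^2)_{n_2} = \prod_{j=0}^{n_2-1}(1 - q^{2(j-n_1')})$, whose sign is $(-1)^{\#\{j < n_2 : j < n_1'\}} = (-1)^{\min(n_1',n_2)}$ times a positive quantity, and this is paired against the sign coming from the $_2\varphi_1$ and from $v^{n_2'}\frac{(vq^{-2n_1'};q^2)_{n_3'}}{(v;q^2)_{n_3}}$. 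The factor $v^{n_2'} > 0$ is immediate; for $(v;q^2)_{n_3} = \prod_{j=0}^{n_3-1}(1-vq^{2j}) > 0$ one uses $0 < v < 1$; and for $(vq^{-2n_1'};q^2)_{n_3'} = \prod_{j=0}^{n_3'-1}(1 - vq^{2(j-n_1')})$ one uses the hypothesis $v \le q^{2n_1'}$, i.e. $vq^{-2n_1'} \le 1$, so each factor $1 - vq^{2(j-n_1')} \ge 1 - vq^{-2n_1'} \ge 0$. The remaining task is to control the sign of the terminating $_2\varphi_1\big(q^{-2n_2}, q^{2n_1+2}; q^{2(n_1'-n_2+1)}; q^2, q^{-2n_3}\big)$ together with $(q^{-2n_1'};q^2)_{n_2}$. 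I expect this to be the main obstacle: the cleanest route is probably to observe that $R_{n_1 n_2 n_3}^{n_1' n_2' n_3'} \ge 0$ for $q \in (0,1)$ by \Cref{requation3}, and that $C_{n_1 n_2 n_3}^{n_1' n_2' n_3'}(k_4,k_5,k_6) \ge 0$ for suitable nonnegative integers $k_4,k_5,k_6$ again by \Cref{requation3} (each $R$-weight in \eqref{kccorrection} is a ratio of nonnegatives), so $S = R \cdot C \ge 0$ whenever $v = q^{2k_5+2}$ with $k_5$ a nonnegative integer satisfying $q^{2k_5+2} \le q^{2n_1'}$, i.e. $k_5 \ge n_1' - 1$; one then checks that the closed form \eqref{sn1n2n3vweight}, being continuous in $v$ on $(0,1)$, stays nonnegative on the whole interval $(0, q^{2n_1'}]$ because it is nonnegative on the dense-enough discrete subset $\{q^{2k} : k \ge \max(1, n_1')\}$ and, being a rational function with no poles in $(0,q^{2n_1'}]$ (the only possible zeros of denominators are at $v = q^{-2j}$ with $j \ge 0$, which lie outside $(0,1)$), cannot change sign there. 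I would present the nonnegativity proof in this second, more robust form, falling back on the explicit factorization only to confirm the denominators of \eqref{sn1n2n3vweight} are nonvanishing on $(0, q^{2n_1'}]$.
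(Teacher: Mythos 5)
Your proof of the stochasticity identity \eqref{stochasticsumsv} is correct and follows essentially the same route as the paper: specialize the tetrahedron equation \eqref{equation31} to the frozen boundary $(n_1'', n_2'', n_3'') = (0,0,0)$ so that the right side collapses to a single term, divide through by that term to recognize the left side as $\sum_{\textbf{n}'} S_{n_1 n_2 n_3}^{n_1' n_2' n_3'} (k_4, k_5, k_6)$ via \eqref{skweight} and \eqref{kccorrection}, pass to $v = q^{2k_5 + 2}$ by \Cref{ssk}, and extend to all $v \in \mathbb{C}$ by rational-function interpolation over the infinite set $\{ q^{2k} \}$ (the sum being finite because the indicators pin down $n_1'$ and $n_3'$ and bound $n_2'$). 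No complaints there.

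The nonnegativity argument, however, has a genuine gap. Your fallback reasoning is that $S_{n_1 n_2 n_3}^{n_1' n_2' n_3'} (v) \ge 0$ at the discrete points $v = q^{2k}$ with $k \ge \max(1, n_1')$, and that a rational function with no poles in $(0, q^{2 n_1'}]$ which is nonnegative on this discrete set ``cannot change sign there.'' That implication is false: nonnegativity on a discrete set says nothing about the gaps between its points. For instance, $(v - a)(v - b)$ with $q^4 < a < b < q^2$ is positive at every point of $\{ q^{2k} \}$ yet negative on all of $(a, b)$; the set $\{ q^{2k} \}$ accumulates only at $0$, so it controls the sign nowhere away from $0$. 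Analytic continuation transfers an \emph{identity} from infinitely many points, but not an \emph{inequality}.

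The fix is precisely the step you started and abandoned in your direct sign analysis. The combination whose sign you could not control, namely $(q^{-2n_1'}; q^2)_{n_2}$ times the terminating ${_2 \varphi_1}$, is --- up to the manifestly positive factors $q^{n_2(n_2+1) - (n_2 - n_1')(n_2 - n_3')}$ and $(q^2; q^2)_{n_2}^{-1}$ --- exactly the weight $R_{n_1 n_2 n_3}^{n_1' n_2' n_3'}$ of \eqref{rn1n2n3}, which is already known to be nonnegative for $q \in (0,1)$ by the second statement of \Cref{requation3}. Writing $S_{n_1 n_2 n_3}^{n_1' n_2' n_3'} (v) = R_{n_1 n_2 n_3}^{n_1' n_2' n_3'} \cdot \bigl( S_{n_1 n_2 n_3}^{n_1' n_2' n_3'} (v) / R_{n_1 n_2 n_3}^{n_1' n_2' n_3'} \bigr)$, the ratio is the explicit product \eqref{svr}, and every one of its factors is nonnegative under your hypotheses by exactly the computations you already carried out: the $(q^2; q^2)$ ratios and the power of $q$ are positive, $v^{n_2'} > 0$, $(v; q^2)_{n_3} > 0$ for $v \in (0,1)$, and $(q^{-2 n_1'} v; q^2)_{n_3'} \ge 0$ because $v \le q^{2 n_1'}$ makes each factor $1 - v q^{2(j - n_1')} \ge 1 - v q^{-2 n_1'} \ge 0$. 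This yields $S_{n_1 n_2 n_3}^{n_1' n_2' n_3'} (v) \ge 0$ for every $v \in (0, q^{2 n_1'}]$ directly, with no interpolation or continuity argument needed.
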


\begin{proof}
The second part of the proposition follows from the second statement of \Cref{requation3} and the fact that 
\begin{flalign}
\label{svr} 
\displaystyle\frac{S_{n_1 n_2 n_3}^{n_1' n_2' n_3'} (v)}{R_{n_1 n_2 n_3}^{n_1' n_2' n_3'}} = \displaystyle\frac{(q^2; q^2)_{n_1} (q^2; q^2)_{n_2} (q^2; q^2)_{n_3}}{(q^2; q^2)_{n_1'} (q^2; q^2)_{n_2'} (q^2; q^2)_{n_3'}} \displaystyle\frac{(q^{- 2 n_1'} v; q^2)_{n_3'}}{(v; q)_{n_3}} v^{n_2'} q^{n_2 (n_1 + n_3 + 1) + n_1' n_3' - 2 n_2' (n_1' + 1)},
\end{flalign}

\noindent which is nonnegative for $q, v \in (0, 1)$ with $v \le q^{2n_1'}$.

The proof of \eqref{stochasticsumsv} will follow from the tetrahedron equation \eqref{equation31} and an analytic continuation. More specifically, first observe that the number of nonzero terms on the left side of \eqref{stochasticsumsv} is bounded (by a quantity that depends only on $n_1$, $n_2$, and $n_3)$. Each of these terms is a rational function in $v$, whose numerator and denominator are both of bounded degrees (again, in a way that only depends on $n_1$, $n_2$, and $n_3$). Therefore, it suffices to verify \eqref{stochasticsumsv} for infinitely many values of $v$. 

Thus, let $k_4, k_5, k_6$ be arbitrary positive integers such that the quantities $a_4', a_5', a_6', a_4'', a_5'', a_6''$ and $b_4', b_5', b_6'$ from \eqref{n4n5n6} and \eqref{b4b5b6}, respectively, are all nonnegative; we will establish \eqref{stochasticsumsv} in the case $v = q^{2 k_5 + 2}$. 

To that end, we apply \eqref{equation31} where the $(n_1, n_2, n_3)$ there coincides with the $(n_1, n_2, n_3)$ here; the $(n_4, n_5, n_6)$ there coincides with the $(k_4, k_5, k_6)$ here; where $n_1'', n_2'', n_3''$ there are each equal to $0$ here; and where the $(n_4'', n_5'', n_6'')$ there coincides with the $(a_4'', a_5'', a_6'')$ here. As explained directly above \Cref{sn1n2n3definitionk1k2k3}, this choice of parameters allows for only one nonzero term on the right side of \eqref{equation31}, corresponding to when $n_1', n_2', n_3'$ there are each equal to $0$ here and the $(n_4', n_5', n_6')$ there coincides with  the $(a_4', a_5', a_6')$ here. The left side of \eqref{equation31} might have several nonzero terms but, again as explained directly above \Cref{sn1n2n3definitionk1k2k3}, if we fix the $n_1', n_2', n_3'$ indices of a nonzero summand there then the $(n_4', n_5', n_6')$ there must coincide with the $(b_4', b_5', b_6')$ here. 

Thus, \eqref{equation31} states that 
\begin{flalign}
\label{stochasticsumsv1}
 \displaystyle\sum_{\textbf{n}'} R_{n_1 n_2 n_3}^{n_1' n_2' n_3'} R_{n_1' k_4 k_5}^{0 b_4' b_5'} R_{n_3' b_5' b_6'}^{0 a_5'' a_6''} R_{n_2' b_4' k_6}^{0 a_4'' b_6'}  = R_{n_3 k_5 k_6}^{0 a_5' a_6'} R_{n_2 k_4 a_6'}^{0 a_4' a_6''}  R_{n_1 a_4' a_5'}^{0 a_4'' a_5''},
\end{flalign}

\noindent where the sum is over $\textbf{n}' = (n_1', n_2', n_3') \in \mathbb{Z}_{\ge 0}^3$ and where we have used the fact that $R_{000}^{000} = 1$. Now \eqref{stochasticsumsv} in the case $v = q^{2k_5 + 2}$ follows from \eqref{skweight}, \eqref{kccorrection}, \eqref{sn1n2n3vweight}, and \eqref{stochasticsumsv1}. 
\end{proof}

In addition to stochasticity, we would also like the $S$ weights to satisfy a (dynamical variant of) the tetrahedron equation. Recall from \Cref{dynamicalsixvertexequationparameter} and \Cref{consistencyvertexequation} that the proof of such an equation requires the consistency condition for the attachment of the shaded vertices. In our previous two-dimensional examples, we explained a diagrammatic way of guaranteeing this consistency, by moving a stochasticization curve through our domain, as depicted in \Cref{arrowstochasticvertices}. 

We do not know of an analogous diagram in the present three-dimensional situation, but we can still algebraically search for a choice of dynamical parameters that guarantees the required cancellations (as we did in \Cref{dynamicalsixvertexequationparameter}). There indeed happens to be one, which is depicted in \Cref{3equationdimension}. This gives rise to the dynamical tetrahedron equation for the $S$ weights provided \eqref{dimensiondynamical}.

\begin{thm} 
	
	\label{3equationdimensiondynamical}
	
Fix complex numbers $v$ and $w$, as well as nonnegative integers $n_1, n_2, n_3, n_4, n_5, n_6$ and $n_1'', n_2'', n_3'', n_4'', n_5'', n_6''$. Then, \eqref{dimensiondynamical} holds.

\end{thm}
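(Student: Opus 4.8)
The plan is to deduce the dynamical tetrahedron equation \eqref{dimensiondynamical} from the non-dynamical tetrahedron equation \eqref{equation31} by the same "matching of $\chi$-weights" strategy used in \Cref{dynamicalsixvertexequationparameter} and in the proof of \Cref{relationwstochastic}, combined with an analytic continuation in $v$ and $w$. Concretely, recall from \Cref{ssk} that $S_{n_1 n_2 n_3}^{n_1' n_2' n_3'}(k_4,k_5,k_6)$ depends on $(k_4,k_5,k_6)$ only through $v = q^{2k_5+2}$, and that by \eqref{skweight}--\eqref{kccorrection} each $S$-weight equals the corresponding $R$-weight times a ratio of six ``frozen'' $R$-weights of the form $R_{\cdot\,\cdot\,\cdot}^{0\,\cdot\,\cdot}$. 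The first step is to fix nonnegative integers $k_4,k_5,k_6$ (and analogously a second triple governing $w$) large enough that all the auxiliary indices appearing in the attachments are nonnegative, set $v = q^{2k_5+2}$ and $w$ equal to the analogous monomial, and then substitute the definition \eqref{skweight} into both sides of \eqref{dimensiondynamical}. Each side becomes a sum of products of four $R$-weights times a product of four stochastic corrections $C$; the goal is to show that, after the substitution, the product of the four $C$'s on each side is independent of the summation indices $\textbf{n}' = (n_1',\dots,n_6')$ and that the two index-independent prefactors coincide.

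The second, and technically central, step is to verify that the product of stochastic corrections factors out of each sum. This is exactly the phenomenon of \Cref{consistencyvertexequation}: each stochastic correction is a ratio of $\chi$-type weights (here frozen $R$'s), and one must check that every frozen $R$-factor appearing in a numerator that depends on a summed index is cancelled by an identical factor in a denominator, once the dynamical shifts $q^{2n_5}w$, $q^{-2n_2''}v$, $q^{2n_3'}w$, $q^{-2n_3'}v$ in \eqref{dimensiondynamical} are taken into account. The bookkeeping is organized by the diagram in \Cref{3equationdimension}: moving the three shaded vertices attached to a given vertex through the tetrahedron relation, as in \Cref{vertex3}, and tracking how the auxiliary indices $b_4',b_5',b_6'$ and $a_4',a_5',a_6',a_4'',a_5'',a_6''$ of \eqref{n4n5n6}--\eqref{b4b5b6} propagate between adjacent vertices in the three-dimensional configuration. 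The shifts $q^{2n_5}w$ and $q^{2n_3'}w$ on the two $S$-weights whose third argument interacts with the $w$-stochasticization plane, and $q^{-2n_2''}v$ and $q^{-2n_3'}v$ for the $v$-plane, are precisely the choices that make the $\textbf{n}'$-dependent frozen $R$-factors cancel pairwise between the two $C$-products; one verifies this by the explicit evaluation \eqref{r0n2n3} of $R_{n_1 n_2 n_3}^{0 n_2' n_3'}$ and the composition-of-Pochhammer identity in \eqref{elliptichypergeometricidentities1}. Once the $C$-products are shown to be index-independent, the third step is short: divide \eqref{equation31} (applied with the appropriate frozen outer boundary, so that the right side collapses to a single term exactly as in the proof of \Cref{stochastic3s}) by the common $C$-product, which yields \eqref{dimensiondynamical} for $v = q^{2k_5+2}$ and the analogous value of $w$. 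The final step is to observe that both sides of \eqref{dimensiondynamical} are rational functions of $v$ and $w$ with numerator and denominator of degree bounded in terms of $n_1,\dots,n_6$ and $n_1'',\dots,n_6''$ only (the number of nonzero summands is likewise bounded), so that an identity holding for the infinite family of values $(v,w) = (q^{2k_5+2}, q^{2k_5'+2})$ with $k_5,k_5' \to \infty$ forces the identity for all complex $v,w$.

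The main obstacle I expect is the second step: establishing that the four stochastic corrections on each side of \eqref{dimensiondynamical} multiply to an $\textbf{n}'$-independent quantity, and that the two resulting prefactors are equal. Unlike the two-dimensional case, there is no stochasticization curve picture (as acknowledged in the paragraph preceding \Cref{3equationdimensiondynamical}), so the cancellation cannot simply be read off \Cref{arrowstochasticvertices}; it has to be checked algebraically. The cleanest route is probably to mimic \eqref{stochastic1dynamical1}: write out the generic product of corrections with undetermined dynamical shifts $q^{2a}w, q^{-2b}v, \dots$ on the relevant $S$-weights, expand each correction via \eqref{cn1n2n3}, and then solve the resulting system of ``matching'' equations among the Pochhammer arguments to pin down the shifts. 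One should find that the unique consistent solution is the one displayed in \Cref{3equationdimension}, at which point the $\textbf{n}'$-dependent factors telescope and the equality of the two prefactors follows from \eqref{equation31} itself (applied to the frozen-boundary specialization). A secondary, more mechanical difficulty is simply keeping the six-index bookkeeping of \eqref{n4n5n6}--\eqref{b4b5b6} straight across the three non-interacting planes in the three-dimensional figure, but this is routine once the correct shifts are identified.
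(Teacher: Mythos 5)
Your proposal takes essentially the same route as the paper: write each stochastic correction as a ratio of frozen $R$-weights via \eqref{kccorrection}, match the $\textbf{n}'$-dependent numerator and denominator factors pairwise to pin down the dynamical shifts displayed in \Cref{3equationdimension}, and then multiply \eqref{equation31} by the resulting common, index-independent prefactor (the paper also notes the independence and equality of the two prefactors can be checked directly from the closed form \eqref{svr}, which works for all complex $v,w$ at once and makes your final analytic-continuation step optional). The one slip is the parenthetical in your third step: no frozen outer boundary is imposed on \eqref{equation31} at that stage --- the tetrahedron equation is applied with the general boundary data $(n_1,\dots,n_6;n_1'',\dots,n_6'')$, and the frozen boundaries enter only inside the definition of each individual correction.
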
 

\begin{proof}[Proof (Outline)]
	
	As in the proofs of \Cref{sdynamicalequation1} and \Cref{relationwstochastic}, one first shows that  
	\begin{flalign*}
	\mathcal{X} = \displaystyle\frac{S_{n_1 n_2 n_3}^{n_1' n_2' n_3'} (t)}{R_{n_1 n_2 n_3}^{n_1' n_2' n_3'} } \displaystyle\frac{S_{n_1' n_4 n_5}^{n_1'' n_4' n_5'} (w)}{R_{n_1' n_4 n_5}^{n_1'' n_4' n_5'}} \displaystyle\frac{ S_{n_3' n_5' n_6'}^{n_3'' n_5'' n_6''} (x)}{R_{n_3' n_5' n_6'}^{n_3'' n_5'' n_6''}} \displaystyle\frac{ S_{n_2' n_4' n_6}^{n_2'' n_4'' n_6'} (v) }{  R_{n_2' n_4' n_6}^{n_2'' n_4'' n_6'}},
	\end{flalign*}
	
	\noindent and 
	\begin{flalign*}
	\mathcal{Y} = \displaystyle\frac{S_{n_3 n_5 n_6}^{n_3' n_5' n_6'} (v')}{R_{n_3 n_5 n_6}^{n_3' n_5' n_6'}} \displaystyle\frac{ S_{n_2 n_4 n_6'}^{n_2' n_4' n_6''} (y)}{R_{n_2 n_4 n_6'}^{n_2' n_4' n_6''}} \displaystyle\frac{ S_{n_1 n_4' n_5'}^{n_1' n_4'' n_5''} (z)}{R_{n_1 n_4' n_5'}^{n_1' n_4'' n_5''}} \displaystyle\frac{ S_{n_1' n_2' n_3'}^{n_1'' n_2'' n_3''} (w')}{R_{n_1' n_2' n_3'}^{n_1'' n_2'' n_3''}}, 
	\end{flalign*}
	
	\noindent are equal and independent of $\textbf{n}' = (n_1', n_2', n_3', n_4', n_5', n_6')$ if 
	\begin{flalign} 
	\label{txyzvw}
	t = q^{2n_5} w; \quad x = q^{-2 n_2''} v; \quad y = q^{-2n_3'} v'; \quad z = q^{2n_3'} w'; \quad v' = v; \quad w' = w. 
	\end{flalign}
	
	\noindent This can be verified directly, since the stochastic corrections $\frac{S}{R}$ appearing in the definitions of $\mathcal{X}$ and $\mathcal{Y}$ are explicitly given by \eqref{svr}. Then, \eqref{dimensiondynamical} would follow as a consequence of this equality and independence, and of \eqref{equation31}. 
	
	However, instead of confirming this here, let us outline an alternative method that enables one to derive the relations \eqref{txyzvw} between the dynamical parameters appearing in \eqref{dimensiondynamical}, without using the explicit form \eqref{svr} for the stochastic correction $C(u) = \frac{S (u)}{R}$. To that end, we will first use \Cref{ssk}, \eqref{skweight}, and \eqref{kccorrection} to express these stochastic corrections as products of weights of the form $R_{abc}^{0ef}$. 
	
	So, let $(j_4, j_5, j_6)$; $(k_4, k_5, k_6)$; $(l_4, l_5, l_6)$; $(m_4, m_5, m_6)$ be four triples of positive integers, to be fixed later. We view these four triples as analogs of the parameters $(k_4, k_5, k_6)$ from \Cref{sn1n2n3definitionk1k2k3}, for each of the four stochastic corrections defining $\mathcal{X}$. Specifically, the $j$-triple corresponds to the correction $C_{n_1 n_2 n_3}^{n_1' n_2' n_3'} (t)$; the $k$-triple to $C_{n_1' n_4 n_5}^{n_1'' n_4' n_5'} (w)$; the $l$-triple to $C_{n_3' n_5' n_6'}^{n_3'' n_5'' n_6''} (x)$; and the $m$-triple to $C_{n_2' n_4' n_6}^{n_2'' n_4'' n_6'} (v)$. By \Cref{ssk}, the dependence of $\mathcal{X}$ on these twelve parameters is given by 
	\begin{flalign}
	\label{twxv} 
	t = q^{2j_5 + 2}; \qquad w = q^{2k_5 + 2}; \qquad x = q^{2l_5 + 2}; \qquad v = q^{2 m_5 + 2};
	\end{flalign}
	
	 \noindent in particular, $\mathcal{X}$ is independent of the eight parameters $j_i, k_i, m_i, l_i$ for $i \in \{ 4, 6 \}$, due to \Cref{sk5}. 
	
	 Now for each $i \in \{ 4, 5, 6 \}$ let $a_i'$, $a_i''$ and $b_i$ be the parameters \eqref{n4n5n6} and \eqref{b4b5b6} associated with the correction $S_{n_1 n_2 n_3}^{n_1' n_2' n_3'} (t)$. Similarly, let $c_i'$, $c_i''$ and $d_i'$ denote the analogous parameters associated with the vertex with parameter $C_{n_1' n_4 n_5}^{n_1'' n_4' n_5'} (w)$; let $e_i'$, $e_i''$ and $f_i'$ denote those associated with $C_{n_3' n_5' n_6'}^{n_3'' n_5'' n_6''} (x)$; and let $g_i'$, $g_i''$ and $h_i$ be those associated with $C_{n_2' n_4' n_6}^{n_2'' n_4'' n_6'} (v)$. 
	 
	 Then, \eqref{kccorrection} implies that 
	\begin{flalign}
	\label{xquantity} 
	\begin{aligned}
	\mathcal{X} =  & \displaystyle\frac{R_{n_1' j_4 j_5}^{0 b_4' b_5'} R_{n_2' b_4' j_6}^{0 a_4'' b_6'} R_{n_3' b_5' b_6'}^{0 a_5'' a_6''}}{R_{n_3 j_5 j_6}^{0 a_5' a_6'} R_{n_2 j_4 a_6'}^{0 a_4' a_6''} R_{n_1 a_4' a_5'}^{0 a_4'' a_5''} } \displaystyle\frac{R_{n_1'' k_4 k_5}^{0 d_4' d_5'} R_{n_4' d_4' k_6}^{0 c_4'' d_6'} R_{n_5' d_5' d_6'}^{0 c_5'' c_6''}}{R_{n_5 k_5 k_6}^{0 c_5' c_6'} R_{n_4 k_4 c_6'}^{0 c_4' c_6''} R_{n_1' c_4' c_5'}^{0 c_4'' c_5''} } \\
	& \qquad \times \displaystyle\frac{R_{n_3'' l_4 l_5}^{0 f_4' f_5'} R_{n_5'' f_4' l_6}^{0 e_4'' f_6'} R_{n_6'' f_5' f_6'}^{0 e_5'' e_6''}}{R_{n_6' l_5 l_6}^{0 e_5' e_6'} R_{n_5' l_4 e_6'}^{0 e_4' e_6''} R_{n_3' e_4' e_5'}^{0 e_4'' e_5''} } 	\displaystyle\frac{R_{n_2'' m_4 m_5}^{0 h_4' h_5'} R_{n_4'' h_4' m_6}^{0 g_4'' h_6'} R_{n_6' h_5' h_6'}^{0 g_5'' g_6''}}{R_{n_6 m_5 m_6}^{0 g_5' g_6'} R_{n_4' m_4 g_6'}^{0 g_4' g_6''} R_{n_2' g_4' g_5'}^{0 g_4'' g_5''} }.
	\end{aligned}
	\end{flalign}
	
	In order for $\mathcal{X}$ to be independent of $\textbf{n}'$, we would like for any $R$ term in the numerator of the right side of \eqref{xquantity} containing some $n_i'$ as one of its indices to also appear in the denominator; this is similar to what was described in \Cref{dynamicalsixvertexequationparameter} and \Cref{consistencyvertexequation}. For instance, if $i = 1$, this would amount to imposing that $R_{n_1' j_4 j_5}^{0 b_4' b_5'}$ (which appears in the numerator of the right side of \eqref{xquantity}) be equal to $R_{n_1' c_4' c_5'}^{0 c_4'' c_5''}$ (which appears in the denominator). One way of guaranteeing this is by stipulating that the ordered sets $(n_1', j_4, j_5, 0, b_4', b_5')$ and $(n_1', c_4', c_5', 0, c_4'', c_5'')$ be equal, which would be the analog of the consistency condition from \Cref{wchiconditions}. This imposes the relations $(j_4, j_5) = (c_4', c_5')$ and $(b_4', b_5') = (c_4'', c_5'')$, which can be rewritten only in terms of the $n_i, n_i', n_i'', j_i, k_i, l_i, m_i$ using \eqref{n4n5n6} and \eqref{b4b5b6}. 
	
	Applying similar reasoning five more times (for the other terms on the right side of \eqref{xquantity} including some $n_i'$ as an index), we obtain a number of relations that amount to
	\begin{flalign}
	\label{jkm456}
	\begin{aligned}
	&  j_5 = k_5 + n_5; \qquad \quad m_5 - n_2'' = l_5; \\
	 k_5 - n_1'' = l_4; 	\quad j_6 = m_5 + n_6; \quad & j_4 = k_4 + n_4; \quad k_4 + n_1'' = m_4; \quad k_6 = m_6 - n_6;  \quad m_6 - n_4'' = l_6.
	\end{aligned}
	\end{flalign}
	
	\noindent Under the assumptions \eqref{jkm456}, we have that
	\begin{flalign*}
	\mathcal{X} =  & \displaystyle\frac{R_{n_1'' k_4 k_5}^{0 d_4' d_5'} R_{n_2'' m_4 m_5}^{0 h_4' h_5'} R_{n_3'' l_4 l_5}^{0 f_4' f_5'}  R_{n_4'' h_4' m_6}^{0 g_4'' h_6'} R_{n_5'' f_4' l_6}^{0 e_4'' f_6'} R_{n_6'' f_5' f_6'}^{0 e_5'' e_6''}}{R_{n_1 a_4' a_5'}^{0 a_4'' a_5''} R_{n_2 j_4 a_6'}^{0 a_4' a_6''} R_{n_3 j_5 j_6}^{0 a_5' a_6'} R_{n_4 k_4 c_6'}^{0 c_4' c_6''} R_{n_5 k_5 k_6}^{0 c_5' c_6'} R_{n_6 m_5 m_6}^{0 g_5' g_6'} },
	\end{flalign*}
	
	\noindent which is independent of $\textbf{n}'$ since all of the indices appearing there are (which can quickly be verified using \eqref{n4n5n6} and \eqref{b4b5b6}) if the $n_i$ and $n_i''$ are fixed for each $1 \le i \le 6$. 
	
	Now, let us fix $j_5, k_5, m_5, l_5$ satisfying the first two relations in \eqref{jkm456} and let the other $j_i, k_i, m_i, l_i$ be arbitrary positive integers satisfying the latter six relations there. In view of \eqref{twxv}, the first two identities in \eqref{jkm456} imply that $t = q^{n_5} w$ and $x = q^{-n_2''} v$, which comprise the first two constraints in \eqref{txyzvw}. The latter six relations in \eqref{jkm456} do not affect $\mathcal{X}$ since, as mentioned above, it is independent of the $j_i, k_i, l_i, m_i$ for $i \in \{ 4, 6 \}$. 
	
	Thus, the first two identities in \eqref{txyzvw} imply that $\mathcal{X}$ is independent of $\textbf{n}'$. Similarly, the third and fourth relations there can be shown to imply that $\mathcal{Y}$ is independent of $\textbf{n}'$, and the last two imply that $\mathcal{X} = \mathcal{Y}$. However, we will omit the latter two verifications since they are analogous to what was outlined above. 
\end{proof}

Similar to what was considered in \Cref{ellipticequationstochastic} and \Cref{sweightsrank2}, one might wonder whether it is possible to find a stochastic solution to the non-dynamical tetrahedron equation. We will address this in the next section.

\subsection{A Non-Dynamical Degeneration} 

\label{EquationDimension2} 

In this section we propose a degeneration of our stochasticized weights that satisfies the original (non-dynamical) tetrahedron equation. This will proceed by taking the limit as $q$ tends to $1$ of the $S_{n_1 n_2 n_3}^{n_1' n_2' n_3'} (v)$ stochasticized weights from \eqref{sn1n2n3vweight}.

To that end, we have the following proposition. 

\begin{prop}
	
	\label{q1sv} 
	
	Fix $v \in \mathbb{C}$ and $n_1, n_2, n_3, n_1', n_2', n_3' \in \mathbb{Z}_{\ge 0}$. Then, 
	\begin{flalign}
	\label{ts1}
	\displaystyle\lim_{q \rightarrow 1}	S_{n_1 n_2 n_3}^{n_1' n_2' n_3'} (v) = T_{n_1 n_2 n_3}^{n_1' n_2' n_3'} (v),
	\end{flalign}
	
	\noindent where, for any $v \in \mathbb{C}$ and $n_1, n_2, n_3, n_1', n_2', n_3' \in \mathbb{Z}_{\ge 0}$, 
	\begin{flalign} 
	\label{tn1n2n3vweight}
	\begin{aligned}
	T_{n_1 n_2 n_3}^{n_1' n_2' n_3'} (v) & = v^{n_2'} (1 - v)^{n_2 - n_2'} \binom{n_2}{n_2'} \textbf{\emph{1}}_{n_2' \le n_2} \textbf{\emph{1}}_{n_1 + n_2 = n_1' + n_2'} \textbf{\emph{1}}_{n_2 + n_3 = n_2' + n_3'}.
	\end{aligned}
	\end{flalign} 
	
	\noindent Moreover, for any fixed $v \in \mathbb{C}$ and $n_1, n_2, n_3 \in \mathbb{Z}_{\ge 0}$, we have that 
	\begin{flalign}
	\label{tstochastic}
	\displaystyle\sum_{\textbf{\emph{n}}'} T_{n_1 n_2 n_3}^{n_1' n_2' n_3'} (v) = 1,
	\end{flalign}
	
	\noindent where the sum is over all $\textbf{\emph{n}}' = (n_1', n_2', n_3') \in \mathbb{Z}_{\ge 0}^3$. Furthermore, for any fixed $v, w \in \mathbb{C}$ and $n_1, n_2, n_3, n_4, n_5, n_6, n_1'', n_2'', n_3'', n_4'', n_5'', n_6'' \in \mathbb{Z}_{\ge 0}$, they also satisfy the (non-dynamical) tetrahedron equation
	\begin{flalign}
	\label{dimensiondynamicalt}
	\begin{aligned} 
	& \displaystyle\sum_{\textbf{\emph{n}}'} T_{n_1 n_2 n_3}^{n_1' n_2' n_3'} (w) T_{n_1' n_4 n_5}^{n_1'' n_4' n_5'} (w) T_{n_3' n_5' n_6'}^{n_3'' n_5'' n_6''} (v) T_{n_2' n_4' n_6}^{n_2'' n_4'' n_6'} (v) \\
	& \qquad = \displaystyle\sum_{\textbf{\emph{n}}'} T_{n_3 n_5 n_6}^{n_3' n_5' n_6'} (v) T_{n_2 n_4 n_6'}^{n_2' n_4' n_6''} (v) T_{n_1 n_4' n_5'}^{n_1' n_4'' n_5''} (w) T_{n_1' n_2' n_3'}^{n_1'' n_2'' n_3''} (w), 
	\end{aligned}
	\end{flalign} 
	
	\noindent where the sum is over all nonnegative integer sets $\textbf{\emph{n}}' = (n_1', n_2', n_3', n_4', n_5', n_6')$. 
	
	Additionally, if $v \in (0, 1)$, then $T_{n_1 n_2 n_3}^{n_1' n_2' n_3'} \ge 0$ for each $n_1, n_2, n_3, n_1', n_2', n_3' \in \mathbb{Z}_{\ge 0}$. 
\end{prop}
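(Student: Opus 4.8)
The plan is to derive \Cref{q1sv} from the earlier results as a straightforward $q \to 1$ limit. First I would establish \eqref{ts1}: by \Cref{ssk} we have the closed formula \eqref{svr} for the stochastic correction $S_{n_1 n_2 n_3}^{n_1' n_2' n_3'}(v) \big/ R_{n_1 n_2 n_3}^{n_1' n_2' n_3'}$, and multiplying by \eqref{rn1n2n3} yields \eqref{sn1n2n3vweight}. As $q \to 1$, the third identity in \eqref{elliptichypergeometricidentities1}, namely $\lim_{q \to 1}(1-q)^{-k}(q^a; q)_k = (a)_k$, lets one track each $q$-Pochhammer factor. The key observations are: the ${_2 \varphi_1}$ in \eqref{sn1n2n3vweight} has upper parameter $q^{-2n_2}$, so it terminates and converges to a terminating ${_2 F_1}$ which, by the Vandermonde--Chu identity \eqref{221identityhypergeometric}, evaluates to a ratio of rational Pochhammer symbols; the prefactor $(q^{-2n_1'}; q^2)_{n_2}$ contributes a factor that, combined with the $(q^2; q^2)$ ratios, rearranges into the binomial coefficient $\binom{n_2}{n_2'}$ after imposing $n_1 + n_2 = n_1' + n_2'$; and the ratio $v^{n_2'}(vq^{-2n_1'}; q^2)_{n_3'}\big/(v; q^2)_{n_3}$ collapses (using $n_2 + n_3 = n_2' + n_3'$ and $n_3 - n_3' = n_2 - n_2' = n_1' - n_1 \ge 0$) to $v^{n_2'}(1-v)^{n_2 - n_2'}$, while the power of $q$ in the exponent tends to $q^0 = 1$. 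This gives \eqref{tn1n2n3vweight}, including the support conditions $\textbf{1}_{n_2' \le n_2}$ (the binomial vanishes otherwise) and the two arrow-type constraints (which survive from \eqref{sn1n2n3vweight}).

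Next, for \eqref{tstochastic} and \eqref{dimensiondynamicalt}, the cleanest route is to observe that both are identities of rational functions in $q$ valid on a dense set, so they pass to the limit. Concretely, \Cref{stochastic3s} gives $\sum_{\textbf{n}'} S_{n_1 n_2 n_3}^{n_1' n_2' n_3'}(v) = 1$ for every $q$ (with $v$ allowed to be an arbitrary complex parameter, since the identity was proved by analytic continuation in $v$); the sum has a bounded number of nonzero terms uniformly near $q = 1$, and each term is continuous in $q$ there by \eqref{ts1}, so taking $q \to 1$ yields \eqref{tstochastic}. Similarly, \Cref{3equationdimensiondynamical} gives the dynamical tetrahedron equation \eqref{dimensiondynamical} for the $S$ weights for all $q$; setting the dynamical arguments appropriately and letting $q \to 1$, the factors $q^{2n_5}$, $q^{-2n_2''}$, $q^{-2n_3'}$ in \eqref{dimensiondynamical} all tend to $1$, so the $v$- and $w$-arguments become constant across a given side of the equation, and \eqref{dimensiondynamical} degenerates precisely to \eqref{dimensiondynamicalt}. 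One should check the limit is legitimate by noting, as in the proof of \Cref{stochastic3s}, that the number of nonzero summands on each side stays bounded as $q \to 1$ (the support conditions in \eqref{tn1n2n3vweight} confine $\textbf{n}'$ to a finite set depending only on the boundary data), so termwise convergence suffices.

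Finally, nonnegativity of $T_{n_1 n_2 n_3}^{n_1' n_2' n_3'}(v)$ for $v \in (0,1)$ is immediate from the explicit form \eqref{tn1n2n3vweight}: it is a product of $v^{n_2'} \ge 0$, $(1-v)^{n_2 - n_2'} \ge 0$ (using $n_2' \le n_2$ on the support), and the nonnegative binomial coefficient and indicator functions. No appeal to \Cref{requation3} is needed here, in contrast to the $q \in (0,1)$ nonnegativity of the $S$ weights.

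The step I expect to be the main obstacle is the bookkeeping in \eqref{ts1} — namely verifying that the Vandermonde--Chu evaluation of the limiting ${_2 F_1}$, the $q$-Pochhammer prefactors, and the $v$-dependent ratio recombine exactly into $v^{n_2'}(1-v)^{n_2-n_2'}\binom{n_2}{n_2'}$ with the correct power of $q$ (which must tend to $1$, with no stray factor surviving). This is purely a matter of carefully matching exponents using the two conservation relations $n_1 + n_2 = n_1' + n_2'$ and $n_2 + n_3 = n_2' + n_3'$, but it is the one place where a sign or off-by-one error in an exponent would propagate, so I would carry it out in full detail even though the remaining two parts are essentially formal limiting arguments.
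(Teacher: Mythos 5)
Your treatment of \eqref{tstochastic}, \eqref{dimensiondynamicalt}, and the nonnegativity claim matches the paper: these are obtained by combining \eqref{ts1} with \Cref{stochastic3s} and \eqref{dimensiondynamical} (the paper notes they can also be checked directly), and your remarks about the bounded number of nonzero summands and the dynamical factors $q^{2n_5}, q^{-2n_2''}, q^{-2n_3'}$ tending to $1$ are fine.

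However, your plan for \eqref{ts1} has a genuine gap, and it sits exactly at the step you dismiss as bookkeeping. You propose to take the $q\to 1$ limit of \eqref{sn1n2n3vweight} termwise and evaluate the limiting ${_2F_1}$ of the \emph{original} ${_2\varphi_1}$ by Vandermonde--Chu. This fails whenever $n_2 > n_2'$ (equivalently $n_1 < n_1'$), which is a nonempty part of the support: the Pochhammer prefactor $(q^2;q^2)_{n_1}(q^{-2n_1'};q^2)_{n_2}(q^2;q^2)_{n_3}\big/\big((q^2;q^2)_{n_1'}(q^2;q^2)_{n_2'}(q^2;q^2)_{n_3'}\big)$ carries a net factor $(1-q^2)^{n_2'-n_2}$ and hence \emph{diverges}, while the limiting ${_2F_1}\big(-n_2,\,n_1+1;\,n_1'-n_2+1;\,1\big)$ is either ill-defined (its lower parameter $n_1'-n_2+1$ is a nonpositive integer when $n_2>n_1'$, so terms with $n_2-n_1'\le k\le n_2$ have vanishing denominators) or, where Vandermonde--Chu formally applies, equals $(-n_2')_{n_2}/(n_1'-n_2+1)_{n_2}$, whose numerator vanishes for $n_2>n_2'$. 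You are left with an $\infty\cdot 0$ indeterminate form that no amount of exponent-matching resolves. The paper's proof avoids this by \emph{first} applying the $q$-Heine transformation \eqref{21identityhypergeometric} (with $q\mapsto q^2$, $k\mapsto n_2$, $b\mapsto q^{2n_1+2}$, $c\mapsto q^{2(n_1'-n_2+1)}$, $z\mapsto q^{-2n_3}$), which rewrites the weight with a ${_2\varphi_1}$ of parameters $q^{-2n_2'}, q^{-2n_3}; q^{-2n_2-2n_3}$ at argument $q^{2n_1+2}$ and a compensating prefactor $(q^{-2n_2-2n_3};q^2)_{n_2}/(q^{2n_1'-2n_2+2};q^2)_{n_2'}$; only after this resummation does every factor have a finite limit, and Vandermonde--Chu is then applied to ${_2F_1}(-n_2',-n_3;-n_2-n_3;1)$, yielding $(-n_2)_{n_2'}/(-n_2-n_3)_{n_2'}$ and ultimately the binomial coefficient. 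Without this (or an equivalent) transformation, your first step does not go through.
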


\begin{rem}
	
\label{arrowtprobability} 

Observe that the weight $T_{n_1 n_2 n_3}^{n_1' n_2' n_3'} (v)$ from \eqref{tn1n2n3vweight} has the following probabilistic interpretation. Each arrow entering a vertex $u$ through the $n_1$ (or $n_3$) direction deterministically exits $u$ through the $n_1'$ (or $n_3'$) direction. Furthermore, if an arrow enters $u$ through the $n_2$ direction, then a Bernoulli $0-1$ random variable $\chi$ with $\mathbb{P}[\chi = 1] = v$ is sampled. If $\chi = 1$, then the arrow exits $u$ through the $n_2'$ direction; if $\chi = 0$, then two copies of the arrow are made, which exit $u$ through the $n_1'$ and $n_3'$ directions. 

\end{rem}

\begin{proof}[Proof of \Cref{q1sv}]

Observe that directly taking the limit as $q$ tends to $1$ on the right side of \eqref{sn1n2n3vweight} poses an issue, in that the prefactor $(q^2; q^2)_{n_1} (q^2; q^2)_{n_2} (q^2; q^2)_{n_3} (q^2; q^2)_{n_1'}^{-1} (q^2; q^2)_{n_2'}^{-1} (q^2; q^2)_{n_3'}^{-1}$ will behave as a constant multiple of $(1 - q)^{n_2' - n_2}$, which diverges if $n_2 > n_2'$. Therefore, we will first apply \eqref{21identityhypergeometric} with that $q$ given by $q^2$; that $k$ given by $n_2$; that $b$ given by $q^{2n_1 + 2}$; that $c$ given by $q^{2 (n_1' - n_2 + 1)}$; and that $z$ given by $q^{-2n_3}$ to rewrite the ${_2 \varphi_1}$ hypergeometric series on the right side of \eqref{sn1n2n3vweight}. This yields 
\begin{flalign*}
S_{n_1 n_2 n_3}^{n_1' n_2' n_3'} (v) & = q^{n_2 (n_1 + n_3 + n_1' + n_3' + 2) - 2 n_2' (n_1' + 1)}  \displaystyle\frac{(q^2; q^2)_{n_1} (q^{-2n_1'}; q^2)_{n_2} (q^2; q^2)_{n_3}}{(q^2; q^2)_{n_1'} (q^2; q^2)_{n_2'} (q^2; q^2)_{n_3'}} \displaystyle\frac{(q^{-2n_2 - 2n_3}; q^2)_{n_2}}{(q^{2n_1' - 2n_2 + 2}; q^2)_{n_2'}}  \\
& \qquad \times v^{n_2'} \displaystyle\frac{(q^{-2n_1'} v; q)_{n_3'}} {(v; q)_{n_3}} {_2 \varphi_1}  \Bigg( \begin{array}{cc} q^{-2n_2'}, q^{-2n_3} \\ q^{-2 n_2 - 2 n_3} \end{array}\Bigg| q^2, q^{2n_1 + 2}\Bigg) \textbf{1}_{n_1 + n_2 = n_1' + n_2'} \textbf{1}_{n_2 + n_3 = n_2' + n_3'},
\end{flalign*}

\noindent where we have used the second identity in \eqref{elliptichypergeometricidentities1} and the facts that $n_1 + n_2 = n_1' + n_2'$ and $n_2 + n_3 = n_2' + n_3'$. Hence, \eqref{tn1n2n3vweight} follows from the facts that 
\begin{flalign*}
&  \displaystyle\lim_{q \rightarrow 1}  \displaystyle\frac{(q^2; q^2)_{n_1} (q^{-2n_1'}; q^2)_{n_2} (q^2; q^2)_{n_3}}{(q^2; q^2)_{n_1'} (q^2; q^2)_{n_2'} (q^2; q^2)_{n_3'}} \displaystyle\frac{(q^{-2n_2 - 2n_3}; q^2)_{n_2}}{(q^{2n_1' - 2n_2 + 2}; q^2)_{n_2'}}   = \displaystyle\frac{n_1! (-n_1')_{n_2}  n_3!  }{n_1'! n_2'! n_3'!} \displaystyle\frac{(-n_2 - n_3)_{n_2}}{(n_1' - n_2 + 1)_{n_2'}}; \\
&  {_2 \varphi_1}  \Bigg( \begin{array}{cc} q^{-2n_2'}, q^{-2n_3} \\ q^{-2 n_2 - 2 n_3} \end{array}\Bigg| q^2, q^{2n_1 + 2}\Bigg) = {_2 F_1}  \Bigg( \begin{array}{cc} -n_2', -n_3 \\ - n_2 -  n_3 \end{array}\Bigg| 1 \Bigg) = \displaystyle\frac{(-n_2)_{n_2'}}{(-n_2 - n_3)_{n_2'}}; \\
& \displaystyle\frac{n_1! (-n_1')_{n_2}  n_3!  }{n_1'! n_2'! n_3'!} \displaystyle\frac{(-n_2 - n_3)_{n_2}}{(n_1' - n_2 + 1)_{n_2'}} \displaystyle\frac{(-n_2)_{n_2'}}{(-n_2 - n_3)_{n_2'}} = \binom{n_2}{n_2'} \textbf{1}_{n_2' \le n_2},
\end{flalign*}
 
 \noindent which are consequences of the third identity in \eqref{elliptichypergeometricidentities1}, \eqref{221identityhypergeometric}, and the facts that $n_1 + n_2 = n_1' + n_2'$ and $n_2 + n_3 = n_2' + n_3'$. 
 
The remaining results given by \eqref{tstochastic}, \eqref{dimensiondynamicalt}, and the last statement of the proposition follow from combining \eqref{ts1} with \Cref{stochastic3s} and \eqref{dimensiondynamical} (alternatively, each of these can be verified directly, independently of the content of \Cref{EquationStochastic3}). 
\end{proof}

\begin{rem}
	
	Let us briefly outline an alternative way to verify that the $T$ weights from \eqref{tn1n2n3vweight} satisfy \eqref{dimensiondynamicalt}. To that end, consider the two tetrahedra depicted in \Cref{3equationdimension}, and fix the numbers $n_1, n_2, n_3, n_4, n_5, n_6$ of arrows entering each tetrahedron. Under the probabilistic interpretation of the $T$ weights from \Cref{arrowtprobability}, the left and right sides of \eqref{dimensiondynamicalt} denote the probabilities that the numbers of arrows exiting the left and right tetrahedra in \Cref{3equationdimension} are $n_1'', n_2'', n_3'', n_4'', n_5'', n_6''$, respectively.
	
	To equate these two probabilities, we analyze the trajectory of a given arrow entering each tetrahedron. For example, consider an arrow entering through $n_2$ edge. In the left diagram in \Cref{3equationdimension}, this arrow splits into two along the $n_1''$ and $n_5'$ edges with probability $1 - w$, and it continues along the $n_2'$ edge with probability $w$. Conditional on the former event, these arrows deterministically exit through the $n_1''$ and $n_3''$ edges; conditional on the latter, the arrow deterministically exits through the $n_2''$ edge. 
	
	Therefore, the probability that an arrow entering the left tetrahedron through the $n_2$ edge causes an arrow to exit through the $n_2''$ edge is $w$, and the probability that it causes arrows to exit through the $n_1''$ and $n_3''$ edges is $1 - w$. Similar reasoning indicates that the same is true for the right tetrahedron in \Cref{3equationdimension}.
	
	Repeating this, one can show that, for any integer $i \in [1, 6]$ and integer subset $\mathcal{J} \subseteq [1, 6]$, the probability that an arrow entering the left tetrahedron through the $n_i$ vertex causes an arrow to exit through the $n_j''$ vertex for each $j \in \mathcal{J}$ is equal to the probability of the same event for the right tetrahedron. Thus, the probability that the numbers of arrows exiting the left tetrahedron are $n_1'', n_2'', n_3'', n_4'', n_5'', n_6''$ is equal to the probability of the same event for the right tetrahedron. This implies \eqref{dimensiondynamicalt}. 
\end{rem}

\end{document}